\def\@tocline#1#2#3#4#5#6#7{\relax
  \ifnum #1>\c@tocdepth 
  \else
    \par \addpenalty\@secpenalty\addvspace{#2}%
    \begingroup \hyphenpenalty\@M
    \@ifempty{#4}{%
      \@tempdima\csname r@tocindent\number#1\endcsname\relax
    }{%
      \@tempdima#4\relax
    }%
    \parindent\z@ \leftskip#3\relax \advance\leftskip\@tempdima\relax
    \rightskip\@pnumwidth plus4em \parfillskip-\@pnumwidth
    #5\leavevmode\hskip-\@tempdima
      \ifcase #1
       \or\or \hskip 1em \or \hskip 2em \else \hskip 3em \fi%
      #6\nobreak\relax
    \dotfill\hbox to\@pnumwidth{\@tocpagenum{#7}}\par
    \nobreak
    \endgroup
  \fi}
 \numberwithin{equation}{section}
\def\bC{{\mathbb{C}}}
\def\NN{{\mathbb{N}}}
\def\mS{{\mathcal{S}}}
\def\WW{{\mathcal{W}}}
\def\PP{{\mathcal{P}}}
\def\calN{{\mathcal{N}}}
\def\calC{{\mathcal{C}}}
\def\calB{{\mathcal{B}}}
\def\calA{{\mathcal{A}}}
\def\ve{\varepsilon}
\def\vp{\varphi}
\def\lec{\lesssim}
\def\gec{\gtrsim}
\def\wt{\widetilde}
\def\ch{\mathop\mathrm{ch}} 
\def\Rn1{\mathbb{R}^{n+1}}
\def\rrn{\mathbb{R}^{n+1}}
\def\rn{\mathbb{R}^{n}}
\def\oom{\overline \Omega} 					
\DeclareMathOperator{\diam}{diam}
\newcommand{\vd}{{\mathrm{D^L_v}}}
\newcommand{\vpd}{{\mathrm{PD^L_v}}}
\newcommand{\dirprime}{{\mathrm{D}^{L^*}_{p'}}}
\newcommand{\rpr}{{\mathrm{R}^L_p}}
\newcommand{\pd}{{\mathrm{PD}}}
\newcommand{\pr}{{\mathrm{PR}^L_p}}
\newcommand{\dqr}{{\mathrm{D}^L_q}}
\newcommand{\pre}{{\mathrm{PR}}}
\newcommand{\tdqr}{{\wt{\mathrm{D}}^L_q}}
\newcommand{\sss}{{\mathsf {Stop}}}
\newcommand{\ttt}{{\mathsf {Top}}}
\newcommand{\tree}{{\rm Tree}}
\def\car{\mathop\mathrm{Car}}
\newcommand{\nn}{{\mathrm{N}}}
\newcommand{\cc}{{\mathrm{C}}}
\newcommand{\ccis}{{\mathrm{C}^\infty_s}}
\newcommand{\nnp}{{\mathrm{N}^p}}
\newcommand{\nni}{{\mathrm{N}^\infty}}
\newcommand{\nnis}{{\mathrm{N}_{\sharp}^\infty}}
\newcommand{\ccpqs}{{\mathrm{C}^p_{s, q}}}
\newcommand{\ccpqsd}{{\mathrm{C}^{p'}_{s, q'}}}
\newcommand{\ccpsi}{{\mathrm{C}^p_{s, \infty}}}
\newcommand{\ccisi}{{\mathrm{C}^\infty_{s, \infty}}}
\newcommand{\nnpq}{{\mathrm{N}^p_q}}
\def\TT{\mathop\mathrm{T}}
\def\AR{\mathop\mathrm{AR}}
\def\JC{\mathop\mathrm{JC}}
\def\tr{\mathop\mathrm{Tr}}
\def\BMO{\mathop\mathrm{BMO}} 					
\def\VMO{\mathop\mathrm{VMO}} 					
\def\Lip{\mathop\mathrm{Lip}} 						
\def\dist{\textup{dist}} 						
\def\supp{\mathop\mathrm{supp}}					
\def\loc{\mathop\mathrm{loc}}						
\DeclareMathOperator*{\qntlim}{qnt-lim}			
\DeclareMathOperator*{\ntlim}{nt-lim}			
\renewcommand{\div}{\mathop\mathrm{div }}			
\def\XXint#1#2#3{{\setbox0=\hbox{$#1{#2#3}{\int}$ }
\vcenter{\hbox{$#2#3$ }}\kern-.58\wd0}}
\theoremstyle{plain}
\newtheorem{theorem}{Theorem}
\newtheorem{corollary}[theorem]{Corollary}
\newtheorem{lemma}[theorem]{Lemma}
\newtheorem{proposition}[theorem]{Proposition}
\theoremstyle{definition}
\newtheorem{definition}[theorem]{Definition}
\newtheorem{remark}[theorem]{Remark}
\numberwithin{equation}{section}
\numberwithin{theorem}{section}
  \DeclareFontFamily{U}{mathb}{\hyphenchar\font45} 
\DeclareFontShape{U}{mathb}{m}{n}{
      <5> <6> <7> <8> <9> <10> gen * mathb
      <10.95> mathb10 <12> <14.4> <17.28> <20.74> <24.88> mathb12
      }{}
\DeclareSymbolFont{mathb}{U}{mathb}{m}{n}
\DeclareMathSymbol{\toitself}      {3}{mathb}{"FD}  
\def\HH{\mathcal{H}}
\newcommand{\vv}{\vspace{2mm}}
\newcommand{\vvv}{\vspace{4mm}}
\newcommand{\dv}{\mathop{\rm div}}
\def\R{\mathbb{R}}
\def\vphi{\varphi}
\def\om{\Omega}
\def\hm{\omega}
\def\pom{{\partial\Omega}}
\newcommand{\DD}{{\mathcal D}}
\appto\appendix{\addtocontents{toc}{\protect\setcounter{tocdepth}{1}}}
\newcommand{\setword}[2]{%
  \phantomsection
  #1\def\@currentlabel{\unexpanded{#1}}\label{#2}%
}
\begin{document}

\title[Varopoulos extensions  in domains with Ahlfors-regular boundaries]{Varopoulos extensions in domains with Ahlfors-regular boundaries and applications to Boundary Value Problems for elliptic systems with $L^\infty$ coefficients}

\author[Mihalis Mourgoglou]{Mihalis Mourgoglou}
\address{Departamento de Matem\'aticas, Universidad del Pa\' is Vasco, Barrio Sarriena s/n 48940 Leioa, Spain and\\
Ikerbasque, Basque Foundation for Science, Bilbao, Spain.}
\email{michail.mourgoglou@ehu.eus}

\author{Thanasis Zacharopoulos}
\address{Departamento de Matem\'aticas, Universidad del Pa\' is Vasco, Barrio Sarriena s/n 48940 Leioa, Spain. }
\email{athanasios.zacharopoulos@ehu.eus}

\thanks{M.M. was supported  by IKERBASQUE and partially supported by the grant PID2020-118986GB-I00 of the Ministerio de Econom\'ia y Competitividad (Spain) and by the grant IT-1615-22 (Basque Government).  T.Z.  was supported by the FPI grant PRE2018-084984
of the Ministerio de Ciencia e Innovaci\'on (Spain) and  partially supported by the grant PID2020-118986GB-I00 of the Ministerio de Econom\'ia y Competitividad (Spain).}
\keywords{Extensions,  trace,  Carleson spaces,  Tent spaces,  Non-tangential maximal function,  BMO,  Campanato space,  Boundary Value Problems in rough domains,  Poisson Problems,  divergence form elliptic PDEs,  elliptic systems,  complex coefficients}

\newcommand{\mih}[1]{\marginpar{\color{red} \scriptsize \textbf{Mi:} #1}}
\newcommand{\than}[1]{\marginpar{\color{blue} \scriptsize \textbf{Thanasis:} #1}}
\maketitle

\begin{abstract}
Let $\Omega \subset \mathbb{R}^{n+1}$, $n \geq 1$,  be an open set with $s$-Ahlfors regular boundary $\partial \Omega$, for some $s \in(0,n]$,  such that either  $s=n$ and  $\Omega$ is a corkscrew domain with the pointwise John condition,  or $s<n$ and $\Omega= \mathbb{R}^{n+1} \setminus E$,  for some $s$-Ahlfors regular set $E \subset \mathbb{R}^{n+1}$.  In this paper we provide a unifying method to construct Varopoulos type extensions of $\BMO$ and $L^p$  boundary functions.  In particular,  we show that a)  if $ f \in \BMO(\partial \Omega)$,  there exists $ F\in C^\infty(\Omega)$ such that $\dist(x, \om^c)|\nabla  F(x)|$ is uniformly bounded in $\om$ and  the Carleson functional of $\dist(x,\Omega^c)^{s-n}|\nabla  F(x)|$  as well  the sharp non-tangential maximal function of $ F$ are uniformly bounded on $\partial \Omega$ with norms controlled by the $\BMO$-norm of $ f$,  and $ F \to  f$ in a certain non-tangential sense  $\mathcal H^s|_{\partial \Omega}$-almost everywhere;  b)  if $\bar f \in L^p(\partial \Omega)$, $1 <p \leq \infty$,   there exists $\bar F \in C^\infty(\Omega)$ such that the non-tangential maximal functions of $\bar F$ and $\dist(\cdot, \om^c)|\nabla \bar F|$ as well as  the Carleson functional of $\dist(\cdot,\Omega^c)^{s-n}|\nabla \bar  F|$  are  in $L^p(\partial \Omega)$ with norms controlled by the $L^p$-norm of $\bar f$,  and $\bar F \to \bar f$ in some non-tangential sense $\mathcal H^s|_{\partial \Omega}$-almost everywhere.  If,  in addition,  the boundary function is Lipschitz with compact support,  then both $F$ and $\bar F$ can be constructed so that they are also Lipschitz on $\overline \Omega$ and converge to the boundary data continuously.  The latter results hold without  the additional assumption of the pointwise John condition.  Finally,  for elliptic systems of equations in divergence form with merely bounded complex-valued coefficients, we show some connections between the solvability of Poisson problems with interior data in the appropriate Carleson  or tent spaces and the solvability of Dirichlet problem with $L^p$ and $\BMO$ boundary data. 
\end{abstract}

\tableofcontents

\section{Introduction}

In the present manuscript we are concerned with open sets  $\om \subset \rrn$, $n \geq 1$, which satisfy  one of the following assumptions:
\begin{itemize}
\item[(a)] $\om$ satisfies the corkscrew condition and  its boundary $\pom$ is $n$-Ahlfors regular (see Definitions \ref{def:sregular} and \ref{def:corkscrew}),  or
\item[(b)] $\om=\rrn \setminus E$,  for some $s$-Ahlfors regular set $E \subset \rrn$ with $s<n$.
\end{itemize}
We will call such domains {\it $\AR(s)$ domains} for $s \in (0,n]$.  We also define  $\sigma_s:= \HH^s|_{\pom}$ to be the ``surface" measure of $\om$, where $\HH^s$ is the $s$-dimensional Hausdorff measure. 

\vv

Our first   goal is to construct, in $\AR(s)$ domains,  smooth  extensions $u:\om \to \R$ of  boundary functions that  are  in $\BMO(\sigma_s)$  (resp.  in  $L^p(\sigma_s)$ for $ p \in (1, \infty]$) so that their  sharp non-tangential maximal function defined in  \eqref{def: sharpntmaxfunction} (resp.  their non-tangential maximal functions defined in \eqref{def: ntmaxfunction}) and the modified Carleson functionals (see \eqref{eq: carleson sup} for the definition) of their ``weighted" gradients are uniformly bounded  (resp.  in $L^p(\sigma_s)$) with norms controlled by the $\BMO(\sigma_s)$  (resp.  $L^p(\sigma_s)$) norms of the boundary functions.  The identification on the boundary is in the non-tangential convergence sense (up to a set of measure zero on the boundary).  To do so,  when $s=n$, we assume that  $\om$ satisfies the {\it pointwise John condition} (see Definition \ref{def: pwJohn}), while no additional connectivity  assumption is required for $s<n$. This is the first time that such results are proved in so general geometric  setting and also when the  co-dimension is larger than $1$.

Our second goal is to construct such extensions of Lipschitz functions  with compact support on the boundary of an $\AR(s)$ domain so that they are  Lipschitz on $\oom$ and in the weighted Sobolev space $\dot W^{1,2}(\om;\hm_s)$, where $\hm_s(x):=\delta_\om(x)^{s-n}$\footnote{When $s=n$, this is the standard  homogeneous Sobolev space $\dot W^{1,2}(\om)$.}.   In fact,  the construction of those extensions is even more important due to their applications to Boundary Value Problems given in Section \ref{sec:applications} and also in \cite{GaMT23, MP24}.  Finally, we also prove similar extensions of  boundary  functions in the Campanato space $\Lambda_\beta ( \pom)$ for $\beta \in (0,1)$.

\vv

The original result of Varopoulos  \cite{VAR77,  VAR78} entails an extension property for $\text{BMO}$ functions in $\mathbb{R}^n$. Specifically, for $f \in \text{BMO}(\mathbb{R}^n)$, there exists an extension $F$ defined in the upper half-space $\mathbb{R}^{n+1} := \{(x, t) \in \mathbb{R}^n \times (0, \infty)\}$, such that $F(x, t) = f$ non-tangentially, with $F \in C^\infty(\mathbb{R}^{n+1})$, satisfying a size condition  given by
\begin{equation}\label{eq:size}
\sup_{{(x,t) \in  \R^{n+1}_+}} t\, {|\nabla F(x, t)|} <\infty,
\end{equation}
and a $L^1$ Carleson measure condition
\begin{equation}\label{eq:carleson-bis}
\sup_{{r > 0, x \in \mathbb{R}^n}} r^{-n} \iint_{\{|x-y| < r\} \cap \R^{n+1}_+}  |\nabla F(y, t)| \, dy \, dt \leq C \|f\|_{\text{BMO}},
\end{equation}
where the implicit constants are purely dimensional. It is crucial to note that the Carleson condition \eqref{eq:carleson-bis} involves an $L^1$ bound, in contrast to the more standard $L^2$ estimate (with respect to the weighted measure $tdydt$):
\begin{equation}\label{eq:square function}
\sup_{{r > 0, x \in \mathbb{R}^n}} r^{-n} \iint_{\{|x-y| < r\} \cap \R^{n+1}_+} |\nabla F(y, t)|^2 \, dy \, td t \leq C \|f\|_{\text{BMO}}^2.
\end{equation}
In the presence of the size condition \eqref{eq:size},  it is evident that \eqref{eq:carleson-bis} implies \eqref{eq:square function}. However, the former is strictly stronger. An example illustrating this distinction is provided by Garnett in  \cite{GAR81}. In this example,  \eqref{eq:square function} holds, but \eqref{eq:carleson-bis} fails. Garnett's example involves the standard Poisson extension of a suitably constructed (bounded) function $f$.  In this case, Fefferman and Stein \cite{FS} established \eqref{eq:square function} for the Poisson extension of $f$,  and this bound played a crucial role in his proof of the $H^1$-$\BMO$ duality.

Varopoulos initially sought to establish his extension result with the aim of extending Carleson's Corona Theorem \cite{Car62} to $\mathbb{C}^n$ with $n \geq 2$. While this particular endeavor did not succeed, the outcome, known as the “Varopoulos extension", has proven to be of significant interest for other reasons. Notably, employing \eqref{eq:carleson-bis} instead of \eqref{eq:square function} allows for a simplification in one of the key steps of Fefferman's duality theorem proof.

Varopoulos based his extension theorem on a deep property of harmonic functions (or solutions of divergence form elliptic equations more broadly), now termed $\varepsilon$-{\it approximability}. This property asserts that for any bounded harmonic function $u$  (initially defined in the half-space but extendable to more general settings),  normalized so that $\|u\|_\infty \leq 1$, and for a fixed $\varepsilon > 0$, there exists an “$\varepsilon$-approximator" $\varphi_\ve \in C^\infty(\mathbb{R}^{n+1}_+)$ such that
\begin{equation}\label{eq:eapprox}
 \|u - \varphi_\ve\|_{L^\infty(\mathbb{R}^{n+1}_+)} < \varepsilon
 \end{equation}
and $\displaystyle \|\mathscr{C}_n (\nabla \vphi_\ve)\|_{L^\infty(\pom)} <  C_{\ve}$.  In particular, he demonstrated that there exists  $\varepsilon \in(0, 1)$ such that every  harmonic function $u$,  as described above, can be $\varepsilon$-approximated. Garnett later established that this holds for every $0 < \varepsilon < 1$. Thus, even though Garnett's example demonstrates that \eqref{eq:carleson-bis} can fail for bounded harmonic functions, these functions can still be approximated closely in the $L^\infty$ norm by a function $\varphi_\ve$ that does satisfy this property.

It is noteworthy that, for example, in Lipschitz domains, the $\varepsilon$-approximability of bounded solutions for divergence form elliptic equations is connected to the solvability of the Dirichlet problem with $L^p$ data\footnote{The relationships outlined have been explored and utilized in the works of  Dahlberg \cite{DAH80} and Kenig, Koch, Pipher, and Toro \cite{KKPT00}, with significant  contributions from Garnett \cite{GAR81}.}. Furthermore, in the case of harmonic functions in more general domains, the $\varepsilon$-approximation property has significant geometric implications.  In particular, if $\om \in \AR(n)$,  it {\it characterizes}  uniform rectifiability of the boundary\footnote{Uniform rectifiability is a quantitative,  scale-invariant version of the classical notion of rectifiability whose theory has been developed extensively in the deep work of David and Semmes \cite{DS1, DS2}.}. This equivalence is derived from the combined work of Hofmann, Martell, and Mayboroda \cite{HMM16} and Garnett,  Tolsa, and the first named author  \cite{GMT18} (see also  \cite{AGMT}).

Varopoulos established his extension theorem by first iterating $\varepsilon$-approximability to obtain the extension property for  $f \in L^\infty(\R^n)$. Subsequently, he used a ``Corona" type decomposition of $\text{BMO}(\R^n)$ functions, credited to Garnett \cite{GAR81}, to establish the general case.  Varopoulos's approach is both powerful and innovative; however, it has a fundamental limitation. Due to its reliance on the $\varepsilon$-approximability property of bounded harmonic functions, the Varopoulos method cannot treat domains whose boundaries fail to be uniformly rectifiable (see \cite{HT20} for the construction of Varopoulos extensions  in such domains).

\vv

In the present manuscript,  we overcome this geometric obstacle and clarify the true nature of the important extension property of Varopoulos, and the ingredients that go into its proof.   We also disprove a conjecture of Hofmann and Tapiola (originally stated in 
the preprint version of \cite{HT20} on arXiv), which was saying that the existence of Varopoulos extensions of $\BMO$ functions in $\AR(n)$ domains implies uniform $n$-rectifiability of the boundary.  Indeed, one can easily find  sets which are $n$-Ahlfors regular and  purely unrectifiable so that their complements are uniform domains (i.e.,  they satisfy Definitions \ref{def:corkscrew} and \ref{def:harnackchain}) and so they satisfy the local John (and thus the pointwise John) condition (see Definitions \ref{def: pwJohn} and \ref{def: local John}).  The $4$-corner Cantor set of Garnett is such an example in $\R^2$.  

\vv

Our first main result is the following. 

\begin{theorem}\label{th: introtheorem3}
Let $ \om \in \AR(s)$ for $s \in (0, n]$,  which,  for $s=n$, satisfies the pointwise John condition.  If $ f\in \BMO(\sigma_s)$, there exist  $ u : \om \to \R $ and $c_0\in(0,\frac{1}{4}]$ such that, for any $c\leq c_0$,  it holds that 
\begin{itemize}
\item[(i)] $u\in C^\infty(\om)$,
\item[(ii)] $\displaystyle  \sup_{\xi \in \pom} \calN_{\sharp,c}(u)(\xi)+ \sup_{x\in \om} \delta_\om(x) | \nabla u(x)|   \lec \| f \|_{\BMO(\sigma_s)} $, 
\item[(iii)] $ \displaystyle \sup_{\xi \in \pom}  {\calC}_{s,c}(\nabla u)(\xi)  \lec \|f\|_{\BMO(\sigma_s)}  $,
\item[(iv)] For $\sigma_s$-almost every $\xi\in\pom$,
$$
\displaystyle \fint_{B(x, \delta_\om(x)/2)} u(y)\,dy   \to 
\begin{dcases}
\displaystyle {f(\xi)\,\,\,\textup{non-tangentially}}, &\textup{if}\,\, s<n\\
\displaystyle {f(\xi)\,\,\,\textup{quasi-non-tangentially}}, &\textup{if}\,\, s=n.
\end{dcases}
$$
\end{itemize} 
Moreover, if $ f\in L^\infty(\sigma_s)$, we also obtain the stronger estimate
\begin{itemize}
\item[(v)]  $\displaystyle \sup_{x\in \om} |u(x)|  \lec \| f \|_{L^\infty(\sigma_s)}$.
\end{itemize} 
The constant $c_0$ only depends on dimension, the Ahlfors regularity constants, and the corkscrew condition.   Moreover, in the case that $s=n$ and $\om$ satisfies the {\it pointwise John condition}  but not the {\it local John condition},  we also assume that $f$ is compactly supported.
\end{theorem}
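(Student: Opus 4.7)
The plan is to avoid the $\varepsilon$-approximability route entirely and construct $u$ directly via Whitney averaging, then dispose of the four items one by one. Fix a Whitney decomposition $\{Q\}$ of $\om$ with $\ell(Q)\sim\delta_\om(Q):=\dist(Q,\pom)$ and a subordinate smooth partition of unity $\{\vp_Q\}$ with $|\nabla^k\vp_Q|\lec \ell(Q)^{-k}$. For each Whitney cube $Q$ I would use the corkscrew/Ahlfors regularity hypothesis to select a surface ball $B_Q\subset \pom$ with $r(B_Q)\sim \ell(Q)\sim\dist(Q,B_Q)$, set $f_Q:=\fint_{B_Q} f\,d\sigma_s$, and define
\[
u(x):=\sum_Q \vp_Q(x)\,f_Q.
\]
Item (i) is immediate. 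For $x\in Q$ only the neighbors $Q'\sim Q$ contribute, and writing $\nabla u(x)=\sum_{Q'\sim Q}(f_{Q'}-f_Q)\,\nabla\vp_{Q'}(x)$ together with the standard BMO oscillation estimate $|f_{Q'}-f_Q|\lec\|f\|_{\BMO(\sigma_s)}$ (valid because the projections $B_Q$, $B_{Q'}$ lie in a common surface ball of comparable radius) yields the size bound in (ii). The $L^\infty$ estimate (v) follows trivially from $|f_Q|\lec\|f\|_\infty$. The sharp non-tangential maximal function bound in (ii) comes from combining the gradient estimate with Poincar\'e on Whitney cubes to control the oscillation $|u(x)-\fint_{B(x,c\delta_\om(x))} u|$ uniformly by $\|f\|_{\BMO}$.

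For (iv) I would apply Lebesgue differentiation for $\sigma_s$ on $\pom$: at a Lebesgue point $\xi$ of $f$, the averages $f_Q$ over Whitney cubes $Q$ whose $B_Q$ is close to $\xi$ converge to $f(\xi)$, and the partition of unity transports this into convergence of $\fint_{B(x,\delta_\om(x)/2)}u$ to $f(\xi)$ along cones or John paths. When $s<n$ genuine cones are available at $\sigma_s$-a.e.\ $\xi$ since $\om=\rrn\setminus E$ and $E$ is lower-dimensional, giving full non-tangential convergence; when $s=n$ under only the pointwise John hypothesis the accessible approach regions degenerate to John paths, explaining why one only recovers quasi-non-tangential convergence, and the compact support of $f$ in that case keeps the stopping argument below localized so that only a bounded portion of the boundary is traversed.

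I expect the main obstacle to be the $L^1$ Carleson estimate (iii), since a naive Cauchy--Schwarz on the Whitney sum only delivers the weaker $L^2$ square-function bound. The plan is to run a Garnett-style corona/stopping-time decomposition on the boundary: fix a surface ball $B_0$, declare the stopping family $\sss$ to be the maximal dyadic descendants $R$ of $B_0$ with $|f_R-f_{B_0}|>\alpha\|f\|_{\BMO}$, and invoke John--Nirenberg to obtain
\[
\sum_{R\in \sss}\sigma_s(R)\lec e^{-c\alpha}\,\sigma_s(B_0).
\]
Inside each corona, adjacent Whitney-cube averages differ by at most $O(\alpha\|f\|_{\BMO})$, so the associated contribution is
\[
\sum_{Q}\ell(Q)^s\,|f_{Q'}-f_Q|\lec \alpha\,\|f\|_{\BMO}\,\sigma_s(B_0),
\]
and iterating the decomposition inside each stopping cube, the exponential packing yields a geometric series summing to a constant multiple of $\|f\|_{\BMO}\,\sigma_s(B_0)$. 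The delicate point will be the compatibility of the $\delta_\om^{s-n}$ weight with the dyadic sawtooth structure, which must be handled uniformly when $s=n$ (using corkscrews plus the pointwise John condition to fit the sawtooths into the domain) and when $s<n$ (where Ahlfors regularity of $E$ in $\rrn$ replaces the John hypothesis and lets one carry out the stopping time directly on $\pom$ as a metric space).
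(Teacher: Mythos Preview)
Your proposed extension $u=\sum_Q \vp_Q f_Q$ is the regularized dyadic extension $\upsilon_f$ of \eqref{eq:dyadext}; items (i), (ii), (iv), (v) do hold for it (Lemmas \ref{lemma:ntconvergenceofdyadic}--\ref{lem: sharpntmaxfunctionuf bmo}), but (iii) fails in general and your corona argument cannot repair it. The flaw is the displayed bound $\sum_{Q}\ell(Q)^s|f_{Q'}-f_Q|\lec \alpha\|f\|_{\BMO}\sigma_s(B_0)$ inside a single corona: although $|f_{Q'}-f_Q|\lec\alpha\|f\|_{\BMO}$ for neighbours in the same tree, the sum runs over \emph{all} cubes of the tree at \emph{all} scales, and $\sum_{\ell(Q)=2^{-k},\,Q\subset B_0}\sigma(Q)\approx\sigma_s(B_0)$ for \emph{each} $k$, so the left side is $\sigma_s(B_0)$ times the depth of the tree, which is not controlled. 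Concretely, $f=N^{-1/2}\sum_{k=1}^N r_k$ on $[0,1]$ (dyadic Rademachers) has $\|f\|_{\BMO}\approx 1$ yet $|f_Q-f_{\widehat Q}|=N^{-1/2}$ for every child--parent pair down to generation $N$, giving $\sum_{Q\subset[0,1]}|f_Q-f_{\widehat Q}|\,|Q|\approx\sqrt N$; for $\alpha\geq 1$ the entire lattice is a single corona, so no geometric series over stopping generations can absorb this.

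The corona is the right tool, but it must be used to build a \emph{different} extension, not to estimate $\nabla\upsilon_f$. The paper replaces $m_{\sigma,b(P)}f$ by the fixed top value $m_{\sigma,R}f$ whenever $b(P)\in\tree(R)$ and $P$ is away from tree boundaries (see \eqref{eq:approximatingdefinition}); this forces $\nabla u\equiv 0$ in the interior of each tree, so the Carleson mass lives only on the transition layer, whose boundary projections form a Carleson family (Lemmas \ref{lem:packing-A0}--\ref{lem:packing-close to B}). The resulting $u$ is then only an $\varepsilon$-approximant of $\upsilon_f$ (Theorem \ref{thm:e-approxbmo}); to recover the boundary trace one runs either the iteration of Theorem \ref{thm: extension in bmo} under the local John condition, or, under only pointwise John, Garnett's splitting $f=g+b$ (Lemma \ref{lem: Garnett's Lemma}) combined with the $L^\infty$ extension for $g$ and the explicit construction from \cite{HT20} for $b$.
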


\begin{remark}
Note that the second term on the left hand side of the estimate  (ii) of  Theorem \ref{th: introtheorem3} can also be written in terms of the non-tangential maximal function.  Namely,  (ii) is equivalent to the estimate
$$
 \sup_{\xi \in \pom} \calN_{\sharp,c}(u)(\xi) +  \sup_{\xi \in \pom} \calN( \delta_\om \nabla u )(\xi)  \lec  \| f \|_{\BMO(\sigma_s)}.
$$
\end{remark}

Our proof of Theorem \ref{th: introtheorem3}, under the additional assumption that the local John condition is satisfied when $s=n$, does not involve a decomposition of the boundary data, as in the Varopoulos argument.  Instead, it relies on a direct approach, which is novel in the case of $L^\infty$ and $\BMO$ functions, and hinges on three crucial components. First, we apply Theorem \ref{thm:e-approxbmo}, demonstrating that the regularized dyadic extension of $f \in \BMO(\sigma_s)$ (see \eqref{eq:dyadext} for its definition) is \textit{uniformly $\varepsilon$-approximable} (see subsection \ref{subsec:approx} for the definition).  Second, we establish a trace theorem (see Definition \ref{def:ntconv} and Proposition \ref{lem: non-tangential convergence}).  Finally, we employ an iteration argument inspired by Varopoulos.  

It is worth noting that the local John condition, representing a  scale-invariant connectivity condition between boundary points and corkscrew points at all scales and locations, is necessary only for the trace theorem and specifically in the case where $\Omega \in \AR(s)$ and $s=n$, while it is always satisfied in $\AR(s)$ for $s<n$. A significant challenge in the iteration argument is the need to introduce the Sobolev-type space $\nn_{\textup{sum}}(\Omega)$ (see \eqref{eq: Nsum} for its definition) and demonstrate its sequential completeness. This constitutes a non-trivial task, occupying the majority of the Appendix (see Corollary \ref{cor: the sum space is complete}).

The aforementioned method does not seem to work when $s=n$ and the domain satisfies a  connectivity condition weaker than the local John condition  called the {\it pointwise John condition}.  So in the case that $\om$ satisfies the pointwise John condition but {\it not} the local John condition,  we resort to a proof closer in spirit to that of Varopoulos. Specifically, we decompose the boundary data $f$ into the sum of a `good' function $g \in L^\infty$ and a `bad' function $b = \sum_j a_j \chi_{Q_j}$, where $\sup_{j \geq 1}|a_j|\lesssim \|f\|_{\BMO(\sigma)}$ and $\{Q_j\}_{j \geq 1}$ is a family of dyadic cubes on the boundary satisfying a Carleson packing condition (see \eqref{eq:packing} for the definition). The construction of the extension of $b$ is more standard,  although, in rough domains, it is technically involved and requires quite some effort to be accomplished (see  \cite[Proposition 1.3]{HT20}). The challenging part is how to build the extension of $g$ without relying on the $\varepsilon$-approximability property of the harmonic extension of $g$ (and, thus, avoiding the restriction to domains with uniformly rectifiable boundaries). Interestingly, the (direct) proof described above works for $L^\infty$ functions in domains merely satisfying the pointwise John condition, allowing us to construct a Varopoulos extension of $g$.  The reason this occurs is that, rather than obtaining an estimate for $\calN_{\sharp,c}(u)$ for the approximating function of $\upsilon_g$,  we are able to establish a stronger estimate like (v).  The extension of $f$ is just the sum of the extensions of $g$ and $b$.

We also prove a  version of Theorem \ref{th: introtheorem3} for boundary functions that belong to  $L^p$.  This  result  was previously shown in $\rrn_+$ by Hyt\"{o}nen and Ros\'{e}n \cite{HR18}.  
\begin{theorem}\label{th: introtheorem2}
Let $ \om \in \AR(s)$ for $s \in (0, n]$,  which,  for $s=n$, satisfies the pointwise John condition.  If  $ f\in L^p(\sigma_s) $ with 
$ p\in (1, \infty)$,  there exist $ u :\om \to \R $ and $c_0\in(0,\frac{1}{4}]$ such that, for any $c\in(0,c_0]$,  it holds that 
\begin{itemize}
\item[(i)] $ u \in C^\infty(\om) $,
\item[(ii)] $\displaystyle \| \calN (u) \|_{L^p(\sigma_s)} + \|\calN(\delta_\om\nabla u)\|_{L^p(\sigma_s)}  \lec  \| f \|_{L^p(\sigma_s)} $,
\item[(iii)] $\displaystyle  \| {\calC}_{s,c}(\nabla u) \|_{L^p(\sigma_s)} \lec \| f \|_{L^p(\sigma_s)} $,
\item[(iv)]  For $\sigma_s$-almost every $\xi\in\pom$,\footnote{For the definitions of non-tangential and quasi-non-tangential convergence, see Definition \ref{def:ntconv}.}
$$
\displaystyle \fint_{B(x, \delta_\om(x)/2)} u(y)\,dy \to 
\begin{dcases}
\displaystyle{f(\xi)\,\,\,\textup{non-tangentially}}, &\textup{if}\,\, s<n\\
\displaystyle{f(\xi)\,\,\,\textup{quasi-non-tangentially}}, &\textup{if}\,\, s=n.
\end{dcases}
$$
\end{itemize}
The constant $c_0$ only depends on dimension, the Ahlfors regularity constants, and the corkscrew condition. 
\end{theorem}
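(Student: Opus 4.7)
The plan is to mirror the proof of Theorem \ref{th: introtheorem3}, employing the same regularized dyadic extension $u$ of $f$ given by \eqref{eq:dyadext}, but substituting $L^p(\sigma_s)$ bounds for the $\BMO(\sigma_s)$ bounds at every step. Property (i) is automatic from the smooth partition-of-unity construction.

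For (ii), I would derive pointwise control by the boundary Hardy--Littlewood maximal function $M_s$. Since $u(x)$ is a convex combination of surface averages of $f$ over boundary balls of radius $\approx \delta_\om(x)$ centered at boundary projections of $x$, one has $|u(x)| \lec M_s f(\pi(x))$, and an analogous bound for $\delta_\om(x)|\nabla u(x)|$. A short non-tangential cone geometry argument then yields $M_s f(\pi(x)) \lec M_s f(\xi)$ whenever $x$ lies in the non-tangential cone at $\xi$, so that $\calN(u)(\xi) + \calN(\delta_\om|\nabla u|)(\xi) \lec M_s f(\xi)$. Property (ii) then follows from the $L^p(\sigma_s)$-boundedness of $M_s$, valid for $p \in (1, \infty]$ because $\sigma_s$ is $s$-Ahlfors regular.

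The main obstacle is the Carleson estimate (iii), since $\calC_{s,c}(\nabla u)$ is not pointwise dominated by $M_s f$; this reflects the same phenomenon that makes \eqref{eq:carleson-bis} strictly stronger than \eqref{eq:square function}, and naive summation of averages over scales produces a divergent expression. To handle it, I would adapt the iteration scheme used in Theorem \ref{th: introtheorem3}, decomposing $u = \sum_{k \geq 0} \varphi^{(k)}$ via the approximation construction of Theorem \ref{thm:e-approxbmo} applied iteratively, now with $L^p$-bookkeeping in place of $\BMO$-bookkeeping. The key quantitative fact to verify is that the boundary traces of residuals decay geometrically in $L^p(\sigma_s)$, while each $\varphi^{(k)}$ has $\|\calC_{s,c}(\nabla \varphi^{(k)})\|_{L^p(\sigma_s)}$ controlled by the $L^p$ norm of its input; summing the resulting geometric series yields (iii). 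The underlying technical point is the sequential completeness of an $L^p$-analogue of the function space $\nn_{\textup{sum}}(\om)$, which would require adapting the arguments of the Appendix.

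For (iv), I would invoke the trace theorem of Proposition \ref{lem: non-tangential convergence}: for Lipschitz compactly supported data the convergence is straightforward from the construction, and the general case follows by approximating $f \in L^p(\sigma_s)$ in norm by such functions and exploiting the maximal-function control $\calN(u_1 - u_2)(\xi) \lec M_s(f_1 - f_2)(\xi)$ from step (ii) together with the Lebesgue differentiation theorem to transfer the a.e. identification of the limit. The pointwise John condition for $s=n$ enters only through this trace step, exactly as in Theorem \ref{th: introtheorem3}, which is why no extra connectivity hypothesis is needed when $s < n$.
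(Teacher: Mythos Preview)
Your overall strategy—iterate the $\varepsilon$-approximation of the regularized dyadic extension and invoke the trace theorem of Proposition \ref{lem: non-tangential convergence}—matches the paper's route (Theorem \ref{thm: extension in Lp} plus Proposition \ref{lem: non-tangential convergence} plus Lemma \ref{lem:modifedFproperties}). However, the write-up conflates two objects: the regularized dyadic extension $\upsilon_f$ and the final extension $u=\sum_{k\geq 0} u_k$ produced by the iteration. Your argument for (ii) treats $u$ as if it were $\upsilon_f$ (a convex combination of boundary averages), but the function you must control is the \emph{sum of approximators} $u_k$ of the successive $\upsilon_{f_k}$; (ii) is obtained by combining \eqref{eq:LpntMfu}, \eqref{ntMFuf}, and the geometric decay $\|f_k\|_{L^p}\lesssim (C\varepsilon)^k\|f\|_{L^p}$, not by a single maximal-function bound.

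Two further corrections. First, the relevant approximability input is Theorem \ref{thm:e-approxLp} (the $L^p$ version), not Theorem \ref{thm:e-approxbmo}; and the $L^p$ iteration is \emph{simpler} than the $\BMO$ one because $\nnp(\om)$ and $\ccpsi(\om)$ are genuine Banach spaces, so no $\nn_{\textup{sum}}$-type semi-normed completeness argument is needed. Second, the iteration in Theorem \ref{thm: extension in Lp} yields only $u\in C^1(\om)$ (the convergence of derivatives is established via Lemma \ref{lem: gradient uniform convergence} at the level of one derivative), and the paper then applies the smoothing kernel of Lemma \ref{lem:modifedFproperties} to upgrade to $C^\infty$ while preserving all the estimates and the boundary trace; this mollification step is missing from your outline and is not ``automatic from the partition-of-unity construction.'' Your density approach to (iv) would also work, but the paper gets $\tr(u)=f$ directly from $\|f_{k+1}\|_{L^p}\to 0$ and the boundedness of the trace on $\nnp(\om)\cap\cc^{1,p}_{s,\infty}(\om)$.
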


\vv
In the proof of Theorem \ref{th: introtheorem2},   we utilize the regularized dyadic extension of the boundary function and establish its $\varepsilon$-approximability in $L^p$ for $p \in (1,\infty)$ (see Subsection \ref{subsec:approx} for the definition). Subsequently,  we demonstrate a trace theorem (see Proposition \ref{lem: non-tangential convergence}) and, finally,  emloy an iteration argument.  This approach follows the general scheme presented in \cite{HR18}, where the $\varepsilon$-approximability in $L^p$ for the dyadic average extension operator in $L^p$ for $p \in (1,\infty)$\footnote{
The concept  of $\ve$-approximability in $L^p$ for $p \in (1,\infty)$ was  introduced  by Hyt\"{o}nen and Ros\'{e}n in \cite{HR18} who 
showed that the dyadic average extension operator as well as  any weak solution to certain elliptic PDEs in $ \R^{n+1}_+ $ are $\ve$-approximable in $L^p$ for every 
$\ve \in (0, 1)$ and $ p\in (1, \infty)$.  The second  part of that work was extended by Hofmann and Tapiola in \cite{HT17} to harmonic functions in $\om = \R^{n+1} \setminus E$ where $E \subset \R^{n+1}$ is a uniformly $n$-rectifiable set.   The converse direction was shown by Bortz and Tapiola in \cite{BT19}. } first appeared.

In fact, this is a key feature of our approach  in both Theorems \ref{th: introtheorem3} and  \ref{th: introtheorem2},   which is  inspired by, but significantly advances Hyt\"{o}nen and Ros\'{e}n's work (even in $\rrn_+$ for the endpoint spaces).  Let us highlight that we encounter significant challenges due to the geometry of our domains. For example, in \cite{HR18}, it is crucially used the separation of variables $(x,t) \in \mathbb{R}^n \times \mathbb{R}_+$ to reduce the case to estimating ${\mathscr C}_n(\partial_t w)$, where $\partial_t w$ stands for the partial derivative in the transversal direction.  In higher co-dimensions, even if $\Omega=\mathbb{R}^3 \setminus \mathbb{R}$, such a reduction does not seem to work, let alone in ``rough" domains. Instead, we resort to multiscale analysis to construct the approximating functions.  An important component is the proof of the packing condition of the top cubes, which was not shown in \cite{HR18} (see Proposition \ref{pro:packing Top}), while  the trace theorem and  the iteration,  also require subtle arguments in our case.

\vv

Our second main goal is to establish Varopoulos extensions of Lipschitz functions with compact supports, which are Lipschitz on $\overline{\Omega}$ and also belong to $\dot{W}^{1,2}(\Omega; \omega_{s})$. This is crucial due to its applications in Boundary Value Problems for elliptic equations (and systems) with merely bounded coefficients (see Theorem \ref{thm:applications}).  
In fact, the second part of the theorem has been employed without explicit verification in connection with Boundary Value Problems (see, e.g., \cite{DaKe} and \cite{MiTa}). To the best of our knowledge, a comprehensive proof of this theorem is not available in the literature. However, it should not be considered folklore since establishing it is far from trivial, at least in our setting, and it has neither been documented nor been known among experts.

\begin{theorem}\label{th: introtheorem4}
Let $ \om \in \AR(s)$ for $s \in (0, n]$.  If $ f\in \Lip_c(\pom) $, there exist a function $ F : \oom \to \R $ and $c_0 \in(0,1/2]$, such that for  any $ c \in (0,c_0]$,  it holds that
\begin{itemize}
\item[(i)] $ F \in  C^\infty(\om)\cap \Lip(\oom) \cap\dot W^{1,2}(\om; \omega_{s}) $, 
\item[(ii)] $ \| \calN(F) \|_{L^p(\sigma_s)}+  \| \calN(\delta_\om  \nabla  {F} )\| _{L^p(\sigma_s)} \lec \| f \|_{L^p(\sigma_s)}$,  for $p\in (1, \infty]$,  
\item[(iii)]  $ \| \calC_{s,c} (\nabla F) \|_{L^p(\sigma_s)}  \lec  \| f \|_{L^p(\sigma_s)}$,
\item[(iv)] $ F|_{\pom} = f $ continuously, 
\item[(v)]   $\| [\calC_s(\delta_\om |\nabla F|^2)]^{1/2}\|_{L^q(\sigma_s)} \lec \|f\|_{L^q(\sigma_s)}$, \quad for \,  $q\in[2, \infty)$.
\end{itemize}
Moreover, there exist a function  $ \bar{F} : \oom \to \R $ and $c_0 \in(0,1/2]$ such that for any $ c \in (0,c_0]$,  it holds that
\begin{itemize}
\item[(i)] $ \bar{F} \in  C^\infty(\om)\cap \Lip(\oom)\cap \dot W^{1,2}(\om; \omega_{s})$, 
\item[(ii)] $ \displaystyle  \sup_{\xi \in \pom} \calN_{\sharp,c}(\bar{F})(\xi)+   \displaystyle  \sup_{x\in \om} \delta_\om(x) | \nabla \bar{F}(x) | \lec 
\| f \|_{\BMO(\sigma_s)} $,   
\item[(iii)]$ \sup_{\xi \in \pom} \calC_{s,c}(\nabla \bar{F})(\xi)  \lec  \| f \|_{\BMO(\sigma_s)}$,
\item[(iv)] $ \bar{F}|_{\pom} = f $ continuously,
\item[(v)] $\displaystyle \sup_{\xi\in\pom}[\calC_s(\delta_\om |\nabla \bar F|^2)(\xi)]^{1/2} \lec \|f\|_{\BMO(\sigma_s)}$.
\end{itemize}
 The constant $c_0$ only depends on dimension, the Ahlfors regularity constants, and the corkscrew condition. 
\end{theorem}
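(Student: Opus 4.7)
The approach is to realize $F$ (resp.\ $\bar F$) as essentially the regularized dyadic extension already used in the proofs of Theorems~\ref{th: introtheorem2} and~\ref{th: introtheorem3}, but to exploit the Lipschitz continuity of $f$ in order to bypass the need for a trace theorem and thereby drop the pointwise John hypothesis in the case $s=n$. Concretely, fix a Whitney decomposition $\{Q\}$ of $\om$ with a smooth partition of unity $\{\varphi_Q\}$ satisfying $|\nabla\varphi_Q| \lec \ell(Q)^{-1}$ and $\sum_Q \varphi_Q \equiv 1$ on $\om$. For each Whitney cube $Q$, select a boundary point $\hat p_Q \in \pom$ with $|\hat p_Q - x_Q| \sim \ell(Q)$, where $x_Q$ is the center of $Q$, and set $c_Q := \fint_{\Delta_Q} f\, d\sigma_s$ for a surface ball $\Delta_Q$ of radius $\sim \ell(Q)$ around $\hat p_Q$. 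Define
\[
F(x) := \sum_Q c_Q\, \varphi_Q(x), \qquad x \in \om.
\]
Smoothness of $F$ on $\om$ is immediate from local finiteness, and because $f$ is compactly supported only finitely many $c_Q$ are nonzero among cubes with $\ell(Q)$ bounded above, so $F$ is compactly supported in $\oom$.

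\textbf{Boundary behaviour.} Using $\sum_Q \nabla\varphi_Q \equiv 0$, for $x \in Q_0$ one can write $\nabla F(x) = \sum_{Q \sim Q_0}(c_Q - c_{Q_0})\nabla\varphi_Q(x)$ with the sum over finitely many neighbors. Since $f$ is Lipschitz and the associated surface balls have radii $\sim \ell(Q_0)$, the bound $|c_Q - c_{Q_0}| \lec \Lip(f)\,\ell(Q_0)$ yields $|\nabla F(x)| \lec \Lip(f)$ uniformly in $\om$. Continuous attainment $F|_{\pom} = f$ follows from uniform continuity of $f$: if $x_k \to \xi \in \pom$ and $x_k \in Q_k$, then $\ell(Q_k) \to 0$ and $\hat p_{Q_k} \to \xi$, so $c_{Q_k} \to f(\xi)$. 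Combining the uniform interior gradient bound with this continuity gives Lipschitz regularity on $\oom$ via the standard triangle inequality. Crucially, this direct continuity argument replaces Proposition~\ref{lem: non-tangential convergence}, which is where Theorems~\ref{th: introtheorem2}--\ref{th: introtheorem3} invoke the pointwise John condition; hence no connectivity hypothesis is needed here.

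\textbf{Quantitative estimates.} The NT and Carleson bounds (ii)--(iii) and (ii$'$)--(iii$'$) are precisely the estimates established in Theorems~\ref{th: introtheorem2} and~\ref{th: introtheorem3} for this regularized dyadic extension. The proofs there split into a purely quantitative part (valid without connectivity) and a trace-identification part (requiring connectivity), and only the quantitative part is used here. For the Carleson-squared estimates (v) and (v$'$), I would start from the pointwise factorization
\[
\delta_\om(x)\,|\nabla F(x)|^2 \;=\; \bigl(\delta_\om(x)\,|\nabla F(x)|\bigr) \cdot |\nabla F(x)|
\]
and control the first factor — in the BMO case uniformly by $\|f\|_{\BMO(\sigma_s)}$ via (ii$'$), and in the $L^p$ case by $\calN(\delta_\om \nabla F)$. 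Integrating over the Carleson region $T(B)$ and taking the supremum produces the pointwise inequality
\[
\calC_s\bigl(\delta_\om|\nabla F|^2\bigr)(\xi) \;\lec\; \calN(\delta_\om \nabla F)(\xi)\cdot \calC_s(\nabla F)(\xi),
\]
from which (v) follows by Cauchy--Schwarz in $L^q$ combined with (ii) and (iii), and (v$'$) follows in $L^\infty$ from (ii$'$) and (iii$'$). Membership in $\dot W^{1,2}(\om;\omega_s)$ then follows by Fubini:
\[
\int_\om |\nabla F|^2\, \delta_\om^{s-n}\,dx \;\lec\; \int_{\pom} \calC_s(\delta_\om|\nabla F|^2)\,d\sigma_s \;\lec\; \|f\|_{L^2(\sigma_s)}^2 < \infty,
\]
where finiteness uses that $f$ is bounded with compact support.

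\textbf{Main obstacle.} The most delicate step is establishing (v) and (v$'$) with the correct definition of the modified Carleson functional; in particular, verifying the pointwise interaction of the NT maximal bound with the Carleson bound on $\nabla F$ requires care about how the Carleson box $T(B)$ intersects the non-tangential cones, and about the correct aperture relationships relating the parameter $c$ in $\calC_{s,c}$ to that of $\calN$. A secondary technical point is the clean separation of the two ingredients in the proofs of Theorems~\ref{th: introtheorem2} and~\ref{th: introtheorem3}, so as to safely remove the pointwise John hypothesis when the boundary datum is continuous — a separation which should be straightforward but must be spelled out explicitly in order to make the drop of the connectivity assumption rigorous.
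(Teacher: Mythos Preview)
Your proposal has a genuine gap: the regularized dyadic extension $F=\upsilon_f$ does \emph{not} satisfy the Carleson estimate (iii). From Lemma~\ref{lem: gradufestimates} one only has $|\nabla\upsilon_f(x)|\lec \ell(P)^{-1} m_{\sigma,B_P}(|f|)$ on each Whitney cube $P$, so
\[
\int_{B(\xi,r)\cap\om}|\nabla\upsilon_f|\,\omega_s\,dx \;\lec\; \sum_{k\ge 0}\;\sum_{\substack{Q\in\DD_{\sigma,k}\\ Q\subset B(\xi,Mr)}} \sigma(Q)\,m_{\sigma,Q}(|f|),
\]
and the sum over scales $k$ diverges. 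This is precisely why the paper develops the $\varepsilon$-approximation machinery: the approximator $u$ in \eqref{eq:approximatingdefinition} replaces $m_{\sigma,b(P)}f$ by $m_{\sigma,R}f$ where $R$ is the top cube of the tree containing $b(P)$, so that $\nabla u$ vanishes except on cubes whose boundary cubes lie in a Carleson family (Proposition~\ref{pro:packing Top}). Your assertion that the quantitative parts of Theorems~\ref{th: introtheorem2} and~\ref{th: introtheorem3} are proved for $\upsilon_f$ itself misreads those proofs; the estimates there hold for the iterated $\varepsilon$-approximator, not for the dyadic extension.

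The paper's actual construction is the hybrid $F(x)=\upsilon_f(x)(1-\varphi_\delta(x))+u(x)\,\varphi_\delta(x)$, where $u$ is the $\varepsilon$-approximator of $\upsilon_f$ and $\varphi_\delta$ is a smooth cutoff supported on $\{\delta_\om\gtrsim\delta\}$. Far from the boundary the Carleson control comes from $u$; in the strip $\{\delta_\om\lec\delta\}$ the contribution of $|\nabla\upsilon_f|$ is handled using the Haj\l asz gradient bound \eqref{gradufW} (or $\Lip f$), which gives $\calC_s(|\nabla\upsilon_f|(1-\varphi_\delta))\lec \delta\,\mathcal{M}(\mathcal{M}(\nabla_H f))$. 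Choosing $\delta=\|f\|_{L^p}/\|f\|_{\dot M^{1,p}}$ (resp.\ $\|f\|_{\BMO}/\Lip f$) balances the two pieces. Your insight that Lipschitz continuity of $f$ makes the trace trivial and removes the need for connectivity is correct, and indeed the paper exploits this; but it is the \emph{combination} of the approximator with the dyadic extension, plus the specific choice of $\delta$, that yields (iii). A secondary issue: your pointwise inequality $\calC_s(\delta_\om|\nabla F|^2)(\xi)\lec \calN(\delta_\om\nabla F)(\xi)\cdot\calC_s(\nabla F)(\xi)$ is false, since points in the Carleson box $B(\xi,r)\cap\om$ need not lie in the cone $\gamma_\alpha(\xi)$; the paper instead proves (v) and (v$'$) directly from the packing condition (see \eqref{eq:pwCarlu-bis}, \eqref{eq: pwCarluBMO}, and \eqref{eq:LpCarlu-bis}).
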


\vv
To prove Theorem \ref{th: introtheorem4}, considering $f \in \Lip_c(\pom)$, we apply Theorems \ref{thm:e-approxbmo} and \ref{thm:e-approxLp} to generate approximating functions for the regularized dyadic extension of $f$. We define the extension to be equal to the approximation everywhere except in a neighborhood of the boundary with a ``width" $\delta>0$, where it is set to equal the regularized dyadic extension. Subsequently, we choose $\delta$ as  $\|f\|_{L^p(\sigma_s)}/\|f\|_{\dot M^{1,p}(\sigma_s)}$ in the case of $p\in (1,\infty)$ (resp. $\|f\|_{L^\infty(\sigma_s)}/\Lip f$ for $p=\infty$) and $\|f\|_{\BMO(\sigma_s)}/\Lip f$ in the case of $\BMO(\sigma_s)$,  achieving the desired estimates.

Notably, we don't construct an extension {\it a priori} and subsequently modify it to obtain the Lipschitz extension; instead, we directly modify the $\ve$-approximator of $\upsilon_f$. This is why we don't need to impose any connectivity condition, as in Theorems \ref{th: introtheorem3} and \ref{th: introtheorem2}. Instead, the existence of the ``trace" is readily ensured by the continuity of $\upsilon_f \in \Lip(\oom)$.

\vv

The theorem presented below is a variation of Theorem \ref{th: introtheorem4} for boundary functions belonging to the Campanato space $\Lambda_\beta(\pom)$ for $\beta \in (0,1)$, as well as the space of H\"older continuous functions $\Lip_\beta(\pom)$. In our setting,  any function in $\Lambda_\beta(\pom)$ coincides $\sigma_s$-almost everywhere with a H\"older continuous function, and the two semi-norms are comparable (refer to Remark \ref{rem:campanato}).

\begin{theorem}\label{th: introtheoremholder}
Let $ \om \in \AR(s)$ for $s \in (0, n]$.  If $ f\in \Lambda_\beta(\pom)$ for $\beta \in (0,1)$, there exists a function $ F : \oom \to \R $ and $c_0 \in(0,1/2]$, such that for  any $ c \in (0,c_0]$,  it holds that
\begin{itemize}
\item[(i)] $ F \in  C^\infty(\om)$,  
\item[(ii)] $ \displaystyle  \sup_{\xi \in \pom} \calN^{(\beta)}_{\sharp,c}({F})(\xi)+    \sup_{x\in \om} \delta_\om(x)^{1-\beta} | \nabla {F}(x) |   \lec 
\| f \|_{\Lambda_\beta(\pom)} $,   
\item[(iii)]$ \displaystyle \sup_{\xi \in \pom} \calC^{(\beta)}_{s,c}(\nabla {F})(\xi) \lec  \| f \|_{\Lambda_\beta(\pom)}$,
\item[(iv)] $\ntlim_{x \to \xi} {F}|_{\pom}(x) = f(\xi)$ for $\sigma_s$-a.e $\xi \in \pom$.
\end{itemize}
Moreover, if $f \in \Lip_\beta(\pom)$, then $F \in \Lip_\beta(\oom)$ and $F|_{\pom} = f $ continuously.   The constant $c_0$ only depends on dimension, the Ahlfors regularity constants, and the corkscrew condition. 

\end{theorem}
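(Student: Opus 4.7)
The plan is to follow the direct approach of Theorem \ref{th: introtheorem3}, but observe that working in $\Lambda_\beta(\pom)$ with $\beta \in (0,1)$ is substantially \emph{easier} than in $\BMO(\sigma_s)$: the improved oscillation control inherent to the Campanato norm means that the regularized dyadic extension $\upsilon_f$ already enjoys pointwise H\"older regularity, so one does not need to iterate an $\ve$-approximation argument. In particular, by Remark \ref{rem:campanato} the given representative of $f$ may be taken to be H\"older continuous with $\|f\|_{\Lip_\beta(\pom)}\lec\|f\|_{\Lambda_\beta(\pom)}$, and the entire construction will be carried out on this representative.

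First, I would let $F := \upsilon_f$ be the regularized dyadic extension of $f$ built earlier in the paper, using the same Whitney/partition-of-unity machinery as in the proofs of Theorems \ref{th: introtheorem3} and \ref{th: introtheorem2}. The central pointwise estimate to establish is
\begin{equation*}
|\nabla F(x)| \;\lec\; \delta_\om(x)^{\beta-1}\,\|f\|_{\Lambda_\beta(\pom)}, \qquad x\in\om,
\end{equation*}
which is the natural $\beta$-analogue of the bound $|\nabla\upsilon_f(x)|\lec\delta_\om(x)^{-1}\|f\|_{\BMO}$. This follows directly from the Campanato oscillation inequality on boundary balls of radius comparable to $\delta_\om(x)$, once one uses (as in the $\BMO$ case) that $\upsilon_f$ is a smooth, locally-finite convex combination of averages of $f$ over dyadic boundary cubes, so that the gradient falls on the cutoffs and produces one factor of $\delta_\om(x)^{-1}$, while the difference of averages at the scale of $\delta_\om(x)$ contributes $\delta_\om(x)^\beta\|f\|_{\Lambda_\beta(\pom)}$. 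Item (i) is then immediate and item (ii) follows, since the $\sup_x \delta_\om(x)^{1-\beta}|\nabla F(x)|$ bound is the pointwise estimate itself, and the $\calN^{(\beta)}_{\sharp,c}$ control is obtained from the Campanato-type mean oscillation of $F$ on interior balls $B(x,c\delta_\om(x))$, which, via the gradient bound, is controlled by $\delta_\om(x)^\beta\|f\|_{\Lambda_\beta(\pom)}$ as required by the $\beta$-weighted sharp maximal function.

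Next, for the $\beta$-adjusted Carleson estimate (iii), I would insert the pointwise gradient bound into the definition of $\calC^{(\beta)}_{s,c}$: for a boundary ball $B(\xi,r)$,
\begin{equation*}
\iint_{B(\xi,r)\cap\om}\delta_\om(x)^{s-n}|\nabla F(x)|\,dx \;\lec\; \|f\|_{\Lambda_\beta(\pom)}\iint_{B(\xi,r)\cap\om}\delta_\om(x)^{s-n+\beta-1}\,dx,
\end{equation*}
and the latter integral is bounded by $C r^{s+\beta}$ by a standard Whitney decomposition using the $s$-Ahlfors regularity of $\pom$ and the fact that $\beta>0$ makes the exponent summable. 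Normalization against $r^{s+\beta}$ (the correct scaling for the $\beta$-weighted Carleson norm) produces the desired bound. For the non-tangential convergence in (iv), the H\"older pointwise regularity of $\upsilon_f$ on $\oom$ (which follows by telescoping averages, exactly as in the proof that $\upsilon_f\to f$ in Theorem \ref{th: introtheorem3}) ensures that $F$ admits a continuous extension to $\pom$ coinciding with $f$ at every Lebesgue point of $f$ with respect to $\sigma_s$, and hence $\sigma_s$-almost everywhere on $\pom$.

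The H\"older endpoint ($f\in\Lip_\beta(\pom)$) comes for free at this stage: the same telescoping argument, now with $f$ H\"older everywhere rather than just $\sigma_s$-a.e., gives $F\in\Lip_\beta(\oom)$ with continuous boundary values. The step I expect to require the most care is not a single estimate but the bookkeeping for the $\beta$-weighted sharp non-tangential maximal function: one must verify that the Campanato-type oscillation of $F$ on a Whitney-scale ball $B(x,c\delta_\om(x))$, divided by the proper power of $\delta_\om(x)$, is indeed comparable to (not merely dominated by) a quantity controlled by $\|f\|_{\Lambda_\beta(\pom)}$, and that the constant $c_0$ can be chosen so that every such ball lies inside $\om$ — both points carry over from the $\BMO$ analysis in Theorem \ref{th: introtheorem3}, but require adjusting the exponents throughout. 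No connectivity hypothesis on $\om$ beyond Ahlfors regularity is needed here, precisely because continuity of $\upsilon_f$ replaces the trace theorem used in Theorems \ref{th: introtheorem3} and \ref{th: introtheorem2}.
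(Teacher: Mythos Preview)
Your proposal is correct and matches the paper's approach exactly: the paper simply takes $F=\upsilon_f$ and invokes Lemmas~\ref{lemma:ntconvergenceofdyadic}, \ref{lem: gradufestimates}, \ref{lem: sharpntmaxfunctionuf bmo}, \ref{lem:carleson-upsilon}, and~\ref{lem: lemma1} (together with Remark~\ref{rem:campanato}), which correspond precisely to your gradient bound, Carleson estimate via Whitney summation, sharp maximal control, non-tangential convergence, and H\"older extension to $\overline\Omega$. Your observation that no $\varepsilon$-approximation or iteration is needed because $\beta>0$ makes the Whitney sum convergent is exactly the paper's remark that this case is substantially easier than the $\BMO$ and $L^p$ endpoints.
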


Remarkably, this theorem marks the first appearance of such a result, despite $\Lambda_\beta(\pom)$ being a natural endpoint in the interpolation scale that contains $L^p(\sigma_s)$ and $\BMO(\sigma_s)$. Its proof is a lot easier compared to those for $L^p(\sigma_s)$ and $\BMO(\sigma_s)$ boundary functions, as the regularized version of the dyadic extension of the boundary data already satisfies the desired properties, eliminating the need for $\ve$-approximability.

\vv

\begin{remark}\label{rem:vector-valued}
The proof of the existence of extensions of complex-valued boundary functions is exactly the same but for the sake of simplicity we prefer to state and prove our results for real-valued boundary functions.  Moreover, if $\vec f:\pom \to \mathbb{C}^{m}$ with $ \vec f = (f_1, \dots, f_m)$, then its extension is just the vector field $\vec F = (F_1, \dots, F_m)$, where $F_j$ is the extension of $f_j$ for each $j \in \{1,2, \dots, m\}$.
\end{remark}

\vv

Lastly, we use Theorem \ref{th: introtheorem4} to obtain connections between Poisson and Boundary Value Problems (see Definitions \ref{def:DP}, \ref{def:PD}, and \ref{def:PR}) for systems of elliptic equations in divergence form with merely bounded complex-valued coefficients. The estimates derived for solutions of elliptic boundary value problems can be perceived as far-reaching extensions of the aforementioned alternative approach to Fefferman’s duality theorem \cite{FS}.  In particular, we will prove the following.

\begin{theorem}\label{thm:applications}
Let $\om \in \AR(n)$ and $L$ be defined in \eqref{eq:inhomogeneous-equation}. If $L^*$ is its formal adjoint,  then the following hold:
\begin{enumerate}
\item If $(\pr)$ is solvable in $\om$ for some $p>1$,  then  $(\dirprime)$  is also solvable, where $1/p+1/p'=1$.
\item If  $(\wt{\pre}^L_q)$ with $H=0$ is solvable in $\om$ for $q \in[1,2]$, then  both $({\wt{\mathrm{PD}}}^{L^*}_{q'})$  with $H=0$ and  $({\wt{\mathrm{D}}}^{L^*}_{q'})$  are solvable in $\om$.
\item If $(\pd^L_p)$  for  $p\in (1,\infty)$    is solvable in $\om$ with $H=0$,  then the Dirichlet problem $(\mathrm{D}^L_p)$ is also solvable in $\om$.  
\item If $(\wt{\pd}^L_{q})$ is solvable in $\om$ with $H=0$ for some $q\in [2,\infty]$,  then $(\wt{\mathrm{D}}^L_q)$ is also solvable in $\om$.  
\end{enumerate}
\end{theorem}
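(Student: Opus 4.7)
The plan is to treat all four parts by a common template: use the Varopoulos extension constructed in Theorem \ref{th: introtheorem4} to convert a boundary datum $f$ into a smooth function $F$ on $\oom$ attaining $f$ on the boundary with quantitatively controlled gradient, and then invoke the hypothesized Poisson-type solvability either directly (for parts (3) and (4)) or through a Green's-formula duality (for parts (1) and (2)).

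\medskip
First I would treat parts (3) and (4). Given $f$ in the appropriate data space, I would let $F$ be its Varopoulos extension from Theorem \ref{th: introtheorem4} and observe that the vector field $\vec H:=-A\nabla F$ (with $A$ the coefficient matrix of $L$) satisfies the Carleson/tent-space bound
\[
\bigl\|[\calC_n(\delta_\om|\nabla F|^2)]^{1/2}\bigr\|_{L^p(\sigma_n)}\lesssim \|f\|_{L^p(\sigma_n)}
\]
from Theorem \ref{th: introtheorem4}(v) (and its BMO analogue). Applying the hypothesized $(\pd^L_p)$- or $(\wt{\pd}^L_q)$-solvability to the problem with zero boundary data and interior source $\vec H$ yields a correction $w$ satisfying $Lw=LF$, vanishing non-tangentially on $\pom$, with the appropriate $\calN$-bound. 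Then $u:=F-w$ solves $Lu=0$, attains $f$ non-tangentially on $\pom$, and inherits the Dirichlet estimate $\|\calN u\|_{L^p(\sigma_n)}\lesssim\|f\|_{L^p(\sigma_n)}$, which is the $(\mathrm{D}^L_p)$- (resp.\ $(\wt{\mathrm{D}}^L_q)$-) conclusion.

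\medskip
For parts (1) and (2), I would set up a duality. Given $g$ in the dual data space, for each $\varphi\in\Lip_c(\pom)$ I would let $\Phi$ be its Lipschitz Varopoulos extension from Theorem \ref{th: introtheorem4} and use the hypothesized regularity solvability to correct $\Phi$ to a solution $w_\varphi$ of $Lw_\varphi=0$ with boundary trace $\varphi$ and $\|\calN(\nabla w_\varphi)\|_{L^p(\sigma_n)}\lesssim \|\varphi\|_{L^p(\sigma_n)}$. Then I would define the candidate solution $v$ to $(\dirprime)$ implicitly through Green's identity paired against $w_\varphi$, and use the $L^p$--$L^{p'}$ duality to translate the regularity estimate into the Dirichlet estimate $\|\calN v\|_{L^{p'}(\sigma_n)}\lesssim \|g\|_{L^{p'}(\sigma_n)}$. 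Part (2) is the tent-space analogue, using $(\wt{\pre}^L_q)$ in place of $(\pr)$ together with the Carleson bound (v) of Theorem \ref{th: introtheorem4} to obtain both $(\wt{\mathrm{PD}}^{L^*}_{q'})$ with $H=0$ and $(\wt{\mathrm{D}}^{L^*}_{q'})$.

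\medskip
The hard part will be the rigorous justification of Green's identity in our rough setting, where the conormal derivatives $\partial_\nu^L$ and $\partial_\nu^{L^*}$ do not exist classically and the boundary pairings must be reinterpreted through tent-space / Carleson dualities together with the non-tangential trace result in Proposition \ref{lem: non-tangential convergence}. The key that makes the bilinear-form manipulations legitimate is the inclusion $F,\bar F\in C^\infty(\om)\cap\Lip(\oom)\cap\dot W^{1,2}(\om;\hm_n)$ in Theorem \ref{th: introtheorem4}(i), which ensures that $A\nabla F$ is an admissible interior source. A secondary technicality is the density passage from Lipschitz compactly supported boundary data to general $L^p(\sigma_n)$ or $\BMO(\sigma_n)$ data, which I would handle by a weak-$*$ compactness argument based on the uniform estimates in Theorem \ref{th: introtheorem4}.
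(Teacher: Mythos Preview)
Your treatment of parts (3) and (4) is essentially the paper's argument (Theorem~\ref{th: PD implies D}): write $u=F+w$ with $F$ the Varopoulos extension from Theorem~\ref{th: introtheorem4} and $w\in Y^{1,2}_0(\om)$ the $(\pd^L_p)$-- or $(\wt\pd^L_q)$--solution with ${\bf\Xi}=-A\nabla F$, and combine the Carleson/tent bounds on $\nabla F$ with the Poisson estimate.

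Parts (1) and (2), however, have a genuine gap. You propose to ``correct $\Phi$ to a solution $w_\varphi$ of $Lw_\varphi=0$'' using $(\pr)$. To do so you would have to apply the $(\pr)$-estimate \eqref{eq: poisson regularity} with ${\bf\Xi}=-A\nabla\Phi$, which requires control of $\|\calC_2(|\nabla\Phi|/\delta_\om)\|_{L^p(\sigma)}$. Theorem~\ref{th: introtheorem4} provides $\|\calC_s(\nabla F)\|_{L^p}$, $\|\calN(\delta_\om\nabla F)\|_{L^p}$, and $\|[\calC_s(\delta_\om|\nabla F|^2)]^{1/2}\|_{L^q}$; none of these controls $|\nabla\Phi|/\delta_\om$ (one negative power of $\delta_\om$ too many). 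Relatedly, the estimate you write, $\|\calN(\nabla w_\varphi)\|_{L^p}\lesssim\|\varphi\|_{L^p}$, is dimensionally inconsistent and cannot hold.

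The paper's route is different. It first uses the Varopoulos extension $F$ of $\varphi$ as a \emph{test function} in the variational definition of the conormal derivative (Lemma~\ref{lem:ell_v}): if $u$ solves $(\pr)$, then
\[
|\langle\partial_{\nu_A}u,\varphi\rangle|
\;\le\;\|A\|_\infty\!\int_\om|\nabla u||\nabla F|+\int_\om|H||F|+\int_\om|{\bf\Xi}||\nabla F|,
\]
and each term is handled by the $\nn$--$\cc$ duality together with \emph{only} the bounds (ii)--(iii) of Theorem~\ref{th: introtheorem4}. This yields the Rellich inequality of Proposition~\ref{prop: PR implies Rellich}. Then, for $(\dirprime)$, one starts from the variational solution $u$ of $L^*u=0$ with data $f$, writes $\|\wt\calN_{2^*}(u)\|_{L^{p'}}\approx\sup_H|\int_\om uH|$ over $H\in L^\infty_c$ with $\|H\|_{\cc_{2_*,p}}=1$, and for each such $H$ solves the \emph{Poisson} problem $Lw=H$, $w\in Y^{1,2}_0$, so that $\int_\om uH=\int_{\pom}\partial_{\nu_A}w\cdot f$ (here $L^*u=0$ kills the bulk term). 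The Rellich bound then closes the estimate. Part~(2) follows the same pattern with tent-space duality and $(\wt\pre^L_q)$-solutions in place of $(\pr)$-solutions. In short, the Varopoulos extension enters as a test function to control $\partial_{\nu_A}$, not as an approximate solution to be corrected.
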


Recently,  the first named author and Tolsa \cite{MT} constructed  an {\it almost harmonic extension} of functions in the  Haj\l asz Sobolev space $\dot M^{1,p}(\sigma_n)$,  which is  the correct analogue of the $L^p$ version of Varopoulos extension for one ``smoothness level'' up.  To be precise,  it was proved in \cite{MT}  that the Carleson functional defined in \eqref{eq:carleson} of the distributional Laplacian of the almost harmonic extension is in $L^p(\sigma_n)$ and  in \cite{mpt22} that the non-tangential maximal function of its gradient is in $L^p(\sigma_n)$  with norms controlled by the $\dot M^{1,p}(\sigma_n)$ semi-norm of the boundary function. The  almost harmonic extension and  its elliptic analogue (see \cite{mpt22}) were very important since they turned out to be the main  ingredients for the solution of the $L^p$-Regularity problem in domains with interior big pieces of chord-arc domains (\cite[Definition 2.12, p. 892]{AHMMT}) for the Laplace operator \cite{MT} and  for elliptic operators satisfying the  Dahlberg-Kenig-Pipher (DKP) condition  \cite{mpt22} respectively.  This solved a 30 year-old question of Kenig.

The Poisson Dirichlet problem $(\pd^L_p)$ (resp.  the Poisson regularity problem $(\pr)$)  with interior data in suitable Carleson spaces for $p>1$ with scale-invariant estimates for the non-tangential maximal function of the (resp.  gradient of the) solution (see Definitions  \ref{def:PD} and \ref{def:PR}),  was first defined in \cite{mpt22} in order to overcome the obstacle that it is not known if  elliptic  operators satisfying the DKP condition have bounded layer potentials.  In particular,  the authors show that, for such operators, $(\pd^{L^*}_{p'})\Rightarrow (\mathrm{R}^L_p)$ for any $p>1$.  To do so, they use the  almost elliptic extension.  One of their results states  the following equivalences for elliptic equations (not systems) with merely bounded coefficients:  
$$
(\mathrm{D}^{L^*}_{p'}) \Leftrightarrow (\pd^{L^*}_{p'}) \Leftrightarrow (\pr)\qquad \textup{if}\,\,p\in(1,\infty).
$$

The equivalence $(\pd^{L^*}_{p'}) \Leftrightarrow (\pr)$ holds for systems as well,  as the proof in \cite{mpt22} does not utilize tools like the maximum principle or the elliptic measure.  However, the remaining results rely significantly on the connection between the weak-$A_{\infty}$ condition of the elliptic measure and the solvability of $(\mathrm{D}^{L^*}_{p'})$, and thus, they only hold for real equations. Inspired by the use of the almost elliptic extension in \cite{mpt22}, we employ the Varopoulos extension constructed in Theorem \ref{th: introtheorem4} to extend some of these results to elliptic systems. We also obtain endpoint results, which are new even for real equations.

Moreover, we introduce the notions of solvability for the Dirichlet and Poisson problems with square function estimates and for the Poisson regularity problem $(\wt{\mathrm{PR}}^{L}_q)$ with data in Coifman-Meyer-Stein tent spaces and establish connections between these problems.  In the recent work of Gallegos, Tolsa, and the first named author \cite{GaMT23},  the authors,  partially inspired by the introduction of $(\wt{\mathrm{PR}}^{L}_q)$ in the present manuscript,  show extrapolation of solvability of the regularity problem $(\mathrm{R}^L_p)$ (see Definition \ref{def:RP}) and the (modified) Poisson regularity problem $(\wt{\mathrm{PR}}^{L}_q)$ in $\AR(n)$ domains for elliptic operators with real and merely bounded coefficients.  Theorem \ref{thm:applications} played a crucial role in the proof of the extrapolation of $(\wt{\mathrm{PR}}^{L}_q)$ as it was used to demonstrate that  $(\wt{\mathrm{PR}}^{L}_q) \Rightarrow \hm_L \in \textup{weak}$-$A_\infty$.  For a detailed history of work in this area, we refer to the introduction of \cite{mpt22}.

It is worth noting that Theorem \ref{th: introtheorem4} plays a crucial role in a forthcoming work by Poggi and the first named author \cite{MP24}. In that work,  it is established that, for  real equations,  not only $(\mathrm{D}^{L}_{p}) \Leftrightarrow (\pd^{L}_{p})$, but the unique solution of the continuous Dirichlet problem with boundary data $g$  is well approximated by a sequence of solutions of $(\pd^{L}_{p})$ with $H=0$ in the following sense:  If $g\in \Lip_c(\pom)$ and $u$ is the unique solution of $(\mathrm{D}^{L}_{p})$, then there exists a sequence $\{\vec F_j\}_{j \geq 1}\subset \Lip_c(\om; \rrn)$ with the following properties: (i) $\sup_{j \geq 1}\| \calC_{q}(|\vec F_j|)\|_{L^{p}(\sigma)} \lec \|g\|_{L^{p}(\sigma)}$; (ii) If $w_j \in Y^{1,2}_0(\om)$ is the unique solution of $Lw_j =-\dv \vec F_j$, it satisfies $w_j \to u$ in the following topologies: a) in the strong topology of $C_{\loc}^{\alpha}(\om)$ for some $\alpha \in (0,1)$; b) in the strong topology of $W_{\loc}^{1,2}(\om)$; c) in the weak-* topology of $\nn_{r, p}(\om)$, for each $r, p \in (1,\infty)$. Moreover,  the  weak-* convergence cannot be improved to convergence in the weak-topology of $\nn_{r, p}(\om)$ for any $r, p \in (1,\infty)$ unless $g=0$.

\vvv

\subsection{Related results}
While writing this paper, we were informed by Bruno Poggi and Xavier Tolsa that in collaboration with Simon Bortz and Olli Tapiola, they have independently obtained in \cite{BOPT23},  as a corollary of their main result studying $\varepsilon$-approximability of solutions to arbitrary elliptic partial differential equations, a less general version of Theorem \ref{th: introtheorem3} which holds for uniform domains with $n$-Ahlfors regular boundaries such that there is an elliptic measure which is $A_{\infty}$ with respect to surface measure. Their assumptions hold, in particular, for the complement of the $4$-corner Cantor set in $\mathbb R^2$, thus they also show that uniform rectifiability is not a necessary condition in order to construct Varopoulos-type extensions.


\vvv

\section{Preliminaries and notation}\label{sec:preliminaries}

We will write $a \lesssim b$ if there is a constant $C>0$ so that $a\leq Cb$ and $a\approx b$ if $\alpha\lesssim b$ and $b\lesssim a$. If we want to indicate the dependence of $C$ on a certain quantity $s$, we write $a \lesssim_s b$.
For a function space $X$ we denote by $X_c$ the space of all the compactly supported functions in $X$.

\subsection{Preliminaries}
In $\rrn$ and for $s \in [0, n+1]$,  we  denote by $\HH^s$ the $s$-dimensional Hausdorff measure and assume that $\HH^{n+1}$ is normalized so that it coincides with $\mathcal{L}^{n+1}$,  the $(n+1)$-dimensional Lebesgue measure in $\rrn$.  We also denote by $\sigma_s:= \HH^s|_{\pom}$ the ``surface" measure of $\om$. When the dimension is clear from the context we drop the dependence on $s$ and just write $\sigma$.

\begin{definition}\label{def:sregular}
If $s \in (0, n+1]$,  a measure $\mu$ in $\mathbb R^{n+1}$ is called {\it $s$-Ahlfors regular} if there exists some constant $C_0>0$ such that 
$$
C_0^{-1} \,r^s\leq\mu(B(x,r))\leq C_0 \,r^s
$$
for all $x\in \supp\mu$ and $0<r<\diam(\supp \mu)$.  If $E \subset \rrn$ is a closed set we say that $E$ is {\it $s$-Ahlfors regular} if $\HH^s|_E$ is $s$-Ahlfors regular.
\end{definition}

\vv

\subsection{Function spaces}

We write $2^*=\frac{2(n+1)}{n-1}$ and $2_\ast=(2^*)'=\frac{2(n+1)}{n+3}$.  Recall that $C^\infty_c(\om)$ is the space of compactly supported smooth functions in $\om$.  For $p\in[1, \infty)$ and a non-negative function $w \in L^1_{\loc}(\om)$ we define the { homogeneous weighted  Sobolev space} $\dot{W}^{1,p}(\om;w)$ to be the space consisted of  $L^1_{\loc}(\om)$ functions whose weak gradients exist  in $\om$ and are in $L^p(\om;w)$.  We also define the {inhomogeneous weighted  Sobolev space} $W^{1, p}(\om;w)$ to be the space of functions in $L^p(\om;w)$ whose weak derivatives exist in $\om$ and are also in $L^p(\om;w)$ and $W^{1, p}_0(\om;w)$ to be the completion of $C^\infty_c(\om)$ under the norm $\|u\|_{W^{1, p}(\om;w)}:=\|u\|_{L^p(\om;w)}+\|\nabla u\|_{L^p(\om;w)}$.
Finally,  we let $Y^{1, 2}_0(\om;w)$ be the completion of $C^\infty_c(\om)$ under the norm $\|u\|_{Y^{1, 2}(\om;w)}:=\|u\|_{L^{2^*}(\om;w)} + \|\nabla u\|_{L^2(\om;w)}$. 

\vv

Let $\Sigma$ be a metric space equipped with a non-atomic doubling measure 
$ \sigma $ on $ \Sigma $, which means that there is a uniform constant $C_\sigma \geq 1$ such that 
$\sigma( B(x, 2r)) \leq C_\sigma \sigma( B(x, r))$, for all $x\in \Sigma$ and $r>0$.    If $E \subset \Sigma$ is a Borel set such that $0<\sigma(E)<\infty$ and $f\in L_{\loc}^1(\sigma)$,  we denote the average of $f$ over $E$ by
\begin{equation}\label{eq:average}
m_{\sigma,  E} f :=\fint_E f\, d\sigma :=\frac{1}{\sigma(E)}\int_E f d\sigma.
\end{equation}
If $\sigma$ is the Lebesgue measure then we simply write $m_{E} f$. 

\vv

For $\beta \in [0,1)$  we define   $\Lambda_\beta (\pom)$ to be  the {\it Campanato space}   consisting of the functions $f \in L^1_{\loc}(\sigma)$ satisfying
\begin{equation}\label{eq:Campanato}
\| f \|_{\Lambda_\beta(\pom)}:=  \sup_{\substack{x \in \supp \sigma \\ r \in (0,  2\diam \pom)}}\frac{1}{r^\beta} \fint_{B(x,r)} |f(y) - m_{\sigma, B(x,r)} f | d\sigma(y) < \infty.
\end{equation}
Note that $\Lambda_0(\sigma) =\BMO(\sigma)$,  the space of functions of {\it bounded mean $\sigma$-oscillation}.
We also define the space of functions of {\it vanishing mean oscillation}\footnote{$\VMO$ was originally introduced by Sarason in \cite{Sar75}.}, which we denote  by $\VMO(\sigma)$,  to be the closure of the space of continuous functions with compact support $C_c(\Sigma)$  in the $\BMO(\sigma)$ norm.  

\vv
We say that $\alpha$ is a {\it $2$-atom} if there exists $x\in \Sigma$ and $0<r<\diam(\Sigma)$ such that
$$
\supp \alpha \subset B(x,r), \quad \|\alpha\|_{L^2(\sigma)} \lec \sigma(B(x,r))^{-1/2} \quad \textup{and}\,\,\,\int \alpha\,d\sigma=0.
$$
We define the {\it atomic Hardy space} $H^1(\sigma)$ as follows: $f \in H^1(\sigma)$ if there exist a sequence $\lambda_j \in \mathbb C$ and a sequence of $2$-atoms $\alpha_j$ such that $f=\sum_j \lambda_j \alpha_j$ in $L^1(\sigma)$.  We say that $f$ has an atomic decomposition.  $H^1(\sigma)$ is a  subspace of $L^1(\sigma)$ and is a Banach space with norm 
$$
\|f\|_{H^1(\sigma)}:=\inf\Big\{ \sum_j |\lambda_j|: \textup{all atomic decompositions} \,\,f=\sum_j \lambda_j \alpha_j\Big\}.
$$
By the work of Coifmann and Weiss, \cite{CW77}, we have that $(H^1(\sigma))^*=\BMO(\sigma)$ and $(\VMO(\sigma))^*=H^1(\sigma)$.

\vv
For $\beta \in (0,1]$  we  define  $\Lip_\beta(\Sigma)$ to be the space of measurable functions that satisfy
\begin{equation}\label{eq:Lipbeta}
\| f\|_{\Lip_\beta(\Sigma)}:=\sup_{\substack{x,y \in \Sigma \\ x\neq y}} \frac{|f(x)-f(y)|}{|x-y|^\beta}<\infty.
\end{equation}
When $\beta=1$  we simply write $\Lip(\Sigma)$ since it is the space of Lipschitz functions.  If $\Sigma$ is locally compact then  it holds that $\Lip_c(\Sigma)$ is dense in $C_c(\Sigma)$ in the supremum norm.  Therefore, it is easy to see that in that case
$$
\overline{{\Lip}_c(\Sigma)}^{\BMO(\sigma)}=\VMO(\sigma).
$$

\begin{remark}\label{rem:campanato}
By a simple inspection of the proof  of \cite[Theorem 4]{MS79},  it is easy to see that if $\Sigma$ is a metric space equipped with a measure $\sigma$ which is $s$-Ahlfors regular, then, if $\beta \in (0,1)$, for every $f \in \Lambda_\beta(\Sigma)$ there exists $g \in \Lip_\beta(\Sigma)$ such that $f(x)=g(x)$ for  $\sigma$-a.e. $x\in\Sigma$ and $\|f\|_{ \Lambda_\beta(\sigma)} \approx \|f\|_{\Lip_\beta(\Sigma)}$.
\end{remark}
\vv

Following \cite{Ha},  we will introduce the {\it Haj\l asz's Sobolev space} on $\Sigma$.  For a Borel function $f:\Sigma\to\R$, we say that a non-negative Borel function $g:\Sigma \to \R$ is a {\it Haj\l asz upper gradient of  $f$} if  
\begin{equation}\label{eq: hajuppergrad}
 |f(x)-f(y)| \leq |x-y| \,(g(x)+g(y))\quad \mbox{ for $\sigma$-a.e. $x, y \in \Sigma$.} 
\end{equation}
We denote the collection of all the Haj\l asz upper gradients of $f$ by $D(f)$.

For $p>0$, we denote by $\dot{M}^{1,p}(\sigma)$ the space of Borel functions $f$ which have 
a Haj\l asz upper gradient in $L^p(\sigma)$, and we let $M^{1,p}(\sigma)$ be the space of functions $f\in L^p(\sigma)$ which have a Haj\l asz upper gradient in $L^p(\sigma)$, i.e.,  $M^{1,p}(\sigma)=  \dot M^{1,p}(\sigma) \cap L^p(\sigma)$.
We  define the semi-norm (as it annihilates constants)
\begin{equation}\label{eq: hajseminorm}
 \| f \|_{ \dot M^{1, p}(\sigma)} = \inf_{g \in D(f)} \| g\|_{L^p(\sigma)}.
 \end{equation}
If $\Sigma$ is bounded, then we define the norm
 \begin{equation}\label{eqnorm}
 \| f\|_{M^{1,p}(\sigma)} =(\diam \Sigma)^{-1} \|f\|_{L^p(\sigma)} +   \inf_{g \in D(f)} \| g\|_{L^p(\sigma)},
 \end{equation}
 while if $\Sigma$ is unbounded, we consider the space $M^{1, p}(\sigma):=\dot M^{1, p}(\sigma)/\R$.
 Observe that,   from the uniform convexity of $L^p(\sigma)$ for $p \in (1,\infty)$, one easily deduces
that the infimum in the definition of the norm $\|\cdot\|_{\dot M^{1,p}(\Sigma)}$ and $\|\cdot\|_{ M^{1,p}(\Sigma)}$, in \eqref{eq: hajseminorm} and \eqref{eqnorm} respectively, is attained and is unique. We denote by $\nabla_{H,p} f$ the function $g$ which attains the infimum which we will call the {\it least Haj\l asz  upper gradient} of $f$.

\vv

\subsection{Maximal operators and Carleson functionals}

 Set $\delta_\om(\cdot):= \dist(\cdot, \om^c)$, $B^x:=B(x, \delta_\om(x))$, and  $c B^x:=B(x, c\,\delta_\om(x))$,  for $c \in (0,\frac{1}{2}]$.    For $f\in L^1_{\loc}(\mu)$ and $x \in \om$,  we denote
\begin{equation*}
\displaystyle m_{q, c}(f)(x):=
\begin{cases}
\displaystyle  m_{c B^x}(|f|^q)^{1/q} &\textup{if}\,\, 1\leq q<\infty,\\
\displaystyle \sup_{y \in \,c B^x} |f(y)| &\textup{if}\,\, q=\infty,
\end{cases}
\end{equation*}
and 
\begin{equation}\label{def: sharp-maxfunction}
m_{\sharp,c}(f)(x):= m_{\infty, c}(f-m_{cB^x}f)(x).
\end{equation}

\vv

We define the {\it centered Hardy-Littlewood maximal operator} for a function $f\in L_{\loc}^1(\sigma)$ as
$$
\mathcal{M}(f)(x):=\sup_{r>0} m_{\sigma, B(x,r)} (|f|), \qquad x\in \Sigma
$$
while the {\it non-centered Hardy-Littlewood maximal operator} is defined to be
$$
\widetilde{\mathcal{M}}(f)(x) := \sup_{ B \ni x} m_{\sigma, B}(|f|),
$$
where the supremum is taken over all balls $B$ containing $x\in\Sigma$.  
The {\it dyadic Hardy-Littlewood maximal operator} with respect to a dyadic lattice $\DD_\sigma$ on $\Sigma$\footnote{For the construction of dyadic lattices in this setting,  see e.g. subsection \ref{dyadic}.} will be denoted 
$$
\mathcal{M}_{\DD_\sigma}f(x) := \sup_{Q\in\DD_\sigma, Q\ni x} m_{\sigma, Q}(|f|),
$$
and if  the measure is clear from the context,  we will just write $\mathcal{M}_{\DD}f$.  We also set 
\begin{equation}\label{eq:dyadicHLnumbers}
Mf(Q) := \underset{\underset{Q\subset R}{R\in {\DD}_\sigma}}{\sup} m_{\sigma, R}(|f|)
\end{equation}
to be a {\it truncated} version of $\mathcal{M}_{\DD}f(x)$.

\vv
From now on, we assume that $\om \in \AR(s)$ in $\rrn$ and $\sigma=\HH^s|_{\pom}$.
\vv

For $\alpha>0$
and $\xi\in\pom$ we define the {\it cone} with vertex $\xi$ and aperture $\alpha>0$ to be the set 
$$
\gamma_\alpha(\xi):=\{ x\in\om:\,  |x-\xi|<(1+\alpha)\dist(x,\pom)\};
$$
 The {\it non-tangential maximal operator} of a measurable function $f: \om \to \R$ for a fixed aperture $\alpha>0$  by
\begin{equation}\label{def: ntmaxfunction}
\calN_\alpha(f)(\xi):=\sup_{x\in\gamma_\alpha(\xi)}|f(x)|, \qquad \xi\in\pom.
\end{equation}

By a straightforward modification of the  classical proof of Feffermann and Stein \cite[Lemma 1]{FS},  one can show the following.
\begin{lemma}\label{lem: aperture}
For  $\om \in \AR(s)$ and for $s \in (0,n]$, there holds $\|\calN_\alpha(f)\|_{L^p(\sigma)}\approx_{\alpha, \beta,s} \|\calN_\beta(f)\|_{L^p(\sigma)}$ 
for all $\alpha, \beta > 0$ and $p\in (0, \infty)$. 
\end{lemma}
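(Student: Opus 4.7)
The plan is to follow the Fefferman--Stein distribution function approach, comparing the super-level sets of $\calN_\alpha(f)$ and $\calN_\beta(f)$ via the Hardy--Littlewood maximal operator on the doubling space $(\pom,\sigma)$. Without loss of generality, assume $\alpha\le\beta$. One direction is immediate since $\gamma_\alpha(\xi)\subset\gamma_\beta(\xi)$, so $\calN_\alpha(f)\le\calN_\beta(f)$ pointwise on $\pom$ and thus $\|\calN_\alpha(f)\|_{L^p(\sigma)}\le\|\calN_\beta(f)\|_{L^p(\sigma)}$ for every $p\in(0,\infty)$. The content is in the reverse inequality, for which it suffices to exhibit a constant $c=c(\alpha,\beta,s)>0$ such that $\sigma(\{\calN_\beta(f)>\lambda\})\le c^{-1}\sigma(\{\calN_\alpha(f)>\lambda\})$ for every $\lambda>0$, since layer-cake integration then yields the $L^p$ estimate for all $p\in(0,\infty)$.

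To produce such a distributional comparison, fix $\lambda>0$ and write $E_\alpha:=\{\calN_\alpha(f)>\lambda\}$, $E_\beta:=\{\calN_\beta(f)>\lambda\}$. Given $\xi\in E_\beta$, I select $x\in\gamma_\beta(\xi)$ with $|f(x)|>\lambda$ and choose $\xi_x\in\pom$ realizing $|x-\xi_x|=\delta_\om(x)$. The triangle inequality yields two facts: first, for every $\eta\in B(\xi_x,\alpha\,\delta_\om(x))\cap\pom$, we have $|x-\eta|<(1+\alpha)\delta_\om(x)$, so $x\in\gamma_\alpha(\eta)$ and hence $\eta\in E_\alpha$; second, $\xi\in B(\xi_x,(2+\beta)\delta_\om(x))$. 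Therefore the boundary ball $B_\xi:=B(\xi_x,(2+\beta)\delta_\om(x))$ contains $\xi$, and contains the sub-ball $B(\xi_x,\alpha\,\delta_\om(x))$ which is entirely contained in $E_\alpha$. By the $s$-Ahlfors regularity of $\sigma$,
\[
\frac{\sigma(E_\alpha\cap B_\xi)}{\sigma(B_\xi)}\;\gtrsim\;\left(\frac{\alpha}{2+\beta}\right)^{s}=:c(\alpha,\beta,s)>0.
\]
This exhibits a ball centered on $\pom$ (not centered at $\xi$, but containing $\xi$) witnessing $\widetilde{\calM}(\chi_{E_\alpha})(\xi)\gtrsim c(\alpha,\beta,s)$, so $E_\beta\subset\{\widetilde{\calM}(\chi_{E_\alpha})\gtrsim c(\alpha,\beta,s)\}$.

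To finish, I invoke the weak-$(1,1)$ bound for $\widetilde{\calM}$, which is valid because $\sigma$ is doubling on $\pom$ (a direct consequence of $s$-Ahlfors regularity); this yields $\sigma(E_\beta)\le C(\alpha,\beta,s)\,\sigma(E_\alpha)$, uniformly in $\lambda$. Plugging into the layer-cake formula
\[
\|\calN_\gamma(f)\|_{L^p(\sigma)}^p=p\int_0^\infty\lambda^{p-1}\sigma\bigl(\{\calN_\gamma(f)>\lambda\}\bigr)\,d\lambda
\]
with $\gamma=\beta$ and then majorizing by the corresponding integral with $\gamma=\alpha$ gives $\|\calN_\beta(f)\|_{L^p(\sigma)}\lesssim_{\alpha,\beta,s}\|\calN_\alpha(f)\|_{L^p(\sigma)}$ for every $p\in(0,\infty)$, completing the proof. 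The only nontrivial step is the geometric trapping of a boundary ball of comparable radius inside $\{\eta\in\pom:x\in\gamma_\alpha(\eta)\}$; once that is in place, everything else reduces to general doubling measure theory, which is why no additional connectivity or structural assumption on $\om$ beyond Ahlfors regularity of $\pom$ enters.
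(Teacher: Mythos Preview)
Your proof is correct and is precisely the classical Fefferman--Stein distribution function argument that the paper itself invokes (the paper does not write out a proof but simply refers to \cite[Lemma~1]{FS}, noting that a straightforward modification works in the present setting). Your write-up makes explicit the one point that needs adaptation, namely that the $s$-Ahlfors regularity of $\sigma$ supplies both the lower bound $\sigma\bigl(B(\xi_x,\alpha\,\delta_\om(x))\cap\pom\bigr)\gtrsim(\alpha\,\delta_\om(x))^s$ and the doubling property required for the weak-$(1,1)$ inequality for $\widetilde{\calM}$.
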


For a fixed aperture $\alpha>0$, $\beta \in [0,1)$ and a  constant $c\in(0,\frac{1}{2}]$, we also define the {\it sharp non-tangential maximal opeartor} applied to a measurable function $f: \om \to \R$ by
\begin{equation}\label{def: sharpntmaxfunction}
\calN^{(\beta)}_{\sharp, \alpha,c}(f)(\xi):=\sup_{x\in \gamma_\alpha(\xi)}  \delta_\om(x)^{-\beta}m_{\sharp,c}(f)(x), \quad \xi \in\pom.
\end{equation}

\vv

Setting  $\hm_s(x):=\delta_\om(x)^{s-n}$  for $x \in\om$, we define the {\it Carleson functional} of a function  $ F \in L^1_{\loc}\big(\om, \hm_s(x)\,dx\big)$  by
 \begin{equation}\label{eq:carleson}
\mathscr{C}^{(\beta)}_s ( F)(\xi):= \sup_{r>0}\frac{1}{r^{s+\beta}} \int_{B(\xi,r)\cap \om} | F(x)| \, \hm_{s}(x)\,dx, \quad \xi \in \pom.
\end{equation}
We define  the {\it modified Carleson functional} of a locally bounded  function  $ F $  by means of
\begin{equation}\label{eq: carleson sup}
\calC^{(\beta)}_{s,c}(F)(\xi):= \mathscr{C}^{(\beta)}_s \big(m_{\infty,c}( F) \big)(\xi), \quad \xi\in\pom.
\end{equation}
For any $q \in [1,\infty)$, the {\it $q$-Carleson functional} of a function $F \in L^q_{\loc}\big(\om, dx\big)$  is defined to be
\begin{equation}\label{eq: q-carleson}
{\mathcal C}^{(\beta)}_{s, q,c} (F)(\xi):= \sup_{r>0}\frac{1}{r^{s+\beta}} \int_{B(\xi,r)\cap \om} m_{q, \sigma, c B^x}(|F|) \, \hm_{s}(x)\,dx, \quad \xi \in \pom.
\end{equation}

\vv

\begin{lemma}\label{lem:changeofconstant}  If  $\om \in \AR(s)$ for $s \in (0,n]$,  $\beta \in [0,1)$,  $q \in [1,\infty)$,  $F \in L^q_{\loc}\big(\om, \,dx\big)$, and $0<c_1<c_2 \leq \frac{1}{2}$, then
\begin{equation}\label{eq:changeofconstant} 
{\mathcal C}^{(\beta)}_{s, q, c_2} (F)(\xi)\lesssim_{c_1, c_2} {\mathcal C}^{(\beta)}_{s, q,c_1} (F)(\xi), \quad \textup{for any}\,\, \xi \in \pom.
\end{equation}
\end{lemma}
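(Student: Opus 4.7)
The plan is to prove the scale-localized inequality
\[
\int_{B(\xi, r) \cap \om} m_{q, c_2}(|F|)(x)\, \delta_\om(x)^{s-n}\,dx \lesssim \int_{B(\xi, Cr) \cap \om} m_{q, c_1}(|F|)(y)\, \delta_\om(y)^{s-n}\,dy
\]
for some dilation constant $C = C(c_1, c_2, n) \geq 1$ valid for every $r > 0$. Taking $\sup_{r>0}$ on both sides and absorbing the dilation of the radius into a factor $C^{s+\beta}$ then yields the claimed comparison of Carleson functionals.

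First, I would establish a pointwise Vitali-type covering bound. Since $c_2 \leq \tfrac12$, any $y \in c_2 B^x$ satisfies $\delta_\om(y) \in [(1-c_2)\delta_\om(x), (1+c_2)\delta_\om(x)]$, so a standard Vitali argument produces $N = N(c_1, c_2, n)$ points $y_1(x),\dots,y_N(x) \in c_2 B^x$ such that $\{c_1 B^{y_j}\}_{j=1}^N$ covers $c_2 B^x$. Summing $\int_{c_2 B^x}|F|^q \leq \sum_{j=1}^N \int_{c_1 B^{y_j}}|F|^q$, using $|c_1 B^{y_j}| \approx |c_2 B^x|$ (with constants depending only on $c_1,c_2$), and taking $q$-th roots yields the pointwise estimate
\[
m_{q, c_2}(|F|)(x) \lesssim \sum_{j=1}^N m_{q, c_1}(|F|)(y_j(x)).
\]

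Second, I would convert this into an integrated bound by realising the selection through a fixed Whitney-type decomposition $\{Q_k\}_k$ of $\om$ with representatives $z_k \in Q_k$ and side length $\ell(Q_k) = \eta \delta_\om(z_k)$, where $\eta = \eta(c_1, c_2) > 0$ is taken small enough that $Q_k \subset c_1 B^{z_k}$ with margin. This rewrites the pointwise bound as $m_{q, c_2}(|F|)(x) \lesssim \sum_{k:\,Q_k \cap c_2 B^x \neq \emptyset} m_{q, c_1}(|F|)(z_k)$ with a bounded number of contributing cubes. Integrating against $\delta_\om(x)^{s-n}\,dx$ over $B(\xi, r)\cap\om$ and swapping summation and integration by Fubini, and using that for each $k$ the set $\{x \in B(\xi,r)\cap\om: Q_k \cap c_2 B^x \neq \emptyset\}$ lies in a ball around $z_k$ of radius $\lesssim c_2\delta_\om(z_k)$ with $\int \delta_\om(x)^{s-n}\,dx \lesssim \delta_\om(z_k)^{s+1} \approx \int_{Q_k}\delta_\om(y)^{s-n}\,dy$, I obtain
\[
\int_{B(\xi, r)\cap\om} m_{q,c_2}(|F|)\, \delta_\om^{s-n}\,dx \lesssim \sum_{k:\, Q_k \cap B(\xi, Cr) \neq \emptyset} m_{q, c_1}(|F|)(z_k) \int_{Q_k} \delta_\om(y)^{s-n}\,dy.
\]

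The main obstacle I anticipate is the final telescoping: converting the sum of point values $m_{q, c_1}(|F|)(z_k)$ back into the integral $\int m_{q, c_1}(|F|)\,\delta_\om^{s-n}\,dy$. A direct pointwise comparison $m_{q, c_1}(|F|)(z_k) \lesssim \fint_{Q_k} m_{q, c_1}(|F|)$ is false in general. Instead, one exploits the small Whitney scale: for any $y \in Q_k$, the containment $c_1 B^{z_k} \subset (c_1 + O(\eta)) B^y$ gives $m_{q, c_1}(|F|)(z_k) \lesssim m_{q, c_1 + O(\eta)}(|F|)(y)$; averaging over $y \in Q_k$ and summing using the bounded overlap of the $\{Q_k\}$ yields the integrated form with parameter $c_1 + O(\eta)$ in place of $c_1$ on the right, where $\eta$ may be chosen arbitrarily small in terms of $c_1, c_2$. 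The residual parameter change is then closed by a direct volume comparison, since $(c_1+O(\eta))B^y$ and $c_1 B^y$ differ only in a shell of relative volume $O(\eta/c_1)$, so the resulting integrand is bounded by $(1+O(\eta))$ times the desired one on average. This produces a constant depending only on $c_1, c_2, n, q, s$ and $\beta$, and taking the supremum over $r$ completes the proof.
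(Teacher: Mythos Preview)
The paper does not give a self-contained proof of this lemma; it refers to \cite[Lemma~2.2]{mpt22} for the case $s=n$, $\beta=0$ and asserts that the remaining cases follow by routine adaptation. Your overall strategy (Vitali-type covering, fixed Whitney-type decomposition, Fubini, then absorbing the dilation of the radius into $C^{s+\beta}$) is standard and almost certainly what that reference does, so up to your penultimate step the argument is in line with the expected proof.

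There is, however, a genuine gap in your closing step. After Step~3 you have established, for every small $\eta>0$,
\[
\int_{B(\xi,r)\cap\om} m_{q,c_2}(|F|)\,\omega_s \;\lesssim_{c_1,c_2,\eta}\; \int_{B(\xi,C'r)\cap\om} m_{q,\,c_1+O(\eta)}(|F|)\,\omega_s,
\]
and you then claim that the residual parameter shift from $c_1+O(\eta)$ to $c_1$ is ``closed by a direct volume comparison'' because the shell $(c_1+O(\eta))B^y\setminus c_1 B^y$ has relative volume $O(\eta/c_1)$. This does not work: the shell may be thin in volume, but $|F|^q$ can concentrate entirely there, so there is no pointwise (nor any ``on average'') bound of the form $m_{q,c_1+O(\eta)}(|F|)(y)\le (1+O(\eta))\,m_{q,c_1}(|F|)(y)$. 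Asserting the integrated comparison instead is exactly the lemma you are proving (for the pair $c_1<c_1+O(\eta)$), so the argument becomes circular.

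The fix is a one-line modification of your Step~2. With $\eta$ chosen small enough (depending only on $c_1$ and $n$) one has $Q_k\subset c_1 B^{y}$ for \emph{every} $y\in Q_k$, not just for $y=z_k$. Hence $\int_{Q_k}|F|^q\le |c_1 B^y|\,m_{q,c_1}(|F|)(y)^q$ for all $y\in Q_k$, and the coefficient $m_{q,c_1}(|F|)(z_k)$ in your sum can be replaced by $\inf_{y\in Q_k} m_{q,c_1}(|F|)(y)\le \fint_{Q_k} m_{q,c_1}(|F|)$. Step~3 then telescopes directly with parameter $c_1$ on the right, with no residual $O(\eta)$ to close, and the rest of your argument goes through unchanged.
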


\begin{proof}
The case $s=n$ and $\beta=0$ was proved in \cite[Lemma 2.2]{mpt22}, while the  proof in the other cases follow by a routine adaptation of the same arguments.
\end{proof}

\vv

If it is clear from the context and in  view of Lemmas \ref{lem: aperture} and \ref{lem:changeofconstant},  most of the times we  will drop the dependence of $\calN_\alpha$,  $\calN^{(\beta)}_{\sharp,\alpha, c}$,   $\mathcal{ C}^{(\beta)}_{s, q,c}$,    and ${\mathcal C}^{(\beta)}_{s, c}$ on $\alpha$ and $c$, and write $\calN$,  $\calN^{(\beta)}_{\sharp}$,  $\mathcal{ C}^{(\beta)}_{s, q}$,   and $\mathcal{ C}^{(\beta)}_{s}$.  If $s=n$ and/or $\beta=0$, we will drop the dependence on $s$ and/or $\beta$ as well.

\vvv

For $p \in (1,\infty)$  we introduce  the Banach spaces 
\begin{align}\label{eq:Np}
\nnp(\om) &:= \{ w :\om \to \R : w\,\textup{is measurable and }\calN(w) \in L^p(\sigma) \}, \\
\ccpsi(\om) &:= \{w\in L^\infty_{\loc}(\om) : \calC_s( w) \in L^p(\sigma) \}, \label{eq:Cp}
\end{align}
equipped with the respective norms 
$$
\| w \|_{\nnp(\om)} := \| \calN(w) \|_{L^p(\sigma)} \quad \textup{and}\quad  \| \vec F\|_{\ccpsi(\om)} :=  \| \calC_s(|\vec F|) \|_{L^p(\sigma)}.
$$  For $p=\infty$ we define
\begin{align}\label{eq:Ninfty}
\nni(\om) &:= \{ w\in C(\om) : \sup_{\xi \in \pom}\calN(w)(\xi)<\infty \} \\
\ccisi(\om) &:= \{ w\in C(\om) : \sup_{\xi \in\pom} \calC_s(w)(\xi)<\infty \} \label{eq:Cinfty}\\
\nnis(\om)&:=\{ w\in C(\om) :  \sup_{\xi \in \pom} \calN_{\sharp,c}(w)(\xi)<\infty\} \label{eq: Ninfinity sharp}
\end{align} 
and equip them  with the respective norms 
$$
\| w \|_{\nn^\infty(\om)} := \sup_{\xi \in \pom} \calN(w)(\xi) \quad \textup{and}\quad  \|\vec F\|_{\ccisi(\om)}:=\sup_{\xi \in\pom} \calC_s( |\vec F|)(\xi),
$$
  and   the semi-norm 
$$
\|w\|_{\nnis(\om)}:= \sup_{\xi \in \pom} \calN_{\sharp,c}(w)(\xi)=\sup_{x \in \om} m_{\sharp,c}(w)(x).
$$

We also  define the space 
\begin{equation}\label{eq: Nsum}
\nn_{\textup{sum}}(\om):= \{ u\in C^1(\om) : (u,   \delta_\om \nabla u)\in \nn^{\infty}_{\sharp}(\om)\times \nn^{\infty}(\om)\}
\end{equation}
and equip it with the semi-norm 
$$
\|u\|_{\nn_{\textup{sum}(\om)}} := \|u\|_{\nn^\infty_{\sharp}(\om)} + \|  \delta_\om \nabla u\|_{\nn^\infty(\om)}. 
$$
 In Corollary \ref{cor: the sum space is complete},  we will show that $(\nn_{\textup{sum}}(\om), \|\cdot\|_{\nn_{\textup{sum}}(\om)}) $ is sequentially complete.

We set
\begin{align*}\label{eq: Sobolev  C1pinfty}
\cc^{1, p}_{s, \infty}(\om) &:= \{ u\in C^1(\om) : \nabla u \in \cc^p_{s, \infty}(\om) \},\quad\textup{for}\,\, p\in (1, \infty]\\
\end{align*}
and  equip it  with  the  semi-norm $\|u\|_{\cc^{1, p}_{s, \infty}(\om)}:= \|\nabla u\|_{\cc^p_{s, \infty}(\om)}$.

\vv

If $G:\om \to \R$ is a measurable function in $\om$, we define the {\it area functional} of $G$, for a fixed aperture $\alpha>0$ as 
\begin{equation}\label{eq: area functional}
\mathcal{A}^{(\alpha)} G (\xi) := \int_{\gamma_\alpha(\xi)} | G(x) | \delta_\om(x)^{-n}\,dx, \quad \xi\in\pom.
\end{equation}

The following lemma is proved in the Appendix.
\begin{lemma}\label{lem: A less than C}
Let $\om \in \AR(s)$ for $s \in (0,n]$, $ u  \in L^1_{\loc}(\om, \hm_s) $,  $ p \in [1,  \infty) $,  and $\alpha \geq 1$. Then there exists $C\geq 1$ such that for any $\xi \in \pom$ and $ r \in (0, 2\diam(\pom))$, it holds that
\begin{equation}\label{eq:A<Clocal}
\| \mathcal{A}^{(\alpha)}(u {\bf 1}_{B(\xi,r)} ) \|_{L^p(\sigma, B(\xi,r))} \lec r^\beta  \| \mathscr{C}^{(\beta)}_s(u  {\bf 1}_{B(\xi,C r)}) \|_{L^p(\sigma, B(\xi,C r))}.
\end{equation}
If $\beta=0$, it also holds 
\begin{equation}\label{eq:A<Cglobal}
\| \mathcal{A}^{(\alpha)}(u) \|_{L^p(\sigma)} \lec  \| \mathscr{C}_s(u) \|_{L^p(\sigma)}.
\end{equation}
Moreover for $\beta=0$ and $1<p\leq \infty$ we have 
\begin{equation}\label{eq:C<Aglobal}
\|\mathscr{C}_s(u)\|_{L^p(\sigma)} \lec \|\mathcal{A}^{(\alpha)}(u)\|_{L^p(\sigma)}.
\end{equation}
\end{lemma}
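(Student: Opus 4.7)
The plan is to couple a Whitney decomposition $\{Q_i\}_i$ of $\om$ (with $\ell(Q_i) \approx \delta_\om(x)$ for every $x \in Q_i$) with ``shadows'' $S(Q_i) := B(\hat Q_i, c\,\ell(Q_i)) \cap \pom$, where $\hat Q_i$ is a nearest boundary point of $Q_i$. The $s$-Ahlfors regularity of $\sigma$ yields $\sigma(S(Q_i)) \approx \ell(Q_i)^s$, and, provided the aperture $\alpha$ is chosen sufficiently large, $Q_i \subset \gamma_\alpha(\eta)$ for every $\eta \in S(Q_i)$. The workhorse identity is the Fubini relation
\begin{equation*}
\int_E \mathcal{A}^{(\alpha)}(u\one_K)\,d\sigma \;=\; \int_K |u(x)|\, \delta_\om(x)^{-n}\, \sigma\bigl(\{\eta \in E : x \in \gamma_\alpha(\eta)\}\bigr)\,dx,
\end{equation*}
combined with the observation that $\{\eta \in \pom : x \in \gamma_\alpha(\eta)\} \subset B(x, (1+\alpha)\delta_\om(x))$, whence this inner $\sigma$-measure is $\lesssim \delta_\om(x)^s$. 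This produces the baseline bound $\int_E \mathcal{A}^{(\alpha)}(u\one_K)\,d\sigma \lesssim \int_K |u|\,\hm_s\,dx$.

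For \eqref{eq:A<Clocal} with $p=1$, I take $E = B(\xi,r) \cap \pom$ and $K = B(\xi,r) \cap \om$ and combine the baseline estimate with the elementary bound $\int_{B(\xi,r)\cap\om} |u|\,\hm_s\,dx \leq (Cr)^{s+\beta}\,\mathscr{C}^{(\beta)}_s(u\one_{B(\xi,Cr)})(\eta)$, valid for every $\eta \in B(\xi,r)\cap\pom$ once $C$ is chosen so that $B(\eta,Cr) \supset B(\xi,r)$. Averaging in $\eta$ and using $\sigma(B(\xi,r)) \approx r^s$ generates the factor $r^\beta$. For $p > 1$, I dualize against $g \in L^{p'}(\sigma)$ with $\supp g \subset B(\xi,r)$: Fubini gives
\begin{equation*}
\int \mathcal{A}^{(\alpha)}(u\one_{B(\xi,r)})\, g\,d\sigma \;\lesssim\; \int_{B(\xi,r)\cap\om} |u(x)|\,\hm_s(x)\, \widetilde{\mathcal{M}}_\sigma(g)(\hat x)\,dx,
\end{equation*}
and a Coifman--Meyer--Stein / Carleson-embedding argument, replacing $\widetilde{\mathcal{M}}_\sigma(g)(\hat x)$ by its approximately constant value on each $S(Q_i)$ and integrating over its level sets on $\pom$, bounds the right-hand side by $r^\beta\,\|\mathscr{C}^{(\beta)}_s(u\one_{B(\xi,Cr)})\|_{L^p}\,\|\widetilde{\mathcal{M}}_\sigma g\|_{L^{p'}}$, and finally by $r^\beta\,\|\mathscr{C}^{(\beta)}_s(u\one_{B(\xi,Cr)})\|_{L^p}\,\|g\|_{L^{p'}}$ via the $L^{p'}$-boundedness of $\widetilde{\mathcal{M}}_\sigma$. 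The same argument without cutoffs yields \eqref{eq:A<Cglobal}.

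For the converse \eqref{eq:C<Aglobal} I plan to prove the pointwise estimate $\mathscr{C}_s(u)(\xi) \lesssim \widetilde{\mathcal{M}}_\sigma(\mathcal{A}^{(\alpha)}(u))(\xi)$ and then invoke the $L^p$-boundedness of the Hardy--Littlewood maximal operator for $p > 1$ (the case $p = \infty$ being trivial). To produce the pointwise bound I decompose $B(\xi,r) \cap \om$ into the Whitney cubes it meets; each such $Q_i$ has $\ell(Q_i) \lesssim r$, since $\delta_\om(x) < r$ on $B(\xi,r) \cap \om$, and consequently $S(Q_i) \subset B(\xi,Cr)$. Using $\hm_s \approx \ell(Q_i)^{s-n}$ on $Q_i$,
\begin{equation*}
\int_{Q_i} |u|\,\hm_s\,dx \;\approx\; \sigma(S(Q_i))\, m_{Q_i}\bigl(|u|\delta_\om^{-n}\bigr) \;\leq\; \int_{S(Q_i)} \mathcal{A}^{(\alpha)}(u)\,d\sigma,
\end{equation*}
the last inequality coming from $\mathcal{A}^{(\alpha)}(u)(\eta) \geq \int_{Q_i} |u|\delta_\om^{-n}\,dx \approx \ell(Q_i)^n\,m_{Q_i}(|u|\delta_\om^{-n})$ for $\eta \in S(Q_i)$. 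Summing over $i$ and using the bounded overlap of the shadows yields $r^{-s}\int_{B(\xi,r) \cap \om} |u|\,\hm_s\,dx \lesssim \fint_{B(\xi,Cr)} \mathcal{A}^{(\alpha)}(u)\,d\sigma$, which is the desired pointwise bound.

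The main obstacle I anticipate is the Coifman--Meyer--Stein step in the $p > 1$ forward direction, where one must realize the mixed integral $\int |u|\,\hm_s(x)\,\widetilde{\mathcal{M}}_\sigma(g)(\hat x)\,dx$ as a genuine $L^p$--$L^{p'}$ pairing of $\mathscr{C}_s(u)$ against (a maximal function of) $g$; implementing this cleanly across the Whitney decomposition in the mixed-dimension regime $s \in (0,n]$ and keeping the cone aperture $\alpha$, the Ahlfors-regular constants, and the enlargement constant $C$ consistent --- so that the shadows of all contributing Whitney cubes really do sit inside $B(\xi,Cr)$ on the right-hand side of \eqref{eq:A<Clocal} --- is the bookkeeping to be carried out carefully.
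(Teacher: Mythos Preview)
Your proposal is correct and follows essentially the same strategy as the paper: dualize against $h\in L^{p'}$, apply Fubini to convert $\int \mathcal{A}(u)\,h\,d\sigma$ into $\int_\om |u|\,\hm_s\,H\,dx$ with $H(y)\approx m_{\sigma,B(\hat y,c\delta_\om(y))}h$, and then execute the Coifman--Meyer--Stein step by decomposing over the level sets $\{H>\lambda\}$ and covering each level set by boundary balls where $\mathcal{M}h>c\lambda$; this yields $\int \mathscr{C}_s^{(\beta)}(u)\,\mathcal{M}h\,d\sigma$ and H\"older finishes. The paper implements the covering step via Vitali's lemma on raw boundary balls rather than through a fixed Whitney decomposition with shadows, but this is a cosmetic difference.

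For the converse \eqref{eq:C<Aglobal}, the paper's argument is shorter than yours: instead of summing over Whitney cubes and invoking bounded overlap of shadows, it simply applies Fubini directly to $\fint_{B(\xi,r)}\mathcal{A}^{(\alpha)}(u)\,d\sigma$, observing that for each $y\in B(\xi,r)\cap\om$ the inner $\sigma$-measure $\sigma(\{\eta\in B(\xi,r):y\in\gamma_\alpha(\eta)\})\gtrsim\delta_\om(y)^s$ by Ahlfors regularity, which immediately gives $\mathcal{M}(\mathcal{A}^{(\alpha)}(u))(\xi)\gtrsim\mathscr{C}_s(u)(\xi)$ in one line. Your Whitney route works too and makes the bounded-overlap mechanism explicit, but the direct Fubini is cleaner here.
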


\vv

We also introduce the {\it modified non-tangential maximal operator} $\wt \calN_{\alpha, c, r}$ for a given aperture $\alpha>0$, a parameter $c\in(0, 1/2]$ and $r\geq 1$: for any $u\in L^r_{\loc}(\om)$ it is defined as 
$$
\wt \calN_{\alpha, c, r}u(\xi):= \sup_{x\in\gamma_\alpha(\xi)}\Big(\fint_{B(x, c\delta_\om(x))}|u(y)|^r\, dy\Big)^{1/r}, \quad \xi\in\pom.
$$
The $L^p$-norms of these non-tangential maximal functions with different aperture $\alpha$ or averaging parameter $c$  are comparable (see \cite[Lemma 2.1]{mpt22}), and for ease the notation we will just write $\wt\calN_r=\wt\calN_{\alpha, c, r}$ when we do not need to specify neither $\alpha$ nor $c$.

For any $q\geq 1$ and $p>1$,  we define the Banach spaces
$$
\cc_{s, q, p}(\om):=\{ H\in L^q_{\loc}(\om) : \calC_{s,q}(H) \in L^p(\sigma) \},
$$
with norm $\|H\|_{\cc_{s,q, p}}=\|\calC_{s.q}(H)\|_{L^p(\sigma)}$, and for $r\in[1, \infty]$, $p>1$ let
$$
\nn_{r, p}(\om):= \{ u\in L^r_{\loc}(\om): \wt \calN_r(u)\in L^p(\sigma)\},
$$
(where we identify $\wt\calN_\infty=\calN$) with norm $\|u\|_{\nn_{r, p}(\om)}=\| \wt \calN_r(u)\|_{L^p(\sigma)}$.  By (the proof of) \cite[Proposition 2.4]{mpt22},    it holds that if either $\om$ is bounded or $\pom$ is unbounded, $ \nn_{q, p}(\om)=(\cc_{s, q', p'}(\om))^*$. When $s=n$ we drop the subscript $s$ from $\cc_{s, q, p}$.

\vv


If $\om\in\AR(s)$, we define the  {\it tent spaces} 
\begin{equation}\label{eq: space T2infty}
T^\infty_{s,2}(\om) :=\{ f \in L^2_{\loc}(\om) :  \mathscr{C}_s(f^2\, \delta_\om^{-1})\in L^\infty(\sigma) \}
\end{equation} 
and 
\begin{equation}\label{eq: space T2p}
T^p_2(\om) := \{g\in L^2_{\loc}(\om) : \left(\mathcal{A}(g^2\, \delta_\om^{-1}) \right)^{1/2} \in L^p(\sigma) \}, \,\, \textup{for} \,\, p\in (0, \infty),
\end{equation}
and we equip them with the respective norms  
$$
\| f\|_{T^\infty_{s,2}(\om)}= \big\|  \mathscr{C}_s\big(f^2\, \delta_\om^{-1}\big)^{1/2}\big\|_{L^\infty(\sigma)} \quad  \text{and} \quad \| g\|_{T^p_2(\om)}=\big\| \left(\mathcal{A}(g^2\, \delta_\om^{-1} \right)^{1/2}\big\|_{L^p(\sigma)}.
$$ 
When $s=n$,  we drop the subscript $s$ from $T^\infty_{2,s}$ and just write $T ^\infty_2$. It is not hard to see that $L^2_c(\om)$ is a dense subspace of $T^p_2(\om)$ for any $p\in[1, \infty)$.

The tent spaces were first  introduced and studied in \cite{CMS85} in the upper-half space $\R^{n+1}_+$ and was 
extended to $\AR(n)$ domains in \cite{MPT}\footnote{Note that the results  are stated in chord-arc domains but an easy inspection of the proofs in \cite{MPT} reveals that neither the Harnack chain condition nor the exterior corkscrew condition are necessary. }.   An important result in this theory is the duality between tent spaces.  Namely,  if $\om \in \AR(n)$,   the pairing
$$
\langle f, g \rangle= \int_\om f(x) \,g(x)\,\frac{dx}{\delta_\om(x)}
$$
realizes $T^\infty_2(\om)$ as the Banach dual of $T^1_2(\om)$.  Moreover,  for $p \in (1,\infty)$,  the same pairing realizes  $T^{p'}_2(\om)$ as the Banach dual of $T^p_2(\om)$, where $1/p+1/{p'}=1$.  In this generality,  this follows from the proof of Theorem 4.2 and  Remarks 4.3 and 4.4 in \cite{MPT}. By an  inspection of the proofs, one can easily show that if $\om\in\AR(s)$ for $s\in(0, n]$,  then  the pairing
$$
\langle f, g \rangle:= \int_\om f(x) \,g(x)\,\frac{dx}{\delta_\om(x)^{n+1-s}}
$$
realizes $T^\infty_{s,2}(\om)$ as the Banach dual of $T^1_2(\om)$.  Analogously,  for $p \in (1,\infty)$,  the same pairing realizes  $T^{p'}_2(\om)$ as the Banach dual of $T^p_2(\om)$, where $1/p+1/{p'}=1$.  
\vvv

\subsection{$L^p$ and uniform $\ve$-approximation}\label{subsec:approx}

 If $\ve>0$,  we say  that a function $w$ is  {\it uniformly $\ve$-approximable},   if  there exist a constant $ C_{\ve} > 0 $ and a function $ \vp = \vp^{\ve} \in C^\infty(\om)$ such that 
\begin{equation}\label{eq:e-approx-sup-1}
\sup_{x \in \om} |w(x) - \vp(x) | + \sup_{x \in \om} \,\delta_\om(x)|\nabla(w-\vp)(x)| \lec \ve 
\end{equation}
and
\begin{equation}\label{eq:e-approx-sup-2}
\sup_{\xi \in \pom} \mathcal{C}_{s,c} (\nabla \vphi)(\xi) \lec \ve^{-2},
\end{equation}
where $\delta_\om(\cdot)=\dist(x\cdot, \om^c)$ and the implicit constants are independent of $\ve$.

\vv

 If $\om \in \AR(s)$,  then,  for fixed $p \in (1, \infty)$,  we  say that a function $w$ is  $\ve$-{\it approximable in $ L^p(\sigma_s)$} if there exists a function $\vp = \vp^{\ve} \in C^\infty(\om)$  such that 
\begin{equation}\label{eq:intro-e-approx1}
\| \calN (w - \vp) \|_{L^p(\sigma_s)} +\| \calN (\delta_\om \nabla(w - \vp) )\|_{L^p(\sigma_s)} \lec_p  \ve  \, \| \calN w \|_{L^p(\sigma_s)}
\end{equation}
and 
\begin{equation}\label{eq:intro-e-approx2}
\| \mathcal{C}_{s,c} (\nabla \vp) \|_{L^p(\sigma_s)} \lec_{p}   \ve^{-2}  \, \| \calN w \|_{L^p(\sigma_s)}.
\end{equation}

\vvv

\subsection{Elliptic systems and Boundary value problems}

In this section we consider domains $\om \in \AR(n)$, $n\geq 1$.  Let $L$ be an elliptic operator acting on column vector-fields $u=(u^1, \dots, u^m)^T$, where $u^\beta:\om \to \mathbb{C}$ for $\beta=1, 2, \dots, m$,  defined as follows:
\begin{equation}\label{eq:inhomogeneous-equation}
L u(x) = - \sum_{i,j=1}^{n+1} \partial_i(A_{ij}(x) \partial_j u(x))=- \sum_{\alpha, \beta=1}^m \sum_{i, j=1}^{n+1}  \partial_i \big(a^{\alpha\beta}_{ij}(x) \, \partial_j u^\beta(x)\big),
\end{equation}
where  $\partial_i=\frac{\partial}{\partial x_i}$, $1\leq i \leq n+1$ and 
$A_{ij}$ are $m \times m$ matrix-valued functions on $\rrn$ with entries $a^{\alpha\beta}_{ij}:\om \to \bC$,  $\alpha, \beta \in \{1, \dots, m\}$ for which there exists $\lambda \in (0,1]$ such that
\begin{align}\label{eq: matrix A}
\sum_{\alpha,\beta=1}^m  \sum_{i, j=1}^{n+1} |a^{\alpha\beta}_{ij}(x)|^2 &\leq \lambda^{-2}, \quad \textup{for a.e.} \, x \in \om\,\, \textup{and}\\
\mathfrak{Re} \sum_{\alpha,\beta=1}^m  \sum_{i, j=1}^{n+1} a^{\alpha \beta}_{ij}(x)\xi_j^\beta \overline{\xi_i^\alpha} 
&\geq \lambda   \sum_{\alpha = 1}^m \sum_{i=1}^{n+1} |\xi_i^\alpha|^2\quad \textup{for a.e.} \, x \in \om.\label{eq: accretivity condition}
\end{align}
For $m=1$ and $a_{ij}:\om \to \R$,  \eqref{eq: accretivity condition} amounts to the standard accretivity condition
\begin{equation}\label{eq: ellipticity cond}
\lambda |\xi|^2 \leq \sum_{i, j=1}^{n+1} a_{ij}(x) \xi_i \dot \xi_j , \quad  \textup{for a.e.} \, x\in\om,  \,\,\, \xi\in\R^{n+1}.
\end{equation}
Notice that the $\alpha$-th component of the column vector  $Lu$ coincides with 
\begin{align}\label{eq: system of elliptic equations}
(L u)^\alpha(x):=-&\sum_{\beta=1}^m \sum_{i, j=1}^{n+1}  \partial_i \big(a^{\alpha\beta}_{ij}(x) \, \partial_j u^\beta(x)\big)
\end{align}
We also define its adjoint operator of $L$ by 
$$
L^* u(x) =- \sum_{\alpha, \beta=1}^m \sum_{i, j=1}^{n+1}  \partial_i \big(\overline{a^{\beta\alpha}_{ji}}(x) \, \partial_j u^\beta(x)\big)),
$$
i.e.,  $L^* = -\dv A^* \nabla$, where $A^*=(\overline{A}_{ij})^T$ or equivalently $(a^{\alpha\beta}_{ij})^*=\overline{a^{\beta\alpha}_{ji}}$.

We assume that   $H: \om \to \bC^m$ is given by $H= (H^1, \dots, H^m)$  and ${\bf{\Xi}}:\om \to \bC^{m(n+1)}$ is given by ${\bf \Xi}:=(\vec{\Xi}^1, \dots, \vec{\Xi}^m)$, where $\vec{\Xi}^\alpha:\om \to  \bC^{n+1}$ and $\vec{\Xi}^\alpha=(\Xi^\alpha_1,\dots, \Xi^\alpha_{n+1})$ for $\alpha=1,\dots m$.  We are interested in solutions of the inhomogeneous equation $Lu=-\dv {\bf \Xi}+H$ in $\om$  in the  sense
\begin{align}\label{eq: system of elliptic equations}
L u(x) = - \sum_{\alpha=1}^m \sum_{i=1}^{n+1} \partial_i  \, \Xi_i^\alpha(x)+ \sum_{\alpha=1}^m H^\alpha(x), \quad \textup{for}\,\, x \in \om.\notag
\end{align}

 \vv

For  $H\in L^{2_\ast}_{\loc}(\om;\bC^{m})$ and ${\bf \Xi} \in L^2_{\loc}(\om;\bC^{m(n+1)})$, we say that the vector field $w\in W^{1, 2}_{\loc}(\om;\bC^{m})$ solves 
$Lw=H-\div {\bf \Xi}$ in the {\it weak sense}, or that $w$ is a {\it weak solution} to the equation $Lw=H-\div{\bf \Xi}$, if for any $\Phi\in C^\infty_c(\om;\bC^{m})$, we have that 
\begin{equation}\label{eq:weaksolution}
\sum_{\alpha, \beta=1}^m  \sum_{i, j=1}^{n+1}  \int_\om a^{\alpha \beta}_{ij}(x)\partial_j w^\beta \, \overline{\partial_i \Phi^\alpha}  =      \sum_{\alpha=1}^m \sum_{i=1}^{n+1}  \int_\om  {\Xi_i^\alpha}   \,\overline{\partial_i \Phi^\alpha} + \sum_{\alpha=1}^m  \int_\om H^\alpha \, \overline{ \Phi^\alpha}.
\end{equation}

\vv

We say that the {\it variational Poisson-Dirichlet problem}  for $L$ is solvable in $\om$ if for every $H \in L^{2_*}(\om ; \bC^m)$ and ${\bf \Xi} \in L^2(\om; \bC^{m(n+1)})$ there exists $u \in W^{1,2}_{\loc}(\om; \bC^m)$ such that
\begin{equation}\label{eq:variational Dirichlet problem}
(\vpd)=
\begin{dcases}
Lu=-\div A\nabla u= -\dv {\bf \Xi} + H  &\textup{weakly in} \,\, \om \\
u \in Y^{1,2}_0(\om).
\end{dcases}
\end{equation}
By Lax-Milgram's theorem,  this problem is always solvable and its  solution is unique.  Moreover, 
$$
\|  u \|_{Y^{1,2}_0(\om)} \lec \|H\|_{L^{2_*}(\om ; \bC^m)} + \| {\bf \Xi}\|_{L^2(\om; \bC^{m(n+1)})}.
$$

\vv

We say that the {\it  variational Dirichlet problem}  for $L$ is solvable in $\om$  if,  for every $\vphi \in \Lip(\pom;\bC^m)$ and  $\Phi \in  \dot W^{1,2}(\om; \bC^m)\cap \Lip(\oom;\bC^m)$ satisfying $\Phi|_\pom=\vphi$,  there exists $w \in \dot W^{1,2}(\om; \bC^m)$  such that
\begin{equation}\label{eq: Dirichlet problem}
(\vd)=
\begin{dcases}
Lw=-\div A\nabla w=0 &\textup{weakly in} \,\, \om \\
w-\Phi \in  Y_0^{1,2}(\om).\quad &\textup{on} \,\, \pom.
\end{dcases}
\end{equation}

If $u$ is the solution of \eqref{eq:variational Dirichlet problem} for ${\bf \Xi}= -A \nabla \Phi \in L^2(\om; \bC^{m(n+1)}))$ and $H=0$,  then,  it is easy to see that  $w=u+\Phi$ is the solution of  \eqref{eq: Dirichlet problem}.

Since the set of compactly supported Lipschitz functions on $\pom$, is dense in the set of compactly supported continuous functions on $\pom$,  we can extend the definition of the Dirichlet problem to $C_c(\pom)$.  Namely,  for any $\vphi \in C_c(\pom;\bC^m)$ and  $\Phi \in  \dot W^{1,2}(\om; \bC^m)\cap C(\oom;\bC^m)$ satisfying $\Phi|_\pom=\vphi$,  there exists $w \in \dot W^{1,2}(\om; \bC^m)$  satisfying \eqref{eq: Dirichlet problem}.

\vv

We set
$$
Y^q(\sigma)=
\begin{cases}
L^q(\sigma) & \textup{if}\,\, q \in (1, \infty)\\
\BMO(\sigma) & \textup{if}\,\, q =\infty,
\end{cases}
$$

\begin{definition}\label{def:DP}
For any $q \in (1, \infty)$,  we say that the {\it Dirichlet problem} with $L^{q}$ boundary data is solvable  for  $L$ in $\om$ (write $(\dqr)$ is solvable in $\om$), if there exists  $C>0$ so that for each 
$g\in \Lip_c(\pom)$, the solution $u$ of \eqref{eq: Dirichlet problem} for $L$ with boundary data $g$ satisfies the estimate 
\begin{equation}\label{eq: Dirichlet solvability estimate}
\|\wt \calN_{2^*} (u)\|_{L^{q}(\sigma)} \leq C \|g\|_{L^{q}(\sigma)},
\end{equation}
where $2^*:=\frac{2(n-1)}{n+1}$.  Similarly,   we will say that $(\tdqr)$  is solvable  for  $L$ in $\om$ if there exists  $C >0$ so that for each  $g\in \Lip_c(\pom)$, the solution $u$ of \eqref{eq: Dirichlet problem} for $L$  with boundary data $g$ satisfies the estimate 
\begin{equation}\label{eq: Dirichlet solvability area}
\| \delta_\om \nabla u \|_{T^q_2(\om)}\leq C \|g\|_{Y^{q}(\sigma)}.
\end{equation}
\end{definition}

\vv

\begin{definition}\label{def:RP}
For any $p\in (0, \infty)$, we say that the (homogeneous) {\it Dirichlet regularity problem} (or just {\it regularity problem}) with boundary data in $\dot{M}^{1, p}(\sigma)$  is solvable for $L$  in $\om$ (write $(\rpr)$ is solvable in $\om$), if there exists $C>0$ so that for each $f\in\Lip_c(\pom)$, the solution $u$ of \eqref{eq: Dirichlet problem} with boundary data $f$ satisfies the estimate 
\begin{equation}\label{eq: Dirichlet regularity estimate}
\|\wt \calN_2(\nabla u)\|_{L^p(\sigma)} \leq C \|f\|_{\dot{M}^{1, p}(\sigma)}. 
\end{equation}
\end{definition}

\vv

Following \cite{mpt22},  we introduce the Poisson-regularity problem with data in $\cc_{q, p}(\om)$.

\begin{definition}\label{def:PD}
For any $p\in(1, \infty)$, we say that the {\it Poisson-Dirichlet problem} $(\pd^L_p)$ is solvable in $\om$ if there exists $C>0$ so that for each $H\in L^\infty_c(\om; \bC^m)$ and ${\bf \Xi} \in L^\infty_c(\om; \bC^{m(n+1)})$, the  solution $v$ of the problem \eqref{eq:variational Dirichlet problem} satisfies the estimate 
\begin{equation}\label{eq: poisson d regularity}
\|\wt \calN_{2^*} (u)\|_{L^p(\sigma)} \leq C \left( \| \calC_{2_\ast}(\delta_\om H)\|_{L^p(\sigma)} + \|\calC_2( {\bf \Xi})\|_{L^p(\sigma)}\right).
\end{equation}
Similarly,  for $p\in (1,\infty[$,  we say that the {\it Poisson-Dirichlet problem} $(\wt \pd_p^L)$ for $H=0$ is solvable in $\om$ if there exists $C>0$ so that for each ${\bf \Xi} \in L^\infty_c(\om; \bC^{m(n+1)})$,  the  solution $u$ of the problem \eqref{eq:variational Dirichlet problem} for $H=0$ satisfies the estimate 
\begin{equation}\label{eq:end-poisson regularity}
\|\delta_\om  \nabla u \|_{T^p_2(\om)} \leq C\, \| \calC_2({\bf \Xi})\|_{L^p(\pom)}.
\end{equation}
\end{definition}

\vv

\begin{definition}\label{def:PR}
For any $p\in(1, \infty)$, we say that the {\it Poisson-regularity problem} $(\pr)$ is solvable in $\om$ if there exists $C>0$ so that for each $H\in L^\infty_c(\om; \bC^m)$ and ${\bf \Xi} \in L^\infty_c(\om; \bC^{m(n+1)})$, the  solution $v$ of the problem \eqref{eq:variational Dirichlet problem} satisfies the estimate 
\begin{equation}\label{eq: poisson regularity}
\|\wt \calN_2(\nabla v)\|_{L^p(\sigma)} \leq C \left( \| \calC_{2_\ast}(H)\|_{L^p(\sigma)} + \|\calC_2(| {\bf \Xi}| / \delta_\om)\|_{L^p(\sigma)}\right).
\end{equation}
Similarly,  for $p\in [1,\infty)$, we say that the {\it Poisson-regularity problem} $(\wt \pre_p^L)$ for $H=0$ is solvable in $\om$ if there exists $C>0$ so that for each ${\bf \Xi} \in L_c^2(\om; \bC^{m(n+1)})$, the  solution $v$ of the problem \eqref{eq:variational Dirichlet problem} for $H=0$ satisfies the estimate 
\begin{equation}\label{eq: poisson regularity}
\|\wt \calN_2(\nabla v)\|_{L^p(\sigma)} \leq C\,  \| {\bf \Xi}\|_{T^p_2(\om)}.
\end{equation}
\end{definition}

\vvv

\subsection{Geometry of domains}

Following Jerison and Kenig \cite{JK82}, we introduce the  notions of corkscrew and Harnack chain conditions.
\begin{definition}\label{def:corkscrew} We say that an open set $\om\subset\R^{n+1}$ satisfies the $c$-{\it corkscrew condition} for $c \in (0,1/2)$, if for every ball $B(\xi,r)$ with $\xi\in\pom$ and $0<r<\diam(\om)$,  there exists a point $x \in \om \cap B(\xi,r)$ such that $B(x, c r)\subset \om\cap B(\xi,r)$.
\end{definition}

\begin{definition} \label{def:harnackchain} 
Given two points $x,x' \in \Omega$, and a pair of numbers $M,N\geq1$, 
an $(M,N)$-{\it Harnack Chain connecting $x$ to $x'$},  is a chain of
open balls
$B_1,\dots,B_N \subset \Omega$, 
with $x\in B_1,\, x'\in B_N,$ $B_k\cap B_{k+1}\neq \emptyset$
and $M^{-1}\diam (B_k) \leq \dist (B_k,\partial\Omega)\leq M\diam (B_k).$
We say that $\Omega$ satisfies the {\it Harnack Chain condition}
if there is a uniform constant $M$ such that for any two points $x,x'\in\om$,
there is an $(M,N)$-Harnack Chain connecting them, with $N$ depending only on $M$ and the ratio
$|x-x'|/\left(\min\big(\delta_{\Omega}(x),\delta_{\Omega}(x')\big)\right)$.  
\end{definition}
It is not hard to see that if $E\subset\R^{n+1}$ is  $s$-Ahlfors regular for $s \in (0,n]$,  then $\R^{n+1}\setminus E$ satisfies the 
$c$-corkscrew condition for some $c \in (0,1/2)$ depending only on the Ahlfors regularity constants.  In the case that $s<n$,  $\R^{n+1}\setminus E$ satisfies the Harnack chain condition as well (see  \cite[Lemma 2.2]{DFM}).

\begin{definition}\label{def: good curve}
A connected rectifiable curve $\gamma:[0,\ell] \to \oom$ connecting $\xi \in \pom$ and $x \in \om$, parametrized by the arc-length $s \in [0,\ell]$ and such that  $\gamma(0)=\xi$ and $\gamma(\ell)=x$, is called {\it $\lambda$-good curve} (or {\it $\lambda$-carrot path}), for some  $\lambda \in (0,1]$,  if $\gamma\setminus\{\xi\} \subset \om$   and   $\delta_\om(\gamma(s)) > \lambda \, s$,  for every $s\in (0, \ell]$.
\end{definition}

\begin{definition}\label{def: pwJohn}
An open set $\Omega \subset \R^{n+1}$ is said  to satisfy the {\it pointwise John condition},  if there exists a constant $\theta \in (0, 1) $ such that  for $\sigma_n\textup{-a.e. } \xi \in\pom$, there exist $x_\xi\in\om$ and $r_\xi > 0$ such that $x_\xi \in B(\xi, 2 r_\xi)$ and $\delta_\om(x_\xi) \geq \theta r_\xi$,  and also  there exists a $\theta$-good curve $\gamma_\xi \subset \om \cap B(\xi, 2 r_\xi)$  connecting the points $\xi$ and $x_\xi$ such that $\ell(\gamma_\xi) \leq \theta^{-1} r_\xi$. We will write that $ \xi \in \JC(\theta) $ if the pointwise John condition holds for the point $\xi$ with constant $\theta\in(0, 1)$. 
\end{definition}

\begin{remark}
Any domain $\om \in \AR(n)$ with $n$-rectifable boundary satisfies the pointwise John condition. 
\end{remark}

Following \cite{HMT} we also introduce the notion of  local John domains, which are also examples of domains satisfying the pointwise John condition.
\begin{definition}\label{def: local John}
An open set $\Omega  \subset \R^{n+1}$ is said  to satisfy the {\it  local John condition} if there is $\theta\in(0,1)$
such that the following holds: For all $x\in\pom$ and $r\in(0,2\diam(\Omega))$ 
there is $y\in B(x,r)\cap \Omega$ such that $B(y,\theta r)\subset \Omega$ with the property that for all
$z\in B(x,r)\cap\pom$ one can find a rectifiable {path} $\gamma_z:[0,1]\to\overline \Omega$ with length
at most $\theta^{-1}|x-y|$ such that 
$$\gamma_z(0)=z,\qquad \gamma_z(1)=y,\qquad \dist(\gamma_z(t),\pom) \geq \theta\,|\gamma_z(t)-z|
\quad\mbox{for all $t\in [0,1].$}$$
\end{definition}
If $\om \in \AR(s)$ for $0<s<n$,  then it clearly satisfies the local John condition as it satisfies the corkscrew and the  Harnack chain conditions. If $s=n$,  any semi-uniform  (and thus any uniform) domain has the local John condition. 

\vv
\begin{definition}\label{def:ntconv} Let $\om$ be a corkscrew domain, $F: \om \to \R$, and $ f : \pom \to \R $.   We say that $F$  {\it converges non-tangentially} to $f$ at $\xi \in \pom$ (write $F \to f \textup{ n.t.  at } \xi$)  if there exists $\alpha>0$ such that for every sequence $x_k \in \gamma_\alpha(\xi)$ for which $x_k \to \xi$ as $k \to \infty$, it holds that $F(x_k) \to f(\xi)$ as $k \to \infty$.   We will also write 
$$
\ntlim_{x \rightarrow \xi}F(x)=f(\xi).
$$
 We will  say  that $F \to f$ {\it  quasi-non-tangentially} at $\xi \in \pom$ (write $F \to f \textup{ q.n.t.  at } \xi$) if there exist $r_\xi>0$,  a corkscrew point $x_\xi \in \om \cap B(\xi,2r_\xi)$, and a $\theta$-good curve $\gamma_\xi \in B(\xi, 2 r_\xi)$ connecting $\xi$ and $x_\xi$,  such that  for any $x_k \in \gamma_\xi$ converging to $\xi$ as $k \to \infty$,  it holds that $ \lim_{k \to \infty} F(x_k)= f(\xi)$. We will also write 
$$
\qntlim_{x \rightarrow \xi}F(x)=f(\xi).
$$
\end{definition}

\vv

\subsection{Dyadic lattices}\label{dyadic}
Given an $s$-Ahlfors-regular measure $\mu$ in $\R^{n+1}$, we consider 
the dyadic lattice of “cubes" built by David and Semmes in \cite[Chapter 3 of Part I]{DS2}. 
The properties satisfied by $\DD_\mu$ are the following. 
Assume first, for simplicity, that $\diam(\supp\mu)=\infty$). Then for each $j\in\mathbb Z$ there exists a family 
$\DD_{\mu,j}$ of Borel subsets of $\supp\mu$ (the dyadic cubes of the $j$-th generation) such that:
\begin{itemize}
\item[$(a)$] each $\DD_{\mu,j}$ is a partition of $\supp\mu$, i.e.\ $\supp\mu=\bigcup_{Q\in \DD_{\mu,j}} Q$ and 
$Q\cap Q'=\varnothing$ whenever $Q,Q'\in\DD_{\mu,j}$ and
$Q\neq Q'$;
\item[$(b)$] if $Q\in\DD_{\mu,j}$ and $Q'\in\DD_{\mu,k}$ with $k\leq j$, then either $Q\subset Q'$ or $Q\cap Q'=\varnothing$;
\item[$(c)$] for all $j\in\mathbb Z$ and $Q\in\DD_{\mu,j}$, we have $2^{-j}\lec\diam(Q)\leq2^{-j}$ and $\mu(Q)\approx 2^{-j s}$;
\item[$(d)$] there exists $C>0$ such that, for all $j\in\mathbb Z$, $Q\in\DD_{\mu,j}$, and $0<\tau<1$,
\begin{equation}\label{small boundary condition}
\begin{split}
\mu\big(\{x\in Q:\, &\dist(x,\supp\mu\setminus Q)\leq\tau2^{-j}\}\big)\\&+\mu\big(\{x\in \supp\mu\setminus Q:\, \dist(x,Q)\leq\tau2^{-j}\}\big)\leq C\tau^{1/C}2^{-j s}.
\end{split}
\end{equation}
This property is usually called the {\em small boundaries condition}.
From (\ref{small boundary condition}), it follows that there is a point $x_Q\in Q$ (the center of $Q$) such that $\dist(x_Q,\supp\mu\setminus Q)\gtrsim 2^{-j}$ (see \cite[Lemma 3.5 of Part I]{DS2}).
\end{itemize}
We set 
$$\DD_\mu:=\bigcup_{j\in\mathbb Z}\DD_{\mu,j}.$$
In case that $\diam(\supp\mu)<\infty$, the families $\DD_{\mu,j}$ are only defined for $j\geq j_0$, with
$2^{-j_0}\approx \diam(\supp\mu)$, and the same properties above hold for 
$\mathcal{D}_\mu:=\bigcup_{j\geq j_0}\DD_{\mu,j}$.

Given a cube $Q\in\mathcal{D}_{\mu,j}$, we say that its {\it side-length }is $2^{-j}$, and we denote it by $\ell(Q)$. Notice that $\diam(Q)\leq\ell(Q)$. 
We also denote 
\begin{equation}\label{defbq}
B(Q):=B(x_Q,c_1\ell(Q)),\qquad B_Q = B(x_Q,\ell(Q)),
\end{equation}
where $c_1>0$ is some fix constant so that $B(Q)\cap\supp\mu\subset Q$, for all $Q\in\mathcal{D}_\mu$. Clearly, we have $Q\subset B_Q$.  For $\lambda>1$, we write
$$\lambda Q = \bigl\{x\in \supp\mu:\, \dist(x,Q)\leq (\lambda-1)\,\ell(Q)\bigr\}.$$


The side-length of a “true cube'' $P\subset\R^{n+1}$ is also denoted by $\ell(P)$. On the other hand, given a ball $B\subset\R^{n+1}$, its radius is denoted by $r(B)$. For $\lambda>0$, the ball $\lambda B$ is the ball concentric with $B$ with radius $\lambda\,r(B)$.

\vv

\subsection{The Whitney decomposition}\label{subsec: Whitney}
Recall that a domain is a connected open set. In the whole paper, $\Omega$ will be an open set in $\R^{n+1}$, with 
$n\geq 1$. We will denote the $n$-Hausdorff measure on $\partial\Omega$ by $\sigma$. 

We 
consider the following Whitney decomposition of $\Omega$ (assuming $\Omega\neq \R^{n+1}$): we have a family 
$\WW(\Omega)$ of dyadic cubes in $\mathbb R^{n+1}$ with disjoint interiors such that
$$
\bigcup_{P\in\mathcal{W}(\Omega)} P = \Omega,
$$
and moreover there are
some constants $\Lambda>20$ and $D_0\geq1$ such the following holds for every $P \in\WW(\Omega)$:
\begin{itemize}
\item[(i)] $10P \subset \Omega$;
\item[(ii)] $\Lambda P \cap \pom \neq \varnothing$;
\item[(iii)] there are at most $D_0$ cubes $P'\in\WW(\Omega)$
such that $10P \cap 10P' \neq \varnothing$. Further, for such cubes $P'$, we have $\frac12\ell(P')\leq \ell(P)\leq 2\ell(P')$.
\end{itemize}
From the properties (i) and (ii) it is clear that $\dist(P,\pom)\approx\ell(P)$ and so there exists $\Lambda'>20$ such that
\begin{equation}\label{eq:delta<ellP}
\dist(x, \pom) \leq \Lambda' \ell(P), \quad \textup{for every}\,\, x\in P.
\end{equation} We assume that
the Whitney cubes are small enough so that
\begin{equation}\label{eqeq29}
\diam(P)< \frac1{20}\,\dist(P,\pom).
\end{equation}
The arguments to construct a Whitney decomposition satisfying the properties above are
standard and can be found for example in \cite{Stein}.

Suppose that $\pom$ is $s$-Ahlfors-regular and consider the dyadic lattice $\DD_\sigma$ defined above.
Then, for each Whitney $P\in \WW(\Omega)$ there is some cube $Q\in\DD_\sigma$ such that 
\begin{equation}\label{definitionb(P)}
\ell(Q)=\ell(P)\quad \textup{and} \quad \dist(P,Q)\approx \ell(Q),
\end{equation}
 with the implicit constant depending on the parameters of $\DD_\sigma$ and on the Whitney decomposition.  For every $P\in\WW(\om)$ there is a uniformly bounded number of cubes $Q\in \DD_\sigma$ depending on $n$ and the $s$-Ahlfors regularity of $\pom$ that satisfy \eqref{definitionb(P)}.  To each $P\in\WW(\om)$  we associate precisely on such cube  $Q\in \DD_\sigma$ and  denote it by $Q=b(P)$.  We say that $Q$ is the {\it boundary cube} of $P$. 

Conversely,
given $Q\in\DD_\sigma$, we let 
\begin{equation}\label{eqwq00}
w(Q) = \bigcup_{P\in\WW(\Omega):Q=b(P)} P.
\end{equation}
In the case of $n$-Ahlfors regular boundary, it is immediate to check that $w(Q)$ is made up at most of a {uniformly} bounded number of cubes $P$, but it may happen that $w(Q)=\varnothing$.

In higher co-dimensions where $s<n$ it is also true that for every boundary cube $Q \in \pom$, 
there exist a uniformly bounded number of Whitney cubes $P \in \WW(\om)$ such that $b(P) = Q$.  For the proof of this fact 
one can see \cite[Lemma 4.16, Lemma 4.18]{MaPo}.

We also denote the ``fattened" Whitney region of $Q$ by 
\begin{equation}\label{eqwqfat}
\wt{w}(Q)=\bigcup_{P\in\WW(\om): Q=b(P)} 1.1P.
\end{equation}

\begin{remark}\label{rem:whitney overlap}
If $x \in \bar P \in \WW(\om)$,  then there exists a constant $C_w>1$ depending only on $n$, the constants of the Whitney decomposition, and the $s$-Alhlfors regularity,  so that for every $P' \in \WW(\om)$ that has the property $x \in 1.1 P'$, it holds that
$$
b(P') \subset B(x_{b(P)}, C_w \ell(P))=:B_{P}.
$$
\end{remark}

\vvv

\section{Regularized dyadic extension of functions on the boundary}\label{sec:dyadic extension}

Let $\om \subset \rrn$, $n \geq 1$,  be an open set and let  $\WW(\Omega)$ be the collection of Whitney cubes in which $\om$ is decomposed as in Subsection \ref{subsec: Whitney}.  Let  $\{\vp_P\}_{P\in\WW(\Omega)}$ be a partition of unity subordinate to the open cover $\{1.1 P\}_{P \in \WW(\Omega)} $  such that
\begin{equation*}
\vp_P\in C^\infty(\rn),\,\, \,|\nabla \vp_P|\lec \frac{1}{\ell(P)}, \,\,\, \supp\vp\subseteq 1.1 P, \,\,\textup{and}\,
\sum_{P\in\WW(\Omega)}\vp_P(x)={\bf1}_{\Omega}(x), x \in \om.
\end{equation*}
For $f\in L^1_{\loc}(\sigma)$,  we define the {\it regularized dyadic extension of $f$} in $\Omega$ by
\begin{equation}\label{eq:dyadext}
\upsilon_f(x):=
\begin{dcases}
\underset{{P\in\WW(\Omega)}}{\sum} m_{\sigma, b(P)}f\, \varphi_P(x) &\textup{if}\,\,  x \in \Omega\\
f(x) &\textup{if}\,\,x\in \partial \Omega.
\end{dcases}
\end{equation} 
where,  in the case that $\om$ is an unbounded domain with compact boundary,  we set $b(P)=\pom$ for every $P \in \WW(\om)$ with  $\ell(P)\geq \diam(\pom)$.

\vv

The fact that $\upsilon_f$ is indeed an extension of $f$ in $\Omega$ (in the non-tangential sense) is proved in the following lemma. 
\begin{lemma}\label{lemma:ntconvergenceofdyadic}
Let $\om \in \AR(s)$ for $s \in(0,n]$ and $f \in L^1_{\loc}(\sigma_s)$. There exists $\alpha>0$  such that 
$$
\ntlim_{\substack{x\to \xi}} \upsilon_f(x) = f(\xi),  \quad \textup{for}\, \,\sigma\textup{-a.e.}\, \,\xi \in \pom,
$$
for some cone $\gamma_\alpha$ where $\alpha>0$ only depends  on $n$,  the  Ahlfors regularity constant,  and the constants of the corkscrew condition.
\end{lemma}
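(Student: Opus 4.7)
The plan is to reduce the claim to the Lebesgue differentiation theorem on the doubling metric measure space $(\pom,\sigma_s)$. Since $\sigma_s$ is $s$-Ahlfors regular, it is doubling, and so for $\sigma_s$-a.e. $\xi \in \pom$ we have
\[
\lim_{r \to 0} \fint_{B(\xi,r) \cap \pom} |f(y) - f(\xi)|\,d\sigma_s(y) = 0;
\]
call such $\xi$ a Lebesgue point of $f$. I will fix such a $\xi$, pick $\alpha>0$ universal (to be determined from the Whitney/boundary-cube constants), and show $\upsilon_f(x) \to f(\xi)$ for $x \to \xi$ through $\gamma_\alpha(\xi)$.

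The main geometric step is the following uniform control: for $x \in \gamma_\alpha(\xi)$, only Whitney cubes $P$ with $x \in 1.1P$ contribute to the sum defining $\upsilon_f(x)$, and for each such $P$ one has $\ell(P) \approx \delta_\om(x)$ and, by Remark~\ref{rem:whitney overlap}, $b(P) \subset B_P = B(x_{b(P_0)}, C_w \ell(P))$ where $P_0$ is any fixed Whitney cube containing $x$. Combining this with the triangle inequality and $|x-\xi| < (1+\alpha)\delta_\om(x)$ yields $b(P) \subset B(\xi, C\delta_\om(x))$ for a universal constant $C = C(n, c_w, C_0, \alpha)$. Since $\sigma_s(b(P)) \approx \ell(P)^s \approx \delta_\om(x)^s \approx \sigma_s(B(\xi, C\delta_\om(x)))$ by Ahlfors regularity, we get
\[
|m_{\sigma_s, b(P)} f - f(\xi)| \leq \frac{1}{\sigma_s(b(P))} \int_{b(P)} |f - f(\xi)|\, d\sigma_s \lesssim \fint_{B(\xi, C\delta_\om(x)) \cap \pom} |f - f(\xi)|\, d\sigma_s.
\]

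Finally, using the partition of unity identity $\sum_{P \in \WW(\om)} \vp_P(x) = 1$ for $x \in \om$, I would write
\[
\upsilon_f(x) - f(\xi) = \sum_{P : x \in 1.1P} (m_{\sigma_s, b(P)} f - f(\xi))\, \vp_P(x),
\]
which gives the pointwise bound
\[
|\upsilon_f(x) - f(\xi)| \leq \max_{P : x \in 1.1P}|m_{\sigma_s, b(P)} f - f(\xi)| \lesssim \fint_{B(\xi, C\delta_\om(x)) \cap \pom} |f - f(\xi)|\, d\sigma_s.
\]
As $x \to \xi$ non-tangentially, $\delta_\om(x) \to 0$, so the right-hand side tends to $0$ by the Lebesgue point property of $\xi$.

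The only subtlety is the verification that the constant $\alpha$ and the enclosing radius $C\delta_\om(x)$ depend solely on $n$, the Ahlfors-regularity constant $C_0$, and the corkscrew constant (through the Whitney parameters $\Lambda, \Lambda', D_0, c_w$); this follows by tracking constants in Remark~\ref{rem:whitney overlap} and the fact that $\dist(P,\pom)\approx \ell(P)$, and imposes no restriction on the non-tangential aperture beyond choosing $\alpha$ small enough that the containment $b(P)\subset B(\xi, C\delta_\om(x))$ holds uniformly. The main obstacle, though essentially book-keeping rather than conceptual, is making sure that in higher co-dimension ($s<n$) the boundary cube $b(P)$ still has $\sigma_s$-measure comparable to $\ell(P)^s$ and lies within a ball centered at $\xi$ of radius $\approx \delta_\om(x)$, which is ensured by the definition \eqref{definitionb(P)} of $b(P)$ and the finite overlap property for boundary cubes in $\AR(s)$ domains recalled just before Remark~\ref{rem:whitney overlap}.
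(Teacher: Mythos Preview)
Your proposal is correct and follows essentially the same approach as the paper: both reduce to the Lebesgue differentiation theorem on $(\pom,\sigma_s)$, write $\upsilon_f(x)-f(\xi)=\sum_{P:x\in 1.1P}(m_{\sigma_s,b(P)}f-f(\xi))\vp_P(x)$ using the partition of unity, and use Remark~\ref{rem:whitney overlap} together with Ahlfors regularity to enclose each $b(P)$ in a ball $B(\xi,C\delta_\om(x))$ of comparable $\sigma_s$-measure. One small remark: the containment $b(P)\subset B(\xi,C\delta_\om(x))$ holds for \emph{every} aperture $\alpha>0$ (with $C=C(\alpha)$), so there is no need to choose $\alpha$ small; the lemma's ``there exists $\alpha$'' is satisfied trivially.
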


\begin{proof}
By \cite[Theorem 1.33]{EG92},  it holds that 
\begin{equation}\label{eq:Lebdiff}
\lim_{r \to 0} m_{\sigma, B(\xi, r)}( |f - f(\xi)|) = 0, \textup{for}\,\,  \sigma\textup{-a.e.} \,\xi \in \pom.
\end{equation}
Fix $\ve>0$ and $\xi\in\pom$ be a point such that \eqref{eq:Lebdiff} holds.  Let $0<\ve'< \ve$ to be picked later. Then there exists $\delta=\delta(\ve',\xi)>0$ such that $m_{\sigma, B(\xi, r)}( |f - f(\xi)|)<\ve'$ for every $r<\delta$.  Let now $0<\delta' <\delta$ be a small constant which  will be chosen momentarily.  For fixed $x\in \gamma_{\alpha}(\xi) \cap B(\xi, \delta')$,  we have that
$$
| \upsilon_f(x) - f(\xi) |\leq \sum_{P\in\WW(\om)} | m_{\sigma, b(P)}f - f(\xi) |\, \vp_P(x).
$$
Let $P_0 \in \WW(\om)$ be a fixed cube so that $x \in \bar P_0$.  Then the only cubes $P\in\WW(\om)$ that contribute to the sum are the ones that $x\in 1.1P$ and,  by Remark \ref{rem:whitney overlap},  $b(P) \subset B_{P_0}=B(x_{P_0}, C_w \ell(P_0))$.  By the properties of Whitney cubes, there exists a constant $C_w'>C_w$ such that $B_P \subset B(\xi, C_w'\,\ell(P))$. Therefore,  choosing  $\delta'>0$ sufficiently small so that $B(\xi, C_w'\,\ell(P)) \subset B(\xi,  \delta/2)$,  we get that for any such $P$,
\begin{align*}
 |m_{\sigma, b(P)}f - f(\xi)|   \lec m_{\sigma, B_{P_0}}(| f - f(\xi) |)  \lesssim \ve',
\end{align*}
which, in turn,  by the bounded overlaps of the Whitney cubes,  infers that there exists a constant $C>1$ such that 
$$
|\upsilon_f(x) - f(\xi)| < C \, \ve',
$$
concluding the proof of the lemma once we choose  $\ve= C \ve'$. 
\end{proof}

\vv

\begin{lemma}\label{lem: gradufestimates}
Let $\om \in \AR(s)$ for $s \in(0,n]$.  Assume that  $f \in L^1_{\loc}(\sigma)$ and $\upsilon_f$ is the extension defined in \eqref{eq:dyadext}.  For any $P\in\WW(\om)$,  we  have that
\begin{equation}\label{graduf}
\sup_{x \in \bar P }|\nabla \upsilon_f(x)| \lec \ell(P)^{-1} \, m_{\sigma, B_P}(|f|).
\end{equation}
If, additionally, $f\in\Lambda_\beta(\pom)$ for $\beta \in [0,1)$,  then it holds  that
\begin{equation}\label{gradufbmo}
\sup_{x \in \bar P } |\nabla \upsilon_f(x)| \lec \ell(P)^{\beta-1} \, \|f \|_{\Lambda_\beta(\pom)},
\end{equation}
while, if $ f \in \dot{M}^{1, p}(\sigma)$, we get that 
\begin{equation}\label{gradufW}
| \nabla \upsilon_f(x) |\lec m_{\sigma, B_P }(\nabla_{H,p} f),
\end{equation}
where $\nabla_{H,p} f$ is the least Haj\l asz  upper gradient of $f$. 
Moreover,  for any $\xi \in\pom$,
\begin{equation}\label{ntMFuf}
\calN_{\alpha}(\upsilon_f)(\xi) \lec_\alpha \mathcal{M}f(\xi), 
\end{equation}
\end{lemma}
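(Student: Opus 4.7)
The starting point is the identity $\sum_{P\in\WW(\om)}\nabla\vphi_P(x)=0$ for $x\in\om$, which follows by differentiating the partition of unity relation. Consequently, for \emph{any} constant $c\in\R$,
\[
\nabla \upsilon_f(x)=\sum_{P'\in\WW(\om)}(m_{\sigma,b(P')}f-c)\,\nabla\vphi_{P'}(x), \qquad x\in\om.
\]
Fix $P\in\WW(\om)$ and $x\in\bar P$. Only those $P'$ with $x\in 1.1 P'$ contribute, and by property (iii) of the Whitney decomposition there are at most $D_0$ of them, each with $\ell(P')\approx\ell(P)$. Moreover, Remark \ref{rem:whitney overlap} gives $b(P')\subset B_P$ with $\ell(b(P'))\approx\ell(B_P)\approx\ell(P)$, so $s$-Ahlfors regularity yields $\sigma(b(P'))\approx\sigma(B_P)$. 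Finally, $|\nabla\vphi_{P'}|\lec\ell(P')^{-1}\approx\ell(P)^{-1}$ in $1.1 P'$.

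For \eqref{graduf} I take $c=0$ and bound $|m_{\sigma,b(P')}f|\le m_{\sigma,b(P')}(|f|)\lec m_{\sigma,B_P}(|f|)$ using the Ahlfors comparability of $\sigma(b(P'))$ and $\sigma(B_P)$; summing the $D_0$ terms gives the claimed bound. For \eqref{gradufbmo} I use the cancellation with $c=m_{\sigma,B_P}f$ and estimate
\[
|m_{\sigma,b(P')}f-m_{\sigma,B_P}f|\le m_{\sigma,b(P')}|f-m_{\sigma,B_P}f|\lec m_{\sigma,B_P}|f-m_{\sigma,B_P}f|\lec \ell(P)^{\beta}\|f\|_{\Lambda_\beta(\pom)},
\]
again by the Ahlfors comparison and the definition of the Campanato seminorm, which multiplied by $\ell(P)^{-1}$ gives \eqref{gradufbmo}. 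For \eqref{gradufW} I pick $c=m_{\sigma,B_P}f$ as before, but now estimate the difference of averages via the Haj\l asz upper gradient inequality applied pointwise: for $y\in b(P')\subset B_P$ and $z\in B_P$, $|y-z|\lec\ell(P)$, so averaging \eqref{eq: hajuppergrad} in $y$ and $z$ gives
\[
|m_{\sigma,b(P')}f-m_{\sigma,B_P}f|\lec \ell(P)\bigl(m_{\sigma,b(P')}(\nabla_{H,p}f)+m_{\sigma,B_P}(\nabla_{H,p}f)\bigr)\lec \ell(P)\,m_{\sigma,B_P}(\nabla_{H,p}f),
\]
and the factor $\ell(P)^{-1}$ from $|\nabla\vphi_{P'}|$ cancels $\ell(P)$, yielding \eqref{gradufW}.

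Finally, for \eqref{ntMFuf} no cancellation is needed: if $x\in\gamma_\alpha(\xi)$, pick any $P\in\WW(\om)$ with $x\in\bar P$, so $\ell(P)\approx\delta_\om(x)\approx|x-\xi|/(1+\alpha)$. Then
\[
|\upsilon_f(x)|\le\sum_{P':\,x\in 1.1P'}m_{\sigma,b(P')}(|f|)\,\vphi_{P'}(x)\lec m_{\sigma,B_P}(|f|),
\]
and since $B_P\subset B(\xi,C_\alpha\ell(P))$ for some $C_\alpha>0$, Ahlfors regularity gives $m_{\sigma,B_P}(|f|)\lec m_{\sigma,B(\xi,C_\alpha\ell(P))}(|f|)\le \mathcal{M}f(\xi)$, proving \eqref{ntMFuf}.

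The argument is genuinely routine once one notices the cancellation together with the Whitney bookkeeping; the only point requiring care is to use the Ahlfors regularity at the right step to compare averages over $b(P')$ with averages over $B_P$, which is what makes all four estimates uniform in $P$.
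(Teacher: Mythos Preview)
Your proof is correct and follows essentially the same approach as the paper: exploit the cancellation $\sum_{P'}\nabla\vphi_{P'}=0$ to subtract a convenient constant, then use the bounded overlap of Whitney cubes together with Ahlfors regularity to compare averages over $b(P')$ with averages over $B_P$. The only cosmetic difference is that the paper subtracts $m_{\sigma,b(P)}f$ rather than $m_{\sigma,B_P}f$ in the proofs of \eqref{gradufbmo} and \eqref{gradufW}, but this is an immaterial choice.
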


\begin{proof}
Fix $x\in \bar P\in\WW(\om)$.  For the proof of the estimate \eqref{graduf}, just note that
$$
|\nabla \upsilon_f(x)| \lec \sum_{\substack{ P'\in\WW(\om) \\ x\in 1.1P'}} m_{\sigma, b(P')}(|f|) \,  \ell(P')^{-1}
\lec_{D_0} m_{\sigma, B_P}(|f|)\,  \ell(P)^{-1},
$$
where we used that  $\ell(P)\approx   \ell(P')$.
If,  in addition, $f \in \Lambda_\beta(\pom)$, then, using the fact that
$\nabla\big(\sum_{P\in\WW(\om)}\vp_P(x)\big)=0$,  we get
\begin{align*}
|\nabla\upsilon_f(x)|&=\Big| \sum_{P'\in\WW(\om)}(m_{\sigma, b(P')}f - m_{\sigma, b(P)}f) \nabla\vp_{P'}(x)\Big| \\
&\lec \ell(P)^{-1} \sum_{\substack{P'\in\WW(\om) \\ x\in 1.1P'}}|m_{\sigma, b(P')}f - m_{\sigma, b(P)}f| \\ 
&\lec\ell(P)^{-1}\sum_{\substack{P'\in\WW(\om) \\ x\in 1.1P'}}m_{\sigma, b(P')} (|f - m_{\sigma,  B_P}f|) \lec_{D_0}  \| f \|_{\Lambda_\beta(\pom)} \ell(P)^{\beta-1}.
\end{align*}
This obviously implies estimate \eqref{gradufbmo}.

Now,  in order to prove \eqref{gradufW},  let $ f\in \dot{M}^{1, p}(\sigma) $ and  $ x\in \bar{P} $.  Then we have 
\begin{align*}
| \nabla \upsilon_f (x) | &= \big| \sum_{ P' \in \WW(\om) } m_{\sigma, b(P')} f  \, \nabla \vp_{P'} (x) \big| 
\lec \sum_{ \substack{ P'\in \WW(\om) \\ x\in 1.1P' }} \frac{1}{\ell(P')} | m_{\sigma, b(P')} f - m_{\sigma, b(P)} f |  \\
&\lec \sum_{ \substack{ P' \in \WW(\om) \\ x\in 1.1P' }} \frac{1}{\ell(P')}
 \fint_{b(P')} \fint_{b(P)} | x - y | [ \nabla_{H,p} f(x) + \nabla_{H,p} f(y) ] \, d\sigma(x)  \, d\sigma(y) \\
 &\lec \sum_{\substack{ P'\in \WW(\om) \\ x\in 1.1P' }} ( m_{\sigma, b(P')}(\nabla_{H,p} f) + m_{\sigma, b(P)}(\nabla_{H,p} f))
 \lec_{D_0} m_{\sigma, B_P} (\nabla_{H,p} f),
\end{align*}
where $ B_P $ was defined in Remark \ref{rem:whitney overlap}.  

 Finally, fox fixed $\xi\in\pom$, if $x \in \gamma_\alpha(\xi)$,  we have  
$$
|\upsilon_f(x)| \leq \sum_{\substack{P'\in\WW(\om)\\x\in 1.1P'}} m_{\sigma, b(P)}(|f|)\, \vp_P(x) 
\lec m_{\sigma, B_P}(|f|) \lec  m_{\sigma, B(\xi, C_w' \ell(P))}(|f|)  \leq \mathcal{M}f(\xi),
$$
for some constant $C_w'>C_w$ depending on $\alpha$ and the Whitney constants.  This readily proves \eqref{ntMFuf} by taking supremum over all  $x \in \gamma_\alpha(\xi)$.
 \end{proof}

\vv
\begin{lemma}\label{lem: sharpntmaxfunctionuf bmo}
If $f\in\Lambda_\beta(\pom)$ for $\beta \in [0,1)$, then it holds that 
\begin{equation}\label{eq: sharpntMFuf-bmo}
\sup_{\xi \in \pom} \calN^{(\beta)}_{\sharp}(\upsilon_f)(\xi)\lec \| f \|_{\Lambda_\beta(\pom)}.
\end{equation}\end{lemma}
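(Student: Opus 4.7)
The plan is to control the sharp non-tangential maximal function pointwise by reducing it to a simple oscillation estimate for $\upsilon_f$ on a Whitney-scale ball, and then feeding in the gradient bound \eqref{gradufbmo} from Lemma \ref{lem: gradufestimates}. Concretely, unwinding the definitions it suffices to prove that
\begin{equation}\label{eq:plan-red}
\sup_{y,z \in c B^x} |\upsilon_f(y) - \upsilon_f(z)| \lesssim \delta_\om(x)^{\beta}\, \|f\|_{\Lambda_\beta(\pom)},\qquad x\in\om,
\end{equation}
for any $c\in(0,1/2]$. Indeed, \eqref{eq:plan-red} implies
$$
m_{\sharp,c}(\upsilon_f)(x) = \sup_{y\in cB^x}|\upsilon_f(y) - m_{cB^x}\upsilon_f| \leq \sup_{y,z\in cB^x}|\upsilon_f(y)-\upsilon_f(z)| \lesssim \delta_\om(x)^{\beta}\|f\|_{\Lambda_\beta(\pom)},
$$
and then multiplying by $\delta_\om(x)^{-\beta}$ and taking supremum over $x\in\gamma_\alpha(\xi)$ and $\xi\in\pom$ gives the conclusion.

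To establish \eqref{eq:plan-red}, fix $x\in\om$ and $y,z\in cB^x$. Since $c\leq 1/2$, the segment $[y,z]$ lies in $B(x,\tfrac12\delta_\om(x))\subset\om$, and $\upsilon_f\in C^\infty(\om)$, so
$$
|\upsilon_f(y) - \upsilon_f(z)| \leq |y-z| \cdot \sup_{w\in [y,z]} |\nabla \upsilon_f(w)| \leq 2c\,\delta_\om(x) \sup_{w\in [y,z]} |\nabla \upsilon_f(w)|.
$$
Any Whitney cube $P$ meeting $[y,z]$ satisfies $\ell(P)\approx \dist(P,\pom)\approx\delta_\om(x)$ by the basic Whitney properties, because each such $P$ contains a point $w$ with $\delta_\om(w) \geq \delta_\om(x) - |x-w| \geq \tfrac12 \delta_\om(x)$ and $\delta_\om(w)\leq \delta_\om(x)+|x-w|\leq \tfrac32\delta_\om(x)$. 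Applying the bound \eqref{gradufbmo} on each such Whitney cube yields
$$
\sup_{w\in [y,z]} |\nabla \upsilon_f(w)| \lesssim \delta_\om(x)^{\beta-1}\,\|f\|_{\Lambda_\beta(\pom)},
$$
and combining the last two displays gives \eqref{eq:plan-red}, completing the proof.

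The argument is therefore essentially a one-line consequence of Lemma \ref{lem: gradufestimates}, with no genuine obstacle; the only point requiring care is the observation that all Whitney cubes intersecting $cB^x$ have side-length comparable to $\delta_\om(x)$, which is where the restriction $c\leq 1/2$ is used to guarantee $cB^x\Subset\om$ and uniform comparability of scales. Note also that this proof uses only \eqref{gradufbmo} and the geometry of Whitney cubes, so it works verbatim in the full range $\beta\in[0,1)$ and requires no additional connectivity assumption on $\om$.
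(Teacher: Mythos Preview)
Your proof is correct and follows essentially the same approach as the paper: both reduce to bounding $|\upsilon_f(y)-\upsilon_f(z)|\lesssim \delta_\om(x)^{\beta}\|f\|_{\Lambda_\beta(\pom)}$ for $y,z\in cB^x$, using that all relevant Whitney cubes have side-length $\approx\delta_\om(x)$. The only difference is cosmetic: the paper estimates the difference directly by writing $\upsilon_f(y)-\upsilon_f(z)=\sum_P(\vp_P(y)-\vp_P(z))(m_{\sigma,b(P)}f-m_{\sigma,B_0}f)$ and invoking the Campanato bound, whereas you invoke the already-proven gradient estimate \eqref{gradufbmo} via the mean value theorem along the segment $[y,z]$ --- a slight streamlining, since the paper's computation essentially reproves \eqref{gradufbmo} inline.
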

\begin{proof}
Fix $\xi\in\pom$ and take $x\in\gamma_\alpha(\xi)$ with 
$x\in\bar{P_0}$, for some $P_0\in\WW(\om)$.  It is enough to bound the difference 
$$
\sup_{y \in B(x, c\delta_\om(x))}\fint_{B(x, c\delta_\om(x))}|\upsilon_f(y)-\upsilon_f(z)|\, dz.
$$
To this  end, fix a point $z\in B(x, c \delta_\om(x))$, with $z\in \bar{P}_1\in\WW(\om)$ and a point $y \in B(x, c \delta_\om(x)) $ with 
$y\in\bar{P}_2\in\WW(\om)$.  Since $c>0$ is small enough,  the Whitney cubes $P_1, P_2$ and $P_0$ are ``close-enough" to each other, in the sense 
that the intersection of a dilation of these cubes is non-empty. 
Then,  by the properties of Whitney cubes, we get that $\ell(P_1)\approx\ell(P_2)\approx\ell(P_0)\approx \delta_\om(x)$. 
Thus, there exist a large enough constant $\Lambda_0>1$, such that for any $P\in\WW(\om)$ with
$1.1P\cap (P_1\cup P_2)\neq \emptyset$,  we have that
$$
B_{b(P)}\subset B_0:= B(x_{b(P_0)}, \Lambda_0\ell(P_0)).
$$
It holds  that
$$
\sum_{\substack{P\in\WW(\om) \\ P\cap B(x, c\delta_\om(x))\neq \emptyset}}(\vp_P(z)-\vp_P(y)) \, m_{\sigma, B_0}f = 0,
$$
and so 
\begin{align*}
&|\upsilon_f(y)-\upsilon_f(z)| \leq \sum_{\substack{P\in\WW(\om)\\ P\cap B(x, c\delta_\om(x))\neq \emptyset}} 
|\vp_P(y)-\vp_P(z)| |m_{\sigma, b(P)}f - m_{\sigma, B_0}f | \\
&\lec \sum_{\substack{P\in\WW(\om)\\ P\cap B(x, c\delta_\om(x))\neq \emptyset}}
\|\nabla \vp_P\|_{L^\infty}|y-z| |m_{\sigma, b(P)}f - m_{\sigma, B_0} f | \\
&\lec \sum_{\substack{P\in\WW(\om)\\ P\cap B(x, c\delta_\om(x))\neq \emptyset}}
 m_{\sigma, b(P)}(| f - m_{\sigma, B_0}f |)
\lec  \| f \|_{\Lambda_\beta(\pom)} \ell(P_0)^{\beta},
\end{align*}
since there are uniformly bounded many Whitney cubes $P\cap B(x, c\delta_\om(x))\neq \emptyset $.  This readily implies \eqref{eq: sharpntMFuf-bmo}.
\end{proof}

\vv

\begin{lemma}\label{lem:carleson-upsilon}
Let $\om \in \AR(s)$ for $s \in (0,n]$.  If $f \in \Lambda_\beta(\pom)$ for $\beta \in (0,1)$,  then 
\begin{equation}\label{eq:carleson-upsilon}
\sup_{\xi \in \pom} \mathcal{C}^{(\beta)}_s(\nabla \upsilon_f)(\xi) \lec \|f\|_{ \Lambda_\beta(\pom)}.
\end{equation}
\end{lemma}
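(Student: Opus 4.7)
The plan is to reduce the Carleson estimate to a purely geometric integral of a power of the distance-to-boundary, which can then be evaluated via the Whitney decomposition and Ahlfors regularity.

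First I would observe that for fixed $x \in \om$, any $y$ in $cB^x = B(x, c\delta_\om(x))$ with $c \leq 1/4$ lies in a Whitney cube $P_y$ whose side length is comparable to $\delta_\om(x)$ (by standard properties of Whitney cubes). Combining this with the $\BMO$/Campanato gradient bound \eqref{gradufbmo} from Lemma \ref{lem: gradufestimates}, one obtains the pointwise estimate
\begin{equation*}
m_{\infty,c}(\nabla \upsilon_f)(x) = \sup_{y \in cB^x}|\nabla \upsilon_f(y)| \lec \delta_\om(x)^{\beta-1}\,\|f\|_{\Lambda_\beta(\pom)}.
\end{equation*}
Plugging this into the definition \eqref{eq: carleson sup} of $\mathcal C^{(\beta)}_{s,c}$, the problem reduces to showing, uniformly in $\xi \in \pom$ and $r>0$,
\begin{equation*}
I(\xi,r) := \int_{B(\xi,r)\cap\om} \delta_\om(x)^{\beta-1}\,\delta_\om(x)^{s-n}\,dx \lec r^{s+\beta}.
\end{equation*}

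Next I would estimate $I(\xi,r)$ by summing over Whitney cubes. Since $\delta_\om(x) \approx \ell(P)$ on each $P \in \WW(\om)$, and $|P| \approx \ell(P)^{n+1}$,
\begin{equation*}
I(\xi,r) \lec \sum_{\substack{P \in \WW(\om)\\ P \cap B(\xi,r)\neq\emptyset}} \ell(P)^{\beta-1+s-n}\,\ell(P)^{n+1} = \sum_{\substack{P \in \WW(\om)\\ P \cap B(\xi,r)\neq\emptyset}} \ell(P)^{s+\beta}.
\end{equation*}
By the property \eqref{eq:delta<ellP} of the Whitney decomposition, any $P$ appearing in this sum has $\ell(P) \lec r$, so $\ell(P) = 2^{-j}$ for some $j$ with $2^{-j} \lec r$.

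The main point is the counting of Whitney cubes of a fixed side length intersecting $B(\xi,r)$. For each such $P$, its boundary cube $b(P) \in \DD_\sigma$ has $\ell(b(P)) = \ell(P) = 2^{-j}$ and $\dist(b(P), P) \approx 2^{-j}$, so $b(P) \subset B(\xi, C r)$ for some absolute $C$. Since there are only uniformly boundedly many Whitney cubes assigned to a given boundary cube and each boundary cube of side $2^{-j}$ has $\sigma(b(P)) \approx 2^{-js}$, the $s$-Ahlfors regularity of $\sigma$ gives
\begin{equation*}
\#\{P \in \WW(\om) : \ell(P) = 2^{-j},\, P \cap B(\xi,r)\neq\emptyset\} \lec \frac{\sigma(B(\xi,Cr))}{2^{-js}} \lec r^s\,2^{js}.
\end{equation*}
Summing the geometric series, which converges precisely because $\beta > 0$,
\begin{equation*}
I(\xi,r) \lec \sum_{j:\, 2^{-j}\lec r} 2^{-j(s+\beta)}\, r^s\,2^{js} = r^s \sum_{j:\, 2^{-j}\lec r} 2^{-j\beta} \lec r^{s+\beta}.
\end{equation*}
Combining with the pointwise bound yields the desired estimate. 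The only potentially delicate step is the counting argument; this is where the hypothesis $\beta>0$ is essential, since for $\beta=0$ the series diverges logarithmically, which is why the $\BMO$ case treated in Theorem \ref{th: introtheorem3} requires a fundamentally different (and much more involved) proof.
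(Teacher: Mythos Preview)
Your proof is correct and follows essentially the same approach as the paper: both use the pointwise gradient bound \eqref{gradufbmo}, decompose the integral over Whitney cubes, pass to the associated boundary cubes, and sum a geometric series in $2^{-j\beta}$ using Ahlfors regularity. You are slightly more explicit in handling the $m_{\infty,c}$ supremum and in phrasing the Whitney-cube count, but the argument is the same.
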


\begin{proof}
By \eqref{gradufbmo}, it is easy to see that for every $\xi \in \pom$ and $r>0$, it holds that
\begin{align*}
\int_{B(\xi,r) \cap \om}& |\nabla \upsilon_f| \,\omega_s(x)\,dx \lec  \|f \|_{\Lambda_\beta(\pom)}\,\sum_{\substack{P \in \WW(\om)\\ P \cap B(\xi,r) \neq \emptyset}}  \ell(P)^{\beta-1} \ell(P)^{s+1}\\
& \lec  \|f \|_{\Lambda_\beta(\pom)}\,\sum_{\substack{P \in \WW(\om)\\ P \cap B(\xi,r) \neq \emptyset}}  \ell(b(P))^{\beta} \sigma(b(P))\\ &\lec   \|f \|_{\Lambda_\beta(\pom)}\,\sum_{k=0}^\infty 2^{-\beta\,k}r^\beta \sum_{\substack{Q \in \DD_{k,\sigma}\\ Q \subset B(\xi, M_0 r)}} \sigma(Q) \lec  \|f \|_{\Lambda_\beta(\pom)}\, r^\beta \, r^s,
\end{align*}
which implies \eqref{eq:carleson-upsilon}.
\end{proof}
\vv

If the boundary function is in $\Lip_\beta(\pom)$ for $\beta \in (0,1]$, then we can show that $\upsilon_f$ is $\Lip_\beta(\oom)$ by arguments similar to the ones in \cite[Lemma 4.2]{MT}.

\begin{lemma}\label{lem: lemma1}
If $f \in \Lip_\beta(\partial\Omega)$ for $\beta \in (0,1]$ and  $\upsilon_f$ is the regularized dyadic extension defined in \eqref{eq:dyadext},  then it holds that  $\upsilon_f \in \Lip_\beta(\oom)$ with $\Lip_\beta(\upsilon_f)\lesssim \Lip_\beta(f)$. 
\end{lemma}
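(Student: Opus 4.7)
The plan is to verify the H\"older estimate $|\upsilon_f(x)-\upsilon_f(y)|\lesssim \Lip_\beta(f)|x-y|^\beta$ by splitting into three regimes: (i) $x,y\in\partial\Omega$, (ii) $x,y\in\Omega$, and (iii) $x\in\partial\Omega$, $y\in\Omega$. Case (i) is immediate since $\upsilon_f=f$ on $\partial\Omega$. The main technical tool is the pointwise gradient bound from Lemma \ref{lem: gradufestimates}, namely
\[
|\nabla \upsilon_f(x)|\lesssim \delta_\Omega(x)^{\beta-1}\|f\|_{\Lambda_\beta(\partial\Omega)}\lesssim \delta_\Omega(x)^{\beta-1}\Lip_\beta(f),
\]
where the second inequality uses the containment $\Lip_\beta(\partial\Omega)\hookrightarrow \Lambda_\beta(\partial\Omega)$ (with the direct $L^\infty$ bound on oscillations).

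For case (ii), I would further split according to the ratio $|x-y|/\delta_\Omega(x)$. If $|x-y|\le \tfrac{1}{4}\delta_\Omega(x)$, then every point $z$ on the segment $[x,y]$ satisfies $\delta_\Omega(z)\ge \tfrac{3}{4}\delta_\Omega(x)$, so integrating the gradient bound along the segment yields
\[
|\upsilon_f(x)-\upsilon_f(y)|\lesssim \Lip_\beta(f)\,|x-y|\,\delta_\Omega(x)^{\beta-1}\lesssim \Lip_\beta(f)|x-y|^\beta,
\]
using $|x-y|\le \delta_\Omega(x)$ and $\beta\le 1$. If instead $|x-y|>\tfrac14\delta_\Omega(x)$, I pick boundary points $\xi_x,\xi_y\in\partial\Omega$ closest to $x,y$ respectively, note $|\xi_x-\xi_y|\lesssim |x-y|$ by the triangle inequality, and use
\[
|\upsilon_f(x)-\upsilon_f(y)|\le |\upsilon_f(x)-f(\xi_x)|+|f(\xi_x)-f(\xi_y)|+|f(\xi_y)-\upsilon_f(y)|.
\]

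The key auxiliary estimate that controls the first and third terms (and also handles case (iii) directly) is the boundary comparison
\[
|\upsilon_f(x)-f(\xi)|\lesssim \Lip_\beta(f)\,|x-\xi|^\beta\quad\text{whenever } \xi\in\partial\Omega,\ x\in\Omega.
\]
I prove it by writing $\upsilon_f(x)-f(\xi)=\sum_{P}(m_{\sigma,b(P)}f-f(\xi))\varphi_P(x)$ and observing that only the Whitney cubes with $x\in 1.1P$ contribute. For such $P$, Remark \ref{rem:whitney overlap} and the definition of the Whitney decomposition give $\mathrm{diam}(b(P))+\mathrm{dist}(b(P),\xi)\lesssim \delta_\Omega(x)+|x-\xi|\lesssim |x-\xi|$ (after choosing $\xi$ so that $|x-\xi|\ge \delta_\Omega(x)/2$; for $\xi=\xi_x$ this is automatic). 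Hence for every $w\in b(P)$, $|f(w)-f(\xi)|\le \Lip_\beta(f)|w-\xi|^\beta\lesssim \Lip_\beta(f)|x-\xi|^\beta$, and averaging plus $\sum_P\varphi_P(x)=1$ yields the claim. Case (iii) follows by taking $\xi=x$ and using $\delta_\Omega(y)\le |x-y|$; the remaining boundary-to-boundary piece $|f(\xi_x)-f(\xi_y)|$ in case (ii) is controlled directly by $\Lip_\beta(f)|\xi_x-\xi_y|^\beta\lesssim \Lip_\beta(f)|x-y|^\beta$.

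The only mildly delicate point is keeping track of the geometric constants in the boundary comparison estimate, i.e.\ ensuring that the boundary cubes $b(P)$ associated to Whitney cubes touching $x$ are all contained in a ball of radius comparable to $|x-\xi|$ around $\xi$; this is precisely the content of Remark \ref{rem:whitney overlap} together with the Whitney property $\ell(P)\approx \delta_\Omega(x)$. Everything else is routine. Combining the three cases gives $\upsilon_f\in \Lip_\beta(\overline\Omega)$ with $\Lip_\beta(\upsilon_f)\lesssim \Lip_\beta(f)$.
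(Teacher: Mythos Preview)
Your argument is correct and follows the same near/far dichotomy as the paper, but packaged differently. The paper splits the interior case according to whether the Whitney cubes $P_1\ni x$, $P_2\ni y$ satisfy $2P_1\cap 2P_2\neq\varnothing$ and works entirely at the discrete level: in the close case it writes $\upsilon_f(x)-\upsilon_f(y)=\sum_P(\varphi_P(x)-\varphi_P(y))(m_{\sigma,b(P)}f-m_{\sigma,b(P_0)}f)$ and bounds each factor by hand; in the far case it splits into $S_1+S_2+S_3$ with $S_3=m_{\sigma,b(P_1)}f-m_{\sigma,b(P_2)}f$, i.e.\ it detours via the dyadic boundary averages rather than actual boundary points $\xi_x,\xi_y$. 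Your route is a bit more streamlined because it reuses the gradient bound \eqref{gradufbmo} as a black box for the close case and goes directly to $\partial\Omega$ for the far case; the paper's route is self-contained and does not invoke Lemma~\ref{lem: gradufestimates}. One minor caveat: \eqref{gradufbmo} is stated in the paper only for $\beta\in[0,1)$ (since $\Lambda_\beta$ is defined for $\beta<1$), so for $\beta=1$ you should either note that its proof goes through verbatim with $\Lip_\beta(f)$ in place of $\|f\|_{\Lambda_\beta}$, or appeal to \eqref{gradufW} with a constant Haj\l asz gradient. Also, your parenthetical ``after choosing $\xi$ so that $|x-\xi|\ge\delta_\Omega(x)/2$'' is unnecessary: $|x-\xi|\ge\delta_\Omega(x)$ holds for every $\xi\in\partial\Omega$, so the boundary comparison estimate is valid for arbitrary $\xi$.
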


\begin{proof}
 We start by proving that $\upsilon_f\in \Lip_\beta(\Omega)$.   Fix $x,y\in\Omega$ and  let $P_1,  P_2 \in \WW(\Omega)$ such that $x\in\bar{P_1}$ and 
$y\in\bar{P_2}$.  We split into  cases.

 {\bf Case 1:} Suppose that $2P_1\cap 2P_2\neq\emptyset$.  In this case let $P_0\in\WW(\Omega)$ be the smallest cube such that it holds
$$
2B_{b(P)}\subset 2B_{b(P_0)},  \,\, \textup{for every}\,\, P\in\WW(\Omega),\,\, \textup{with}\, 1.1P\cap(P_1\cup P_2)\neq\emptyset.
$$
Then,  by the properties of Whitney cubes, we get that $\ell(P_1)\approx\ell(P_2)\approx\ell(P_0)$. 
As
$$
\upsilon_f(x)-\upsilon_f(y) = \sum_{P\in\WW(\Omega)}m_{\sigma, b(P)}f \,(\vp_P(x)-\vp_P(y))
$$
and
$$
\sum_{P\in\WW(\Omega)}(\vp_P(x) - \vp_P(y))\,m_{\sigma, b(P_0)} f
= 0,
$$
we can write 
\begin{align*}
\upsilon_f(x)-\upsilon_f(y)&\\
 = \sum_{P\in\WW(\Omega)} &m_{\sigma, b(P)} f \,(\vp_P(x)-\vp_P(y)) -\sum_{P\in\WW(\Omega)}  m_{\sigma, b(P_0)}f \,(\vp_P(x)-\vp_P(y)) \\
= \sum_{P\in\WW(\Omega)}&(\vp_P(x)-\vp_P(y))\left(m_{\sigma, b(P)}f - m_{\sigma, b(P_0)}f \right)
\end{align*} 
and thus we get 
\begin{align*}
|\upsilon_f(x)-\upsilon_f(y)| &\leq  \sum_{P\in\WW(\Omega)}|\vp_P(x)-\vp_P(y)| \left| m_{\sigma, b(P)}f - m_{\sigma, b(P_0)}f \right|.
\end{align*}
Observe now that 
$$
|\vp_P(x)-\vp_P(y)|\leq|\nabla\vp_P||x-y|\lec{\ell(P)}^{-1}|x-y|
$$
while, for fixed $w\in 2B_{b(P_0)}$, we can estimate 
\begin{align*}
\left| m_{\sigma, b(P)}f - m_{\sigma, b(P_0)}f \right| &\leq m_{\sigma, b(P)}(|f(z)-f(w)|) + m_{\sigma, b(P_0)}(|f(z)-f(w)|) \\
&\lec {\Lip}_{\beta}(f)\,\ell(P_0)^{\beta}.
\end{align*}
To deal with the  sum,  we may assume that the cubes $P$ appearing in the sum are such that either $1.1P\cap P_1\neq\varnothing$ or
$1.1P\cap P_2\neq\varnothing$, since otherwise the associated summand vanishes. We denote by $I_0$ the family of such cubes.
So the cubes from $I_0$ are such that $B_{b(P)}\subset 2B_{b(P_0)}$ and they satisfy $\ell(P)\approx\ell(P_0)$.  Combining this observation with the last two estimates and the fact that $|x-y| \lec \ell(P_0)$,   we  obtain 
\begin{align}\label{eq:dyadext-Lip1}
|\upsilon_f(x)-\upsilon_f(y)| &\lec \sum_{P\in\WW(\Omega)}\frac{1}{\ell(P)}|x-y|{\Lip}_{\beta}(f)\,\ell(P_0)^{\beta} 
\lec_{D_0} {\Lip}_{\beta}(f)\,|x-y|^{\beta}.
\end{align}

{\bf Case 2:} Suppose that $2P_1\cap 2P_2=\emptyset$. In this case we have 
\begin{align*}
\upsilon_f(x)-\upsilon_f(y) = \sum_{P\in\WW(\Omega)} m_{\sigma, b(P)}f &\, (\vp_P(x)-\vp_P(y)) \\
= \sum_{P\in\WW(\Omega)}\vp_P(x)(m_{\sigma, b(P)}f - &m_{\sigma, b(P_1)}f ) +\sum_{P\in\WW(\Omega)}\vp_P(y)\left(m_{\sigma, b(P_2)}f - m_{\sigma, b(P)}f \right) \\
&+ \left( m_{\sigma, b(P_1)}f - m_{\sigma, b(P_2)}f \right) =:S_1+S_2+S_3.
\end{align*}
If we use that $\ell(P_i)\lesssim |x-y|$ and the fact that $| m_{\sigma, b(P)}f - m_{\sigma, b(P_i)}f | \lec  {\Lip}_{\beta}(f)\,\ell(P_i) ^{\beta}$,  for $i=1,2$, we can show that
$$ 
|S_1| + |S_2| \lec  {\Lip}_{\beta}(f)\,|x-y|^{\beta}.
$$
 It remains to bound $S_3$.  If $w_1\in b(P_1)$ and $w_2\in b(P_2)$,
 $$
 |S_3| \leq \left| m_{\sigma, b(P_1)}f - f(w_1)\right| + |f(w_1)-f(w_2)| + \left| f(w_2) - m_{\sigma, b(P_2)}f \right|.
$$
Since $f\in \Lip_\beta(\partial\Omega)$, 
\begin{align*}
|f(w_1)-f(w_2)| &\leq {\Lip}_{\beta}(f)\,|w_1-w_2|^{\beta} \leq {\Lip}_{\beta}(f)\,\big(|w_1-x|^{\beta} +|x-y|^{\beta} +|y-w_2|^{\beta} \big)\\
&\lec{\Lip}_{\beta}(f)\,\big(\ell(P_1)^{\beta}  + |x-y|^{\beta} + \ell(P_2)^{\beta} \big)\lec {\Lip}_{\beta}(f)\,|x-y|^{\beta},
\end{align*}
while, for $i=1,2$,  once again using that $f\in {\Lip}_{\beta}(\partial\Omega)$,  it is easy to see that
 \begin{align*}
 \left| m_{\sigma, b(P_i)}(f - f(w_i))\right| &\leq m_{\sigma, b(P_i)} (| f-f(w_i)|) \leq {\Lip}_{\beta}(f)\,\ell(P_i)^{\beta} \lesssim {\Lip}_{\beta}(f)\,|x-y|^{\beta}.
 \end{align*}
 implying that  $ |S_3| \lec \Lip_\beta(f) |x-y|^\beta $.  If we combine the above estimates, we get 
 $$
| \upsilon_f(x)-\upsilon_f(y)| \leq |S_1| + |S_2| + |S_3| \lec {\Lip}_{\beta}(f)\,|x-y|^{\beta}, 
 $$ 
in the case 2 as well and thus for all $x,y \in \Omega$ with $x \neq y$.  This readily implies that $\upsilon_f\in {\Lip}_{\beta}(\Omega)$ with ${\Lip}_{\beta}(\upsilon_f)\leq {\Lip}_{\beta}(f)$. 

It remains to prove that 
\begin{equation}\label{eq:upsilon-holder-boundary}
|\upsilon_f(x) - \upsilon_f(y)|  \lesssim {\Lip}_{\beta}(f)\,|x-y|^{\beta}, \quad \textup{for any } \,\,x\in\pom\,\,\textup{and}\,\, y\in \om.
\end{equation}
To this end, we fix such $x$ and $y$ and estimate
\begin{align*}
|\upsilon_f(x) &- \upsilon_f(y)| = |f(x) - \upsilon_f(y)| 
= \Big| f(x) - \sum_{P\in\WW(\Omega)}m_{\sigma, b(P)}f \,\vp_P(y)\Big| \\
&\leq  \sum_{P\in\WW(\Omega)}\vp_P(y) |f(x) - m_{\sigma, b(P)}f| \leq \sum_{\underset{1.1 P\ni y}{P\in\WW(\Omega):}}\vp_P(y) | f(x) - m_{\sigma, b(P)}f|.
\end{align*}
As  $f\in {\Lip}_{\beta}(\partial\Omega)$,   for every $P \in \WW(\om)$ such that $y \in 1.1P$,  we have that
$$ 
| f(x) - m_{\sigma, b(P)}f|\lec {\Lip}_{\beta}(f) \,\ell(P)^{\beta} \approx  {\Lip}_{\beta}(f) \, \delta_\om(y)^{\beta}  \leq  {\Lip}_{\beta}(f) \,|x-y|^{\beta},
$$
which implies \eqref{eq:upsilon-holder-boundary} by the bounded overlaps of the Whitney cubes that contain $y$,  concluding the proof of the lemma.
\end{proof}

\vv

\begin{proof}[proof of Theorem \ref{th: introtheoremholder}]
It follows by combining Lemmas \ref{lemma:ntconvergenceofdyadic}, \ref{lem: gradufestimates}, \ref{lem: sharpntmaxfunctionuf bmo}, \ref{lem:carleson-upsilon},  and \ref{lem: lemma1} (see also Remark \ref{rem:campanato}).
\end{proof}

\vvv

\section{A Corona decomposition for functions in $L^p$ or $\BMO$}\label{sec: large oscillation cubes}

In this section we will assume that $\om \in \AR(s)$ for $s \in(0,n]$.

\vv

We say that a family of cubes $\mathcal{F} \subset \DD_\sigma$ satisfies a {\it Carleson packing condition} with constant $M>0$,  and we write $\mathcal{F} \in \car(M)$, if for any $S \in \DD_\sigma$, it holds that
\begin{equation}\label{eq:packing}
\sum_{R \in \mathcal{F}: R\subset  S} \sigma(R) \leq M \, \sigma(S),
\end{equation}

\vv

A family $\mathcal{T}\subset\DD_\sigma$ is a {\it tree} if it
verifies the following properties:
\begin{enumerate}
\item $\mathcal{T}$ has a maximal element (with respect to inclusion) $Q(\mathcal{T})$ which contains all the other
elements of $\mathcal{T}$ as subsets of $\R^{n+1}$. The cube $Q(\mathcal{T})$ is the ``root'' of $\mathcal{T}$ and we will  call it  ``top" cube.
\item If $Q,Q'$ belong to $\mathcal{T}$ and $Q\subset Q'$, then any $\mu$-cube $P\in\DD_\sigma$ such that $Q\subset P\subset
Q'$ also belongs to $\mathcal{T}$. 
\end{enumerate}
For a tree $\mathcal T$, if $R=Q(\mathcal{T})$ is a top cube,  we will write $\mathcal{T}=\tree(R)$.

\vv

\begin{definition}\label{def: corona decomposition}
A {\textit {corona decomposition}} of $\sigma$ is a partition of $\DD_\sigma$ into a family of ``good cubes", which we denote by $\mathcal{G}$, and a family of ``bad cubes",  which we denote by $\mathcal{B}$, so that the following hold:
\begin{enumerate}
\item $\DD_\sigma=\mathcal{G} \cup \mathcal{B}$;
\item There is a partition of $ \mathcal{G}$ into trees,  that is,
$$\mathcal{G}= \bigcup_{\mathcal{ T}\subset \mathcal{G}} \mathcal T;$$
\item The collections of maximal cubes $ Q(\mathcal{T})$ of the trees $\mathcal{T}$ satisfies \eqref{eq:packing} for some $M_0>0$;
\item The collection of cubes $ \mathcal{B}$ satisfies  \eqref{eq:packing} for some $M_1>0$.
\end{enumerate}
\end{definition}
\vv

We can also define a localized Corona decomposition in a cube $R_0\in \DD_\sigma$ if, in the definition above, we replace $\mathcal{D}_\sigma$ by $\mathcal{D}_\sigma(R_0)$. 

\vv

We recall the definition of the truncated (at large scales) dyadic Hardy-Littlewood maximal function
$$
Mf(Q)=\sup_{\substack{R\in\DD_\sigma \\ Q\subset R}} m_{\sigma, R}|f|, \quad f  \in L^1_{\loc}(\sigma).
$$

\vv

 Given any $R \in\DD_\sigma$ and for fixed $\ve>0$, we define the collection $\sss(R) \subset \DD(R)$ consisting of cubes  $S\in \DD(R)$ which are maximal (thus disjoint) with respect to the  condition
\begin{equation}\label{stopcond}
| m_{\sigma, R}f - m_{\sigma, S}f | \geq
\begin{dcases}
 \ve Mf(S) &,\textup{if}\,\,  f\in L^1_{\loc}(\sigma)\\
 \ve\|f\|_{\BMO(\sigma)} &,\textup{if}\,\,  f\in \BMO(\sigma).
\end{dcases}
\end{equation}

We fix a cube $R_0\in\DD_\sigma$ and we define the family of the top cubes with respect to $R_0$ as follows:
first we define the families $\ttt_k(R_0)$ for $k\geq 0$ inductively. We set
$$\ttt_0(R_0)=\{R_0\}.$$
Assuming that $\ttt_k(R_0)$ has been defined, we set
$$\ttt_{k+1}(R_0) = \bigcup_{R\in\ttt_k(R_0)}\sss(R),$$
and then we define
$$\ttt(R_0)=\bigcup_{k\geq0}\ttt_k(R_0).$$
We also set 
$$
\tree(R):=\{ Q \in \DD_\sigma (R): \nexists \,\, S \in \sss(R) \,\, \textup{such that}\,\, Q\subset S\}.
$$
and notice that
$$\DD_\sigma(R_0)= \bigcup_{R\in\ttt(R_0)}\tree(R),$$
and this union is disjoint.  This is a localized Corona decomposition in  $R_0$ and notice that, in this case,  $\mathcal{B}=\emptyset$.

\vvv

In the rest of this section,  we will devote all our efforts to  proving that $\ttt(R_0)$ satisfies a Carleson packing condition.
\begin{proposition}\label{pro:packing Top}
For any $R_0 \in \DD_\sigma$,  the family of cubes $\ttt(R_0) \in \car(C \ve^{-2})$ for some $C>0$ depending on the Ahlfors-regularity constants.
\end{proposition}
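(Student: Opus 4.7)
The plan is to deduce the Carleson packing from an $L^2$ square-function identity on the martingale increments indexed by $\ttt(R_0)$, after which the stopping condition directly yields the factor $\ve^{-2}$. For $R\in\ttt(R_0)\setminus\{R_0\}$, let $R^{\ttt}\in\ttt(R_0)$ denote the unique ``tree-parent'' of $R$, i.e.\ the element for which $R\in\sss(R^{\ttt})$. The core estimate I will establish is
\begin{equation*}
(\star)\qquad\sum_{R\in\ttt(R_0)\setminus\{R_0\}}\bigl(m_{\sigma, R} f - m_{\sigma, R^{\ttt}} f\bigr)^{2}\sigma(R) \;\leq\; \int_{R_0}|f-m_{\sigma,R_0}f|^{2}\,d\sigma.
\end{equation*}

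To prove $(\star)$, I fix $R'\in\ttt(R_0)$, use the fact that the stopping children $\sss(R')$ are pairwise disjoint subsets of $R'$, and apply the Pythagorean identity $\int_{S}|f-m_{\sigma,R'}f|^{2}\,d\sigma=\int_{S}|f-m_{\sigma,S}f|^{2}\,d\sigma+(m_{\sigma,S}f-m_{\sigma,R'}f)^{2}\sigma(S)$ on each $S\in\sss(R')$ to rewrite
$$\int_{R'}\!|f-m_{\sigma,R'}f|^{2}\,d\sigma=G(R')+\!\!\sum_{S\in\sss(R')}\!\int_{S}|f-m_{\sigma,S}f|^{2}\,d\sigma+\!\!\sum_{S\in\sss(R')}\!(m_{\sigma,S}f-m_{\sigma,R'}f)^{2}\sigma(S),$$
where $G(R'):=\int_{R'\setminus\bigcup_{S\in\sss(R')}S}|f-m_{\sigma,R'}f|^{2}\,d\sigma\geq 0$. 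Summing over $R'\in\ttt(R_0)$ (first over the finite truncation $\bigcup_{j\leq k}\ttt_{j}(R_0)$ and then letting $k\to\infty$ by monotone convergence), the $\int_{S}|f-m_{\sigma,S}f|^{2}\,d\sigma$ terms cancel against the $\int_{R''}|f-m_{\sigma,R''}f|^{2}\,d\sigma$ contributions arising at the next generation (each $R\neq R_0$ in $\ttt(R_0)$ appears exactly once as a ``root'' $R''$ and once as a stopping child $S$), the residual $G(R')$ terms are nonnegative and may be dropped, and only $\int_{R_0}|f-m_{\sigma,R_0}f|^{2}\,d\sigma$ survives on the right, giving $(\star)$.

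In the BMO case, the stopping condition forces $(m_{\sigma,R}f-m_{\sigma,R^{\ttt}}f)^{2}\geq\ve^{2}\|f\|_{\BMO(\sigma)}^{2}$, while John--Nirenberg gives $\int_{R_0}|f-m_{\sigma,R_0}f|^{2}\,d\sigma\lesssim\|f\|_{\BMO(\sigma)}^{2}\sigma(R_0)$; inserting both into $(\star)$ yields $\sum_{R\in\ttt(R_0)}\sigma(R)\lesssim\ve^{-2}\sigma(R_0)$. In the $L^{p}$ case, the stopping bound reads $(m_{\sigma,R}f-m_{\sigma,R^{\ttt}}f)^{2}\geq\ve^{2}\bigl(Mf(R)\bigr)^{2}$; I stratify $\ttt(R_0)$ by dyadic ranges $Mf(R)\in[2^{j}Mf(R_0),2^{j+1}Mf(R_0))$ and exploit that $Mf$ is monotone nondecreasing along descending chains in $\DD_{\sigma}$, which allows a level-by-level reduction to a BMO-type computation via a geometric summation, reaching the same conclusion. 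To go from $R_0$ to a general $S_{0}\in\DD_{\sigma}$, the cases $S_{0}\cap R_0=\varnothing$ and $R_0\subseteq S_{0}$ are immediate; in the remaining case $S_{0}\subsetneq R_0$, let $S_{0}^{\ast}$ be the smallest element of $\ttt(R_0)$ containing $S_{0}$ and apply the argument above to the sub-tree rooted at $S_{0}^{\ast}$ (if $S_{0}=S_{0}^{\ast}$) or to the sub-trees rooted at the members of $\sss(S_{0}^{\ast})$ contained in $S_{0}$, which are pairwise disjoint and contained in $S_{0}$.

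The main technical obstacle will be the $L^{p}$ step, where the absence of a uniform BMO-type oscillation bound on $\int_{R_0}|f-m_{\sigma,R_0}f|^{2}\,d\sigma$ forces the dyadic stratification by $Mf$-levels; the monotonicity of $Mf(\cdot)$ along chains in $\DD_{\sigma}$, together with the universal orthogonality identity $(\star)$, is what makes this work uniformly in $p$.
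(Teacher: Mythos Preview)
Your $\BMO$ argument is correct and essentially matches the paper's: both rest on the same $L^2$ orthogonality (the paper phrases it via the martingale differences $\Delta_Q f$, you via the telescoping identity $(\star)$), the stopping lower bound, and John--Nirenberg to control $\int_{R_0}|f-m_{\sigma,R_0}f|^2\,d\sigma$.

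The $L^1_{\loc}$ case, however, is where your proposal is incomplete. First, the identity $(\star)$ requires $f\in L^2_{\loc}(\sigma)$, which is not assumed; for $f\in L^p(\sigma)$ with $1<p<2$ the right-hand side of $(\star)$ may be infinite. Second, and more seriously, the phrase ``level-by-level reduction to a BMO-type computation via a geometric summation'' hides the entire difficulty. Even granting $(\star)$, plugging in the stopping bound gives $\sum_{R\in\mathcal S_j}\sigma(R)\lesssim \ve^{-2}\int_{R_0}|f-m_{\sigma,R_0}f|^2\,d\sigma\,/\,(2^j Mf(R_0))^2$, and the numerator is not controlled by $(Mf(R_0))^2\sigma(R_0)$. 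To make the stratification work you would need, at each level $j$, to (i) localize to the Calder\'on--Zygmund cubes of $R_0$ at height $\approx 2^j Mf(R_0)$ (whose total measure decays like $2^{-j}\sigma(R_0)$ by the weak-$(1,1)$ bound), and (ii) replace $f$ by the bounded ``good'' part $g_{j+1}$ of the CZ decomposition at level $2^{j+1}Mf(R_0)$, using that $m_{\sigma,R}f=m_{\sigma,R}g_{j+1}$ whenever $Mf(R)<2^{j+1}Mf(R_0)$, so that $(\star)$ applied to $g_{j+1}$ on each CZ cube yields a bound $\lesssim \ve^{-2}(2^{j}Mf(R_0))^2$ times the CZ-cube measure. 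None of these steps appear in your sketch.

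The paper takes a genuinely different route for $f\in L^1_{\loc}(\sigma)$: it uses the pointwise estimate $\sigma(Q)/Mf(Q)^2\lesssim\int_Q(\mathcal M_{\DD_\sigma}f)^{-2}\,d\sigma$ to pass to a weighted $L^2$ setting with weight $w=(\mathcal M_{\DD_\sigma}f)^{-2}$, verifies $w\in A_\infty(\sigma)$ via the factorization $w=1\cdot((\mathcal M_{\DD_\sigma}f)^{2\gamma})^{1-q}$ with $\gamma\in(0,1/2)$, and then invokes a weighted stopped-square-function inequality $\|\mS_{\mathcal F}f\|_{L^2(w)}\lesssim\|\mathcal M_{\DD}f\|_{L^2(w)}$ (from \cite{HR18}) applied to $\tilde f=(f-m_{\sigma,S}f){\bf 1}_S$. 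This bypasses the need for any level-set decomposition and works directly for $f\in L^1_{\loc}(\sigma)$.
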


For the proof of proposition \ref{pro:packing Top} we consider the cases $f\in \BMO(\sigma)$ and $f \in L^1_{\loc}(\sigma)$ separately.

\begin{proof}[Proof of Proposition \ref{pro:packing Top} when $f \in \BMO(\sigma)$]
For any  $R\in\ttt(R_0)$ it holds
$$
| m_{\sigma, S}f - m_{\sigma, R}f |>\ve \| f \|_{\BMO(\sigma)}.
$$
Define
$$
f_R(x):=\sum_{Q\in\tree(R)}\Delta_Qf(x)=\sum_{Q\in\tree(R)}\sum_{Q' \in \ch(Q)} \left(m_{\sigma, Q'}f - m_{\sigma, Q}f \right) 1_{Q'}(x).
$$
If $x\in P \in\sss(R)$, we have that 
$ f_R(x)= m_{\sigma, P}f - m_{\sigma, R}f $ 
and so, $|f_R(x)| >\ve\|f\|_{\BMO(\sigma)}$.  This implies that
\begin{align*}
\ve^2\| f\|_{\BMO(\sigma)}^2\sum_{P\in\sss(R)}\sigma(P) &\leq
\sum_{P\in\sss(R)}\int_P| f_R(x)|^2\, d\sigma(x)  \\
&= \int_{\underset{P\in\sss(R)}{\bigcup P}}|f_R|^2\, d\sigma
\leq \int |f_R|^2\, d\sigma.
\end{align*}
By the above estimate and the orthogonality of $\Delta_Qf$,
\begin{align*}
\sum_{\substack{R\in\ttt(R_0) \\ R \subset S}} \sum_{P\in\sss(R)}\sigma(P) &\leq\frac{1}{\ve^2}\frac{1}{\|f\|_{\BMO(\sigma)}^2}
\sum_{R\in\ttt(R_0)}\|f_R\|_{L^2(\sigma)}^2 \\
&=\frac{1}{\ve^2}\frac{1}{\|f\|_{\BMO(\sigma)}^2}\sum_{R\in\ttt(R_0)}\sum_{Q\in\tree(R)}\|\Delta_Qf\|_{L^2(\sigma)}^2 \\
&\leq \frac{1}{\ve^2}\frac{1}{\|f\|_{\BMO(\sigma)}^2}\sum_{Q\in\DD_\sigma(R_0)}\| \Delta_Qf \|_{L^2(\sigma)}^2 \\
&=\frac{1}{\ve^2}\frac{1}{\|f\|_{\BMO(\sigma)}^2}\| {\bf 1}_{R_0}(f - m_{\sigma, R_0}f)\|_{L^2(\sigma)}^2 
\lec \ve^{-2} \sigma(R_0)
\end{align*} 
which proves \eqref{eq:packing} in the case that $S\in \ttt(R_0)$.  By the same argument as in the end of the proof of Proposition \ref{pro:packing Top} when $f \in L^1_{\loc}(\sigma)$, we obtain  \eqref{eq:packing} for any $S \in \DD_\sigma(R_0)$.
\end{proof}

\vv

To prove the proposition for $f \in L^1_{\loc}$, we first need some auxiliary lemmas. 
\begin{lemma}\label{lem: lemma4.1}
Let $f\in L_{\loc}^1(\sigma)$ and $Q\in\DD_\sigma$. Then it holds
\begin{equation}\label{eq: eq4.1}
\frac{\sigma(Q)}{Mf(Q)^2} \leq 8 \int_Q\frac{1}{\left(\mathcal{M}_{\DD_\sigma}f(x)\right)^2}\, d\sigma(x)
\end{equation}
\end{lemma}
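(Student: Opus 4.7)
The plan is to bound $\int_Q (\mathcal{M}_{\DD_\sigma}f)^{-2}\,d\sigma$ from below by a simple two-step localization argument. The heuristic is that the values of $\mathcal{M}_{\DD_\sigma}f$ on $Q$ cannot exceed $2\, Mf(Q)$ on more than half of $Q$, because the only way to produce a larger value is through cubes strictly contained in $Q$, on which the dyadic weak-$(1,1)$ inequality applies.

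First, I would record the following dichotomy for $x \in Q$: every dyadic cube $R \ni x$ is comparable with $Q$, so either $R \supset Q$, in which case $m_{\sigma, R}(|f|) \leq Mf(Q)$ by definition of $Mf(Q)$, or $R \subsetneq Q$. In particular, taking $R = Q$ gives $\mathcal{M}_{\DD_\sigma}f(x) \geq Mf(Q)$, and one has the decomposition
$$
\mathcal{M}_{\DD_\sigma}f(x) \;=\; \max\bigl\{Mf(Q),\; \mathcal{M}_{\DD_\sigma(Q)}f(x)\bigr\}, \qquad x \in Q,
$$
where $\mathcal{M}_{\DD_\sigma(Q)}f$ denotes the dyadic maximal operator restricted to subcubes of $Q$.

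Next, setting $\alpha := Mf(Q)$, I would split $Q$ into the good set $A := \{x \in Q : \mathcal{M}_{\DD_\sigma}f(x) \leq 2\alpha\}$ and its complement $B := Q \setminus A$. On $B$, the observation above forces $\mathcal{M}_{\DD_\sigma(Q)}f(x) > 2\alpha$, so the standard dyadic weak-$(1,1)$ estimate (obtained by taking maximal disjoint subcubes of $Q$ on which the average of $|f|$ exceeds $2\alpha$) yields
$$
\sigma(B) \;\leq\; \frac{1}{2\alpha} \int_Q |f|\,d\sigma \;=\; \frac{m_{\sigma,Q}(|f|)}{2\alpha}\,\sigma(Q) \;\leq\; \tfrac{1}{2}\,\sigma(Q),
$$
since $m_{\sigma, Q}(|f|) \leq Mf(Q) = \alpha$. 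Consequently $\sigma(A) \geq \tfrac{1}{2}\sigma(Q)$.

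Finally, since on $A$ we have $(\mathcal{M}_{\DD_\sigma}f)^{-2} \geq (2\alpha)^{-2}$, I conclude
$$
\int_Q \frac{d\sigma(x)}{(\mathcal{M}_{\DD_\sigma}f(x))^2} \;\geq\; \int_A \frac{d\sigma(x)}{(\mathcal{M}_{\DD_\sigma}f(x))^2} \;\geq\; \frac{\sigma(A)}{4\alpha^2} \;\geq\; \frac{1}{8}\,\frac{\sigma(Q)}{Mf(Q)^2},
$$
which is exactly \eqref{eq: eq4.1}. There is no real obstacle here; the only delicate point is the observation that the supremum defining $\mathcal{M}_{\DD_\sigma}f(x)$ reduces, on points where it exceeds $Mf(Q)$, to the restricted maximal operator $\mathcal{M}_{\DD_\sigma(Q)}f$, which is what makes the weak-type bound inside $Q$ directly applicable and produces the clean constant $8$.
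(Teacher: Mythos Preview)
Your proof is correct and is essentially the standard argument behind \cite[Lemma 4.1]{HR18}, which the paper simply cites without reproducing; the splitting at level $2\,Mf(Q)$ together with the dyadic weak-$(1,1)$ bound inside $Q$ is exactly what produces the constant $8$.
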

\begin{proof}
This was shown in \cite[ Lemma 4.1]{HR18}  for the Lebesgue measure but the same proof works for any
non-atomic Radon measure and so we skip the details. 
\end{proof}

\vv

Let $\mathcal{F}\subset \DD_\sigma$ be any collection of dyadic cubes. 
Given any cube $Q\in\DD_\sigma$, define its stopping parent $Q^*$ to be the minimal $Q^*\in \mathcal{F}$ such that 
$Q\subsetneq Q^*$. If no such $Q^*$ exists, we set $Q^*:=Q$. Define the stopped square function 
\begin{equation}\label{sqf}
\mS_{\mathcal{F}}f(x):=\Big(\sum_{Q\in \mathcal{F}}| m_{\sigma, Q}f - m_{\sigma, Q^*}f |^2{\bf 1}_Q(x)\Big)^{1/2}.
\end{equation}
In the special case  $\mathcal F=\ttt(R_0)$,  we will simply write $\mS f$.

\begin{lemma}\label{pr: sqfest}
If  $w\in A_\infty(\sigma)$ and $1\leq p < \infty$, then
$$
\| \mS_{\mathcal{F}}f \|_{L^p(\pom; w)} \lec \| M_{\DD}f \|_{L^p(\pom; w)}
$$
uniformly for any collection of dyadic cubes $\mathcal{F}$. 
\end{lemma}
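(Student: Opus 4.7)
The plan is to adapt the strategy of Hytönen--Rosén (see the proof of Proposition~4.2 in \cite{HR18}) to the present Ahlfors-regular setting. The argument has two main ingredients: an unweighted $L^2(\sigma)$ estimate obtained from the orthogonality of martingale differences, and a good-$\lambda$ inequality that upgrades it to $L^p(\sigma;w)$ for $w\in A_\infty(\sigma)$.

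\textbf{Step 1 (unweighted $L^2$ bound).} I would first prove
$$
\|\mS_{\mathcal{F}}f\|_{L^2(\sigma)} \lesssim \|\mathcal{M}_{\DD_\sigma}f\|_{L^2(\sigma)}.
$$
For $x\in\pom$, the cubes of $\mathcal{F}$ containing $x$ form a descending chain $Q_0(x)\supsetneq Q_1(x)\supsetneq\cdots$ with $Q_k(x)^*=Q_{k-1}(x)$, and each jump $m_{\sigma,Q_k(x)}f-m_{\sigma,Q_{k-1}(x)}f$ is the telescoping sum of the dyadic martingale differences $\Delta_P f(x)$ for $P$ strictly between $Q_k(x)$ and $Q_{k-1}(x)$. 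Integrating in $x$ and using the orthogonality of the $\Delta_P f$'s gives
$$
\|\mS_{\mathcal{F}}f\|_{L^2(\sigma)}^2 = \sum_{Q\in\mathcal{F}} |m_{\sigma,Q}f-m_{\sigma,Q^*}f|^2\,\sigma(Q).
$$
Since the cubes $\{Q\in\mathcal{F}:Q^*=P\}$ are pairwise disjoint subsets of $P$ and $|m_{\sigma,Q}f-m_{\sigma,P}f|\leq 2\,Mf(Q)\leq 2\,\mathcal{M}_{\DD_\sigma}f(x)$ for $x\in Q$, we can bound the sum by $4\int_\pom (\mathcal{M}_{\DD_\sigma}f)^2\,d\sigma$. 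Alternatively, this step can be phrased as Doob's inequality applied to the stopped martingale $\{m_{\sigma,Q_k(\cdot)}f\}_{k\geq 0}$.

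\textbf{Step 2 (good-$\lambda$ inequality).} To bring in the weight, I would establish that, for $w\in A_\infty(\sigma)$, there exist $\delta\in(0,1]$ and $C>0$ such that, for all $\lambda>0$ and $\gamma\in(0,1)$,
$$
w\bigl(\{\mS_{\mathcal{F}}f>2\lambda,\ \mathcal{M}_{\DD_\sigma}f\leq \gamma\lambda\}\bigr) \;\leq\; C\,\gamma^{2\delta}\,w\bigl(\{\mS_{\mathcal{F}}f>\lambda\}\bigr).
$$
Decompose the level set $\{\mS_{\mathcal{F}}f>\lambda\}$ into its maximal dyadic subcubes $\{R_j\}_j$. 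On each $R_j$, the ``outer'' part of $\mS_{\mathcal{F}}f$ (contributions from cubes containing $R_j$) is $\leq \lambda$ by maximality of $R_j$, while the ``inner'' part coincides with $\mS_{\mathcal{F}\cap\DD_\sigma(R_j)}f$. Applying the $L^2(\sigma)$ bound of Step~1 to this localized square function and using Chebyshev yields
$$
\sigma\bigl(\{\mS_{\mathcal{F}}f>2\lambda,\ \mathcal{M}_{\DD_\sigma}f\leq \gamma\lambda\}\cap R_j\bigr) \;\lesssim\; \gamma^2\,\sigma(R_j).
$$
The $A_\infty(\sigma)$ reverse-Hölder inequality $w(E)/w(R_j)\lesssim (\sigma(E)/\sigma(R_j))^\delta$ promotes this to the weighted version above.

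\textbf{Step 3 (conclusion).} Integrating the good-$\lambda$ inequality against $p\lambda^{p-1}d\lambda$ in the standard way yields $\|\mS_{\mathcal{F}}f\|_{L^p(\pom;w)}\lesssim\|\mathcal{M}_{\DD_\sigma}f\|_{L^p(\pom;w)}$ for $1\leq p<\infty$. The main obstacle is Step~1: even though it formally reduces to a stopped martingale estimate, one has to be careful that the orthogonality argument continues to work uniformly in $\mathcal{F}$ and, crucially, that the ``jump'' $|m_{\sigma,Q}f - m_{\sigma,Q^*}f|$ is pointwise controlled by $\mathcal{M}_{\DD_\sigma}f$ on $Q$ so that a single integration yields the full $L^2$ bound. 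Everything else is standard Calderón--Zygmund machinery, and the proof closes in the same way as in the Euclidean dyadic setting of \cite{HR18}.
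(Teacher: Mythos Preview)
Your proposal is essentially correct and follows exactly the route the paper takes: the paper's own proof of this lemma is simply ``This was proved in \cite[Proposition 3.2]{HR18} for the Lebesgue measure but the same proof works verbatim for $\sigma$,'' and your outline reproduces that HR18 argument (unweighted $L^2$ bound plus a good-$\lambda$ inequality exploiting $A_\infty$). One caveat: in Step~1 your first justification does not close as written---the observation that the cubes $\{Q\in\mathcal{F}:Q^\ast=P\}$ are pairwise disjoint in $P$ controls the inner sum over $Q$ for a fixed $P$, but the parents $P\in\mathcal{F}$ may themselves be nested, so summing $\int_P(\mathcal{M}_{\DD_\sigma}f)^2\,d\sigma$ over all $P$ overcounts; the alternative you mention (orthogonality of the increments of the stopped martingale $\{m_{\sigma,Q_k(\cdot)}f\}_k$) is the correct way to finish and yields $\|\mS_{\mathcal{F}}f\|_{L^2(\sigma)}\lesssim\|f\|_{L^2(\sigma)}\approx\|\mathcal{M}_{\DD_\sigma}f\|_{L^2(\sigma)}$.
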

\begin{proof}
This was  proved in \cite[Proposition 3.2]{HR18} for the Lebesgue measure but the same proof works verbatim for $\sigma$.
\end{proof}

We will now proceed  to the proof of Proposition \ref{pro:packing Top}  for $f\in L^1_{\loc}(\sigma)$,  which is based on the one  of \cite[Theorem 1.2(3)]{HR18}. 


\begin{proof}[Proof of Proposition \ref{pro:packing Top}  when $f \in L^1_{\loc}(\sigma)$]
We first  fix $S \in \ttt(R_0)$.  As for any $R\in\ttt(R_0)$ it holds that
$$
| m_{\sigma, R}f - m_{\sigma, R^*}f | >\ve Mf(R) 
$$
we have that 
\begin{align*}
\sum_{\underset{R\subset S}{R\in\ttt(R_0)}}&\sigma(R)\leq
\sum_{\underset{R\subset S}{R\in\ttt(R_0)}}\frac{| m_{\sigma, R}f - m_{\sigma, R^*}f |^2}{\ve^2 Mf(R)^2}\sigma(R)\\
&\lec \sum_{\underset{R\subset S}{R\in\ttt(R_0)}} \frac{| m_{\sigma, R}f - m_{\sigma, R^*}f |^2}{\ve^2}
\int_{S}\frac{{\bf 1}_R(x)}{\mathcal{M}_{\DD_\sigma}f(x)^2}\, d\sigma =\int_{S} \frac{\mS f(x)^2}{\ve^2} 
\frac{d\sigma(x)}{\mathcal{M}_{\DD_\sigma}f(x)^2},
\end{align*}
where,  in the second inequality, we used  Lemma \ref{lem: lemma4.1}.  We  write 
$$
(\mathcal{M}_{\DD_\sigma}f)^{-2} = 1\cdot \left((\mathcal{M}_{\DD_\sigma}f)^{2\gamma}\right)^{1-q}
$$
for $\gamma\in(0,1/2)$ and $q=1+\frac{1}{\gamma}>3$.  Since $f\in L_{\loc}^1(\sigma)$ and $2\gamma\in(0,1)$, using for example 
\cite[Theorem 3.4, p.158]{CG} (whose proof works for doubling Borel measures), we get that 
$\left(\mathcal{M}_{\DD_\sigma}f\right)^{2\gamma}\in A_1(\sigma)$.  As $1\in A_1$ and $q>1$ it follows from \cite[Theorem 2.16, p.407]{CG} (whose proof also works for doubling Borel measures)  that 
$1\cdot\left(\left(\mathcal{M}_{\DD}f \right)^{2\gamma}\right)^{1-q} \in A_q(\sigma)$.
Therefore, $\left(\mathcal{M}_{\DD_\sigma}f\right)^{-2}\in A_q(\sigma)\subset A_\infty(\sigma)$. 
We now apply Lemma
\ref{pr: sqfest} with the collection of cubes $\wt{\mathcal{F}}:=\{R\in\ttt(R_0) : R \subset S\}$ to
 the function
\begin{equation*}
\tilde{f}(x):=
\begin{dcases}
f(x) - m_{\sigma, S}f    &,\textup{if}\,\,x\in S \\
0    &,\textup{if}\,\,x\notin S,
\end{dcases}
\end{equation*}
for the weight $w:=\left(\mathcal{M}_{\DD_\sigma}f\right)^{-2}$ and $p=2$,  and obtain
\begin{align*}
\int |S_{\wt{\mathcal{F}}}\tilde{f}|^2\, w \,d\sigma &\lec \int |\mathcal{M}_{\DD}\tilde{f}|^2 \, w\,d\sigma = \int_{S}|\mathcal{M}_{\DD}\tilde{f}|^2\, w\,d\sigma \lec \int_{S}|\mathcal{M}_{\DD}f|^2 \, w\, d\sigma.
\end{align*}
Thus, since
$|S_{\wt{\mathcal{F}}}\tilde{f}(x)|^2=|S_{\wt{\mathcal{F}}}f(x)|^2$ for all $x \in S$, we infer that
\begin{align*}
\int_{S}|\mS f|^2 w\,d\sigma &\lec 
\int_{S}|\mS_{\wt{\mathcal{F}}}f|^2 \, w\,d\sigma + \int_{S}|\mathcal{M}_{\DD}f|^2 \, w\, d\sigma \leq \int| \mS_{\wt{\mathcal{F}}}\tilde{f}|^2 \, w\, d\sigma + \int_{S}|\mathcal{M}_{\DD}f|^2 \, w\, d\sigma \\
&\lec \int_S |\mathcal{M}_{\DD}f|^2 \, w\, d\sigma = \int_{S}|\mathcal{M}_{\DD}f|^2 
\frac{d\sigma}{\left(\mathcal{M}_{\DD}f\right)^2} =\sigma(S),
\end{align*}
proving \eqref{eq:packing} in the case that $S\in \ttt(R_0)$.

If $S\in\DD_\sigma(R_0) \setminus \ttt(R_0)$,  we can find a maximal collection $\mathcal{F}_0$  of cubes $\wt S\in \ttt(R_0)$ such that 
$$
S=\bigcup_{\wt S\in\mathcal{F}_0} \wt S.
$$
Then, 
$$
\sum_{\substack{R\in\ttt(R_0) \\ R\subset S}}\sigma(R)=
\sum_{\substack{\wt S\in\mathcal{F}_0}} \, \sum_{\substack{R\in\ttt(\wt S): \\ R\subset \wt S}}\sigma(R) \lec 
\sum_{\substack{ \wt S\in\mathcal{F}_0}} \sigma(\wt S) = \sigma(S)
$$
and the proof is now complete. 
\end{proof}

\vvv

\begin{remark}\label{rem:Coronaunbounded} If $\supp \sigma$ is bounded, we can pick  $R_0=\supp \sigma$.  In the case that  $\supp \sigma$ is not bounded we apply a technique described in p.\ 38 of \cite{DS1}: we consider a family of cubes $\{R_j\}_{j\in J}\subset \DD_\sigma$ which are pairwise disjoint, whose union is all of $\supp\sigma$, and which have the property 
that for each $k$ there at most $C$ cubes from $\DD_{\sigma,k}$ not contained in any cube $R_j$. For each
$R_j$ we construct a family $\ttt(R_j)$ analogous to $\ttt(R_0)$.
Then we set 
$$\ttt:=\bigcup_{j\in J} \ttt(R_j)$$
and
 $$ 
 \mathcal{B}:=\{S  \subset\DD_\sigma: \,\textup{there does not exist}\, \, j \in J  \, \textup{such that}\, S \subset R_j\in \ttt\}.
 $$ 
One can easily check that the families $\ttt$ and $\mathcal B$ satisfy a Carleson packing condition.  See  \cite[p.\ 38]{DS1}
for the construction of the family $\{R_j\}$ and additional details. 
\end{remark}
\vvv

\section{$L^p$ and uniform  $\ve$-approximability of the regularized dyadic extension}
\vv


Given $A>1$, we say that two cubes $Q_1, Q_2$ are {\it $A$-close} if 
$$
\frac{1}{A}\diam Q_1 \leq \diam Q_2 \leq A \diam Q_1
$$
and 
$$
\dist(Q_1, Q_2) \leq A (\diam Q_1 + \diam Q_2).
$$

The following lemma was proved in   \cite[p. 60]{DS2}.
\begin{lemma}\label{lem:packing-A0}
If we have a Corona decomposition such that $\ttt \in \car(M_0)$ for some $M_0>0$,  then,  the collection of cubes 
\begin{align*}
\calA_0:=\{Q \in \DD_\sigma: &Q \in \tree(R)\,\textup{for}\, R\in \ttt \, \textup{and}\,\\& \exists \,Q' \in \tree(R')\,\textup{for some}\, R'\neq R\in \ttt,  
\,\textup{such that}\,Q' \,A\textup{-close to}\, Q \}
\end{align*} 
 is in $\car(M_1)$ for some $M_1>0$  depending on $M$, $A$, and the Ahlfors-regularity constants.
 \end{lemma}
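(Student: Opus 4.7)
The strategy is to reduce the packing condition for $\calA_0$ to a per-tree analysis, then combine it with the Carleson packing of the tops $\ttt$ and the small boundary condition \eqref{small boundary condition} of the dyadic lattice. Fix $S \in \DD_\sigma$; the goal is to show $\sum_{Q \in \calA_0,\, Q \subset S} \sigma(Q) \lesssim \sigma(S)$, with constant depending on $A$, $M_0$, and the Ahlfors regularity of $\sigma$.

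I would first establish a per-tree estimate. For each $T \in \ttt$ and each $Q \in \calA_0 \cap \tree(T)$, fix a witness $Q' \in \tree(T')$ with $T' \neq T$ and $Q'$ $A$-close to $Q$. Split into cases according to the position of $Q'$ relative to $T$: either (i) $Q' \not\subset T$, in which case $Q$ lies in an $A\ell(Q)$-neighborhood of $\partial T \cap \pom$; or (ii) $Q' \subset T$ but $Q' \notin \tree(T)$, in which case the tree structure forces $Q'$ to lie inside some descendant top $T_1 \in \ttt$ with $T_1 \subsetneq T$, and hence either $Q \supseteq T_1$ with $\ell(Q)$ comparable to $\ell(T_1)$ (yielding an $A$-dependent bounded number of such $Q$'s per $T_1$) or $Q \cap T_1 = \varnothing$ and $Q$ is within distance $\lesssim A\ell(Q)$ of $T_1$. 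In each scenario, the small boundary condition applied to $T$ (resp.\ $T_1$) with $\tau = A \cdot 2^{-k}$ bounds the total $\sigma$-measure of the relevant cubes at scale $2^{-k}\ell(T_\ast)$ by $C(A \cdot 2^{-k})^{1/C} \sigma(T_\ast)$, and summing the geometric series in $k \geq 0$ yields the per-tree bound $\sum_{Q \in \calA_0 \cap \tree(T)} \sigma(Q) \lesssim_{A} \sigma(T) + \sum_{T_1 \in \ttt,\, T_1 \subsetneq T} \sigma(T_1)$.

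Next I would localize to $S$. Splitting the sum over $Q \in \calA_0$ with $Q \subset S$ according to the tree containing $Q$, trees $T \subseteq S$ contribute, via the per-tree estimate and the packing $\sum_{T \in \ttt,\, T \subseteq S} \sigma(T) \leq M_0 \sigma(S)$, a total of $\lesssim_A M_0(1+M_0) \sigma(S)$. The only remaining relevant tree is the unique ancestor top $T_S \supsetneq S$ with $S \in \tree(T_S)$; for $Q \in \calA_0 \cap \tree(T_S)$ with $Q \subset S$, I would repeat the case analysis but invoke the geometric observation that $\partial T_S \cap S \subseteq \partial S$, and that any descendant top $T_1$ of $T_S$ with $T_1 \cap S = \varnothing$ which is $A$-close to some $Q \subset S$ forces $Q$ into an $A\ell(Q)$-neighborhood of $\partial S$. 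These observations let me apply the small boundary condition of $S$ itself to handle the near-boundary contributions and obtain the sharp $\lesssim_A \sigma(S)$ bound; the near-descendant-top contributions with $T_1 \subset S$ are again controlled by $\sum_{T_1 \in \ttt,\, T_1 \subset S} \sigma(T_1) \leq M_0 \sigma(S)$.

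The main obstacle is precisely this localization step: a naive application of the per-tree estimate to $T_S$ returns a bound proportional to $\sigma(T_S)$, which may be far larger than $\sigma(S)$ when $S$ sits deep inside its tree. The resolution is the geometric observation above, which replaces "near $\partial T_S$" behavior by "near $\partial S$" behavior and thereby recovers the correct $\sigma(S)$ scaling. Combining the per-tree estimate with the packing of $\ttt$ then yields $\calA_0 \in \car(M_1)$ for some $M_1 = M_1(M_0, A, n, s, C_0)$, as claimed.
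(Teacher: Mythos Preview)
The paper does not give its own proof of this lemma; it simply records that ``the following lemma was proved in \cite[p.~60]{DS2}''. Your sketch is correct and is essentially the David--Semmes argument: couple the small-boundary property \eqref{small boundary condition} of the dyadic lattice with the Carleson packing of $\ttt$, after reducing to a per-tree analysis via the dichotomy ``$Q'$ exits $T$ spatially'' versus ``$Q'$ lies under a descendant top $T_1\subsetneq T$''. The localization step you flag---replacing the naive bound $\sigma(T_S)$ by $\sigma(S)$ via the observation that for $Q\subset S$ nearness to $\partial T_S$ or to an external $T_1$ forces nearness to $\partial S$---is exactly the point that makes the argument work, and you have identified it correctly.

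Two minor remarks. First, you should dispose explicitly of the degenerate cases where $S$ is itself a top (so $T_S=S$, already covered by the ``$T\subseteq S$'' analysis) or where $S\in\mathcal B$ (so no ancestor top $T_S$ with $S\in\tree(T_S)$ exists; then every $Q\in\calA_0$ with $Q\subset S$ lies in some $\tree(T)$ with $T\subset S$, again already covered). Second, when invoking \eqref{small boundary condition} at the top few scales $k$ for which $\tau\approx A\cdot 2^{-k}\geq 1$, just bound those finitely many generations trivially by $\sigma(T)$ (or $\sigma(S)$) rather than by $\tau^{1/C}\sigma(T)$.
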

 
 \vv
 
 \begin{lemma}[Lemma I.3.27,  p. 59 in \cite{DS2} ]\label{lem:packing-close to B}
 If $\mathcal{F} \subset \DD_\sigma$ is in $\car(M_1)$ for some $M_1>0$, then the family
  \begin{align*}
\mathcal{F}_A:=\{Q \in \DD_\sigma: Q \,\,\textup{is} \,A\textup{-close to some}\,\, Q' \in \mathcal F \}
\end{align*} 
 is in $\car(M_2)$ for some $M_2>0$  depending on $M_1$, $A$, and the Ahlfors-regularity constants.
 \end{lemma}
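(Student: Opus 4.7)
The plan is to exploit two facts: first, that the $A$-closeness relation has uniformly bounded fibers in each direction, and second, that $A$-close cubes have comparable $\sigma$-measure and are spatially close, so the packing of $\mathcal{F}_A$ over $S$ reduces to the packing of $\mathcal{F}$ over a bounded spatial enlargement of $S$.

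Fix $S \in \DD_\sigma$ and assign to each $Q \in \mathcal{F}_A$ with $Q \subset S$ some $Q' = \pi(Q) \in \mathcal{F}$ that is $A$-close to $Q$. The first step is a multiplicity bound: using the $s$-Ahlfors regularity of $\sigma$, the number of dyadic cubes $A$-close to any fixed $Q' \in \DD_\sigma$ is controlled by a constant $N = N(A, C_0)$. Indeed, the size constraint $\diam Q \in [\diam Q'/A,\, A \diam Q']$ allows only $O(\log_2 A)$ admissible dyadic scales, and at each scale the distance constraint $\dist(Q,Q') \leq A(\diam Q + \diam Q')$ confines $Q$ to a ball of radius $\lec A^2 \diam Q'$, inside which Ahlfors regularity permits at most $\lec A^{2s}$ cubes of the relevant scale. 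Simultaneously, $A$-closeness forces $\sigma(Q) \approx_A \sigma(Q')$.

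The second step is to confine $\pi\bigl(\{Q \in \mathcal{F}_A : Q \subset S\}\bigr)$ inside a region of $\sigma$-measure comparable to $\sigma(S)$. If $Q \subset S$ and $Q' = \pi(Q)$, then $\diam Q' \leq A \diam S$ and $\dist(Q', S) \leq \dist(Q', Q) \leq A(\diam Q + \diam Q') \lec A^2 \diam S$, so $Q' \subset B(x_S, C A^2 \diam S)$ for some absolute constant $C$. By Ahlfors regularity, this ball meets $\supp\sigma$ in a set that can be covered by a bounded number $N_0 = N_0(A, C_0)$ of dyadic cubes $\widehat S_1, \dots, \widehat S_{N_0}$ of diameter comparable to $A \diam S$, with $\sum_{i=1}^{N_0} \sigma(\widehat S_i) \lec A^{2s} \sigma(S)$. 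The main (mild) obstacle is precisely this step: dyadic ancestors do not cleanly absorb spatially nearby cubes, so one must enlarge $S$ by a union of boundedly many same-scale cubes rather than a single ancestor, but this bookkeeping is routine given the dyadic structure.

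Combining the two steps with the hypothesis $\mathcal{F} \in \car(M_1)$ yields
\begin{align*}
\sum_{\substack{Q \in \mathcal{F}_A \\ Q \subset S}} \sigma(Q)
\;\lec_A\; \sum_{\substack{Q' \in \mathcal{F} \\ Q' \subset \bigcup_i \widehat S_i}} \sigma(Q')
\;\leq\; \sum_{i=1}^{N_0} M_1 \sigma(\widehat S_i)
\;\lec\; M_1 A^{2s}\, \sigma(S),
\end{align*}
where the first inequality uses the multiplicity bound $|\pi^{-1}(Q')| \leq N$ together with $\sigma(Q) \lec_A \sigma(Q')$. This gives $\mathcal{F}_A \in \car(M_2)$ with $M_2$ depending only on $M_1$, $A$, and the Ahlfors-regularity constants, as required.
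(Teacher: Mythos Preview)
The paper does not prove this lemma; it is cited directly from David--Semmes \cite[Lemma I.3.27, p.~59]{DS2} without argument. Your proof is correct and is essentially the standard one: bounded multiplicity of the $A$-closeness relation (via Ahlfors regularity and the finite number of admissible scales), spatial confinement of $\pi(Q)$ inside a bounded dilation of $S$, and reduction to the packing of $\mathcal{F}$ over boundedly many dyadic cubes covering that dilation.
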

 
 \vv
 
 Let us define the family of Whitney cubes
\begin{align*}
\mathcal{P}_0:= \{P &\in \WW(\om):  \textup{there exists}\, P'\in \WW(\om)\,\textup{such that} \,1.2P \cap 1.2 P' \neq \emptyset \,\textup{and there exist}\\
& R, R' \in \ttt \, \textup{with} \, R \neq R' \, \textup{such that} \, b(P) \in \tree(R)\, \textup{and} \,b(P') \in \tree(R')\}.
\end{align*}  
Then, by the properties of Whitney cubes,   for every  $P \in \mathcal{P}_0$,  the cubes $P' \in \WW(\om)$ such that $b(P')$ is not in the same tree as $b(P)$ have the following properties:
 \begin{itemize}
 \item $\ell(b(P))/2 \leq \ell(b(P')) \leq 2 \ell(b(P))$
\item $\dist(b(P),b(P')) \leq C_1 \ell(b(P))$.
\end{itemize}
If, for fixed $R \in \ttt$, we define 
$$
\partial\tree(R):= \{ Q \in \tree(R): \,\textup{there exists}\, P \in \mathcal{P}_0 \, \textup{such that}\, b(P)=Q \}
$$
then, there exists $A>1$ large enough depending only on $C_1$ and $n$ such that 
 $$
\bigcup_{R \in \ttt} \partial\tree(R) \subset \calA_0,
 $$
 and,  by Lemma \ref{lem:packing-A0},  $\bigcup_{R \in \ttt} \partial\tree(R)  \in \car(M_1)$. If $\mathcal{F}$ is a family of ``true" dyadic cubes in 
 $\R^{n+1}$, we also define   
 \begin{align*}
\mathcal{N}(\mathcal{F}):= \{P &\in \WW(\om) :  \textup{there exists}\, P'\in \mathcal{F}\,\textup{such that} \,1.2P \cap 1.2 P' \neq \emptyset\}.
\end{align*}  
So, for $\mathcal F=\mathcal{P}_0$, we set 
$$
\partial\tree^*(R):=\{ Q \in \DD_\sigma : \exists\, P \in \mathcal{N}(\mathcal{P}_0)  \,\textup{such that}\,  Q=b(P) \in \tree(R) \}.
$$
It is easy to see that 
$$
\mathcal{J}:=\bigcup_{R \in \ttt}  \partial\tree^*(R)\subset \Big(\bigcup_{R \in \ttt} \partial\tree(R)\Big)_A
$$
and,  by Lemma \ref{lem:packing-close to B},  $\mathcal{J} \in  \car(M_2)$ for some $M_2>0$. Finally,  recall that $\calB$ is a collection of bad cubes satisfying a Carleson packing condition \eqref{eq:packing} and  define
$$
\mathcal{B}_0:= \{ P \in \WW(\om): b(P) \in \mathcal{B} \}.
$$

 \vvv
 We are now ready to define the {\it approximating function} of $\upsilon_f$ by
\begin{align}\label{eq:approximatingdefinition}
u(x)=\sum_{S \in \mathcal{B}_0} & m_{\sigma, b(S)}f\, \vp_S(x)\\
&+ \sum_{R\in \ttt}\Big[ \sum_{\substack{P\in\WW(\Omega) \setminus\mathcal{P}_0  \\ b(P)\in\tree(R)}}
 m_{\sigma, R}f\, \vp_P(x) + \sum_{\substack{P\in\mathcal{P}_0 \\ b(P)\in \tree(R)}} m_{\sigma, b(P)}f\, \vp_P(x)\Big],\notag
\end{align}
using  the Corona decomposition constructed in Section \ref{sec: large oscillation cubes}.   Note that when $\om$ is bounded,  $\ttt=\ttt(\pom)$ and $\mathcal{B}= \emptyset$,  while if $\pom$ is unbounded,  $\ttt$ and $\mathcal{B}$ are the families constructed in Remark \ref{rem:Coronaunbounded}.  Finally,  when $\om$ is an unbounded domain with compact boundary $\pom$,  we modify the definition of the approximating function as follows.
\begin{align}\label{eq:approximatingdefinition-unbounded-compact}
u(x)=& \sum_{P \in \WW(\om): \ell(P) \geq \diam(\pom)} m_{\sigma,  \pom} f \, \vp_P(x) \\
&+ \sum_{R\in \ttt}\Big[ \sum_{\substack{P\in\WW(\Omega) \setminus\mathcal{P}_0  \\ b(P)\in\tree(R)}}
 m_{\sigma, R}f\, \vp_P(x) + \sum_{\substack{P\in\mathcal{P}_0 \\ b(P)\in \tree(R)}} m_{\sigma, b(P)}f\, \vp_P(x)\Big].\notag
\end{align}

\vv

\begin{theorem}\label{thm: approxestimates}
Let $f\in L^1_{\loc}(\sigma)$ and $\ve>0$.  There exists $\alpha_0 \geq 1$ such that for any $\alpha\geq \alpha_0$ and any $\xi\in\pom$,
\begin{align}
\calN_\alpha(u - \upsilon_f)(\xi) +\calN_\alpha(\delta_\om \nabla( u - \upsilon_f) )(\xi) &\lec \ve \mathcal{M}f(\xi), \label{eq:pwMFu}\\
\calN_\alpha(\delta_\om \nabla u)(\xi) &\lec \mathcal{M}f(\xi)+\mathcal{M}(\mathcal{M}f)(\xi),\label{eq: ntmf-gradu estimate}
\\
\calC_s(\nabla u)(\xi) &\lec_\ve  \mathcal{M}(\wt{\mathcal{M}}f)(\xi) + \mathcal{M}(\wt{\mathcal{M}}( \mathcal{M}f))(\xi), \label{eq:pwCarlu}\\
\calC_s(\delta_\om \,|\nabla u|^2)(\xi) &\lec_\ve  \mathcal{M}((\wt{\mathcal{M}}f)^2)(\xi) + \mathcal{M}(\wt{\mathcal{M}}( (\mathcal{M}f)^2))(\xi). \label{eq:pwCarlu-bis}
\end{align}
Here $c_{\ve}$ is a positive constant depending on $\ve$ and $\alpha_0$ depend only on $n$ and the Ahlfors regularity,   the corkscrew condition, and the Whitney constants.

Let $f\in\BMO(\sigma)$ and $\ve>0$. Then, for any $x \in \om$,  it holds that 
\begin{align}\label{eq: pwMFBMO}
|u(x)-\upsilon_f(x)| + \delta_\om(x)|\nabla (u -\upsilon_f)(x)| &\leq \ve \|f\|_{\BMO(\sigma)}\\ 
\delta_\om(x) |\nabla u(x)| &\lec \|f\|_{\BMO(\sigma)},\label{eq: ntmf-gradu estimate bmo}
\end{align}
and for any $\xi \in \pom$,
\begin{equation}\label{eq: pwCarluBMO}
\calC_s(\nabla u)(\xi) + \left[ \calC_s(\delta_\om \,|\nabla u|^2)(\xi)\right]^{1/2}  \lec_{\ve} \|f\|_{\BMO(\sigma)},
\end{equation}
where the implicit constants depend  only on $n$ and the Ahlfors regularity,   the corkscrew condition, and the Whitney constants.

Moreover, if $f\in \Lip_c(\pom)$, then $u \in \Lip_{\textup{loc}}(\om)$ and for any $x\in\om$, 
\begin{equation}\label{eq: grad-approx-lip}
\delta_\om(x)|\nabla u(x)| \lec \Lip(f) \diam(\supp f).
\end{equation}
\end{theorem}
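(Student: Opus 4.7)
The plan is to exploit the defining structure of $u$: on each Whitney cube $P$ the coefficient of $\vp_P$ is $m_{\sigma,R}f$ for the top $R$ of the tree containing $b(P)$, except on the transition and bad cubes $\mathcal{P}_0\cup\mathcal{B}_0$, where it is $m_{\sigma,b(P)}f$. Only a uniformly bounded number of bumps $\vp_P$ are nonzero at any given $x\in\om$, and the stopping condition \eqref{stopcond} yields $|m_{\sigma,b(P)}f-m_{\sigma,R}f|\lec \ve\,Mf(b(P))$ for every $P$ with $b(P)\in\tree(R)$ (respectively $\lec \ve\,\|f\|_{\BMO(\sigma)}$ in the $\BMO$ case). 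Since the coefficients of $u$ and $\upsilon_f$ coincide on $\mathcal{P}_0\cup\mathcal{B}_0$,
$$u(x)-\upsilon_f(x)=\sum_{R\in\ttt}\sum_{\substack{P\in\WW(\om)\setminus(\mathcal{P}_0\cup\mathcal{B}_0)\\ b(P)\in\tree(R)}}\bigl(m_{\sigma,R}f-m_{\sigma,b(P)}f\bigr)\vp_P(x),$$
and the analogous identity holds for $\nabla(u-\upsilon_f)$ via the partition-of-unity cancellation $\sum_P\nabla\vp_P\equiv 0$ — the same trick used in Lemma \ref{lem: gradufestimates}. Combined with $|\nabla\vp_P|\lec \ell(P)^{-1}\approx \delta_\om(x)^{-1}$ and the finite overlap of the bumps, this produces the pointwise bound $|u(x)-\upsilon_f(x)|+\delta_\om(x)|\nabla(u-\upsilon_f)(x)|\lec \ve\,Mf(b(P_x))$ on the Whitney cube $\bar P_x\ni x$ (resp.\ $\lec \ve\,\|f\|_{\BMO(\sigma)}$). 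Taking the supremum over a cone $\gamma_\alpha(\xi)$ with $\alpha$ large enough that $b(P_x)\subset B(\xi,C|x-\xi|)$ and using $Mf(b(P_x))\lec \mathcal{M}f(\xi)$ yields \eqref{eq:pwMFu} and \eqref{eq: pwMFBMO}; then \eqref{eq: ntmf-gradu estimate} and \eqref{eq: ntmf-gradu estimate bmo} follow by the triangle inequality with \eqref{graduf} and \eqref{gradufbmo}.

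The decisive observation for the Carleson estimates is that $\nabla u(x)=0$ at every $x\in\bar P_x$ for which every Whitney cube $P'$ with $1.1P'\ni x$ is in the same tree as $b(P_x)$ and lies outside $\mathcal{P}_0\cup\mathcal{B}_0$: all coefficients then coincide with $m_{\sigma,R}f$ and $\nabla u(x)=m_{\sigma,R}f\,\nabla\bigl(\sum_{P'}\vp_{P'}\bigr)(x)=0$. Hence $\{\nabla u\neq 0\}$ sits inside $\bigcup_{P\in\mathcal{N}(\mathcal{P}_0)\cup\mathcal{N}(\mathcal{B}_0)}\bar P$, whose associated boundary cubes form a Carleson family by combining Proposition \ref{pro:packing Top} with Lemmas \ref{lem:packing-A0} and \ref{lem:packing-close to B}. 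On each such Whitney cube, writing $\nabla u(x)=\sum_{P'}(a_{P'}-a_{P_x})\nabla\vp_{P'}(x)$ and bounding each coefficient by an average of $|f|$ over a nearby boundary ball yields $|\nabla u(x)|\lec \ell(P_x)^{-1}\,\wt{\mathcal{M}}f(x)$ in the $L^1_{\loc}$ case (or directly $\lec \ell(P_x)^{-1}\,\|f\|_{\BMO(\sigma)}$ in the $\BMO$ case via the stopping condition on the non-transition neighbours). The Carleson integral over $B(\xi,r)\cap\om$ then collapses, using $\omega_s\approx \ell(P_x)^{s-n}$ and $|P_x|\approx\ell(P_x)^{n+1}$, to a sum of the form $\sum_Q \sigma(Q)\cdot(\textup{scalar bound on }B_Q)$ over the Carleson family of boundary cubes meeting $B(\xi,Cr)$; the packing condition (with constant $\lec \ve^{-2}$) then produces \eqref{eq:pwCarlu} and \eqref{eq: pwCarluBMO}. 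The square variant \eqref{eq:pwCarlu-bis} is identical: the extra $\delta_\om\approx\ell(P_x)$ on the integrand absorbs one $\ell(P_x)^{-1}$ coming from $|\nabla u|^2$, so the per-cube budget is again $\sigma(Q)$ times a squared maximal function.

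Finally, when $f\in\Lip_c(\pom)$, only finitely many Whitney cubes contribute to the sum defining $u$ (those with $\ell(P)\lec\diam(\supp f)$ and close to $\supp f$); the trivial coefficient bound $|m_{\sigma,b(P)}f-m_{\sigma,b(P')}f|\lec \Lip(f)\,\diam(\supp f)$ together with $|\nabla\vp_P|\lec\ell(P)^{-1}$ and the partition-of-unity cancellation immediately yield \eqref{eq: grad-approx-lip} and local Lipschitz regularity of $u$. The main technical obstacle throughout the theorem is the Carleson bound \eqref{eq:pwCarlu}: one must simultaneously (i) identify the support of $\nabla u$ at the level of Whitney cubes, (ii) transport the estimate to boundary cubes of comparable scale through $P\mapsto b(P)$, and (iii) chain together three distinct packing statements — the tops via Proposition \ref{pro:packing Top}, their neighbours via Lemma \ref{lem:packing-A0}, and the $A$-closure step via Lemma \ref{lem:packing-close to B} — into a single $\car$-family, which is exactly the content of the preparatory material in Section \ref{sec: large oscillation cubes}.
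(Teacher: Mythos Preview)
Your outline is essentially correct and follows the paper's own approach closely: the same partition-of-unity cancellation, the same identification of $\supp(\nabla u)$ inside a Carleson family of Whitney cubes, and the same Carleson-embedding endgame via Proposition~\ref{pro:packing Top} and Lemmas~\ref{lem:packing-A0}--\ref{lem:packing-close to B}. Two small imprecisions are worth flagging but do not affect the argument: (i) for the same-tree transition terms the coefficient bound is $|m_{\sigma,R_0}f-m_{\sigma,b(P)}f|\lec \ve\,Mf(b(P))\approx \ve\inf_{b(P)}\mathcal{M}f$, a truncated maximal-function value rather than a local average of $|f|$---this is precisely why the paper's bound \eqref{eq:pwCarlu} carries the extra term $\mathcal{M}(\wt{\mathcal{M}}(\mathcal{M}f))$; and (ii) in the Lipschitz case infinitely many small Whitney cubes near $\supp f$ do contribute, but your coefficient bound $|a_{P'}-a_{P_x}|\leq 2\|f\|_\infty\lec \Lip(f)\diam(\supp f)$ together with $|\nabla\vp_P|\lec\delta_\om(x)^{-1}$ and bounded overlap already gives \eqref{eq: grad-approx-lip} without any finiteness.
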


\begin{proof}
We will only  deal with the case that both $\om$ and $\pom$ are unbounded as the other cases can be treated in a similar but easier way. We remark first that if we choose $\alpha_0$ large enough, depending on $n$, the constants of the corkscrew condition and the Whitney decomposition,  the cone is always  non-empty and for every $Q \in \DD_\sigma$ such that $\xi \in Q$, there exists $P \in \WW(\om)$ such that $b(P)=Q$ and $P \subset \gamma_{\alpha_0}(\xi)$.  

For fixed $\xi \in \pom$  we let $x \in \gamma_\alpha(\xi)$ for $a \geq \alpha_0$.  There exists  $P_0\in\WW(\om)$ such that $x\in \bar P_0$ and we either have that  $ P_0 \in \mathcal{B}_0$ or that there is a unique $R_0 \in \ttt$ such that  $b(P_0)\in\tree(R_0)$.  If either  $P_0 \in \mathcal{P}_0$ and there does not exist any $P \in \mathcal{N}(\mathcal{P}_0) \setminus  \mathcal{P}_0 $ such that $x \in 1.1 P$,  or $P_0 \in \mathcal{B}_0$, it is easy to see that $u(x) - \upsilon_f(x)=0$,  while, if $P_0\in\PP_0$ and there 
exists some  $\wt{P}\in\calN(\PP_0)\setminus\PP_0$ such that $x\in 1.1\wt{P}$, then
$$
u(x) - \upsilon_f(x)= \sum_{\substack{P\in \mathcal{N}(\mathcal{P}_0 )  \setminus \mathcal{P}_0 \\ b(P) \in \tree(R_0) }} (m_{\sigma, R_0}f-m_{\sigma, b(P)}f) \,\vp_P(x).
$$
The same is true if $P_0 \in \mathcal{N}(\mathcal{P}_0) \setminus  \mathcal{P}_0 $ and there is $P\in  \mathcal{P}_0$  such that $x \in 1.1 P$. In any other case,  we have that
\begin{equation}\label{eq: u-uf in tree}
u(x) - \upsilon_f(x)= \sum_{\substack{P\in \WW(\om) \\ b(P) \in \tree(R_0)}} (m_{\sigma, R_0}f-m_{\sigma, b(P)}f) \,\vp_P(x).
\end{equation}
Therefore,  since $b(P) \in \tree(R_0)$,  by \eqref{stopcond}, 
\begin{align*}
|u(x)-\upsilon_f(x)| &\leq \ve \sum_{\substack{P\in\WW(\Omega) \\ b(P)\in\tree(R_0)}}  Mf(b(P)) \, \vp_P(x).
\end{align*}

For any $P \in \WW(\om)$ such that $x \in 1.1P \cap \gamma_\alpha(\xi)$,  since $|x-\xi| \approx \delta_\om(x) \approx  \ell(P)$, it holds that $P \subset B(\xi, M \ell(P))$. The same is true for any $S \in \DD_\sigma$ such that $b(P) \subset S$,  i.e., $S \subset B(\xi,  M' \ell(S))$, for a possibly larger constant $M'>0$  depending also on the Ahlfors regularity constants.  Thus,  
\begin{equation}\label{eq:pointwise-u-v(x)}
|u(x)-\upsilon_f(x)|  \lesssim  \ve \sup_{S \supset b(P)} m_{\sigma, B(\xi,  M'\ell(S))}(|f|) \lesssim  \ve \sup_{r\gtrsim \delta_\om(x)} m_{\sigma, B(\xi,  r)}(|f|).
\end{equation}
which, by taking supremum over all $x \in \gamma_\alpha(\xi)$,  implies \eqref{eq:pwMFu}.
By the same arguments and the fact the $\nabla \vp_P(x)\lec \ell(P)^{-1}\approx \delta_\om(x)^{-1}$, we  infer  that 
$$
\nabla (u-  \upsilon_f)(x) \lec \ve \mathcal{M}(f)(\xi) \delta_\om(x)^{-1}, 
$$
which  implies 
\begin{equation}\label{eq: ntmf-grad u-upsilonf est}
\calN_\alpha(\delta_\om \nabla(u-\upsilon_f))(\xi)\lec \ve \mathcal{M}(f)(\xi).
\end{equation}

In the case that $f\in\BMO(\sigma)$,  in view of  \eqref{eq: u-uf in tree} and $\nabla \vp_P(x)\lec \ell(P)^{-1}\approx \delta_\om(x)^{-1}$,  we have that
\begin{equation}\label{eq: ntmf plus delta gradient in bmo}
|u(x)-\upsilon_f(x)|+\delta_\om| \nabla(u-\upsilon_f)(x)|\lec \ve \|f\|_{\BMO}.
\end{equation}

We now turn our attention to the proof of \eqref{eq:pwCarlu} and \eqref{eq: pwCarluBMO}.  Let $x\in \bar P_0 \in \WW(\om)$.  
Then, once again,  either  there exists a unique $R_0 \in \ttt$ such that $b(P_0)\in \tree(R_0)$, or there exists $B_0\in \mathcal{B}_0$ such that $b(P_0)=B_0$.  For the sake of brevity, we denote 
\begin{equation}\label{Bx}
B_x:=c \, B^x=B(x, c\,\delta_\om(x)),
\end{equation}
for a small enough constant $c>0$ to be chosen.  Fix $y\in B_x$ and if
$P\in\WW(\om)$ is such that $y \in 1.1 P$,  then  $x\in 1.2P$.
Indeed,  by \eqref{eq:delta<ellP}, we always have  that $\dist(x, 1.1 P)\leq |x-y|\leq  c \,\delta_\om(x) \leq c\, \Lambda' \, \ell(P_0)$, and if there exists $ P\in \mathcal{W}(\om)$ such that $y \in 1.1 P$ and $x\notin1.2P$ it also holds that 
$\dist(x, 1.1 P)\geq 0.1\ell(P)$. Now, note that since $\frac{1}{2}\ell(P_0)\leq \ell(P) \leq 2 \ell(P_0)$,  we get that $\frac{1}{2}\ell(P_0) \leq c\, \Lambda' \, \ell(P_0)$  and if we  choose  $c=\frac{1}{4\Lambda'}$ we reach a contradiction.

It is easy to see that $\nabla u(y)=0$ if there does not exist any cube $P \in \mathcal{P}_0$  or $P \in \mathcal{B}_0$  
 such that $y \in 1.1 P$.    Using that $ \sum  \nabla \vp_P(y)=0$, we have  that
\begin{align}
\nabla u(y)&= \Big( \sum_{P\in\mathcal{B}_0} + \sum_{\substack{P\in\calN(\PP_0) \\ b(P)\notin \tree(R_0)}} \Big) (m_{\sigma, b(P)}f-m_{\sigma, b(P_0)}f) \nabla \vp_{P}(y)   \label{eq:gradient2-approx}\\
&+ \sum_{\substack{P\in \PP_0 \\ b(P)\in\tree(R_0)}}(m_{\sigma, b(P)}f - m_{\sigma, b(P_0)}f)\nabla \vp_{P}(y)  \notag\\
&+\sum_{\substack{P\in\calN(\PP_0)\setminus \PP_0 \\ b(P)\in \tree(R_0)}}(m_{\sigma, R_0}f-m_{\sigma, b(P)}f)\nabla\vp_P(y).\notag
\end{align}
Therefore, by Remark \ref{rem:whitney overlap}, arguing as in the proof of \eqref{eq:pointwise-u-v(x)} and 
using  \eqref{eq:gradient2-approx},  the fact that $\ell(P_0)\approx\ell(P)$ for any  $P\in\WW(\om)$ such that $1.1P\ni y$, and the 
Carleson packing of the cubes in $\mathcal{B}_0$,  for fixed $\xi\in\pom$ and $r>0$, we  can estimate
\begin{align}
& \int_{B(\xi, r)\cap\om}\, \sup_{y\in B_x} |\nabla u(y)| \frac{dx}{\delta_\om(x)^{n-s}} \notag\\
& \lec \Big( \sum_{P\in\mathcal{B}_0} +\sum_{R \in \ttt} \sum_{\substack{P\in\calN(\PP_0) \\ b(P)\notin \tree(R)}} \Big) \int_{ P\cap B(\xi, r)}
m_{\sigma, B(x_{b(P)}, C_w \ell(P))}(|f|)\frac{dx}{\ell(P)^{n+1-s}} \notag\\
&+ 2\ve\sum_{R\in\ttt} \sum_{\substack{P\in \mathcal{P}_0 \\ b(P) \in \tree(R)}}
\int_{ P\cap B(\xi, r)} \sup_{\rho \gtrsim \ell(P)} m_{\sigma, B(x_{b(P)},  \rho)}(|f|) \frac{dx}{\ell(P)^{n+1-s}}   \notag\\
&+ \ve \sum_{R\in\ttt} \, \sum_{\substack{P\in\calN(\PP_0)\setminus \PP_0 \\ b(P)\in\tree(R)}}
\int_{P\cap B(\xi, r)} \sup_{\rho \gtrsim \ell(P)} m_{\sigma, B(x_{b(P)},  \rho)}(|f|) \frac{dx}{\ell(P)^{n+1-s}} \notag \\
&\lesssim \Big(\sum_{\substack{P\in \mathcal{B}_0  \\ P\cap B(\xi,r) \neq \emptyset}} +\sum_{R \in \ttt} \sum_{\substack{P\in\calN(\PP_0) \\ b(P)\notin \tree(R)  \\ P\cap B(\xi,r) \neq \emptyset}} \Big)\,\sigma(b(P))\,
m_{\sigma, B(x_{b(P)}, C_w \ell(P))}(|f|) \notag \\
& +  \sum_{R\in\ttt}\sum_{\substack{P\in \calN( \PP_0 ) \\ b(P)\in\tree(R) \\ P\cap B(\xi,r) \neq \emptyset}} \,
\sigma(b(P))\, \inf_{\zeta\in B(x_{b(P)}, M\ell(P))}\mathcal{M}f(\zeta)  \notag\\
&\leq \sum_{\substack{Q\in \mathcal{B} \cup  \mathcal{J} \\ Q\subset B(\xi,C' r)}}\sigma(Q) \, m_{\sigma, B(x_Q, C_w \ell(Q))}(|f|)  
+  \sum_{\substack{Q\in \mathcal{J} \\ Q\subset B(\xi,C' r)}}\sigma(Q) \, m_{\sigma, B(x_Q , M \ell(Q))}(\mathcal{M}f ) \notag\\
&\lesssim \int_{B(\xi, Cr)}\sup_{Q \ni z} \, m_{\sigma, B(x_Q, C_w \ell(Q))}(|f|) \, d\sigma(z) 
+ \int_{B(\xi, Cr)}\sup_{Q \ni z} \,  m_{\sigma, B(x_Q, M \ell(Q))}(\mathcal{M}f) \, d\sigma(z) \notag\\
&\lesssim  \int_{B(\xi, Cr)}  \wt{\mathcal{M}}f \, d\sigma + \int_{B(\xi, Cr)}\wt{\mathcal{M}}(\mathcal{M}f) \, d\sigma,\notag
 \end{align}
for $M>1$ possible larger than $C_w$ and where in the antepenultimate  inequality we used that if $P \cap B(\xi,r) \neq \emptyset$, then $b(P) \subset B(\xi, C'r)$ for some large constant $	C'>0$  depending on Ahlfors-regularity and the Whitney constants,  while  the penultimate inequality follows from Carleson's embedding theorem (see \cite[Theorem 5.8, p. 144]{Tol})
since the families  $\mathcal{J}=\cup_{R \in \ttt} \partial^*\tree(R)$ and $\mathcal{B}$ are Carleson  families.  This readily concludes \eqref{eq:pwCarlu},  while the proof of \eqref{eq:pwCarlu-bis} follows by similar arguments.  We omit the details.

If $f\in\BMO(\sigma)$, using \eqref{eq:gradient2-approx}, for $\xi\in \pom$ and $r>0$,
\begin{align*}
\int_{B(\xi, r)\cap\om}\, \sup_{y\in B_x}& |\nabla u(y)|  \frac{dx}{\delta_\om(x)^{n-s}} \lec_{\ve} 
\sum_{R\in\ttt}\sum_{\substack{P \in \calN(\PP_0) \\ b(P')\in\tree(R)}}\int_{P \cap B(\xi, r)}  \|f\|_{\BMO(\sigma)}\,
\hm_s(x)\,dx 
\end{align*}
\begin{align*}
&+\Big( \sum_{P\in\mathcal{B}_0} +\sum_{R \in \ttt} \sum_{\substack{P\in\calN(\PP_0) \\ b(P)\notin \tree(R)}} \Big)  \int_{P \cap B(\xi, r)}  \|f\|_{\BMO(\sigma)} \frac{\hm_s(x)}{\ell(P)}\,dx\\
&\lec \|f\|_{\BMO(\sigma)} \,  \sum_{\substack{P \in \calN(\PP_0) \cup \mathcal{B}_0\\  P \cap B(\xi, r)\neq \emptyset}} \,\sigma(b(P)) 
\leq \|f\|_{\BMO(\sigma)}\, \sum_{\substack{Q\in\mathcal{J}\cup\mathcal{B} \\ Q\subset B(\xi, Mr)}}\sigma(Q) \\
&\lec \|f\|_{\BMO(\sigma)} \, r^s,
\end{align*}
for $M>1$ large enough constant depending on the Ahlfors regularity and the Whitney constants. For the last inequality we used that the families of surface cubes $\mathcal{J}$ and $\mathcal{B}$  satisfy Carleson packing condition from Lemma  \ref{lem:packing-close to B}.  Similarly,  we can show that 
$$
\int_{B(\xi, r)\cap\om}\, \sup_{y\in B_x} \delta_\om(y)|\nabla u(y)|^2  \frac{dx}{\delta_\om(x)^{n-s}} \lec_{\ve}  \|f\|^2_{\BMO(\sigma)} \, r^s
$$
The two estimates above obviously imply \eqref{eq: pwCarluBMO}.

Let $\xi \in \pom$ and $x\in\gamma_\alpha(\xi)$.  There exists $P_0 \in \WW(\om)$ such that $x \in \bar{P}_0$.  Then  using \eqref{eq:gradient2-approx} and the bounded overlaps of the Whitney cubes,  it holds 
\begin{align}\label{eq:gradient-bound-approx}
|\nabla u(x)| &\lesssim \ell(P_0)^{-1} (m_{\sigma,   B_{P_0}}(|f|)  +  \ve \sup_{\rho \gec \ell(P_0)} m_{\sigma, B(x_{b(P_0)},  \rho)}(|f|))\notag\\
&\leq {\delta_\om(x)}^{-1} \big(m_{\sigma,  B(\xi,  C' \ell(P_0))} (|f|) + \inf_{\zeta\in B(\xi,  C' \ell(P_0))}\mathcal{M}f(\zeta)\big) \\
&\lec \delta_\om(x)^{-1}\big( \mathcal{M}f(\xi)+ \mathcal{M}(\mathcal{M}f)(\xi)\big). \notag
\end{align}
By a similar but easier argument,  we can show that
\begin{equation}\label{eq: est-gradu-bmo}
|\nabla u(x)|\lec \|f\|_{\BMO(\sigma)} \delta_\om(x)^{-1}.
\end{equation} 
Since $\sup_{x\in\om}\delta_\om(x)|\nabla u(x)|=\sup_{\xi\in\pom}\sup_{x\in\gamma_\alpha(\xi)}\delta_\om(x)|\nabla u(x)| $, 
it easily follows that the estimates 
\eqref{eq:gradient-bound-approx} and \eqref{eq: est-gradu-bmo} imply the estimates \eqref{eq: ntmf-gradu estimate} and 
\eqref{eq: ntmf-gradu estimate bmo} respectively. 

It remains to prove \eqref{eq: grad-approx-lip} in the case that $f\in \Lip_c(\pom)$. 
Using  \eqref{eq:gradient2-approx}  and the bounded overlaps of the Whitney cubes,  it holds that for
any $x\in \om$,
\begin{equation}\label{eq: gradu-f lip}
|\nabla u(x)|\lec \Lip(f)\diam(\supp f)\ell(P)^{-1} 
+ \ve \sum_{\substack{P\in\calN(\PP_0)\setminus\PP_0 \\ b(P)\in\tree(R_0) \\ x \in 1.1 P} } Mf(b(P))\ell(P)^{-1}. 
\end{equation}
Since $f$ has compact support, if  $\xi_0\notin \supp f$,  for every $Q\supset b(P)$, we have that
$$
m_{\sigma, Q}(|f|)=m_{\sigma, Q} ( |f - f(\xi_0)| )\lec \diam(\supp f)\Lip(f).
$$
Taking supremum over all cubes $Q\supset b(P)$ and using again the bounded overlaps of the Whitney cubes, by
\eqref{eq: gradu-f lip} and the fact that $\delta_\om(x) \approx \ell(P)$ for all $P \in \mathcal{W}(\om)$ such that $x \in 1.1P$, we infer that 
$$
|\nabla u(x)|\lec \Lip(f)\diam(\supp f)\delta_\om(x)^{-1},
$$
and this ends the proof.
\end{proof}

As a corollary we get that if $f \in L^p(\sigma)$,  $p\in(1, \infty)$,  (resp. $f \in \BMO(\sigma)$),  then $\upsilon_f$ is $\ve$-approximable in $L^p$ (uniformly $\ve$-approximable).

\begin{theorem}\label{thm:e-approxbmo} If $f\in\BMO(\sigma)$, for any $\ve>0$, there exists $u=u_\ve \in C^\infty(\om)$, $\alpha_1 \geq 1$,  and  a constant $c_\ve>1$, such that for any $\alpha \geq \alpha_1$, it holds that
\begin{align}\label{eq: Linf-ntMF-bmo}
\sup_{x \in \om} |v_f(x) - \upsilon_f)(x) | + \sup_{x \in \om} \,\delta_\om(x)|\nabla(v_f-\upsilon_f))(x)|  &\lec \ve \|f\|_{\BMO(\sigma)},\\
\sup_{\xi \in \pom}  \calC_s(\nabla u)(\xi) + \sup_{\xi \in \pom} \left[\calC_s(\delta_\om \, |\nabla u|^2)(\xi)\right]^{1/2} &\lec_{\ve} \|f\|_{\BMO(\sigma)},\label{eq: Linf-carlu-bmo}\\
 \sup_{x \in \om} \,\delta_\om(x)|\nabla u(x)|  & \lec \|f\|_{\BMO(\sigma)},\label{eq: Linfty-ntmf-grad u est-bmo}
\end{align}
where the implicit constants depend on $s$, $n$,  and the constants of the Ahlfors regularity,  the corkscrew condition, and the Whitney decomposition.  
\end{theorem}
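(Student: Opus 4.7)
The plan is to take $u$ to be precisely the approximating function constructed in \eqref{eq:approximatingdefinition} (or, in the unbounded-with-compact-boundary case, \eqref{eq:approximatingdefinition-unbounded-compact}) associated with the Corona decomposition of Section \ref{sec: large oscillation cubes} for the $\BMO$ stopping condition \eqref{stopcond} at threshold $\ve$. Once this identification is made, essentially every analytic estimate required by the statement has already been proved in Theorem \ref{thm: approxestimates}; the task reduces to extracting them and verifying smoothness.

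First, I would verify that $u \in C^\infty(\om)$. Since the partition of unity $\{\vp_P\}_{P\in\WW(\om)}$ is $C^\infty$ and locally finite (any point of $\om$ lies in at most $D_0$ of the supports $\supp \vp_P$), and the coefficients in \eqref{eq:approximatingdefinition} are constant scalars --- namely averages $m_{\sigma,R}f$ or $m_{\sigma,b(P)}f$ --- the function $u$ is locally a finite sum of $C^\infty$ functions on $\om$, and hence $C^\infty$ on $\om$.

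Next I would extract the three estimates by passing to suprema in the pointwise bounds of Theorem \ref{thm: approxestimates}. Concretely, \eqref{eq: Linf-ntMF-bmo} is exactly the pointwise estimate \eqref{eq: pwMFBMO} after taking $\sup_{x\in\om}$; the Carleson estimate \eqref{eq: Linf-carlu-bmo} is \eqref{eq: pwCarluBMO} after taking $\sup_{\xi\in\pom}$; and \eqref{eq: Linfty-ntmf-grad u est-bmo} is \eqref{eq: ntmf-gradu estimate bmo} after taking $\sup_{x\in\om}$. The aperture $\alpha_1$ can be chosen as the $\alpha_0$ produced in Theorem \ref{thm: approxestimates}, depending only on $n$, the Ahlfors-regularity constants, the corkscrew condition, and the Whitney constants; in fact none of the three displayed estimates depends on $\alpha$ at all, since each is an $L^\infty$ quantity.

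Because Theorem \ref{thm: approxestimates} already absorbs all of the substantive work --- the $\BMO$ stopping condition forces a uniform $\ve\|f\|_{\BMO(\sigma)}$ discrepancy inside each tree $\tree(R)$, while the Carleson control of $\nabla u$ is driven by the fact that the nontrivial gradient contributions are localized to Whitney cubes over $\mathcal{J}\cup\mathcal{B}$, a family whose packing follows from Proposition \ref{pro:packing Top} together with Lemmas \ref{lem:packing-A0}--\ref{lem:packing-close to B} --- I do not anticipate any genuine obstacle: the proof will collapse to a one-paragraph invocation of Theorem \ref{thm: approxestimates} once the function $u$ is identified.
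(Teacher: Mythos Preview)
Your proposal is correct and matches the paper's approach exactly: the paper's proof is a one-line invocation of the pointwise estimates \eqref{eq: pwMFBMO}, \eqref{eq: ntmf-gradu estimate bmo}, and \eqref{eq: pwCarluBMO} from Theorem \ref{thm: approxestimates}, with $u$ taken to be the approximating function \eqref{eq:approximatingdefinition}. Your additional remarks on smoothness and the irrelevance of the aperture are accurate and slightly more explicit than the paper, but the substance is identical.
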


\begin{proof}
The result follows immediatelly by the  estimates \eqref{eq: pwMFBMO}, 
  \eqref{eq: ntmf-gradu estimate bmo}  and \eqref{eq: pwCarluBMO}.
\end{proof}

\vv

\begin{theorem}\label{thm:e-approxLp}
If $f\in L^p(\sigma)$,  for $p \in (1,\infty]$,  then for any $\ve>0$, there exists $u=u_\ve \in C^\infty(\om)$, $\alpha_0 \geq 1$,  and  a constant $c_\ve>1$, such that for any $\alpha \geq \alpha_0$, it holds that
\begin{align}
\| \calN_\alpha(u-\upsilon_f)\|_{L^p(\sigma)}+\| \calN_\alpha(\delta_\om \nabla(u-\upsilon_f)\|_{L^p(\sigma)} &\lesssim \ve\,\| f\|_{L^p(\sigma)}, \label{eq:LpntMfu}\\
\|\calN_\alpha(\delta_\om \nabla u)\|_{L^p(\sigma)} &\lec \|f\|_{L^p(\sigma)},\label{eq: Lp-ntmf-grad u estimate}\\
\|\calC_s(\nabla u)\|_{L^p(\sigma)} &\lec  \ve^{-2} \, \| f \|_{L^p(\sigma)}, \label{eq:LpCarlu}\\
\|\left[\calC_s(\delta_\om \, |\nabla u|^2)\right]^{1/2}\|_{L^q(\sigma)} &\lec  \ve^{-2} \, \| f \|_{L^q(\sigma)},\,\,\,q \in[2,\infty),\label{eq:LpCarlu-bis}
\end{align}
where the implicit constants depend on $s$, $n$, $p$,  $q$,  and the constants of the Ahlfors regularity, the corkscrew condition, and the Whitney decomposition.  
\end{theorem}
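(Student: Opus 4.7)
The strategy is to deduce \eqref{eq:LpntMfu}--\eqref{eq:LpCarlu-bis} from the pointwise estimates in Theorem \ref{thm: approxestimates} by taking $L^p(\sigma)$-norms and invoking the $L^p$-boundedness (for $p\in(1,\infty]$) of the centered and non-centered Hardy--Littlewood maximal operators $\mathcal{M}$ and $\wt{\mathcal{M}}$ on the doubling measure space $(\pom,\sigma)$. The constants of the form $\lec_\ve$ in the pointwise estimates arise from the Carleson packing of the family $\ttt$ with constant $C\ve^{-2}$ (Proposition \ref{pro:packing Top}), which then propagates to $\mathcal{J}\cup\mathcal{B}$ via Lemmas \ref{lem:packing-A0} and \ref{lem:packing-close to B}, producing the stated $\ve^{-2}$ dependence.

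For \eqref{eq:LpntMfu} and \eqref{eq: Lp-ntmf-grad u estimate}, take the $L^p$-norm of \eqref{eq:pwMFu} and \eqref{eq: ntmf-gradu estimate}; invoking $\|\mathcal{M}f\|_{L^p(\sigma)}\lec\|f\|_{L^p(\sigma)}$ for $p\in(1,\infty)$ (and the trivial estimate for $p=\infty$), applied iteratively where needed, produces the claimed bounds. For \eqref{eq:LpCarlu}, take the $L^p$-norm of \eqref{eq:pwCarlu} and apply the $L^p$-boundedness of $\mathcal{M}$ and $\wt{\mathcal{M}}$ twice.

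For \eqref{eq:LpCarlu-bis} in the range $q>2$, I write
$$\big\|[\calC_s(\delta_\om|\nabla u|^2)]^{1/2}\big\|_{L^q(\sigma)}^q=\big\|\calC_s(\delta_\om|\nabla u|^2)\big\|_{L^{q/2}(\sigma)}^{q/2}.$$
Since $q/2>1$, I take the $L^{q/2}$-norm of \eqref{eq:pwCarlu-bis} and use
$$\big\|\mathcal{M}((\wt{\mathcal{M}}f)^2)\big\|_{L^{q/2}(\sigma)}\lec\|(\wt{\mathcal{M}}f)^2\|_{L^{q/2}(\sigma)}=\|\wt{\mathcal{M}}f\|_{L^q(\sigma)}^2\lec\|f\|_{L^q(\sigma)}^2,$$
together with the analogous bound for the second summand; taking square roots yields the claim.

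The main obstacle is the endpoint $q=2$, since then $q/2=1$ and $\mathcal{M}$ fails to be $L^1$-bounded, so the pointwise bound \eqref{eq:pwCarlu-bis} alone does not suffice. To handle it I would refine the proof of \eqref{eq:pwCarlu-bis}: by \eqref{eq:gradient2-approx}, $\nabla u$ vanishes outside the Whitney cubes $P\in\mathcal{B}_0\cup\mathcal{N}(\mathcal{P}_0)$, whose boundary cubes lie in the Carleson family $\mathcal{J}\cup\mathcal{B}$ with packing constant $O(\ve^{-2})$. The local estimate then reads
$$\int_{B(\xi,r)\cap\om}\delta_\om|\nabla u|^2\,\omega_s\,dx\lec\sum_{\substack{Q\in\mathcal{J}\cup\mathcal{B}\\Q\subset B(\xi,Mr)}}\sigma(Q)\,(m_{\sigma,B_Q}|f|)^2,$$
and combining this with a covering/Fubini argument (replacing $r$ by a comparable dyadic scale) and Carleson's embedding theorem applied to the Carleson sequence $\{\sigma(Q)\}_{Q\in\mathcal{J}\cup\mathcal{B}}$ controls $\|\calC_s(\delta_\om|\nabla u|^2)\|_{L^1(\sigma)}$ by $\ve^{-2}\|\mathcal{M}f\|_{L^2(\sigma)}^2\lec\ve^{-2}\|f\|_{L^2(\sigma)}^2$. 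Alternatively, real interpolation between the case $q>2$ just established and the $\BMO$-endpoint furnished by Theorem \ref{thm:e-approxbmo} via \eqref{eq: Linf-carlu-bmo} delivers the $q=2$ case directly.
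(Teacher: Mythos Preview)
Your argument for \eqref{eq:LpntMfu}--\eqref{eq:LpCarlu} and for \eqref{eq:LpCarlu-bis} with $q>2$ is exactly the paper's: take $L^p$-norms of the pointwise estimates in Theorem~\ref{thm: approxestimates} and invoke the $L^p$-boundedness of $\mathcal{M}$ and $\wt{\mathcal{M}}$.

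For $q=2$ your direct approach is in the right spirit but needs two corrections. First, the displayed local estimate is not correct as written: the second and third sums in \eqref{eq:gradient2-approx} are controlled via the stopping condition \eqref{stopcond} by $\ve\,Mf(b(P))$, not by a single local average; the right-hand side must therefore carry $\big(\inf_{\zeta\in Q}\wt{\mathcal{M}}f(\zeta)\big)^2$ rather than $(m_{\sigma,B_Q}|f|)^2$, and it is this quantity that Carleson embedding converts to $\|\wt{\mathcal{M}}f\|_{L^2(\sigma)}^2$ (consistent with the final bound you state). Second, the paper bypasses your ``covering/Fubini'' step via the equivalence \eqref{eq:comparable L2C},
\[
\big\|[\calC_s(\delta_\om|\nabla F|^2)]^{1/2}\big\|_{L^2(\sigma)}^2\approx\int_{\om}\delta_\om(x)\sup_{y\in B_x}|\nabla F(y)|^2\,\frac{dx}{\delta_\om(x)^{n-s}},
\]
which turns the $L^1(\sigma)$-norm of the Carleson functional directly into a single integral over $\om$; from there the computation is the same as for \eqref{eq:pwCarlu} but with squares.

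Your interpolation alternative does not work. The approximator in Theorem~\ref{thm:e-approxbmo} is built from the $\BMO$ branch of the stopping condition \eqref{stopcond}, whereas the $u$ of the present theorem uses the $L^1_{\loc}$ branch; these yield different Corona decompositions and hence different functions $u$ for the same boundary datum $f$. Since $f\mapsto u$ is neither linear nor the same operator at the two endpoints, real interpolation between the $q>2$ bound and the $\BMO$ estimate \eqref{eq: Linf-carlu-bmo} is not available.
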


\begin{proof} 
The proof is an immediate consequence of  \eqref{eq:pwMFu},   \eqref{eq: ntmf-gradu estimate},  \eqref{eq:pwCarlu},   \eqref{eq:pwCarlu-bis},  Lemma \ref{lem: aperture},  and the fact that $\mathcal{M}$ and  $\wt{\mathcal{M}}$ are $L^p(\sigma) \to L^p(\sigma)$-bounded for any $p\in(1, \infty)$.  In order to prove \eqref{eq:LpCarlu-bis} for $q=2$,  we should estimate it directly using \eqref{eq:gradient2-approx} as in the proof of \eqref{eq:pwCarlu} along with the fact that
\begin{equation}\label{eq:comparable L2C}
\|\calC_s(\delta_\om|\nabla F|^2)^{1/2})\|_{L^2(\sigma)}^2 \approx \int_{\om} \delta_\om(x) \sup_{y \in B_x}|\nabla F(y)|^2 \,\frac{dx}{\delta_\om(x)^{n-s}}.
\end{equation}
Indeed,
\begin{align*}
\int_{\om} \delta(x) &\sup_{y \in B_x}|\nabla u(y)|^2\, \frac{dx}{\delta_\om(x)^{n-s}} \lec \sum_{P \in \WW(\om)} \int_P \ell(P) \sup_{y \in B_x}|\nabla u(y)|^2 \,\frac{dx}{\delta_\om(x)^{n-s}}\\
& \lec \Big( \sum_{P\in\mathcal{B}_0} +\sum_{R \in \ttt} \sum_{\substack{P\in\calN(\PP_0) \\ b(P)\notin \tree(R)}} \Big) \int_{ P}
m_{\sigma, B(x_{b(P)}, C_w \ell(P))}(|f|)^2\,\frac{dx}{\ell(P)^{n+1-s}} \notag\\
&+ 2\ve^2\sum_{R\in\ttt} \sum_{\substack{P\in \mathcal{P}_0 \\ b(P) \in \tree(R)}}
\int_{ P} \sup_{\rho \gtrsim \ell(P)} m_{\sigma, B(x_{b(P)},  \rho)}(|f|)^2 \frac{dx}{\ell(P)^{n+1-s}}   \notag
\end{align*}
\begin{align*}
&+ \ve^2 \sum_{R\in\ttt} \, \sum_{\substack{P\in\calN(\PP_0)\setminus \PP_0 \\ b(P)\in\tree(R)}}
\int_{P} \sup_{\rho \gtrsim \ell(P)} m_{\sigma, B(x_{b(P)},  \rho)}(|f|)^2 \frac{dx}{\ell(P)^{n+1-s}} \notag \\
&\lesssim \Big(\sum_{P\in \mathcal{B}_0} +\sum_{R \in \ttt} \sum_{\substack{P\in\calN(\PP_0) \\ b(P)\notin \tree(R)}} \Big)\,\sigma(b(P))\,
m_{\sigma, B(x_{b(P)}, C_w \ell(P))}(|f|)^2 \notag \\
& +  \sum_{R\in\ttt}\sum_{\substack{P\in \calN( \PP_0 ) \\ b(P)\in\tree(R}} \,
\sigma(b(P))\,  \sup_{\rho \gtrsim \ell(P)} m_{\sigma, B(x_{b(P)},  \rho)}(|f|)^2   \notag\\
&\lec \sum_{\substack{Q\in \mathcal{B} \cup  \mathcal{J}}} \sigma(Q) \, \sup_{\rho \gtrsim \ell(Q)} m_{\sigma, B(x_{Q},  \rho)}(|f|)^2\notag\\
&\lesssim \int_{\pom}\sup_{Q \ni z}\, \sup_{\rho \gtrsim \ell(Q)} m_{\sigma, B(x_{Q},  \rho)}(|f|)^2 \, d\sigma(z)  \lesssim  \int_{\pom}  \wt{\mathcal{M}}f^2 \, d\sigma \lec \|f\|^2_{L^2(\sigma)}.
\end{align*}
\end{proof}

\vv
\begin{remark}
Note that since $L^\infty(\sigma) \subset \BMO(\sigma)$,  the estimates \eqref{eq:LpntMfu},  \eqref{eq: Lp-ntmf-grad u estimate}, and \eqref{eq:LpCarlu} for $p=\infty$,  follow from \eqref{eq: Linf-ntMF-bmo},   \eqref{eq: Linf-carlu-bmo}, and  \eqref{eq: Linfty-ntmf-grad u est-bmo}.
\end{remark}

\vvv

\section{Construction of Varopoulos-type extensions of $L^p$  and $\BMO$ functions}\label{sec: final extension}

\vv

We will first construct extensions of boundary functions that belong to  $\BMO$.  

\vv

\noindent{\bf Hypothesis} \setword{{[$\wt \TT$]}}{Word:Hypothesis T'}
\begin{itemize}
\item[(i)] There exists a bounded linear {\it trace} operator 
$$
\tr: \cc^{1, \infty}_{s, \infty}(\om) \to \BMO(\sigma)
$$
 such that $\| \tr(w)\|_{\BMO(\sigma)} \lec \| \nabla w \|_{\ccisi(\om)}$.  
\item[(ii)]  If $\upsilon_f$  is the regularized dyadic extension of $f \in \BMO(\sigma)$,  then $\tr( \upsilon_f)(\xi)=f(\xi)$ for $\sigma$-a.e. $\xi \in\pom$, and for any $w \in \ccisi(\om)$, it holds that
\begin{equation}\label{eq:lower bound-ftru-bmo}
\|f-\tr(w)\|_{\BMO(\sigma)} =\|\tr( \upsilon_f - w)\|_{\BMO(\sigma)} \lec  \sup_{x \in \om} |  \upsilon_f(x) - w(x)|.
\end{equation}
\end{itemize}

\begin{theorem}\label{thm: extension in bmo}
Let $\om \in \AR(s)$ for $s \in (0,n]$  satisfying  the \textup{{Hypothesis}  \ref{Word:Hypothesis T'}}.
If $f\in\BMO(\sigma)$, then there exist a function $u:\om\to\R$ and a constant $c_0 \in (0, \frac{1}{2}]$ such that for every $c \in (0, c_0]$, it holds that
\begin{itemize}
\item[(i)] $u\in C^1(\om)$.
\item[(ii)] $\displaystyle \sup_{\xi\in\pom}\calN_{\sharp,c}(u)(\xi)+\sup_{x\in\om}\delta_\om(x) |\nabla u(x)|    \lec \|f\|_{\BMO(\sigma)}$,
\item[(iii)] $\displaystyle \sup_{\xi\in\pom}\calC_{s,c}(\nabla u)(\xi) \lec \|f\|_{\BMO(\sigma)}$, and
\item[(iv)] $\tr(u)(\xi)=f(\xi)$ for $\sigma$-a.e. $\xi\in\pom$.
\end{itemize}
\end{theorem}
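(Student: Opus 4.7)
The plan is to iterate the uniform $\varepsilon$-approximation from Theorem \ref{thm:e-approxbmo}, using Hypothesis \ref{Word:Hypothesis T'} to absorb the trace error at each step into the next iteration. Fix $\varepsilon>0$ small (to be determined), set $f_0:=f$, and inductively let $u_k \in C^\infty(\Omega)$ be the $\varepsilon$-approximator of $\upsilon_{f_k}$ supplied by Theorem \ref{thm:e-approxbmo}; then define $f_{k+1}:=f_k-\tr(u_k)$. Combining the pointwise bound $\sup_\Omega |u_k-\upsilon_{f_k}|\lesssim \varepsilon\,\|f_k\|_{\BMO(\sigma)}$ with the estimate \eqref{eq:lower bound-ftru-bmo} of Hypothesis \ref{Word:Hypothesis T'}(ii) yields the contraction
\[
\|f_{k+1}\|_{\BMO(\sigma)}=\|\tr(\upsilon_{f_k}-u_k)\|_{\BMO(\sigma)}\lesssim \sup_\Omega|\upsilon_{f_k}-u_k|\leq C_0\,\varepsilon\,\|f_k\|_{\BMO(\sigma)}.
\]
Choosing $\varepsilon$ so that $C_0\varepsilon \leq 1/2$ then gives the geometric decay $\|f_k\|_{\BMO(\sigma)}\leq 2^{-k}\|f\|_{\BMO(\sigma)}$.

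Next, I would set $u:=\sum_{k=0}^\infty u_k$. The bound $\sup_\Omega|u_k-\upsilon_{f_k}|\lesssim \varepsilon\|f_k\|_{\BMO(\sigma)}$ combined with Lemma \ref{lem: sharpntmaxfunctionuf bmo} applied to $\upsilon_{f_k}$ gives $\|u_k\|_{\nn^\infty_{\sharp}(\Omega)}\lesssim \|f_k\|_{\BMO(\sigma)}$, and together with the estimate $\sup_\Omega \delta_\Omega|\nabla u_k|\lesssim \|f_k\|_{\BMO(\sigma)}$ of Theorem \ref{thm:e-approxbmo} this yields
\[
\|u_k\|_{\nn_{\textup{sum}}(\Omega)}\lesssim \|f_k\|_{\BMO(\sigma)}\lesssim 2^{-k}\|f\|_{\BMO(\sigma)}.
\]
The partial sums $S_N:=\sum_{k=0}^N u_k$ therefore form a Cauchy sequence in $\nn_{\textup{sum}}(\Omega)$, and sequential completeness (Corollary \ref{cor: the sum space is complete}) supplies a limit $u\in \nn_{\textup{sum}}(\Omega)\cap C^1(\Omega)$ with $\|u\|_{\nn_{\textup{sum}}(\Omega)}\lesssim \|f\|_{\BMO(\sigma)}$, establishing (i) and (ii).

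For (iii), the same geometric decay and Theorem \ref{thm:e-approxbmo} give $\sup_{\partial\Omega}\calC_{s,c}(\nabla u_k)\lesssim_\varepsilon 2^{-k}\|f\|_{\BMO(\sigma)}$; since the uniform control on $\delta_\Omega \nabla u_k$ implies that $\sum_k \nabla u_k$ converges to $\nabla u$ locally uniformly on $\Omega$, the subadditivity of $\calC_{s,c}$ yields
\[
\sup_{\xi\in\partial\Omega}\calC_{s,c}(\nabla u)(\xi) \leq \sum_{k=0}^\infty \sup_{\xi\in\partial\Omega}\calC_{s,c}(\nabla u_k)(\xi) \lesssim_\varepsilon \|f\|_{\BMO(\sigma)}.
\]
In particular $u\in \cc^{1,\infty}_{s,\infty}(\Omega)$, so Hypothesis \ref{Word:Hypothesis T'}(i) makes $\tr(u)$ well defined and continuous in the $\cc^{1,\infty}_{s,\infty}$ topology. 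Linearity then produces the telescoping identity $\tr(S_N)=\sum_{k=0}^N(f_k-f_{k+1})=f-f_{N+1}$, and the Carleson bound $\sup_{\partial\Omega}\calC_{s,c}(\nabla(u-S_N))\lesssim_\varepsilon \sum_{k>N}2^{-k}\|f\|_{\BMO(\sigma)}\to 0$ together with continuity of $\tr$ force $\tr(S_N)\to \tr(u)$ in $\BMO(\sigma)$, while $\|f_{N+1}\|_{\BMO(\sigma)}\to 0$; hence $\tr(u)=f$ $\sigma$-almost everywhere, proving (iv).

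The principal technical issue is that $\varepsilon$ must be selected once at the outset, so that the geometric factor $2^{-k}$ dominates the otherwise $\varepsilon$-dependent Carleson constant $C_\varepsilon$ that appears at each step of Theorem \ref{thm:e-approxbmo}; the contraction mechanism only requires $\varepsilon<1/(2C_0)$, which is compatible with summing both the $\nn_{\textup{sum}}$ and the Carleson estimates with a fixed $C_\varepsilon$. The reconciliation of the two modes of convergence (the sum-seminorm for (i), (ii), and the trace identity, and the Carleson-subadditivity for (iii)) rests crucially on the sequential completeness of $\nn_{\textup{sum}}(\Omega)$ proved in the Appendix.
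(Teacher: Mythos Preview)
Your approach is essentially identical to the paper's: the same iteration via Theorem \ref{thm:e-approxbmo}, the same contraction through Hypothesis \ref{Word:Hypothesis T'}(ii), the same appeal to sequential completeness of $\nn_{\textup{sum}}(\Omega)$ (Corollary \ref{cor: the sum space is complete}), and the same Carleson-summation and trace-continuity argument.

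There is one small but genuine gap in your final step. Convergence $\tr(S_N)\to\tr(u)$ and $\tr(S_N)\to f$ takes place in $\BMO(\sigma)$, which is a space modulo constants; thus $\|\tr(u)-f\|_{\BMO(\sigma)}=0$ only yields $\tr(u)=f+c$ for some constant $c$ $\sigma$-a.e., not $\tr(u)=f$. The paper closes this by replacing $u$ with $\wt u:=u-c$, which leaves (i)--(iii) intact (the seminorms are insensitive to additive constants) and yields (iv). You should include this adjustment.
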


\begin{proof}
 If $f\in\BMO(\sigma)$ and $\upsilon_f$  its regularized dyadic extension, we apply  Theorem \ref{thm:e-approxLp} and construct  the $\ve$-approximating function of $\upsilon_f$, which we denote by $u_0$. In light of \eqref{eq: Linf-carlu-bmo}, and Hypothesis \ref{Word:Hypothesis T'},  we have that the trace $\tr(u_0)$ exists and it is in $\BMO(\sigma)$. We set 
$$
f_1:= f - \tr(u_0).
$$
Inductively, for every $k\geq 1$, we define $u_k$ to be the $\ve$-approximating function of $\upsilon_{f_k}$ and set 
$$
f_{k+1} := f_k-\tr(u_k),
$$
Therefore by  \eqref{eq:lower bound-ftru-bmo} and \eqref{eq: Linf-ntMF-bmo},  we have that 
 $$
 \|f_{k+1}\|_{\BMO(\sigma)}\lec \sup_{x \in \om} |  \upsilon_{f_k}(x) - u_k(x)| \lec \ve\|f_k\|_{\BMO(\sigma)},
 $$
 which implies that 
 \begin{equation}\label{eq: fk bmo control}
 \|f_{k+1}\|_{\BMO(\sigma)}\leq C\ve \|f_k\|_{\BMO(\sigma)}\leq \cdots \leq (C\ve)^{k+1} \|f\|_{\BMO(\sigma)}.
 \end{equation}
Assume that $C\ve\leq 1/2$ and set $S_k:=\sum_{j=0}^k u_j$,  for any positive integer $k$. Using \eqref{eq: Linf-ntMF-bmo} and
\eqref{eq: sharpntMFuf-bmo},  and finally \eqref{eq: fk bmo control},  we can estimate
\begin{align*}
&\| S_k-S_m\|_{\nn^\infty_{\textup{sum}}(\om)}= \|S_k-S_m\|_{\nnis(\om)} +\| \delta_\om\nabla S_k-   \delta_\om \nabla S_m\|_{\nni(\om)}\\
&\lec \sum_{j=k+1}^m \Big(\sup_{x \in \om} |  \upsilon_{f_j}(x) - u_j(x)|+\sup_{x \in \om} \delta_\om(x)| \nabla( \upsilon_{f_j} - u_j)(x)|\Big) \\
&+ \sum_{j=k+1}^m \Big(\sup_{\xi\in\pom}\calN_{\sharp}(\upsilon_{f_j})(\xi) + \sup_{x \in \om} \delta_\om(x) |\nabla u_j(x)| \Big) \\
&\lec \sum_{j=k+1}^m \left(\ve \|f_j\|_{\BMO(\sigma)} + \|f_j\|_{\BMO(\sigma)} + \|f_j\|_{\BMO(\sigma)} \right) 
\lec \sum_{j=k+1}^m(C\ve)^j\|f\|_{\BMO(\sigma)} \\
&\leq ( 2^{-k} - 2^{-m} ) \|f\|_{\BMO(\sigma)}.
\end{align*}
Thus, $S_k$ is Cauchy sequence in the $(\nn_{\textup{sum}}(\om), \|\cdot\|_{\textup{sum}})$ and so, by Corollary \ref{cor: the sum space is complete}, there exists $u \in \nn_{\textup{sum}}(\om)$ such that $S_k \to u$ in $\nn_{\textup{sum}}(\om)$ (this implies that $\nabla S_k \to \nabla u$ locally uniformly in $\om$).   By
 \eqref{eq: Linf-carlu-bmo} and \eqref{eq: Linfty-ntmf-grad u est-bmo},  for any $m>k$, we have that
\begin{align*}
\|\nabla S_k- \nabla S_m\|_{\ccisi(\om)} &\leq \sum_{j=k+1}^m \sup_{\xi\in\pom}\calC_{s, c} (\nabla u_j)(\xi) \\
&\lec \sum_{j=k+1}^m\|f_j\|_{\BMO(\sigma)} \leq ( 2^{-k} - 2^{-m} ) \|f\|_{\BMO(\sigma)}.
\end{align*}
Therefore, since $\ccisi(\om)$ is a Banach space,   there exists $\vec F\in\ccisi(\om)$ such that 
 $\nabla S_k \to \vec F$ in  $\ccisi(\om)$ and, by Lemma  \ref{lem: sup gradient bound},  for any fixed $x\in\om$,
$$
\sup_{y\in B_x} |\nabla S_k(y) -  \vec F(y)| \delta_\om(y) \to 0, \quad \textup{as}\,\, k \to \infty,
$$
which implies that $\vec F=\nabla u$ in $\om$. Therefore,   
$$
 \nabla S_k \to  \nabla u(x) \quad \textup{in}\,\,  \ccisi(\om) \cap \nn^\infty_{\textup{sum}}(\om).
$$
Furthermore, we have that 
\begin{equation}\label{eq: carlgradu-bmo}
\sup_{\xi\in\pom}\calC_s(\nabla u)(\xi)\leq\sum_{k=0}^\infty \sup_{\xi\in\pom} \calC_s(\nabla u_k)(\xi)   
\lec_{\ve}  \sum_{k=0}^\infty \|f_k\|_{\BMO(\sigma)} \lec \|f\|_{\BMO(\sigma)},
\end{equation}
and, similarly,
\begin{align} \label{eq: ntMFu-bmo}
\|  u \|_{\nn^\infty_{\textup{sum}}(\om)}  \leq \sum_{k=0}^\infty \|  u_k \|_{\nn^\infty_{\textup{sum}}(\om)}  \lec  \sum_{k=0}^\infty \|f_k\|_{\BMO(\sigma)} \lec  \|f\|_{\BMO(\sigma)}.
\notag\end{align}
Finally,  it holds  that
$$
0=\lim_{k \to \infty} \|f_{k+1}\|_{\BMO(\sigma)}= \|f - \sum_{j=0}^{k} \tr(u_j)\|_{\BMO(\sigma)}
 $$
 and since, by construction, $\sum_{j=0}^k u_j-u \in \cc^{1, \infty}_{s, \infty}(\om)$,  using the linearity of the trace and   Hypothesis \ref{Word:Hypothesis T'}, we have that 
\begin{align*}
\Big\|\tr(u)-\sum_{j=0}^k \tr(u_j)\Big\|_{\BMO(\sigma)} &= \left\|\tr\big(u-S_k \big)\right\|_{\BMO(\sigma)} \lec \left\|\nabla u - \nabla S_k\right\|_{\ccisi(\om)} \overset{k \to \infty}{\longrightarrow} 0.
\end{align*}
This gives that $u$ is an extension of $f\in\BMO(\sigma)$ with $\tr(u)=f $ in $ \BMO(\sigma)$ and so 
\begin{equation}\label{eq: trace of extension in bmo}
\tr(u)(\xi)=f(\xi)+c \,\,\, \textup{for}\,\,\sigma\textup{-almost every}\,\xi\in \pom.
\end{equation}
If we set $\wt u=u-c$,  it is clear that $\wt u $ satisfies (i), (ii),  (iii), and (iv) concluding the proof of the theorem.
\end{proof}

\vv

\begin{remark}\label{rem:v-w}
Note that by the proof of  Theorem \ref{thm: extension in bmo},  it is immediate that the extension $u$ satisfies the estimate
\begin{equation}\label{eq:v-w}
\sup_{x \in \om} |\upsilon_f(x) - u(x)| \lec \|f\|_{\BMO(\sigma)}. 
\end{equation}
\end{remark}

\vv

We shall now deal with boundary data in $L^p(\sigma)$.

\vv

\noindent{\bf Hypothesis} \setword{{[$\TT$]}}{Word:Hypothesis T} 
\begin{itemize}
\item[(i)] For $p \in (1,\infty]$,  there exists a bounded linear {\it trace} operator 
$$
\tr:\nnp(\om) \cap \cc^{1, p}_{s, \infty}(\om)  \to L^p(\sigma)
$$
  such that $\| \tr(w)\|_{L^p(\sigma)} \leq \, \|w\|_{\nnp(\om)}$.  
\item[(ii)]  If $\upsilon_f$  the regularized dyadic extension of $f \in L^p(\sigma)$,  then $\tr(\upsilon_f)(\xi)=f(\xi)$ for $\sigma$-a.e. $\xi \in\pom$, and for any $w \in \nnp(\om)\cap \ccpsi(\om)$, it holds that
\begin{equation}\label{eq:lower bound-ftru}
\|f-\tr(w)\|_{L^p(\sigma)} =\|\tr(\upsilon_f - w)\|_{L^p(\sigma)} \leq \, \|\upsilon_f - w\|_{\nnp(\om)}.
\end{equation}
\end{itemize}

\vv

\begin{theorem}\label{thm: extension in Lp}
Let $\om \in \AR(s)$ for $s \in (0,n]$  satisfying  the \textup{{Hypothesis}  \ref{Word:Hypothesis T}}.
If $ f \in L^p(\sigma) $ with $p\in(1, \infty]$, then there exists a function $ u : \om \to \R $ such that 
\begin{enumerate}
\item[(i)] $ u \in C^1(\om) $,
\item[(ii)] $ \tr(u)(\xi) = f(\xi)$ for $\sigma$-a.e. $\xi \in\pom$,
\item[(iii)] $\| \calN(u) \|_{L^p(\sigma)}  + \|\calN(\delta_\om\nabla u)\|_{L^p(\sigma)} \lec \| f \|_{L^p(\sigma)}$,
\item[(iv)] $  \| \calC_s( \nabla u) \|_{L^p(\sigma)}  \lec \|f\|_{L^p(\sigma)}$.  
\end{enumerate}
\end{theorem}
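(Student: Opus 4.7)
The plan is to mirror the iterative scheme from the proof of Theorem \ref{thm: extension in bmo}, but working in the Banach spaces $\nnp(\om)$ and $\ccpsi(\om)$ rather than in the semi-normed sum space $\nn_{\textup{sum}}(\om)$. This should make the convergence step conceptually cleaner, since $\nnp(\om)$ is already a complete normed space and Corollary \ref{cor: the sum space is complete} is not needed.

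First, given $f \in L^p(\sigma)$, I would fix $\ve > 0$ small (to be chosen so that $C\ve < 1/2$, where $C$ is the constant from the contraction estimate below), form the regularized dyadic extension $\upsilon_f$ of \eqref{eq:dyadext}, and let $u_0 \in C^\infty(\om)$ be the $\ve$-approximator produced by Theorem \ref{thm:e-approxLp}. Setting $f_1 := f - \tr(u_0)$, Hypothesis \ref{Word:Hypothesis T}(ii) combined with \eqref{eq:LpntMfu} gives
\[
\|f_1\|_{L^p(\sigma)} = \|\tr(\upsilon_f - u_0)\|_{L^p(\sigma)} \le \|\upsilon_f - u_0\|_{\nnp(\om)} \lec \ve\,\|f\|_{L^p(\sigma)}.
\]
Iterating this construction yields sequences $f_k \in L^p(\sigma)$ and $u_k = u_k^{\ve}\in C^\infty(\om)$ such that $u_k$ is the $\ve$-approximator of $\upsilon_{f_k}$ and $f_{k+1} = f_k - \tr(u_k)$, with the geometric decay $\|f_k\|_{L^p(\sigma)} \le (C\ve)^k \|f\|_{L^p(\sigma)}$.

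Next, I would set $S_k := \sum_{j=0}^k u_j$ and show that $\{S_k\}$ is Cauchy simultaneously in $\nnp(\om)$, in the space $\{w : \calN(\delta_\om \nabla w) \in L^p(\sigma)\}$, and in $\ccpsi(\om)$. The key estimates are: from \eqref{eq: Lp-ntmf-grad u estimate} together with \eqref{ntMFuf} and the $L^p$-boundedness of $\mathcal{M}$,
\[
\|u_j\|_{\nnp(\om)} + \|\calN(\delta_\om \nabla u_j)\|_{L^p(\sigma)} \lec \|f_j\|_{L^p(\sigma)} \le (C\ve)^j \|f\|_{L^p(\sigma)},
\]
and from \eqref{eq:LpCarlu},
\[
\|\calC_s(\nabla u_j)\|_{L^p(\sigma)} \lec \ve^{-2}\|f_j\|_{L^p(\sigma)} \lec \ve^{-2}(C\ve)^j \|f\|_{L^p(\sigma)}.
\]
Since the geometric series converges, the partial sums converge to a common limit $u$, which is $C^1(\om)$ because the gradients converge locally uniformly (by an analogue of Lemma \ref{lem: sup gradient bound} adapted to the Carleson norm, or simply because $\nabla S_k \to \nabla u$ in $L^1_{\mathrm{loc}}$), and it satisfies estimates (iii) and (iv) by summing.

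Finally, I would verify property (ii). By construction,
\[
f - \sum_{j=0}^{k}\tr(u_j) = f_{k+1} \to 0 \quad \text{in } L^p(\sigma) \text{ as } k \to \infty,
\]
while Hypothesis \ref{Word:Hypothesis T}(i) applied to $u - S_k \in \nnp(\om) \cap \cc^{1,p}_{s,\infty}(\om)$ gives
\[
\Big\|\tr(u) - \sum_{j=0}^k \tr(u_j)\Big\|_{L^p(\sigma)} \lec \|u - S_k\|_{\nnp(\om)} \to 0,
\]
so $\tr(u) = f$ $\sigma$-a.e.\ on $\pom$. The main obstacle in executing this plan is, as in the $\BMO$ case, verifying that the limit $u$ is genuinely $C^1$ and that $\nabla u$ agrees with the limit of $\nabla S_k$ in $\ccpsi(\om)$; here one must be careful because $\ccpsi(\om)$ controls only a Carleson-type integral of pointwise suprema of the gradient, so one needs an analogue of Lemma \ref{lem: sup gradient bound} (or direct local uniform convergence) to identify the limit with a classical gradient and to conclude regularity. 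For $p = \infty$, since $L^\infty \subset \BMO$, one could alternatively invoke Theorem \ref{thm: extension in bmo} directly and use Remark \ref{rem:v-w}, but the iteration above applied with the $L^\infty$ norm gives the stronger estimate (v) of Theorem \ref{th: introtheorem3} as a bonus.
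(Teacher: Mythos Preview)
Your proposal is correct and follows essentially the same iterative scheme as the paper: construct $\ve$-approximators $u_k$ of $\upsilon_{f_k}$, obtain geometric decay of $\|f_k\|_{L^p(\sigma)}$ via Hypothesis \ref{Word:Hypothesis T}(ii) and \eqref{eq:LpntMfu}, and show that $S_k$ is Cauchy in $\nnp(\om)$ while $\nabla S_k$ is Cauchy in both $\ccpsi(\om)$ and (for $\delta_\om\nabla S_k$) in $\nnp(\om)$. The paper handles the $C^1$ identification exactly as you anticipate: it invokes Lemma \ref{lem: sup gradient bound} to pass from convergence in $\ccpsi(\om)$ (and in $\nnp(\om)$) to local uniform convergence of $\nabla S_k$, and then applies Lemma \ref{lem: gradient uniform convergence} to conclude $u\in C^1(\om)$ with $\nabla u=\sum_k \nabla u_k$; your parenthetical ``or simply because $\nabla S_k\to\nabla u$ in $L^1_{\loc}$'' would not by itself yield $C^1$, so the local uniform convergence route via Lemma \ref{lem: sup gradient bound} is the one to keep.
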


\begin{proof}
Fix $\ve>0$ to be chosen.  By  Theorem \ref{thm:e-approxLp},  we construct  $u_0$,  the $\ve$-approximating function of $\upsilon_f$, and by Hypothesis \ref{Word:Hypothesis T},  the trace  of $u_0$  exists and $\tr(u_0) \in L^p(\sigma)$. 
We set 
$$ f_1:= f - \tr(u_0) \in L^p(\sigma).$$ 
We then let $u_1$  be the $\ve$-approximating function of $\upsilon_{f_1}$ and set  
$$ f_2:= f_1 -  \tr(u_1) \in L^p(\sigma).$$ 
Inductively,  for every $k\geq1$, we define $u_k$ to be the $\ve$-approximating function of $\upsilon_{f_k}$ and set
$f_{k+1}:=f_k-\tr(u_k)$.
Therefore,  by \eqref{eq:lower bound-ftru} and \eqref{eq:LpntMfu}, we have that
$$
\|f_{k+1}\|_{L^p(\sigma)}  \leq \| \upsilon_{f_k}-u_k \|_{\nnp(\om)} \leq C\, \ve \| f_k \|_{L^p(\sigma)},
$$
which implies that
\begin{align}\label{eq:f_k+1<ef_k}
\|f_{k+1}\|_{L^p(\sigma)} \leq C\, \ve \, \|f_k\|_{L^p(\sigma)}\leq \dots \leq (C \ve)^{k+1} \|f\|_{L^p(\sigma)}.
\end{align}
Thus, if we choose $\ve$ so that $C\ve\leq \frac{1}{2}$ and  set $S_k:=\sum_{j=0}^k u_j $,  then for $k<m$,  using  \eqref{eq:LpntMfu}, \eqref{ntMFuf}, and \eqref{eq:f_k+1<ef_k},   it holds that 
\begin{align*}
\| S_k-  &S_m \|_{\nnp(\om)}\leq \sum_{j=k+1}^m \left(\|\mathcal{N}(u_j-\upsilon_{f_j})\|_{L^p(\sigma)} +
 \|\mathcal{N} \upsilon_{f_j} \|_{L^p(\sigma)}\right)\\
&\lesssim \sum_{j=k+1}^m\left( \ve \| f_j\|_{L^p(\sigma)} + \| f_j\|_{L^p(\sigma)} \right)  \leq (1+\ve)  \sum_{j=k+1}^m (C\ve)^{j} \| f \|_{L^p(\sigma)}\\
&\leq (2^{-k+1}-2^{-m+1})\| f \|_{L^p(\sigma)}.
\end{align*}
Thus,  $S_k$ is a Cauchy sequence in $\nnp(\om)$,  and since $\nnp(\om)$  is  a Banach space,   there exists $u \in \nnp(\om)$ such that $S_k \to u$ in $\nnp(\om)$. It is easy to see that $S_k \to u$ uniformly in $B_x$,  for any $ x\in\om$,  and so we define
\begin{equation}\label{eq: extension}
u(x)=\sum_{k=0}^\infty u_k(x) \quad \textup{for all}\,\,x \in \om.
\end{equation}
Similarly,  we can show that $\nabla S_k=\sum_{j=0}^k \nabla u_j $  is convergent in the Banach space  $\ccpsi(\om)$ (resp.   $\nnp(\om)$), since,  by \eqref{eq:LpCarlu}  (resp. \eqref{eq: Lp-ntmf-grad u estimate}) and \eqref{eq:f_k+1<ef_k}, 
\begin{align*}
\| \nabla S_k &-  \nabla S_m \|_{\ccpsi(\om)} +\| \delta_\om \nabla S_k-  \delta_\om \nabla S_m \|_{\nnp(\om)} \\
&  \leq \sum_{j=k+1}^m \|\mathcal{C}_s(\nabla u_j) \|_{L^p(\sigma)}+ \sum_{j=k+1}^m \|\mathcal{N}(\delta_\om |\nabla u_j|) \|_{L^p(\sigma)} \leq C \, \ve^{-2} \sum_{j=k+1}^m  \| f_j\|_{L^p(\sigma)}.
\end{align*}
Thus, there exists $\vec F_1 \in \ccpsi(\om)$ (resp. $\vec F_2$ so that $\delta_\om \vec F_2\in\nnp(\om)$) such that $\nabla S_k \to \vec F_1$ in $\ccpsi(\om)$ (resp. $\delta_\om\nabla S_k \to \delta_\om\vec F_2$ in $\nnp(\om)$), and,  by Lemma \ref{lem: sup gradient bound}, we have that for any fixed $x\in \om$,
$$
\sup_{y \in B_x} |\nabla S_k-F_i |\delta_\om(y) \to 0, \quad \textup{for}\,\, i=1,2,
$$
which readily implies that $\vec F_1=\vec F_2=:\vec F$ in $\om$. Hence,
$\nabla S_k$ converges to $\sum_{k=0}^\infty \nabla u_k$ uniformly in $B_x$ for every $x\in\om$, and 
by Lemma \ref{lem: gradient uniform convergence}, we deduce that $u \in C^1(\om)$ and 
$$
\sum_{k=0}^\infty \nabla u_k(x) = \nabla u(x)\quad  \textup{for all} \,\, x\in \om.
$$
 In fact, 
 \begin{align*}
\|\mathcal{N}(u)\|_{L^p(\sigma)} + \|\mathcal{N}(\delta_\om \nabla u)\|_{L^p(\sigma)}+ \|\mathcal{C}_s(\nabla u)\|_{L^p(\sigma)} \lesssim \| f \|_{L^p(\sigma)}.
\end{align*}

To show that $u$ is an  extension of $f$, notice first that, in light of \eqref{eq:f_k+1<ef_k},
\begin{align*}
0 &=\underset{k\to\infty}{\lim}\|f_{k+1}\|_{L^p(\sigma)}=\underset{k\to\infty}{\lim}\| f- \tr \Big(\sum_{j=0}^k u_j \Big) \|_{L^p(\sigma)}.
\end{align*}
Since,  by construction, $\sum_{j=0}^k u_j-u \in \nnp(\om) $,  in light of Hypothesis \ref{Word:Hypothesis T} (i), we get that
 $$
\|\tr \Big(\sum_{j=0}^k u_j\Big) - \tr(u)  \|_{L^p(\sigma)}=\|\tr \Big(\sum_{j=0}^k u_j - u \Big) \|_{L^p(\sigma)}\leq  
\| \sum_{j=0}^k u_j  - u\|_{\nnp(\om)} \overset{k \to \infty}{\longrightarrow} 0,
 $$
which entails that $ \tr(u)(\xi)= f(\xi)$ for $\sigma$-almost every $\xi\in \pom$.

\end{proof}

\vv

\begin{proposition}\label{lem: non-tangential convergence}
Let $\om \in \AR(s)$ for $s \in(0,n]$, which for $s=n$ satisfies the pointwise John condition. Then, for any $p \in (1,\infty]$, there exists a bounded linear  trace operator $\tr{_\om}: \nnp(\om) \cap \cc^{1, p}_{s, \infty}(\om)  \to L^p(\sigma)$  satisfying the Hypothesis  \textup{\ref{Word:Hypothesis T}}.   Moreover,  if $\om$ satisifies the local John condition for $s=n$, there exists a bounded linear  trace operator $\tr{_\om}:  \cc^{1, \infty}_{s, \infty}(\om) \to \BMO(\sigma)$.
\end{proposition}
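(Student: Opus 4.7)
I will define the trace operator via pointwise non-tangential (respectively quasi-non-tangential) limits. For $s<n$, where $\Omega=\mathbb{R}^{n+1}\setminus E$ satisfies the Harnack chain condition as well as the corkscrew condition, the natural candidate is $\tr_\Omega(w)(\xi):=\ntlim_{x\to\xi} w(x)$. For $s=n$ under only the pointwise John condition, I will instead set $\tr_\Omega(w)(\xi):=\qntlim_{x\to\xi} w(x)$, where the limit is taken along the $\theta$-good curve $\gamma_\xi$ provided by Definition 2.17. The proof then consists of (i) showing the limit exists $\sigma$-a.e., (ii) verifying the $L^p$ bound, (iii) identifying $\tr_\Omega(\upsilon_f)=f$, and (iv) deducing the estimate in Hypothesis [T](ii) from linearity. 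In the $\BMO$ case, (i)--(iii) persist but $L^p$ boundedness must be replaced by a $\BMO$ estimate, which is where the local John condition enters.

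\textbf{Existence of the (quasi-)non-tangential limit.} Given $w\in\nn^p(\Omega)\cap\cc^{1,p}_{s,\infty}(\Omega)$, the key observation is that along a $\theta$-good curve $\gamma_\xi:[0,\ell]\to\overline{\Omega}$ parametrised by arc length,
$$
|w(\gamma_\xi(t_1))-w(\gamma_\xi(t_2))|\leq\int_{t_1}^{t_2}|\nabla w(\gamma_\xi(s))|\,ds,
$$
and, since $\delta_\Omega(\gamma_\xi(s))\gtrsim s$, each point $\gamma_\xi(s)$ lies in a cone $\gamma_{\alpha}(\xi)$ for a suitable aperture. Hence the integral is dominated by a (one-dimensional slice of the) area functional applied to $|\nabla w|\cdot\delta_\Omega^{-n+s}$. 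By Lemma \ref{lem: A less than C}, the area functional of $|\nabla w|\cdot\delta_\Omega^{-n+s}$ is finite $\sigma$-a.e.\ on $\partial\Omega$ because $\|\mathcal{A}(|\nabla w|\,\delta_\Omega^{s-n})\|_{L^p(\sigma)}\lesssim\|\mathscr{C}_s(|\nabla w|)\|_{L^p(\sigma)}<\infty$; a refinement of this argument (truncating near the tip of the cone and using dominated convergence along $\gamma_\xi$) shows the limit exists at every $\xi$ where this area integral is finite. For $s<n$, Harnack chains let one promote the quasi-non-tangential limit to a genuine non-tangential one, so the limit is independent of the approach direction and $\tr_\Omega(w)$ is well defined $\sigma$-a.e.

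\textbf{$L^p$ bound, consistency, and Hypothesis [T](ii).} The pointwise bound $|\tr_\Omega(w)(\xi)|\leq\calN_\alpha(w)(\xi)$ is immediate from the definition, so $\|\tr_\Omega(w)\|_{L^p(\sigma)}\leq\|\calN_\alpha(w)\|_{L^p(\sigma)}=\|w\|_{\nn^p(\Omega)}$, establishing boundedness and linearity. For $w=\upsilon_f$ with $f\in L^p(\sigma)$, Lemma \ref{lemma:ntconvergenceofdyadic} shows $\upsilon_f\to f$ non-tangentially $\sigma$-a.e., hence $\tr_\Omega(\upsilon_f)=f$ $\sigma$-a.e. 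The estimate \eqref{eq:lower bound-ftru} then follows from linearity:
$$
\|f-\tr_\Omega(w)\|_{L^p(\sigma)}=\|\tr_\Omega(\upsilon_f-w)\|_{L^p(\sigma)}\leq\|\upsilon_f-w\|_{\nn^p(\Omega)}.
$$

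\textbf{The $\BMO$ case.} Here $w\in\cc^{1,\infty}_{s,\infty}(\Omega)$, so we no longer have $L^p$ control of $\calN(w)$, only uniform control of $\mathscr{C}_s(\nabla w)$. To produce a $\BMO$ trace, given a boundary ball $B=B(\xi_0,r)\cap\partial\Omega$ and its corkscrew point $x_B$, define $\tr_\Omega(w)(\xi)$ at $\sigma$-a.e.\ $\xi\in B$ via the (quasi-)non-tangential limit, and write $w(\xi_B)$ for the value at a fixed corkscrew point. The oscillation of $\tr_\Omega(w)$ on $B$ is controlled by $\int_{\gamma_\xi}|\nabla w|\,ds+|w(\xi_B)-w(x_B)|$; the first term is handled by a Poincar\'e-type estimate along good curves and the second via a Harnack-chain-like argument using the local John condition, which guarantees that every boundary point in $B$ can be connected to $x_B$ by a good path of length $\lesssim r$ staying in a neighbourhood of $B$. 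Averaging and using the Carleson bound $\sup\mathscr{C}_s(\nabla w)\lesssim\|\nabla w\|_{\cc^\infty_{s,\infty}(\Omega)}$ yields
$$
\fint_B|\tr_\Omega(w)-m_{\sigma,B}\tr_\Omega(w)|\,d\sigma\lesssim\|\nabla w\|_{\cc^\infty_{s,\infty}(\Omega)}.
$$
The remaining ingredients (consistency with $\upsilon_f$ and the lower bound \eqref{eq:lower bound-ftru-bmo}) are as before. The main obstacle in this step is establishing the connectivity estimate needed for the $\BMO$ bound: the local John condition provides exactly the quantitative, scale-invariant access between arbitrary pairs of boundary points through corkscrew regions that makes the multiscale control of oscillations work, and this is why it is required precisely in the $s=n$ endpoint case.
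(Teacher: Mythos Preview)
Your overall strategy matches the paper's, but there is a genuine gap in the existence step. You write
\[
|w(\gamma_\xi(t_1))-w(\gamma_\xi(t_2))|\le\int_{t_1}^{t_2}|\nabla w(\gamma_\xi(s))|\,ds
\]
and then assert that this line integral is ``dominated by a (one-dimensional slice of the) area functional''. This is the step that does not go through: a one-dimensional integral of $|\nabla w|$ along a single curve is \emph{not} controlled by the solid cone integral $\mathcal A^{(\alpha)}(|\nabla w|)(\xi)=\int_{\gamma_\alpha(\xi)}|\nabla w(y)|\,\delta_\Omega(y)^{-n}\,dy$. The Carleson/area control you have is $(n+1)$-dimensional, and nothing prevents $|\nabla w|$ from being large on a curve of measure zero while the solid integral stays finite. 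Your parenthetical about ``truncating near the tip and dominated convergence'' does not address this.

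The paper fixes exactly this issue by defining the trace through \emph{averages}: it sets $E(x):=\fint_{B(x,c\delta_\Omega(x))}w$ and shows that $E$ is Cauchy along $\gamma_\xi$. After the change of variables $z=\gamma(t)+w\,c\,\delta_\Omega(\gamma(t))$, $w\in B(0,1)$, the difference $E(x_1)-E(x_2)$ becomes a \emph{double} integral $\int_{t_1}^{t_2}\int_{B(0,1)}|\nabla w(\cdots)|\,dw\,dt$; the inner $B(0,1)$-integral provides the transversal averaging that turns each dyadic $t$-shell into a genuine ball integral $\int_{B(\gamma(s_k),c\delta_\Omega(\gamma(s_k)))}|\nabla w|\,\delta_\Omega^{-n}$, and these sum with bounded overlap to $\mathcal A^{(\alpha)}(|\nabla w|)(\xi)$. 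The trace is then $\tr_\Omega(w)(\xi):=\qntlim_{x\to\xi}E(x)$ (and $\ntlim$ when $s<n$). Once you use $E$ instead of pointwise values, the rest of your outline---the pointwise bound $|\tr_\Omega(w)(\xi)|\le\calN(w)(\xi)$, the identification $\tr_\Omega(\upsilon_f)=f$ via Lemma~\ref{lemma:ntconvergenceofdyadic}, and \eqref{eq:lower bound-ftru} by linearity---goes through exactly as you wrote. In the $\BMO$ part the same averaging issue recurs: the paper compares $\tr_\Omega(w)(\zeta)$ not with a point value $w(x_B)$ but with $\fint_{B_r}w$ over a corkscrew ball, obtains $|\tr_\Omega(w)(\zeta)-\fint_{B_r}w|\lesssim\mathcal A(\nabla w\,{\bf 1}_{B(\xi,C'r)})(\zeta)$, and then applies the \emph{local} inequality \eqref{eq:A<Clocal} to get the $\BMO$ bound.
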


\begin{proof}
For any $ x\in\om $ and fixed $c \in (0,1/2]$, we define
\begin{equation}\label{eq: average of u}
E(x) := \fint_{B(x, c\delta_\om(x))} u(z)\, dz. 
\end{equation}
Fix $\xi\in\pom$ such that $ \xi \in \JC(\theta)$ (see Definition \ref{def: pwJohn}).  Then there exist $r_{\xi}>0$ and $x_{\xi} \in B(\xi, 2r_\xi) \cap\om $ such that $\delta_\om(x_\xi) \geq \theta r_\xi$,  and also there exists a good curve
$ \gamma: [0, 1] \to \R $ in $ B(\xi,  2r_{\xi})\cap\om$ connecting the points $ \xi $ and $ x_\xi $ such that $| \dot{\gamma}(t) | = 1$ 
$\forall t\in[0, 1]$.  For any fixed points $ x_1, x_2 \in \gamma $,  there exist $ t_{1}, t_{2} \in [0, 1] $ such that 
$ x_1 =\gamma( t_{1} ) $ and $ x_2 = \gamma( t_{2} ) $. 
Applying a change of variables and applying the mean value theorem,  we estimate
\begin{align*}
| E(x_1) &- E(x_2) | = \Big| \int_{B(0, 1)} \big( u(x_1+wc\delta_\om(x_1)) - u(x_2+wc\delta_\om(x_2)) \big)\, dw \Big| \\
&= \Big| \int_{B(0, 1)} \big( u(\gamma(t_{1}) + wc\delta_\om(\gamma(t_{1}))) - 
u(\gamma(t_{2}) + wc\delta_\om(\gamma(t_{2}))) \big) \, dw \Big| \\
&= \Big| \int_{B(0, 1)} \int_{t_{1}}^{t_{2}} \nabla u( \gamma(t)+wc\delta_\om(\gamma(t))) \cdot
\nabla \delta_\om(\gamma(t))\,\dot{\gamma}(t)  \, dt\, dw \Big| \\
&\leq \int_{t_{1}}^{t_{2}} \int_{B(0, 1)} \big| \nabla u(\gamma(t) + wc\delta_\om(\gamma(t)) \big|\, dw\, dt
\end{align*}
where we used that $|\dot{\gamma}(t)|=1$ and $| \nabla \delta_\om(\gamma(t)) | \leq 1$ since the function $\dist( \cdot \, , \pom) $ is 
$1$-Lipschitz. 
Note that there exists $M_j\in \NN$ such that $ 2^{-M_j} \leq t_{j} \leq 2^{-M_j + 1} $, for $ j= 1, 2$. 
By the Fundamental Theorem of Calculus, since $u \in C^1(\om)$ and $\gamma(t) + wc\delta_\om(\gamma(t))$ is $(1+c)$-Lipschitz in $t$ for any $w \in B(0,1)$, we have that there exists $s_k \in [2^{-k}, 2^{1-k}]$ such that
\begin{align*}
\int_{t_{1}}^{t_{2}} \int_{B(0, 1)} &\big| \nabla u(\gamma(t) + wc\delta_\om(\gamma(t)) \big| dw dt\\
& = \sum_{k=M_1}^{M_2} \int_{2^{-k}}^{2^{-k+1}} \int_{B(0, 1)} \big| \nabla u(\gamma(t) + wc\delta_\om(\gamma(t)) \big| dw dt \\
&= \sum_{k=M_1}^{M_2}2^{-k}  \int_{B(0, 1)} \big| \nabla u(\gamma(s_k) + wc\delta_\om(\gamma(s_k)) \big| dw   \\
&=\sum_{k=M_1}^{M_2}2^{-k}  \int_{ B(\gamma(s_k), c \delta_\om(\gamma(s_k)))}  |\nabla u(y)| \frac{dy}{ (c \delta_\om(\gamma(s_k)))^{n+1}}\\
&\lec \sum_{k=M_1}^{M_2}  \int_{ B(\gamma(s_k), c \delta_\om(\gamma(s_k)))}  |\nabla u(y)|  \frac{dy}{\delta_\om(y)^n},
\end{align*}
where is the last inequality we used that $\delta_\om(\gamma(s_k)) \approx s_k \approx 2^{-k}$ and that $\delta_\om(y) \lesssim \delta_\om(\gamma(s_k))$ for any $y \in B(\gamma(s_k), c \delta_\om(\gamma(s_k)) )$. Therefore,  there exists a cone $\gamma_\alpha(\xi)$ with apperture depending on $c$ and $\theta$ such that $B(\gamma(s_k), c \delta_\om(\gamma(s_k))) \subset \gamma_\alpha(\xi)$, and, by the bounded overlaps of the balls $B(\gamma(s_k), c \delta_\om(\gamma(s_k))) $, we infer that 
$$
| E(x_1) - E(x_2) | \lec \int_{\gamma_\alpha(\xi) \cap B(\xi, C \delta_\om(x_2)) }  |\nabla u(y)|  \frac{dy}{\delta_\om(y)^n}.
$$

By \eqref{eq:A<Cglobal},  we have that $ \mathcal{A}^{(\alpha)}_s( \nabla u ) \in L^p(\sigma)$  for $p\in(1, \infty)$ and $ \mathcal{A}^{(\alpha)}_s( \nabla u ) \in L^q_{\loc}(\sigma)$ for any $q \in (1,\infty)$ when $p=\infty$. Thus,   $ \mathcal{A}^{(\alpha)}_s(\nabla u)(\xi) < \infty $ for $\sigma$-almost every $\xi\in\pom$ 
 and using the fact that the above estimate holds for any points $ x_1, x_2 \in \gamma_\xi$,  we can assume that  $x_1, x_2 \in B(\xi, \ve)$ for some $\ve>0$ small compared to $r_\xi$.  Therefore,
$$
|E(x_1) - E(x_2)| \lec 
\int_{\gamma_\alpha(\xi)\cap B(\xi, C\ve)} | \nabla u(y)| \, \delta_\om(y)^{-n} \, dy  \lec  \, \mathcal{A}^{(\alpha)}(\nabla u)(\xi) < \infty .
$$ 
By the dominated convergence theorem, we get that 
$ | E(x_1) - E(x_2) | \to 0 $ as $\ve \to 0$, i.e.,  $E(x)$ is Cauchy on $\gamma_\xi$ and thus convergent. This shows that the quasi-non-tangential limit of $E(x)$ at $\xi \in \pom$ exists for $\sigma$-a.e. $\xi \in \pom$ and we can  define the desired trace operator by 
\begin{equation}\label{eq: trace}
\tr{_\om}(u)(\xi):=\qntlim_{x \rightarrow \xi} E(x) \quad \,\,\text{for}\,\,\sigma\textup{-a.e.}\,\xi \in \pom.
\end{equation}
In the case  $s<n$,  we  just define $\tr{_\om}(u)(\xi)=\ntlim_{x \rightarrow \xi} E(x)$ since $\om$ has only  one connected component and any $\xi \in \pom$ can be connected to a corkscrew point by a good curve.  It is clear that $\tr: \cc^{1, p}_{s, \infty}(\om) \to L^p(\sigma)$ is a linear operator, while the fact that $\tr: \nnp(\om) \cap \cc^{1, p}_{s, \infty}(\om) \to L^p(\sigma)$  is bounded if $p \in (1, \infty]$ can be proved pretty easily.  Indeed,  let $\xi\in\pom$ such that $ \xi \in \JC(\theta)$.  For fixed $\ve>0$,  there exists $\delta>0$ such that, if  $x\in B(\xi, \delta) \cap \gamma_\xi$,  it holds that 
$$
|\tr{_\om}(u)(\xi)| \leq |\tr{_\om}(u)(\xi) - E(x)| + m_{\sigma, B(x, c\delta_\om(x))} (|u|) 
< \ve + \calN(u)(\xi).
$$
Letting $ \ve\to 0$ we infer that $ |\tr{_\om}(u)(\xi)| \leq  \calN(u)(\xi) $ for $\sigma$-a.e. $\xi \in \pom$, which readily concludes Hypothesis  \ref{Word:Hypothesis T} (i), while (ii)  readily follows from Lemma \ref{lemma:ntconvergenceofdyadic}.

Assume now that $\om$ satisfies the local John condition when $s=n$ (in the case $s<n$ this is automatic).  Fix $\xi\in\pom$ and $r>0$,  and, by the local John condition,  there exists a corkscrew point  $x_r \in B(\xi,r)$ such that any $\zeta \in B(\xi,r) \cap \pom$ can be connected to $x_r$ by  a good curve. The existence of the trace operator  follows by the same argument as above and we define it the same way.  It remains to show that $\tr: \cc^{1, \infty}_{s, \infty}(\om) \to \BMO(\sigma)$ is bounded.  For $u \in \cc^{1, \infty}_{s, \infty}(\om)$, if  $B_r := B(x_r,  c \delta_\om(x_r))$ is a corkscrew ball centered at $x_r$ with radius $c \delta_\om(x_r) \approx r$,   by the same proof as above,  we can show that 
$$
\big| \tr{_\om}(u)(\zeta) - \fint_{ B_r }u(y)\, dy \big| \lec \mathcal{A}_s(\nabla u {\bf 1}_{B(\xi,C' r)})(\zeta),
\quad \forall \zeta\in B(\xi, r)\cap\pom.
$$
Thus, taking averages over the ball $B(\xi, r)$  and applying \eqref{eq:A<Clocal} in $L^1(B(\xi, C' r))$,  we conclude that 
$$
\fint_{B(\xi, r)} \big| \tr{_\om}(u) -  \fint_{ B_r }u(y)\, dy \big|  d\sigma \lec 
\fint_{ B(\xi, C'' r) } \calC_s( \nabla u {\bf 1}_{B(\xi,C'' r)})\, d\sigma \leq \| \calC_s( \nabla u ) \|_{L^\infty(\sigma)}.
$$ 
This readily implies that $ \| \tr{_\om}(u) \|_{\BMO(\sigma)} \lec  \| \calC_s(\nabla u) \|_{L^\infty(\sigma)} $,  which,  combined with   $\|f\|_{\BMO(\sigma)} \leq 2\|f\|_{L^\infty(\sigma)}$,  \eqref{eq:v-w},    and  Hypothesis  \ref{Word:Hypothesis T} (ii) (we have already proved it above),  proves Hypothesis  \ref{Word:Hypothesis T'}.
\end{proof}

\vv
We state  \cite[Theorem 2, p. 171]{Stein} in the following lemma. 
\begin{lemma}\label{lem: Stein lemma}
Let $E\subset \R^{n+1}$ be a closed set and $\delta_E$ be the distance function with respect to $E$. Then there exist positive 
constants $m_1$ and $m_2$ and a function $\beta_E$ defined in $E^c$ such that
\begin{itemize}
\item[(i)] $m_1\delta_E(x)\leq \beta_E(x) \leq m_2\delta_E(x)$, for every $x\in E^c$, and 
\item[(ii)] $\beta_E$ is smooth in $E^c$ and 
$$
\Big| \frac{\partial^\alpha}{\partial x^\alpha}\beta_E(x) \Big| \leq C_\alpha \beta_E(x)^{1-|\alpha|}.
$$
\end{itemize}
In addition, the constants $m_1$, $m_2$ and $C_\alpha$ are independent of $E$. 
\end{lemma}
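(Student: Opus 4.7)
The plan is to prove this via the classical Whitney extension/regularization machinery. First, I would apply the Whitney decomposition to the open set $E^c$ to obtain a countable family of essentially disjoint dyadic cubes $\{Q_k\}_{k}$ with pairwise disjoint interiors whose union is $E^c$ and which satisfy the standard Whitney properties: there exist absolute constants $c_1, c_2 > 0$ (independent of $E$) such that $c_1 \ell(Q_k) \leq \dist(Q_k, E) \leq c_2 \ell(Q_k)$, and the slightly dilated cubes $\{Q_k^* := (1+\eta) Q_k\}$ (for some fixed small $\eta > 0$) have uniformly bounded overlap and satisfy $\ell(Q_k) \approx \ell(Q_j)$ whenever $Q_k^* \cap Q_j^* \neq \emptyset$.

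Next, I would construct a smooth partition of unity $\{\varphi_k\}_k$ subordinate to $\{Q_k^*\}$: each $\varphi_k \in C^\infty(\R^{n+1})$, $\supp \varphi_k \subset Q_k^*$, $\sum_k \varphi_k \equiv 1$ on $E^c$, and $|\partial^\alpha \varphi_k(x)| \leq A_\alpha \ell(Q_k)^{-|\alpha|}$ for every multi-index $\alpha$, where $A_\alpha$ depends only on $\alpha$ and $n$. Such a partition is produced by convolving the indicator functions of a related cover with a suitably scaled mollifier, and the constants $A_\alpha$ are absolute. I would then define
\[
\beta_E(x) := \sum_{k} \ell(Q_k)\, \varphi_k(x), \qquad x \in E^c.
\]

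The verification of (i) is routine: for $x \in Q_k$, the only $\varphi_j$ contributing at $x$ are those with $Q_j^* \ni x$, and for each such $j$ we have $\ell(Q_j) \approx \ell(Q_k) \approx \delta_E(x)$; convexity of $\sum_j \varphi_j(x) = 1$ then gives $\beta_E(x) \approx \delta_E(x)$ with constants depending only on the Whitney parameters. For (ii), smoothness in $E^c$ is immediate because the sum is locally finite. Fixing $x \in Q_k$, the identity $\sum_j \partial^\alpha \varphi_j(x) = 0$ for $|\alpha| \geq 1$ allows me to write
\[
\partial^\alpha \beta_E(x) = \sum_{j:\, Q_j^* \ni x} \bigl(\ell(Q_j) - \ell(Q_k)\bigr)\, \partial^\alpha \varphi_j(x),
\]
so the bounded overlap, the comparability $\ell(Q_j) \approx \ell(Q_k) \approx \beta_E(x)$, and the estimates on $\partial^\alpha \varphi_j$ yield $|\partial^\alpha \beta_E(x)| \leq C_\alpha\, \beta_E(x)^{1 - |\alpha|}$.

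The only subtle point---which is where I would focus care---is confirming that all constants $m_1, m_2, C_\alpha$ are indeed independent of the particular closed set $E$. This follows because the Whitney decomposition construction, the dilation parameter $\eta$, the bounded overlap constants, and the bump functions used to form $\{\varphi_k\}$ are all produced by a universal procedure depending only on ambient dimension $n+1$; no feature of $E$ enters these constants, only the Whitney cubes' side-lengths, which are absorbed into $\beta_E$ itself. This is the heart of why the estimate scales correctly.
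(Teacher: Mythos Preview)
Your proposal is correct and follows the classical Whitney regularization argument. Note, however, that the paper does not actually prove this lemma: it simply records it as \cite[Theorem 2, p.~171]{Stein} and invokes it as a black box. Your sketch is essentially Stein's own proof, so there is nothing to compare beyond observing that you have supplied the details the paper chose to omit by citation.
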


Following  \cite[Section 3]{HT20}, we  define a  kernel $\Lambda(\cdot,\cdot) : \om \times \om \to [0,\infty]$,  which will be  necessary in the proof of Theorem \ref{thm: extension in bmo}. To this end,  let $\beta = \beta_{\om^c}$ be the  function constructed  in Lemma \ref{lem: Stein lemma} and let $\zeta \geq 0$ be a smooth non-negative function supported on $ B(0, \frac{c}{4m_2}) $,  satisfying $\zeta\leq 1$ and  $ \int \zeta = 1 $.  For every $ \lambda >0 $, we set 
$$
\zeta_\lambda (x) := \lambda^{-(n+1)}  \zeta \big( x/\lambda\big), 
$$
and define the mollifier
$$ 
\Lambda( x, y):= \zeta_{\beta(x)}(x - y) = \frac{1}{ \beta(x)^{n+1} } \zeta \Big( \frac{2(x-y)}{\beta(x)} \Big).
$$
Observe that,  by construction,  for every $ x\in \om$,
\begin{equation}\label{eq:lambda-prop}
 \supp ( \Lambda (x, \cdot) ) \subset \wt B_x:=B(x, c\,\delta_\om(x)/8) \quad \textup{and}\quad  \int_\om \Lambda(x,y)\, dy = 1.
\end{equation}
Moreover, it is easy to prove that
\begin{equation}\label{eq:derivetive-lambda}
\sup_{y \in  \wt B_x} \Lambda(x,y) \lesssim \delta_\om(x)^{-n-1} \quad \textup{and}\quad \sup_{y \in  \wt B_x} |\nabla_x \Lambda(x,y) | \lesssim \delta_\om(x)^{-n-2}.
\end{equation}
For any $F:\om \to \R$, we  define the {\it smooth modification} of $F$ by
\begin{equation}\label{def:tilde-mollify}
\wt{F}(x) := \int_\om \Lambda(x, y)\, F(y) \,dy.
\end{equation}

\vv

The next lemma was essentially proved in  \cite[Section 3]{HT20} but we provide its proof  for the reader's convenience.

\begin{lemma}\label{lem:modifedFproperties}
Let $\om \subset \rrn$ be an open set satisfying the corkscrew condition. If $F\in C^1(\om ; \rrn) $ and $\wt F$ is the smooth modification of $F$ as defined in \eqref{def:tilde-mollify}, then we have that
\begin{itemize}
\item[(a)] For any $x \in \om$, 
\begin{equation*}
|\wt F(x)| \lec \sup_{ \wt B_x}|F(y)|.
\end{equation*}
\item[(b)]  For any $x \in \om$, 
\begin{equation*}
|\wt F(x_1)-\wt F(x_2)| \lec |x_1- x_2| \,\delta_\om(x)^{-1} \,m_{\sharp, c}(F)(x), \quad \textup{for all} \,\, x_1, x_2 \in  B_x.
\end{equation*}
\item[(c)]  For any $x \in \om$, 
\begin{equation*}
m_{\sharp,c}(\wt F)(x) \lec m_{\sharp,c}(F)(x).
\end{equation*}
\item[(d)] For any $\xi \in \pom$, 
\begin{equation*}
\sup_{x \in \gamma_\alpha(\xi)}\delta_\om(x)| \nabla \wt F(x) | \lec \mathscr{C}_s(\nabla \wt F)(\xi).
\end{equation*}
\item[(e)] For any $\xi \in \pom$,
\begin{equation*}
\calC_{s,c}(\nabla \wt F)(\xi) \lec \mathscr{C}_{s}(\nabla  F)(\xi).
\end{equation*}
\item[(f)] If $\qntlim_{x \to \xi} F(x) = f(\xi)\,\textup{(resp. }\ntlim_{x \to \xi} F(x) = f(\xi) \textup{)}$ for $\sigma$-a.e. $\xi \in \pom$, then $\qntlim_{x \to \xi} \wt F(x) = f(\xi)\,\textup{(resp. }\ntlim_{x \to \xi} \wt F(x) = f(\xi) \textup{)}$ for $\sigma$-a.e. $\xi \in \pom$.
\end{itemize}
\end{lemma}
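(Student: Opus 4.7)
The plan is to prove the six parts in the order (a), (b), (c), (e), (d), (f), exploiting the explicit mollifier structure $\wt F(x) = \int_\om \Lambda(x, y) F(y)\, dy$, where $\Lambda(x, \cdot)$ is a nonnegative kernel supported in $\wt B_x = B(x, c\delta_\om(x)/8)$ with total mass one, smooth in $x$, and satisfying $|\nabla_x^k \Lambda(x,y)| \lec \delta_\om(x)^{-(n+1+k)}$ for $k = 0, 1, 2$. The substitution $z = x - \beta(x) u$ yields the alternative form $\wt F(x) = \int \zeta(u) F(x - \beta(x) u)\, du$, which is convenient when one needs to differentiate.

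Parts (a)--(c) are essentially elementary and I would dispatch them first. Part (a) is immediate from $\int \Lambda(x, y)\, dy = 1$ together with non-negativity. For (b), I use $\int (\Lambda(x_1, y) - \Lambda(x_2, y))\, dy = 0$ to subtract the constant $m_{C_0 B^x} F$ from $F$ inside the integrand, apply the mean-value bound $|\Lambda(x_1, y) - \Lambda(x_2, y)| \lec |x_1 - x_2| \delta_\om(x)^{-(n+2)}$ on the common support $C_0 B^x \supset \wt B_{x_1} \cup \wt B_{x_2}$, and use $\int_{C_0 B^x}|F - m_{C_0 B^x} F| \lec \delta_\om(x)^{n+1} m_{\sharp, c}(F)(x)$ to obtain the claimed estimate. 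Part (c) then follows directly from (b), since it implies that any two values of $\wt F$ on $cB^x$ differ by $\lec m_{\sharp, c}(F)(x)$.

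For (e), differentiating the substituted form yields $\nabla \wt F(y) = \int \zeta(u) (I - u\nabla \beta(y)^T) \nabla F(y - \beta(y) u)\, du$, and since $|\nabla \beta| \lec 1$ and $\zeta$ is compactly supported, this gives the pointwise bound $|\nabla \wt F(y)| \lec \fint_{B(y, c\delta_\om(y)/4)} |\nabla F|$. Taking $\sup_{y \in cB^x}$ (noting $\delta_\om(y) \approx \delta_\om(x)$), inserting into $\calC_{s,c}(\nabla \wt F)(\xi)$, and interchanging the order of integration via Fubini (using bounded overlap of the averaging balls and comparability of $\delta_\om$) yields (e). For (d), observe that if $x \in \gamma_\alpha(\xi)$ and $r = 2(1+\alpha)\delta_\om(x)$, then $B(x, c_0 \delta_\om(x)) \subset B(\xi, r) \cap \om$ with $\delta_\om \approx \delta_\om(x)$ throughout, so that
\begin{equation*}
\mathscr{C}_s(\nabla \wt F)(\xi) \gec \delta_\om(x) \fint_{B(x, c_0 \delta_\om(x))} |\nabla \wt F(y)|\, dy.
\end{equation*}
The remaining task -- and the main obstacle of the lemma -- is to establish the sub-mean-value estimate $|\nabla \wt F(x)| \lec \fint_{B(x, c_0 \delta_\om(x))} |\nabla \wt F|$. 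Since $\wt F$ is a mollification at scale $\delta_\om(x)$, I would bound $|\nabla^2 \wt F|$ on the ball via the mollifier identity with cancellations and deduce that $\nabla \wt F$ varies by at most a small fraction of the average $\fint |\nabla \wt F|$ on $B(x, c_0 \delta_\om(x))$ when $c_0$ is chosen small enough, so that a triangle inequality forces $|\nabla \wt F(x)|$ to be comparable to this average.

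Finally, for (f), I would use the substituted form and note that for $x \in \gamma_\alpha(\xi)$ and $|u| \leq c/(4m_2)$, the point $x - \beta(x) u$ lies in a slightly wider cone $\gamma_{\alpha'}(\xi)$ (with $\alpha'$ depending on $\alpha$ and $c$), since $\delta_\om(x - \beta(x) u) \geq 3\delta_\om(x)/4$ and $|x - \beta(x) u - \xi| \leq (1 + \alpha + 1/4)\delta_\om(x)$. Hence, if $F \to f$ non-tangentially at $\xi$ with aperture $\alpha'$, then as $x_k \to \xi$ in $\gamma_\alpha(\xi)$, $F(x_k - \beta(x_k) u) \to f(\xi)$ for each fixed $u$, and dominated convergence yields $\wt F(x_k) \to f(\xi)$. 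The quasi-non-tangential case is analogous, using that the averaging remains within the corkscrew region afforded by the good curve in the pointwise John condition.
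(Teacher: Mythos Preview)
Your treatment of parts (a), (b), (c), (e), and (f) is correct and matches the paper closely. One minor difference: for the pointwise gradient bound underlying (e), the paper uses the cancellation $\int \nabla_x\Lambda(x,y)\,dy=0$ together with Poincar\'e to get
\[
|\nabla \wt F(x)| = \Big|\int \nabla_x\Lambda(x,y)\big(F(y)-m_{\wt B_x}F\big)\,dy\Big| \lec \delta_\om(x)^{-n-2}\int_{\wt B_x}|F-m_{\wt B_x}F| \lec \fint_{\wt B_x}|\nabla F|,
\]
rather than differentiating the substituted form via the chain rule. Both routes give the same estimate, and the Fubini/annular-decomposition argument for (e) is the same.

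The substantive difference is in (d). The paper does \emph{not} prove the sub-mean-value inequality $|\nabla\wt F(x)|\lec \fint_{B(x,c_0\delta_\om(x))}|\nabla\wt F|$ that you identify as ``the main obstacle.'' Instead, it simply reuses the pointwise bound above: for $x\in\gamma_\alpha(\xi)$ one has $\wt B_x\subset B(\xi,C\delta_\om(x))\cap\om$ with $\delta_\om\approx\delta_\om(x)$ there, whence
\[
\delta_\om(x)|\nabla\wt F(x)| \lec \delta_\om(x)^{-n}\int_{\wt B_x}|\nabla F| \lec \mathscr{C}_s(\nabla F)(\xi).
\]
In other words, the paper's argument yields (d) with $\mathscr{C}_s(\nabla F)$ on the right-hand side, and the printed statement with $\mathscr{C}_s(\nabla\wt F)$ appears to be a typo (the version with $\nabla F$ is what is actually used in the applications).

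Taking the statement literally led you to attempt something harder, and the argument you sketch does not close. Bounding $|\nabla^2\wt F|$ via the mollifier (with cancellation and Poincar\'e) gives $|\nabla^2\wt F(y)|\lec \delta_\om(y)^{-1}\fint_{\wt B_y}|\nabla F|$, so the oscillation of $\nabla\wt F$ over $B(x,c_0\delta_\om(x))$ is controlled by $c_0\fint|\nabla F|$, not by $c_0\fint|\nabla\wt F|$. Since there is no general lower bound for $\fint|\nabla\wt F|$ in terms of $\fint|\nabla F|$ (think of $F$ oscillating at a scale much smaller than $\delta_\om(x)$, so that $|\nabla F|$ is large but $\nabla\wt F\approx 0$), you cannot conclude the sub-mean-value estimate this way.
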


\begin{proof}
(a) follows by definition. For (b),  we let  $x_1, x_2 \in B_x$. Then,  by triangle inequality, 
\begin{equation}\label{eq:twoballsinone}
B(x_1,  c\, \delta_\om(x_1)/8) \cup B(x_2,  c\, \delta_\om(x_2)/8) \subset \frac{5}{4}    B_x.
\end{equation}
Combining \eqref{eq:lambda-prop}, \eqref{eq:derivetive-lambda}, and \eqref{eq:twoballsinone}, we get that
\begin{align*}
|\wt F(x_1)-\wt F(x_2)|&= \left|\int (\Lambda(x_1, y)-\Lambda(x_2, y))\, \left( F(y) - m_{\frac{5}{4}B_x} F\right) \,dy \right|\\
  &\leq |x_1-x_2| \sup_{z  \in \frac{5}{4}B_x} \sup_{w \in  B_x}  |\nabla_w \Lambda (w,z)| \, m_{\frac{5}{4} B_x} \left( |F - m_{\frac{5}{4} B_x} F |\right)\, \big|\tfrac{5}{4} B_x \big| \\
&\lec |x_1- x_2| \,\delta_\om(x)^{-1} \,m_{\sharp, c}(F)(x),
\end{align*} 
which proves (b) and thus (c).  We turn our attention to (d)  and fix $\xi\in\pom$ and $r>0$.  	For any $z\in B(\xi, r)\cap\om$ and  $x\in B_z$,  using \eqref{eq:lambda-prop}, \eqref{eq:derivetive-lambda}, and   Poincar\'{e} inequality,  we can write 
\begin{align*}
|\nabla \wt F(x)|&=\Big| \int \nabla_x\Lambda(x, y) F(y)\, dy\Big|
=\Big|\int \nabla_x\Lambda(x, y)\big(F(y)-m_{\wt B_x}F \big)\, dy\Big| \\
&\lec \delta_\om(x)^{-n-2}\int_{\wt B_x}\big|F(y)-m_{\wt B_x}F\big|\, dy\lec \fint_{\wt B_x} | \nabla F|\, dy,
\end{align*} 
which immediately implies (d).  To prove (e), we first define  
\begin{align*}
A_k(\xi,r)&:= \{ x \in B(\xi,r) \cap \om: 2^{-k-1} r \leq \delta_\om(x) < 2^{-k} r\}\\
A^*_k(\xi,r)&:= \{ x \in B(\xi,r) \cap \om: 2^{-k-2} r \leq \delta_\om(x) < 2^{-k+1} r\}
\end{align*}
and write 
\begin{align*}
\int_{B(\xi,r) \cap \om}& \sup_{y \in B_x}  |\nabla \wt F(y)| \,dy \leq  \sum_{k=0}^\infty \int_{A_k(\xi,r)} \sup_{y \in B_x}  |\nabla \wt F(y)| \,dy.
\end{align*}
As $\cup_{y \in B_x} B_y \subset 2B_x$, by Fubini's theorem,
$$
\int_{A_k(\xi,r)} \sup_{y \in B_x}  |\nabla \wt F(y)| \,dy \lec \int_{A_k(\xi,r)}  \fint_{2 B_x} | \nabla F(y)|\, dy \lec \int_{A^*_k(\xi,r)} | \nabla F(y)|\, dy,
$$
Summing over $k$ and using that $A^*_k(\xi,r)$ have bounded overlap,  we get $(e)$. Finally, (f) follows from  \cite[Lemma 3.14]{HT20}.
\end{proof}

\vv

\begin{proof}[proof of Theorem \ref{th: introtheorem2}]
It is an immediate consequence of  Theorem  \ref{thm: extension in Lp},  Proposition \ref{lem: non-tangential convergence}, and Lemma \ref{lem:modifedFproperties}.
\end{proof}

\vv

Let us now turn our attention to the proof of Theorem \ref{th: introtheorem3}.  When $s=n$ and $\om$ satisfies the pointwise John condition but  not the local John condition, we   will need the following generalization of Garnett's Lemma, which was  proved in \cite[Lemma 10.1]{HT20}.

\begin{lemma}\label{lem: Garnett's Lemma}
Let $\om\ \in \AR(n)$, $ Q_0 \in \DD_\sigma $, and let 
$ f \in \BMO(\sigma) $ which vanishes on $ \pom \setminus Q_0 $ (if it is non-empty). Then, 
there exists  a collection of cubes $\wt{\mathcal{S}}(Q_0) = \{ Q_j \}_j \subset \DD(Q_0) $ and coefficients $\alpha_j$ such that 
\begin{enumerate}
\item $ \sup_j |\alpha_j | \lec \| f \|_{\BMO(\sigma)} $,
\item $ f = g + \sum_j \alpha_j {\bf 1}_{Q_j} $, where $ g\in L^\infty(\sigma) $ with $ \| g \|_{L^\infty(\sigma)} \lec \| f \|_{\BMO(\sigma)} $ and 
\item $\wt{\mathcal{S}}(Q_0) $ satisfies a Carleson packing condition. 
\end{enumerate}
\end{lemma}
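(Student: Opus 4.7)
The plan is to reproduce the classical Garnett--Jones decomposition by adapting the corona construction of Section \ref{sec: large oscillation cubes} to the dyadic lattice $\DD_\sigma$ on $\pom$. After normalizing $\|f\|_{\BMO(\sigma)}=1$, fix a constant $\lambda\geq 1$ and apply the iterative stopping-time construction localized at $Q_0$ with the BMO stopping rule: set $\ttt_0(Q_0):=\{Q_0\}$ and, inductively, $\ttt_{k+1}(Q_0)$ is the family of maximal dyadic subcubes $S$ of some $R\in\ttt_k(Q_0)$ for which $|m_{\sigma,S}f-m_{\sigma,R}f|\geq\lambda$. Set $\wt{\mathcal{S}}(Q_0):=\ttt(Q_0)=\bigcup_{k\geq 0}\ttt_k(Q_0)$. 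Proposition \ref{pro:packing Top} in the BMO case gives $\ttt(Q_0)\in\car(C\lambda^{-2})$, so property (3) holds.

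For the coefficients I take a telescoping ansatz: for each $R\in\ttt(Q_0)\setminus\{Q_0\}$, let $R^\uparrow$ denote its tree-parent (the minimal element of $\ttt(Q_0)$ strictly containing $R$) and define
$$
\alpha_R:=m_{\sigma,R}f-m_{\sigma,R^\uparrow}f,\qquad \alpha_{Q_0}:=m_{\sigma,Q_0}f.
$$
To verify (1) for $R\neq Q_0$, let $R'$ be the dyadic parent of $R$. If $R'=R^\uparrow$, the dyadic BMO comparability gives $|\alpha_R|\lec \|f\|_{\BMO(\sigma)}$. Otherwise, by maximality $R'$ is \emph{not} itself a stopping cube below $R^\uparrow$, so $|m_{\sigma,R'}f-m_{\sigma,R^\uparrow}f|<\lambda$, and combined with $|m_{\sigma,R}f-m_{\sigma,R'}f|\lec \|f\|_{\BMO(\sigma)}$ this yields $|\alpha_R|\leq \lambda+C\lec \|f\|_{\BMO(\sigma)}$. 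The boundary coefficient $\alpha_{Q_0}$ is handled separately: if $\pom\setminus Q_0\neq\varnothing$, pick $Q'\in\DD_\sigma$ with $Q_0\subsetneq Q'$ and $\sigma(Q')\geq 2\sigma(Q_0)$; since $f|_{\pom\setminus Q_0}=0$ we have $m_{\sigma,Q'}f=(\sigma(Q_0)/\sigma(Q'))m_{\sigma,Q_0}f$, and BMO-comparability across one dyadic level gives $|m_{\sigma,Q_0}f|(1-\sigma(Q_0)/\sigma(Q'))\lec \|f\|_{\BMO(\sigma)}$, hence $|\alpha_{Q_0}|\lec \|f\|_{\BMO(\sigma)}$. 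If $Q_0=\pom$ we simply normalize $f$ so that $m_{\sigma,Q_0}f=0$.

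Now define $g:=f-\sum_{R\in\wt{\mathcal{S}}(Q_0)}\alpha_R\mathbf{1}_R$, which vanishes outside $Q_0$ since both $f$ and the sum do. For $\sigma$-a.e.\ $x\in Q_0$, I consider two cases. If $x$ lies in infinitely many cubes $R_k\in\ttt(Q_0)$, nested and shrinking to $\{x\}$, then the telescoping partial sums $\sum_{R\ni x}\alpha_R\mathbf{1}_R(x)$ equal $m_{\sigma,R_k(x)}f$, which converge to $f(x)$ $\sigma$-a.e.\ by Lebesgue differentiation in the doubling space $(\pom,\sigma)$, so $g(x)=0$. If $x$ lies in only finitely many tree cubes, let $R(x)$ be the smallest; then $\sum_R\alpha_R\mathbf{1}_R(x)=m_{\sigma,R(x)}f$, and any dyadic $Q$ with $x\in Q\subsetneq R(x)$ failed the stopping criterion relative to $R(x)$ (otherwise $R(x)$ would not be minimal), so $|m_{\sigma,Q}f-m_{\sigma,R(x)}f|<\lambda$. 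Sending $\ell(Q)\to 0$ and again differentiating gives $|g(x)|=|f(x)-m_{\sigma,R(x)}f|\leq\lambda$ a.e., proving (2) with $\|g\|_{L^\infty(\sigma)}\lec \|f\|_{\BMO(\sigma)}$.

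The main technical obstacle I anticipate is the measure-theoretic bookkeeping in the last step: one must show that $\sigma$-a.e.\ point genuinely falls into exactly one of the two cases above, and that Lebesgue differentiation applies to the relevant shrinking chains of cubes. The former follows from disjointness of the sibling stopping cubes and upward nesting, while the latter is standard on doubling metric measure spaces once the small-boundary $\sigma$-null set built into the construction of $\DD_\sigma$ is discarded. Everything else, including the sharp constant tracking, reduces to inequalities already available: the dyadic BMO inequality and Proposition \ref{pro:packing Top} in the BMO setting.
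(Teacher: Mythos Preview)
Your argument is correct and is precisely the classical Garnett--Jones corona decomposition; the paper does not give its own proof of this lemma but simply cites \cite[Lemma 10.1]{HT20}, which proceeds in the same way. A pleasant feature of your write-up is that you route the Carleson packing through Proposition~\ref{pro:packing Top} (BMO case), making the argument self-contained within the paper. The only cosmetic point is that when bounding $|\alpha_{Q_0}|$ you may need to pass up a bounded number of dyadic generations (not just one) to guarantee $\sigma(Q')\geq 2\sigma(Q_0)$, since the Ahlfors-regularity constants need not give this in a single step; this does not affect the conclusion.
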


\vv

\begin{proof}[proof of Theorem \ref{th: introtheorem3}]
Recall that if $s<n$ then $\om \in \AR(s)$ is uniform and thus it satisfies the local John condition.  Therefore,  by Theorem \ref{thm: extension in bmo}, Proposition \ref{lem: non-tangential convergence}, and Lemma \ref{lem:modifedFproperties},  we can construct the desired extension of Theorem  \ref{th: introtheorem3} when either $s<n$, or  $s=n$ and $\om$ satisfies, in addition,  the local John condition. We are left with the case $s=n$ so that $\om$ satisfies the pointwise John condition but not the local John condition. By Lemma \ref{lem: Garnett's Lemma}, if $f \in \BMO(\sigma)$ with compact support in $Q_0 \in \DD_\sigma$,  there exists $g \in L^\infty(\sigma)$ and $b=\sum_j \alpha_j {\bf 1}_{Q_j}  \in \BMO(\sigma)$ such that $f=g+b$.  We construct an extension $G$ of $g$ by Theorem \ref{th: introtheorem2} and so it remains to prove the existence of the extension of $b$.  By  \cite[Proposition 1.3]{HT20},  there exists $ B_0:\om \to \R $ such that  $\sup_{\xi \in \pom}\mathscr{C}_n(\nabla  B_0)(\xi) + \sup_{x \in \om} \delta_\om(x) |\nabla  B_0(x)| \lec \|f\|_{\BMO}$ and $ B_0 \to b$ non-tangentially for $\sigma$-a.e. $\xi \in \pom$.  By Lemma \ref{lem:modifedFproperties}, if we set $B=\wt B_0$ (as defined in \eqref{def:tilde-mollify}), we get the desired  extension of $b$.  The  extension of $f$ is given by $G+B$.
\end{proof}

\vvv

\section{Varopoulos-type extensions of compactly supported Lipschitz functions}

We begin by constructing an extension of $L^p$-boundary functions in the next theorem. 

\begin{theorem}\label{thm: Lipschitz extension in Lp}
Let $ \om \in \AR(s)$ for $s \in (0, n]$.  If $ f\in \Lip_c(\pom)$,  there exists a  function $ F : \oom \to \R $ such that for every $ p \in (1, \infty]$,
\begin{itemize}
\item[(i)] $ F \in  C^\infty(\om)\cap \Lip(\oom) $, 
\item[(ii)] $ F|_{\pom} = f $ continuously,
\item[(iii)] $ \| \calN(F) \|_{L^p(\sigma)} +\|\calN(\delta_\om \nabla F)\|_{L^p(\sigma)} \lec \| f \|_{L^p(\sigma)}$,  
\item[(iv)] $  \| \calC_s(\nabla F) \|_{L^p(\sigma)} \lec \|f\|_{L^p(\sigma)} $,
\item[(v)] $\| [\calC_s(\delta_\om |\nabla F|^2)]^{1/2}\|_{L^q(\sigma)} \lec \|f\|_{L^q(\sigma)}$, \quad for \,  $q\in[2, \infty)$.
\end{itemize}
When $p=\infty$ the norms on left hand-side of \textup{(iii)} and \textup{(iv)} are the $\sup$-norms instead of $L^\infty$.
\end{theorem}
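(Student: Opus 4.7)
The strategy is to construct $F$ as a controlled gluing of the regularized dyadic extension $\upsilon_f$ (which by Lemma~\ref{lem: lemma1} lies in $\Lip(\oom)$ with $\upsilon_f|_{\pom}=f$ continuously) and an $\varepsilon$-approximator $u=u_{\varepsilon_0}\in C^\infty(\om)$ of $\upsilon_f$ produced by Theorem~\ref{thm:e-approxLp} for a fixed $\varepsilon_0\in(0,1/2)$. Since the Corona-based construction of $u$ is $p$-independent, the same $u$ satisfies the $L^p$-approximation estimates for every $p\in(1,\infty]$ simultaneously. Using Lemma~\ref{lem: Stein lemma} to obtain a smooth surrogate for $\delta_\om$, build a cutoff $\eta_\delta\in C^\infty(\om;[0,1])$ with $\eta_\delta\equiv0$ on $\{\delta_\om\leq\delta\}$, $\eta_\delta\equiv1$ on $\{\delta_\om\geq2\delta\}$, and $|\nabla\eta_\delta|\lesssim\delta^{-1}$, where $\delta:=\|f\|_{L^p(\sigma)}/\|f\|_{\dot M^{1,p}(\sigma)}$ for $p\in(1,\infty)$ and $\delta:=\|f\|_{L^\infty(\sigma)}/\Lip(f)$ for $p=\infty$. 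Define
\[
F:=(1-\eta_\delta)\upsilon_f+\eta_\delta u=\upsilon_f+\eta_\delta(u-\upsilon_f).
\]

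Properties (i) and (ii) follow directly: $F=\upsilon_f$ on the slab $\{\delta_\om\leq\delta\}$ gives continuous boundary values and Lipschitzness up to $\pom$; $F=u\in C^\infty(\om)$ on $\{\delta_\om\geq2\delta\}$; and the global Lipschitz bound on $\oom$ follows from $|u-\upsilon_f|\lesssim\varepsilon_0\mathcal{M}f\lesssim\|f\|_{L^\infty(\sigma)}$ (a consequence of \eqref{eq:pwMFu}), $|\nabla\eta_\delta|\lesssim\delta^{-1}$, and the interior bound $\delta_\om|\nabla u|\lesssim\Lip(f)\diam(\supp f)$ from \eqref{eq: grad-approx-lip}. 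The $\calN(F)$ bound in (iii) reduces to $\calN(F)\leq\calN(\upsilon_f)+\calN(u)$, both controlled in $L^p$. For the gradient estimates I decompose
\[
\nabla F=\eta_\delta\nabla u+(1-\eta_\delta)\nabla\upsilon_f+(u-\upsilon_f)\nabla\eta_\delta;
\]
the $\eta_\delta\nabla u$ term directly inherits all the required $L^p$ and square-function bounds from Theorem~\ref{thm:e-approxLp}, while the cross term $(u-\upsilon_f)\nabla\eta_\delta$ is supported on the slab $S_\delta:=\{\delta\leq\delta_\om\leq2\delta\}$ where $\delta_\om|\nabla\eta_\delta|\lesssim1$, so its non-tangential, Carleson, and square-function norms are bounded pointwise by $\varepsilon_0\mathcal{M}f$, giving the desired $L^p$-bounds via the $L^p$-boundedness of $\mathcal{M}$.

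The main obstacle is controlling $(1-\eta_\delta)\nabla\upsilon_f$, supported on $\{\delta_\om\leq2\delta\}$. The key is to use the sharp pointwise estimate \eqref{gradufW}: for $x\in\gamma_\alpha(\xi)$,
\[
|\nabla\upsilon_f(x)|\lesssim m_{\sigma,B_P}(\nabla_{H,p}f)\lesssim\mathcal{M}(\nabla_{H,p}f)(\xi)
\]
in terms of the least Haj\l asz upper gradient. Applying Lemma~\ref{lem: A less than C} (area-function/Carleson duality) and integrating $\delta_\om^{-n}$ over $\gamma_\alpha(\xi)\cap\{\delta_\om\leq2\delta\}$ (where $\delta_\om\approx|x-\xi|$),
\[
\mathcal{A}((1-\eta_\delta)\nabla\upsilon_f)(\xi)\lesssim\mathcal{M}(\nabla_{H,p}f)(\xi)\int_0^{2\delta}\rho^{-n}\rho^n\,d\rho\lesssim\delta\,\mathcal{M}(\nabla_{H,p}f)(\xi),
\]
hence
\[
\|\calC_s((1-\eta_\delta)\nabla\upsilon_f)\|_{L^p(\sigma)}\lesssim\delta\,\|f\|_{\dot M^{1,p}(\sigma)}=\|f\|_{L^p(\sigma)}
\]
by the choice of $\delta$. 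The case $p=\infty$ uses the uniform bound $|\nabla\upsilon_f|\lesssim\Lip(f)$ from \eqref{gradufbmo} in place of the Haj\l asz bound. Estimate (v) follows by the same scheme applied to $\delta_\om|\nabla\upsilon_f|^2\lesssim\delta_\om[\mathcal{M}(\nabla_{H,q}f)]^2$: the area integral yields $\delta^2[\mathcal{M}(\nabla_{H,q}f)]^2$, and taking square roots and $L^q$-norms for $q\geq2$ closes the bound via $L^{q/2}$-boundedness of $\mathcal{M}$.
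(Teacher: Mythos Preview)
Your proposal is correct and follows essentially the same strategy as the paper: glue the regularized dyadic extension $\upsilon_f$ to its $\varepsilon$-approximator $u$ via a cutoff at scale $\delta$, control the boundary-layer term $(1-\eta_\delta)\nabla\upsilon_f$ through the Haj\l asz-gradient bound \eqref{gradufW}, and choose $\delta=\|f\|_{L^p}/\|f\|_{\dot M^{1,p}}$ (respectively $\|f\|_{L^\infty}/\Lip f$) to balance. The paper builds the cutoff $\varphi_\delta$ from the Whitney partition rather than from Lemma~\ref{lem: Stein lemma}, and bounds the Carleson functional of $(1-\varphi_\delta)\nabla\upsilon_f$ by a direct Whitney-cube summation rather than via the $\mathcal{A}$--$\mathscr{C}_s$ comparison \eqref{eq:C<Aglobal}; these are cosmetic differences. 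One small point: your route through \eqref{eq:C<Aglobal} requires $p>1$, so the endpoint $q=2$ in (v) (where you would need $\mathscr{C}_s$ in $L^1$) needs the direct Fubini identity \eqref{eq:comparable L2C} instead---exactly as the paper does.
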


\begin{proof}
Let $\{ \vp_P\}_{P\in\WW(\om)}$ be a partition of  unity of $\om$ so that each $\vp_P$ is supported in $1.1P$ and $ \| \nabla \vp_P \|_{\infty} \lec 1/{\ell(P)}$. 
For each $\delta \in (0, \diam(\om))$,  set 
$$
\WW_{\delta}(\om) = \{ P\in\WW(\om) : \ell(P)\geq \delta \}
$$
and 
$$
\vp_\delta = \sum_{P\in\WW_{\delta}(\om)} \vp_P. 
$$
 From the properties of the Whitney cubes, there exists  $C>0$ 
(depending on the parameters of the construction of the Whitney cubes) such that
$$
\vp_\delta(x) = 0,  \quad \textup{if} \,\, \dist(x, \pom) \leq \delta/C
$$
and 
$$
\vp_\delta(x) = 1,  \quad \textup{if} \,\, \dist(x, \pom) \geq C\delta .
$$
Consequently, 
for a suitable constant $C'$ depending on $C$, we infer that 
\begin{equation}\label{eq: supportgradphidelta}
\supp \nabla\vp_\delta \subset \{ x\in\om: \delta/C \leq \dist(x, \pom)  \leq C\delta \} =: S_\delta
\subset \bigcup_{P\in \mathcal{I}_\delta} P,
\end{equation}
where 
$$
\mathcal{I}_{\delta}:= \{ P\in\WW(\om) : \frac{1}{2^{N_0+1}2^{N_1}}\leq \ell(P) \leq \frac{2^{N_1}}{2^{N_0}}\}
$$ 
with $N_0\in\NN$ such that $\frac{1}{2^{N_0+1}} \leq \delta \leq \frac{1}{2^{N_0}}$ and $N_1\in\NN$ satisfies 
$2^{N_1} \leq C \leq 2^{N_1+1}$.

We define 
\begin{equation}\label{modifu}
F(x):= \upsilon_f(x) (1-\vp_\delta(x)) + u(x) \vp_\delta(x),
\end{equation}
where $u$ is the approximation function of $\upsilon_f$ as constructed in Theorem \ref{thm:e-approxLp}.  It holds that
\begin{equation}\label{eq: CgradF}
\calC_s(\nabla F) \leq \calC_s(\nabla u) + \calC_s(|\nabla \vp_\delta| \, (u-\upsilon_f))+ 
\calC_s(|\nabla \upsilon_f| (1 - \vp_\delta))
\end{equation}
and
\begin{equation}\label{eq: CdeltagradF2}
\calC_s(\delta_\om|\nabla F|^2)\lec \calC_s(\delta_\om|\nabla \upsilon_f|^2(1-\vp_\delta)^2) + \calC_s(\delta_\om|u-\upsilon_f|^2|\nabla\vp|^2) + \calC_s(\delta_\om |\nabla u|^2).
\end{equation}
For fixed $\xi\in\pom$ and $r>0$, we have 
\begin{align*}
&\int_{ B(\xi, r) \cap \om} | \nabla \vp_\delta |\, |u-\upsilon_f|\, \frac{ dx }{ \delta_\om(x)^{n-s}}   \lec 
\sum_{ \substack{ P\in \mathcal{I}_\delta \\ P\cap B(\xi, r) \neq \emptyset }} \int_P  
|u - \upsilon_f|\, \frac{ dx }{ \delta_\om(x)^{n+1-s}} \\
&\lec \sum_{ \substack{ P\in \mathcal{I}_\delta \\ P\cap B(\xi, r) \neq \emptyset }} \ell(P)^s
\inf_{\zeta\in b(P)}  \calN_\alpha( u - \upsilon_f )(\zeta)  
\lec \sum_{ \substack{ P\in \mathcal{I}_\delta \\ P\cap B(\xi, r) \neq \emptyset }} 
\int_{ b(P) }  \calN( u - \upsilon_f )  \, d\sigma  \\
&\lec   \sum_{ k= -(N_1+1)}^{N_1} \, \sum_{ \substack{ \ell(P) = 2^k/{2^{N_0}} \\ P\subset B(\xi, Mr) }} \,
\int_{ b(P) }  \calN_\alpha( u - \upsilon_f )  \, d\sigma  \lec_C \int_{ B(\xi, M r) }  \calN_\alpha( u - \upsilon_f )   \, d\sigma,
\end{align*}
for suitably chosen constants $\alpha>1$ and $M>1$ large enough. 
Thus, when $p \in (1, \infty)$,  we will get that
\begin{equation}\label{eq:LpLipextension1}
 \calC_s( |\nabla \vp_\delta| \, (u-\upsilon_f))(\xi)  \lec \mathcal{M}(\calN_\alpha( u - \upsilon_f) )(\xi).
\end{equation}
By similar arguments we can also infer that 
\begin{equation}\label{eq: Cdeltau-uf2}
\calC_s(\delta_\om(u-\upsilon_f)^2 |\nabla \vp_\delta|^2)(\xi) \lec \mathcal{M}(\calN_\alpha(u-\upsilon_f)^2)(\xi).
\end{equation}
When $p=\infty$,  by \eqref{eq: pwMFBMO},  we get $ \sup_{x\in\om}\sup_{y \in B_x} |u(y)-\upsilon_f(y)|  \leq 2 \ve \|f\|_{L^\infty(\sigma)}$, which,  arguing as above,  implies 
\begin{align}\label{eq:Linftycarleson1}
\int_{ B(\xi, r) \cap \om} | \nabla \vp_\delta| \,  &|u-\upsilon_f | \frac{ dx }{ \delta_\om(x)^{n-s}}  \lec  \|f\|_{L^\infty(\sigma)} \sum_{ \substack{ P\in \mathcal{I}_\delta \\ P\cap B(\xi, r) \neq \emptyset }} \ell(P)^s\\
 &\lec  \|f\|_{L^\infty(\sigma)}   \sum_{ k= -(N_1+1)}^{N_1} \, \sum_{ \substack{ \ell(P) = 2^k/{2^{N_0}} \\ b(P)\subset B(\xi, Mr) }} \sigma(b(P)) \lec r^s  \|f\|_{L^\infty(\sigma)}\notag
\end{align}
 and thus
\begin{equation}\label{eq:LpLipextension1bis}
\sup_{\xi \in \pom} C_s(|\nabla \vp_\delta| \, (u-\upsilon_f))(\xi) \lec  \|f\|_{L^\infty(\sigma)}.
\end{equation}

For the last term on the right hand side of \eqref{eq: CgradF},  when $p\in(1, \infty)$,  since $f \in \Lip_c(\pom)$, we have that $ f\in \dot{M}^{1, p} (\sigma) $. 
So, for fixed $\xi\in\pom$ and $r>0$,  if $\nabla_H f$ is the least  upper gradient of $f$, in view of \eqref{gradufW}, we estimate
\begin{align*}
&\int_{ B(\xi, r) \cap \om } |\nabla \upsilon_f(x) | | 1 - \vp_{\delta}(x) | \frac{ dx }{ \delta_\om(x)^{n-s}}  \lesssim 
\sum_{\substack{ P\in\WW(\om) \\ P\cap B(\xi, r) \neq \emptyset \\ \ell(P)\lec \delta }} 
\int_P | \nabla \upsilon_f (x) | \frac{\ell(P)^s}{\ell(P)^n} \, dx
\end{align*}
\begin{align*}
&\lec \sum_{\substack{ P\in\WW(\om) \\ P\cap B(\xi, r) \neq \emptyset \\ \ell(P)\leq C \delta }}  m_{\sigma, b(P)}(\nabla_H f)\, \ell(P)^{s+1} \lec  \sum_{\substack{ P\in\WW(\om) \\ P\cap B(\xi, r) \neq \emptyset \\ \ell(P)\leq C \delta }} {\ell(P)} \sigma(b(P)) 
\inf_{\zeta \in b(P)} \mathcal{M}( \nabla_H f )(\zeta) \\
&\leq \sum_{k \geq N_0-N_1} 2^{-k} \sum_{\substack{ Q\in \DD_\sigma \\ Q \subset B(\xi, Mr) \\ \ell(Q)=2^{-k}}} \, 
\int_{Q} \mathcal{M} ( \nabla_H f )\, d\sigma 
\lec \delta \, m_{\sigma, B(\xi, Mr)} (\mathcal{M}(\nabla_H f))\,r^s,
\end{align*}
which shows that 
\begin{equation}\label{eq:LpLipextension2}
\calC_s \big( |\nabla \upsilon_f| (1 - \vp_\delta) \big)(\xi) \lec  \delta \, \mathcal{M}(\mathcal{M}( \nabla_H f ))(\xi).
\end{equation}
In view of \eqref{graduf} and  \eqref{gradufW}, 
$$
\sup_{x \in P} |\nabla v_f(x)|^2 \lec \ell(P)^{-1} m_{\sigma, B_P}(|f|)\,m_{\sigma, B_P}(\nabla_H f).
$$
Arguing as above and using the latter estimate,  we get 
\begin{equation}\label{eq: Cdeltagraduf2}
\calC_s\big(\delta_\om |\nabla\upsilon_f|^2(1-\vp_\delta)^2\big)(\xi) \lec \delta \mathcal{M}\big(\mathcal{M}(\nabla_Hf)\,\mathcal{M}f \big)(\xi). 
\end{equation}
For  $p=\infty$, we use Lemma \ref{lem: lemma1} to  get 
\begin{equation}\label{eq:LpLipextension2bis}
\calC_s(|\nabla \upsilon_f| (1 - \vp_\delta))(\xi)  \lec \Lip( f ) \delta.
\end{equation}
Indeed, for $\xi\in\pom$ and $r>0$,
\begin{align*}
&\int_{B(\xi, r)\cap \om} |\nabla \upsilon_f| |1-\vp_\delta| \frac{dx}{ \delta_\om(x)^{n-s}} 
\lec \Lip(f) \sum_{\substack{ P\in \WW(\om) \\ P\cap B(\xi, r)\neq \emptyset \\ \ell(P)\leq C \delta}}
\int_P \frac{\ell(P)^s}{\ell(P)^n} \, dx \\
&\lec \Lip(f)  \sum_{\substack{ P\in \WW(\om) \\ P \cap B(\xi, r)\neq \emptyset \\ \ell(P)\leq C \delta}} \ell(P)^{s+1} \leq  \Lip(f) \sum_{k \geq N_0-N_1} 2^{-k} \sum_{\substack{ P\in \WW(\om) \\ P\cap B(\xi, r)\neq \emptyset \\ \ell(P)=2^{-k}}} \sigma(b(P))
\end{align*}
\begin{align*}
\lec \Lip(f) \sum_{k \geq N_0-N_1} 2^{-k} \sum_{\substack{ Q\in \DD_\sigma \\ Q \subset B(\xi, Mr) \\ \ell(Q)=2^{-k}}} \sigma(Q) \lec \Lip(f) \, \delta\,r^s,
\end{align*}
for $M>1$ large enough constant depending on Ahlfors regularity and Whitney constants.

Combining \eqref{eq:pwCarlu}, \eqref{eq: CgradF},  \eqref{eq:LpLipextension1},  \eqref{eq:LpLipextension1bis}, \eqref{eq:LpLipextension2},  and \eqref{eq:LpLipextension2bis},  and choosing 
\begin{equation*}
\delta=
\begin{dcases}
 \| f \|_{L^\infty(\sigma)}/\Lip(f)  &,\textup{if}\,\,  p=\infty\\
 \| f \|_{L^p(\sigma)}/\| f \|_{\dot{M}^{1, p}(\sigma)} &,\textup{if}\,\,  p\in (1,\infty),
\end{dcases}
\end{equation*}
it follows that 
$$
\| \calC_s ( \nabla F ) \|_{L^p(\sigma)} \lec \| f \|_{L^p(\sigma)}, \quad \textup{for}\,\, p \in (1, \infty].
$$
Moreover, combining \eqref{eq:pwCarlu-bis}, \eqref{eq: CdeltagradF2}, \eqref{eq: Cdeltau-uf2}, \eqref{eq: Cdeltagraduf2} and choosing $\delta=\|f\|_{L^q(\sigma)}/\|f\|_{\dot M^{1, q}(\sigma)}$ it follows 
$$
\|\calC_s(\delta_\om|\nabla F|^2)^{1/2}\|_{L^q(\sigma)} \lec \|f\|_{L^q(\sigma)} \quad \textup{for} \,\,\, q\in(2, \infty).
$$
The last estimate can be proved for $q=2$ by direct estimates using \eqref{eq:comparable L2C}. We leave the proof as an exercise.

For the non-tangential estimate note that, since $ \upsilon_f = \vp_\delta \upsilon_f + (1-\vp_\delta)\upsilon_f $, 
we can write
$$
\upsilon_f - F = \vp_\delta ( \upsilon_f - u ).
$$
So, for every $\xi\in\pom$ we have 
$$
\calN(\upsilon_f - F)(\xi) = \sup_{x\in\gamma(\xi)}|\upsilon_f(x)-F(x)| \leq \sup_{x\in\gamma(\xi)}|\upsilon_f(x)-u(x)| 
= \calN(\upsilon_f-u)(\xi)
$$
and thus, by \eqref{eq:LpntMfu}, we get
$$
\| \calN(\upsilon_f - F) \|_{L^p(\sigma)} \lec \ve \| f \|_{L^p(\sigma)}, \quad \textup{for}\,\,p\in(1,\infty). 
$$
Using this and  \eqref{ntMFuf}, for $p\in(1,\infty)$,  we get that 
$$
\|\calN(F)\|_{L^p(\sigma)} \lec \|\calN(\upsilon_f-F)\|_{L^p(\sigma)}+\|\calN(\upsilon_f)\|_{L^p(\sigma)}\lec \|f\|_{L^p(\sigma)}.
$$
Moreover, combining  \eqref{graduf}, \eqref{eq:pwMFu},  \eqref{eq:gradient-bound-approx}, the fact that 
$ |\nabla\vp_\delta(x)| \lec\delta_\om(x)^{-1}$,
and using the $L^p$-boundedness of the Hardy-Littlewood maximal function, we can easily infer that
$$
\|\calN(\delta_\om \nabla F)\|_{L^p(\sigma)}\lec \|f\|_{L^p(\sigma)}, \quad \textup{for}\,\,p\in(1,\infty). 
$$
The estimates for $p=\infty$ can be proved similarly and  the routine details are omitted.

Note that the extension $F$ is Lipschitz in $\oom$. Indeed,  if $E:=\supp f$,  in light of Lemmas \ref{lem: gradufestimates} and   \ref{lem: lemma1}, and Theorem \ref{thm: approxestimates},  we infer that for every $x\in\om$, 
\begin{align*}
|\nabla \upsilon_f(x)(1-\vp_\delta(x))| &\lec \Lip(f),\\
|\nabla u(x) \, \vp_\delta(x)|&\lec \delta_\om(x)^{-1}\|f\|_{L^\infty(\sigma)} |\vp_\delta(x)| \lec \frac{1}{\delta}\Lip(f)\diam E,\\
|(u(x)-\upsilon_f(x))\nabla\vp_\delta(x)|&\lec\frac{1}{\delta} \,
 \inf_{\zeta\in B(\xi_x, 2\delta_\om(x))\cap\pom}\calN(u-\upsilon_f)(\zeta) 
\lec_{\ve} \frac{1}{\delta} \| \mathcal{M}(f) \|_{L^\infty} \\
&\lec \frac{\|f\|_{L^\infty(\sigma)}}{\delta} 
\lec \frac{\Lip(f)}{\delta}\diam E.
\end{align*}
These estimates imply that $\|\nabla F\|_{L^\infty(\om)}\lec_{\delta, \diam E}\Lip(f)$. Moreover, since $\upsilon_f\in\Lip(\oom)$, 
$F = \upsilon_f$ around $\pom$,  and $\upsilon_f|_{\pom}=f$,  we  deduce that $F\in\Lip(\oom)$ with $F|_\pom=f$ and 
$\Lip(F)\lec_{\delta, \diam E}\Lip(f)$, concluding the proof of the theorem.  

\end{proof}

\begin{remark}
Note that the convergence to the boundary is inherited from the one of $\upsilon_f$ and there is no need for an iteration argument as in the proof of Theorem \ref{th: introtheorem2}. 
\end{remark}

\vv

We now turn our attention to the  construction of an extension of $\BMO$-boundary functions.

\begin{theorem}\label{thm: Lipschitz extension in BMO}
Let $ \om \in \AR(s)$ for $s \in (0, n]$. If $f\in\Lip_c(\pom)$, then there exists an extension $ {F} :\oom \to \R $ such that
\begin{enumerate}
\item[(i)] ${F} \in C^\infty(\om)\cap\Lip(\oom)$, 
\item[(ii)] $ {F}|_{\pom} = f $ continuously,
\item[(iii)] $\displaystyle \sup_{\xi\in\pom}\calN_{\sharp}({F})(\xi) +  \sup_{x\in\om}\delta_\om(x)|\nabla {F}(x)|  \lec \| f \|_{\BMO(\sigma)} $, 
\item[(iv)] $\displaystyle \sup_{\xi\in\pom}\calC_s( \nabla {F} )(\xi) \lec \|f\|_{\BMO(\sigma)}$,
\item[(v)] $\displaystyle \sup_{\xi\in\pom}[\calC_s(\delta_\om |\nabla F|^2)(\xi)]^{1/2} \lec \|f\|_{\BMO(\sigma)}$.
\end{enumerate}
\end{theorem}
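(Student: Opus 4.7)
The plan is to mirror the construction used in Theorem \ref{thm: Lipschitz extension in Lp}, but with the $L^p$-approximator replaced by the uniform $\varepsilon$-approximator of Theorem \ref{thm:e-approxbmo}. Concretely, let $u = u_\varepsilon \in C^\infty(\om)$ be the uniform approximator of $\upsilon_f$ guaranteed by Theorem \ref{thm:e-approxbmo} (for a small $\varepsilon>0$ to be fixed), let $\varphi_\delta$ be the smooth cutoff defined in the proof of Theorem \ref{thm: Lipschitz extension in Lp} (so that $\varphi_\delta \equiv 0$ for $\delta_\om(x)\lesssim \delta$ and $\varphi_\delta \equiv 1$ for $\delta_\om(x)\gtrsim \delta$, with $|\nabla \varphi_\delta|\lesssim \delta_\om^{-1}$ supported on an annulus $S_\delta$), and set
\begin{equation*}
F(x) := \upsilon_f(x)(1-\varphi_\delta(x)) + u(x)\varphi_\delta(x),
\end{equation*}
and, finally, make the choice $\delta := \|f\|_{\BMO(\sigma)}/\Lip(f)$. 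Smoothness in $\om$ is immediate, and since $F \equiv \upsilon_f$ in the strip $\{\delta_\om \lesssim \delta\}$, the continuity $F|_{\pom}=f$ follows at once from $\upsilon_f \in \Lip(\oom)$ with $\upsilon_f|_{\pom}=f$ given by Lemma \ref{lem: lemma1}.

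For the Lipschitz property on $\oom$ and property (iii), I will use the decomposition
\begin{equation*}
\nabla F = \varphi_\delta \nabla u + (1-\varphi_\delta)\nabla \upsilon_f + (u-\upsilon_f)\nabla\varphi_\delta
\end{equation*}
together with the three bounds: $\delta_\om |\nabla u|\lesssim \|f\|_{\BMO(\sigma)}$ from \eqref{eq: Linfty-ntmf-grad u est-bmo}, $\delta_\om|\nabla \upsilon_f|\lesssim \|f\|_{\BMO(\sigma)}$ from Lemma \ref{lem: gradufestimates}, and $|u - \upsilon_f|\lesssim \varepsilon \|f\|_{\BMO(\sigma)}$ from \eqref{eq: Linf-ntMF-bmo}. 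The first two pieces are pointwise $\lesssim \Lip(f)$ (using also $|\nabla \upsilon_f|\lesssim \Lip(f)$ from Lemma \ref{lem: lemma1}), while the third is bounded by $\varepsilon\|f\|_{\BMO(\sigma)}/\delta = \varepsilon \Lip(f)$ on $S_\delta$. The same decomposition gives property (iii): the sharp non-tangential piece is controlled by $\|\upsilon_f\|_{\nn_\sharp^\infty(\om)}\lesssim \|f\|_{\BMO(\sigma)}$ (Lemma \ref{lem: sharpntmaxfunctionuf bmo}) plus a term handled via the uniform gradient bound of $\varphi_\delta(u-\upsilon_f)$.

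For the Carleson estimate (iv) I will split
\begin{equation*}
\calC_s(\nabla F) \lesssim \calC_s(\nabla u) + \calC_s(|\nabla \varphi_\delta||u-\upsilon_f|) + \calC_s(|\nabla\upsilon_f|(1-\varphi_\delta)).
\end{equation*}
The first summand is $\lesssim \|f\|_{\BMO(\sigma)}$ directly from \eqref{eq: Linf-carlu-bmo}. The second, using the uniform bound $|u-\upsilon_f|\lesssim \varepsilon\|f\|_{\BMO(\sigma)}$, is handled exactly as in the $L^\infty$ computation \eqref{eq:Linftycarleson1}--\eqref{eq:LpLipextension1bis} of Theorem \ref{thm: Lipschitz extension in Lp} and yields a bound $\lesssim \varepsilon\|f\|_{\BMO(\sigma)}$. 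The third summand uses $|\nabla \upsilon_f|\lesssim \Lip(f)$ (since $f\in\Lip_c(\pom)\subset \Lip_\beta$ for all $\beta\in (0,1]$) and the support restriction $\ell(P)\lesssim \delta$, which gives $\lesssim \delta\, \Lip(f)$ by the argument in \eqref{eq:Linftycarleson1}. The choice $\delta = \|f\|_{\BMO(\sigma)}/\Lip(f)$ then balances all terms at $\|f\|_{\BMO(\sigma)}$. Property (v) is analogous: writing $\delta_\om|\nabla F|^2$ as a sum of three pieces, the piece $\calC_s(\delta_\om|\nabla u|^2)^{1/2}\lesssim \|f\|_{\BMO(\sigma)}$ is given by \eqref{eq: Linf-carlu-bmo}; the cross-term involving $|\nabla \varphi_\delta|^2$ contributes $\lesssim \varepsilon \|f\|_{\BMO(\sigma)}$ using $\delta_\om |\nabla \varphi_\delta|^2 \lesssim \delta_\om^{-1}$ on $S_\delta$; and the $(1-\varphi_\delta)$ piece is $\lesssim \delta\, \Lip(f) \cdot \Lip(f)^{1/2}\ldots$ — more precisely, one integrates $\delta_\om|\nabla\upsilon_f|^2\lesssim \Lip(f)^2 \delta_\om$ against $\omega_s$ over Whitney cubes of side $\lesssim \delta$ to obtain $\lesssim \Lip(f)^2 \delta^2 r^s$, and the choice of $\delta$ then yields the bound $\|f\|_{\BMO(\sigma)}^2$.

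The main obstacle, as in Theorem \ref{thm: Lipschitz extension in Lp}, is the interplay between the two fundamentally different scales in the problem: the $\BMO$ norm (homogeneous of degree $0$) and the Lipschitz norm (homogeneous of degree $1$), which meet precisely at the dimensional choice $\delta = \|f\|_{\BMO(\sigma)}/\Lip(f)$. Verifying that this single choice of $\delta$ simultaneously produces (a) the uniform Carleson bound against $\|f\|_{\BMO(\sigma)}$, (b) the uniform $\delta_\om|\nabla F|^2$-Carleson bound, and (c) the pointwise Lipschitz bound on $\oom$ is what makes the argument delicate, although no new conceptual ingredient beyond Theorems \ref{thm:e-approxbmo} and the estimates collected in Section \ref{sec:dyadic extension} is required — the approximator does all the heavy lifting, and the remaining work consists of standard cutoff-level Whitney summations identical in form to those already carried out in the proof of Theorem \ref{thm: Lipschitz extension in Lp}.
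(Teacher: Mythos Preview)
Your proposal is correct and follows essentially the same approach as the paper's proof: define $F=\upsilon_f(1-\varphi_\delta)+u\,\varphi_\delta$ with $u$ the uniform $\varepsilon$-approximator from Theorem~\ref{thm:e-approxbmo}, choose $\delta=\|f\|_{\BMO(\sigma)}/\Lip(f)$, and estimate the three gradient pieces exactly as in Theorem~\ref{thm: Lipschitz extension in Lp}. The only minor difference is in the treatment of~(v): the paper observes that~(v) follows in one line from~(iv) together with the pointwise bound $\delta_\om|\nabla F|\lesssim\|f\|_{\BMO(\sigma)}$ in~(iii) (since $\delta_\om|\nabla F|^2=(\delta_\om|\nabla F|)\cdot|\nabla F|$), whereas you carry out a direct Whitney summation for each of the three pieces; both routes are fine.
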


\begin{proof}

Let $w$ be the approximation of $\upsilon_f$ given by Theorem \ref{thm:e-approxLp} and define 
\begin{equation}\label{eq: modification BMO}
{F}(x) := \upsilon_f(x)( 1 - \vp_\delta(x) ) + w(x) \vp_\delta(x).
\end{equation}
Then,  for any $\xi \in \pom$,
\begin{equation}\label{eq: Carl gradient mod V}
\calC_s( \nabla {F} )(\xi)  \leq \calC_s( \nabla \upsilon_f( 1 - \vp_\delta ))(\xi) + \calC_s(\nabla w )(\xi) + 
\calC_s( ( w - \upsilon_f ) \nabla \vp_\delta) (\xi)
\end{equation}
and 
\begin{equation}\label{eq: CdeltagradF2BMO}
\calC_s(\delta_\om|\nabla F|^2)\lec \calC_s(\delta_\om|\nabla \upsilon_f|^2(1-\vp_\delta)^2) + \calC_s(\delta_\om|w-\upsilon_f|^2|\nabla\vp|^2) + \calC_s(\delta_\om |\nabla w|^2).
\end{equation}

For the second term on the right hand side  of \eqref{eq: Carl gradient mod V} we just use \eqref{eq: Linf-carlu-bmo},  while for the first one, by  \eqref{eq:LpLipextension2bis},
we have that 
\begin{equation}\label{eq: Carl grad uf bmo}
\calC_s(\nabla \upsilon_f (1 - \vp_\delta))(\xi) \lec  \delta \, \Lip(f).
\end{equation}
The third term can be bounded as  in \eqref{eq:Linftycarleson1} and get
\begin{equation}\label{eq: Linfty Carlgradphidelta V-uf}
\sup_{\xi\in\pom} \calC_s( \nabla \vp_\delta ( w - \upsilon_f ) )(\xi) \lec \| f \|_{\BMO}.
\end{equation}
Combining \eqref{eq: Carl gradient mod V}, \eqref{eq: Carl grad uf bmo}, and \eqref{eq: Linfty Carlgradphidelta V-uf}, and choosing $\delta= \|f \|_{\BMO} / \Lip(f)$,  we obtain (iv).  Moreover,  (v) follows from (iv) and the second estimate of (iii).

For the sharp non-tangential maximal function estimate,  note that since $ {F}-\upsilon_f=\vp_\delta(w-\upsilon_f)$, 
using \eqref{eq: sharpntMFuf-bmo} and \eqref{eq: pwMFBMO}, we get that for every $\xi \in\pom$  it holds  that
$$
\calN_{\sharp}({F})(\xi) \leq 2 \calN({F}-\upsilon_f)(\xi)+\calN_{\sharp}(\upsilon_f)(\xi) \leq 2 \calN(w-\upsilon_f)(\xi)
+ \calN_{\sharp}(\upsilon_f)(\xi) \lec \|f\|_{\BMO(\sigma)}. 
$$

It remains to prove that  ${F} \in \Lip(\om)$.  We first show that
\begin{equation}\label{eq:LipFBMOcont}
\| \nabla {F} \|_{L^\infty(\sigma)} \lec \Lip(f).
\end{equation}
To this end, for every $x\in\om$,  by Lemma \ref{lem: lemma1}, we have that
$$
|\nabla \upsilon_f(x) (1 - \vp_\delta(x) ) | \lec \Lip(f).
$$
By  \eqref{eq: pwMFBMO} and the fact that $\delta_\om(x) \approx \delta$ in the support of $ \nabla \vp_\delta$,  we obtain
\begin{align*}
|(w(x)-\upsilon_f(x)) \nabla \vp_\delta(x)| \lec_{\ve} \delta_\om(x)^{-1} \|f\|_{\BMO(\sigma)} \approx \delta^{-1} \|f\|_{\BMO(\sigma)}=\Lip(f).
\end{align*}
while,  by \eqref{eq: ntmf-gradu estimate bmo} it holds that
$$
|\nabla w(x)\vp_\delta(x)| \lec \| f\|_{\BMO(\sigma)} \, \delta_\om(x)^{-1} |\vp_\delta(x)| \lec \delta^{-1} \| f \|_{\BMO(\sigma)}=\Lip(f),
$$
which implies \eqref{eq:LipFBMOcont}.   By construction $F$ is continuous around the boundary  and $ {F}|_\pom = f $ continuously, which implies that $ {F} \in \Lip(\oom)\cap C^\infty(\oom) $ with
$\Lip({F})\lec \Lip(f)$.  Moreover, combining the last two estimates above  and \eqref{gradufbmo},  we get that 
$$
\sup_{x\in\om}\delta_{\om}(x)|\nabla {F}(x)|\lec \|f\|_{\BMO(\sigma)},
$$
which concludes the proof of the theorem.
\end{proof}

\vv

Our last goal is to modify that the extensions constructed in Theorems \ref{thm: Lipschitz extension in Lp} and \ref{thm: Lipschitz extension in BMO} so that they are also in $\dot W^{1,2}(\om;\omega_{s})$.  This will conclude the proof of Theorem \ref{th: introtheorem4}.

\begin{theorem}\label{thm:W12}
Let $\om \in \AR(s)$  for $ s \in (0,n]$.  If $f\in\Lip_c(\pom)$, then there exists an extension  $F_0 \in \dot W^{1,2}(\om; \omega_{s})$ (resp. $\bar F_0  \in \dot W^{1,2}(\om; \omega_{s})$) that satisfies the conclusions $(i)$-$(iv)$ of  Theorem \ref{thm: Lipschitz extension in Lp} (resp.  Theorem \ref{thm: Lipschitz extension in BMO}).
\end{theorem}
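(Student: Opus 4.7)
The plan is to multiply the extensions $F$ and $\bar F$ furnished by Theorems \ref{thm: Lipschitz extension in Lp} and \ref{thm: Lipschitz extension in BMO} by a compactly supported smooth cutoff, producing modified extensions $F_0$ and $\bar F_0$ of compact support on $\oom$ (hence automatically in $\dot W^{1,2}(\om;\omega_s)$), and then to verify that the estimates of the respective theorems are preserved up to controllable perturbation terms. Concretely, fix $\xi_0\in\supp f$, set $R_0:=\diam(\supp f)+1$, and for a parameter $R\geq 2R_0$ to be chosen later, let $\eta_R\in C^\infty_c(\R^{n+1})$ satisfy $\eta_R\equiv 1$ on $B(\xi_0,R)$, $\supp\eta_R\subset B(\xi_0,2R)$, $0\leq\eta_R\leq 1$, and $\|\nabla\eta_R\|_\infty\lesssim R^{-1}$. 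Define $F_0:=F\eta_R$ and $\bar F_0:=\bar F\eta_R$.

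Since $\eta_R\equiv 1$ on a neighborhood of $\supp f$, both $F_0|_\pom=f$ and $\bar F_0|_\pom=f$ continuously, and $F_0,\bar F_0\in C^\infty(\om)\cap\Lip(\oom)$ is inherited from $F$, $\bar F$, and $\eta_R$. Since $F_0$ and $\bar F_0$ are Lipschitz on $\oom$ and supported in $\overline{B(\xi_0,2R)}\cap\oom$, the $\dot W^{1,2}(\om;\omega_s)$ property reduces to the Whitney-decomposition measure estimate
\begin{equation*}
\int_{B(\xi_0,2R)\cap\om}\omega_s\,dx\lesssim R^{s+1},
\end{equation*}
which follows by summing $\int_P\delta_\om^{s-n}\,dx\approx\ell(P)^{s+1}$ over the Whitney cubes $P$ intersecting $B(\xi_0,2R)$ and using the Ahlfors regularity of $\pom$.

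The main work is to preserve the non-tangential and Carleson estimates. Writing $\nabla F_0=\eta_R\nabla F+F\nabla\eta_R$, the first summand inherits all bounds from $F$. The non-tangential bound on the perturbation is immediate since $\delta_\om|\nabla\eta_R|\lesssim 1$ on $\supp\nabla\eta_R$, giving $\calN(\delta_\om F\nabla\eta_R)\lesssim\calN F$ pointwise. The key step is the Carleson bound. For the $L^p$ extension, one exploits the decay $|F(x)|\lesssim\mathcal{M}f(\xi_x)\lesssim\|f\|_{L^1(\sigma)}\dist(\xi_x,\supp f)^{-s}$ for $x$ in the annulus $\supp\nabla\eta_R$ (using $\calN F\lesssim\mathcal{M}f$ and the elementary $L^1$ decay of $\mathcal{M}f$ away from $\supp f$); combined with the measure estimate, and optimizing over the radius $r$ in the Carleson functional, this yields $\|\calC_s(F\nabla\eta_R)\|_{L^p(\sigma)}\lesssim\|f\|_{L^1(\sigma)}R^{-s/p'}$. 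Choosing $R=2R_0$ and invoking H\"older's inequality $\|f\|_{L^1(\sigma)}\leq\|f\|_{L^p(\sigma)}\sigma(\supp f)^{1/p'}\lesssim\|f\|_{L^p(\sigma)}R_0^{s/p'}$ gives the universal bound $\|\calC_s(F\nabla\eta_R)\|_{L^p(\sigma)}\lesssim\|f\|_{L^p(\sigma)}$ for every $p\in(1,\infty]$.

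The BMO case $\bar F_0:=\bar F\eta_R$ proceeds along the same lines, and the main obstacle I anticipate is the Carleson control of $\bar F\nabla\eta_R$: because $\bar F$ is only globally Lipschitz on $\oom$ with $\Lip(\bar F)\lesssim\Lip(f)$ and not a priori bounded in terms of $\|f\|_{\BMO(\sigma)}$, the decay argument of the $L^p$ case does not apply directly. I plan to overcome this by first subtracting a reference constant $c$ (comparable to $\fint_{B(\xi_0,R_0)\cap\pom}f\,d\sigma$) and using the pointwise bound $\delta_\om|\nabla\bar F|\lesssim\|f\|_{\BMO(\sigma)}$ integrated along a Harnack chain to show that $|\bar F(x)-c|$ is dominated by $\|f\|_{\BMO(\sigma)}$ times a uniformly bounded logarithmic factor on $B(\xi_0,2R_0)$; an auxiliary cutoff reinstates the correct trace after subtraction of $c$. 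With this decay in hand and the choice $R\asymp R_0$, the required Carleson bound follows, while the sharp non-tangential control on $\bar F_0$ is verified by a direct oscillation calculation using the Lipschitz continuity of $\eta_R$ at scale $R$.
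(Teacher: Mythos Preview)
Your overall strategy—multiply the extensions of Theorems \ref{thm: Lipschitz extension in Lp} and \ref{thm: Lipschitz extension in BMO} by a smooth cutoff $\eta_R$ at scale $R\asymp\diam(\supp f)$—is exactly what the paper does. The $\dot W^{1,2}(\om;\omega_s)$ membership and the non-tangential estimates go through as you describe. The difficulties are precisely in the Carleson control of the cross term, and here your sketch has genuine gaps.

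\textbf{The $L^p$ case.} Your claimed decay $|F(x)|\lesssim\mathcal M f(\xi_x)\lesssim\|f\|_{L^1(\sigma)}\dist(\xi_x,\supp f)^{-s}$ on the annulus is not justified: when $\delta_\om(x)\approx R$, the nearest boundary point $\xi_x$ can lie inside $\supp f$, so $\dist(\xi_x,\supp f)$ may vanish and the bound is vacuous. The paper avoids this by opening up the explicit formula for $F$: if $x$ lies in the annulus and belongs to $1.1P$ for a \emph{small} Whitney cube ($\ell(P)\lesssim M^{-1}r_0$), then $b(P)\cap\supp f=\varnothing$ and hence $F(x)=0$; only \emph{large} Whitney cubes contribute, and on those one reads off from \eqref{eq:approximatingdefinition} and \eqref{stopcond} the bound $|F(y)|\lesssim Mf(b(P))$, which after summing and using the Carleson packing gives $\calC_s(F\nabla\psi_{r_0})\lesssim\mathcal M(\mathcal M f)$. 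Your approach is salvageable (for instance via the direct estimate $|F(x)|\lesssim\|f\|_{L^1(\sigma)}\,\delta_\om(x)^{-s}$, which follows from \eqref{eq:pointwise-u-v(x)} and the analogous bound for $\upsilon_f$), but not as written.

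\textbf{The $\BMO$ case.} Here the gap is more serious. Your Harnack-chain argument fails on two counts: for $s=n$ the domain $\om$ is \emph{not} assumed to satisfy any chain condition; and even when chains exist, integrating $|\nabla\bar F|\lesssim\|f\|_{\BMO}\,\delta_\om^{-1}$ produces a factor $\log(R/\delta_\om(x))$ that is unbounded as $x$ approaches $\pom$ inside the annulus. Moreover, any ``auxiliary cutoff'' that reinstates the constant $c=\fint f$ contributes a term $c\,\nabla\eta$ whose Carleson norm is $\approx|c|$, not $\|f\|_{\BMO}$. The paper's resolution again uses the structure of $\bar F$: on small Whitney cubes in the annulus $\bar F$ vanishes as before, while for each large cube $P\in\mathscr P_l(E)$ one finds a neighbouring Whitney cube $P^*$ with $\ell(P^*)\approx r_0$ and $b(P^*)\subset\pom\setminus\supp f$, so $m_{\sigma,b(P^*)}f=0$ and
\[
|m_{\sigma,b(P)}f|=|m_{\sigma,b(P)}f-m_{\sigma,b(P^*)}f|\lesssim\|f\|_{\BMO(\sigma)}.
\]
This gives $|\bar F|\lesssim\|f\|_{\BMO(\sigma)}$ throughout the annulus with no logarithmic loss, and the Carleson bound follows from the packing \eqref{eq:packing6.4}.
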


\begin{proof}
Let  $f \in \Lip_c(\pom)$, $E:=\supp f$, and $r_0:=\diam E$. Without loss of generality we  may assume that $0 \in E$ and so $E \subset B(0,r_0)$.   Now let $B= B(0,  M r_0)$ for some   $M >1$ large enough depending on the Whitney constants so that for every $P \in \WW(\om)$ satisfying  $\ell(P) \leq M^{-1}  r_0$ and $1.2P \cap (2B \setminus B) \neq \emptyset$  it holds that $b(P) \cap E = \emptyset$.  We denote the collection of all such Whitney cubes by $ \mathscr P_s(E)$ (``$s$" stands for ``small").  We also denote by $ \mathscr P_l(E)$ the collection of $P \in \WW(\om)$ satisfying  $\ell(P) > M^{-1}  r_0$ and $1.2P \cap (2B \setminus B) \neq \emptyset$  (``$l$" stands for ``large").  It is easy to see that 
\begin{equation}\label{eq:packing6.4}
\sum_{\substack{Q \subset  R: Q=b(P)\\ P \in  \mathscr{P}_l(E)}} \sigma(Q) \lec r_0^s\lec \sigma(R).
\end{equation}
Note that if $x \in (2B \setminus B) \cap \om$  and  there exists $P \in {\mathscr P}_s(E)$ such that $x \in 1.1 P$,  then the extension $F$ of Theorem \ref{thm: Lipschitz extension in Lp} satisfies $F(x)=0$  (resp.  $\bar F$ of Theorem \ref{thm: Lipschitz extension in BMO} satisfies $\bar F(x)=0$).  We define now the cut-off function $\psi_{r_0} \in C^\infty_c(\rrn)$ such that $0 \leq \psi_{r_0}  \leq 1$, $\psi_{r_0} =1$ in $\overline B$, $\psi_{r_0} =0$ in $\rrn \setminus 2B$,  and $|\nabla \psi_{r_0}| \lec 1/r_0$. Then we define 
$$
F_0(x):=F(x)\, \psi_{r_0}(x) \qquad \textup{and} \qquad \bar F_0(x):=\bar F(x)\, \psi_{r_0}(x)
$$
It is clear  that $F_0|_{\pom}=f$ (resp.  $\bar F_0|_{\pom}=f$) and remark that 
\begin{equation} \label{eq:support-nabla-F0}
\begin{rcases*}
\supp( F \nabla \psi_{r_0}) \\
\supp( \bar F \nabla \psi_{r_0}) 
\end{rcases*} 
\subset T_{r_0}:= \{x \in \om \cap (2B \setminus B): \dist(x, \pom) \geq c_0 r_0 \},
\end{equation}
 for some $c_0 \in (0,1)$ small enough depending on $M$ and the Whitney constants.  Therefore, for any  $\xi \in \pom$,  if $B(\xi,r) \cap \supp( F \nabla \psi_{r_0}) \neq \emptyset$, then  $r \geq c_1\max\{ r_0, \dist(\xi,  2B \setminus B)\}$ for some constant $c_1 \in (0,1)$ depending on $c_0$.  Moreover, 
\begin{equation}\label{eq:Fpsi}
|F(x)|| \nabla \psi_{r_0}(x)|\lec \delta_\om(x)^{-1}|F(x)|\quad \textup{and}\quad |\bar F(x)|| \nabla \psi_{r_0}(x)|\lec \delta_\om(x)^{-1}|\bar F(x)|.
\end{equation}
 We will only prove the theorem for $F_0$ and unbounded domains with unbounded boundary since  for domains with compact boundary the arguments are similar. 

We first prove that $F_0$ satisfies the conclusions of Theorem \ref{thm: Lipschitz extension in Lp}.  It is easy to see that  $\| \calN(F_0)\|_{L^p(\sigma)} \lec \|f\|_{L^p(\sigma)}$ since $|F_0| \leq |F|$ and the same estimate holds for $F$.   We have that  $\nabla F_0 = \nabla F \, \psi_{r_0} + F \,\nabla \psi_{r_0}$ and it is easy to see that $\|\calN(\delta_\om \nabla F_0)\|_{L^p(\sigma)} \lec \|f\|_{L^p(\sigma)}$ by \eqref{eq:Fpsi} and the  estimates in (ii) and (iii)  for $F$ in Theorem \ref{thm: Lipschitz extension in Lp}.  To  prove the  estimate $\| \mathcal{C}_s(\nabla F_0)\|_{L^p(\sigma)} \lec \|f\|_{L^p(\sigma)}$,  it is enough to show that $\| \mathcal{C}_s(F\, \nabla \psi_{r_0})\|_{L^p(\sigma)}\lec \|f\|_{L^p(\sigma)}$.  Thus,  for any such $r$, we have that
 \begin{align*}
 \int_{B(\xi,r) \cap \om} \,\sup_{y \in B_x} |F(y)|| \nabla \psi_{r_0}(y)| \,\omega_{s}(x) \,dx \lec r_0^{-1} \int_{B(\xi,r) \cap (2B \setminus B) \cap \om} \,\sup_{y \in B_x}  |F(y)|\,\omega_{s}(x) \,dx.
 \end{align*}
By \eqref{eq:approximatingdefinition},  \eqref{stopcond}, \eqref{modifu}, and  the choice of the constant $M$, for any $x \in B(\xi,r) \cap (2B \setminus B) \cap \om$ and for any $y \in B_x$,  it holds that 
  \begin{align*}
|F(y)|& \leq \sum_{R \in \ttt} \sum_{\substack{P \in {\mathscr P}_l(E) \setminus \mathcal{P}_0  \\ b(P) \in \tree(R)}} |m_{\sigma, R}f -m_{\sigma, b(P)}f|  \, \vp_P(y) + \sum_{\substack{P \in {\mathscr P}_l(E)}} |m_{\sigma, b(P)}f|  \, \vp_P(y)\\
&\leq \ve \sum_{R \in \ttt} \sum_{\substack{P \in {\mathscr P}_l(E) \setminus \mathcal{P}_0  \\ b(P) \in \tree(R)}} Mf(b(P)) \, \vp_P(y)+\sum_{\substack{P \in {\mathscr P}_l(E)}} |m_{\sigma, b(P)}f|  \, \vp_P(y),\notag
   \end{align*}
  which, in turn, implies that
    \begin{align*}
 r_0^{-1} \int_{B(\xi,r) \cap (2B \setminus B) \cap \om} \,\,\sup_{y \in B_x} |F(y)|\,\omega_{s}(x) \,dx &\lec r_0^{-1}  \sum_{\substack{P \in {\mathscr P}_l(E)}} \ell(P) \sigma(b(P)) Mf(b(P))\\
& \lec  \int_{CB}  \mathcal{M}(f) \,d\sigma \leq \int_{B(\xi, C'r)} \mathcal{M}f \,d\sigma,
   \end{align*}
   for some constant $C'>1$ depending on $C$ and $M$. This readily yields that for every $\xi \in \pom$,
   $$
   \mathcal{C}_s(F \,| \nabla \psi_{r_0}|)(\xi) \lec \mathcal{M}( \mathcal{M}f)(\xi),
   $$
and the desired estimate  follows for any $p \in (1,\infty]$.  Similarly, we can show that 
$$
 \mathcal{C}_s(\delta_\om |F|^2 \,| \nabla \psi_{r_0}|^2)(\xi) \lec \mathcal{M}( (\mathcal{M}f)^2)(\xi)
 $$
  which implies (v)  of Theorem \ref{thm: Lipschitz extension in Lp} for $q>2$. The case $q=2$ once again can be treated separately using  \eqref{eq:comparable L2C}.  We omit the  details.

It remains to  show  that $F_0 \in \dot W^{1,2}(\om; \omega_{s})$ since it is clear that $F_0 \in C^\infty(\om) \cap \Lip(\oom)$ such that $F_0|_{\pom}=f$.  To this end,  by the  definition of $F$ and the proof of its Lipschitz property,  we get that 
\begin{align*}
\int_{ \om}  |\nabla F_0|^2 \omega_{s}(x)\,dx &\leq \int_{2B \cap \om}  |\nabla F|^2 \omega_{s}(x)\,dx\\
&\lec \Big( \Big(1+\frac{r_0^{2}}{\delta^2} \Big)\min(r_0,\delta)+ \frac{r_0^{3}}{\delta^2} \Big) \, r_0^s \,   (\Lip f)^2.
  \end{align*}
  Moreover,  using \eqref{eq:support-nabla-F0} and the fact that $\supp f =E$,  we can show that
  \begin{align*}
\int_{ \om}  | F \nabla \psi_{r_0}|^2 \omega_{s}(x)\,dx \lec r_0^{-2-n+s} \int_{T_{r_0}}  | F|^2 \,dx\lec  r_0^{s-1} \|f\|_{L^\infty(\pom)}^2 \leq r_0^{s+1}  \,   (\Lip f)^2,
  \end{align*}
  concluding the proof of the Theorem for $L^p$,  $p\in (1,\infty]$.

To demonstrate the theorem for $\bar F_0$,  it  is enough to prove the Carleson estimate as the  estimates for the non-tangential maximal functions are easy to prove and we leave them as an exercise.    By \eqref{eq:support-nabla-F0}, \eqref{eq:approximatingdefinition},   \eqref{stopcond}, \eqref{eq: modification BMO},  and  the choice of the constant $M$, for any $x \in B(\xi,r) \cap (2B \setminus B) \cap \om$ and every $y \in B_x$, it holds that 
    \begin{align*}
|\bar F(y)|& \leq \sum_{R \in \ttt} \sum_{\substack{P \in {\mathscr P}_l(E) \setminus \mathcal{P}_0  \\ b(P) \in \tree(R)}} |m_{\sigma, R}f -m_{\sigma, b(P)}f|  \, \vp_P(y) + \sum_{\substack{P \in {\mathscr P}_l(E)}} |m_{\sigma, b(P)}f|  \, \vp_P(y)\\
&\leq \ve\|f\|_{\BMO(\sigma)}+\sum_{\substack{P \in {\mathscr P}_l(E)}} |m_{\sigma, b(P)}f|  \, \vp_P(y).
   \end{align*}
   It is not hard to see that for every $P\in {\mathscr P}_l(E)$, there exists $P^* \in \WW(\om)$ such that  $\ell(P^*)\approx \dist(P^*, P) \approx r_0$ and $b(P^*) \subset \pom \setminus E$, and it holds that $m_{\sigma, b(P^*)}f=0$.  Thus,  for any $x \in B(\xi,r) \cap (2B \setminus B) \cap \om$,
    \begin{align*}
  |\bar F(y)|&\leq  \ve\|f\|_{\BMO(\sigma)}+\sum_{\substack{P \in {\mathscr P}_l(E)}} |m_{\sigma, b(P)}f-m_{\sigma, b(P^*)}f|  \, \vp_P(x)
  \lec \|f\|_{\BMO(\sigma)},
   \end{align*}
   which,  arguing as above,  implies that $\sup_{\xi \in \pom}   \mathcal{C}_s(F \, \nabla \psi_{r_0})(\xi) \lec \|f\|_{\BMO(\sigma)}$.  This finishes the proof of the theorem since the same proof as above shows that $\bar F_0 \in \dot W^{1,2}(\om;\hm_s)$.
\end{proof}

\vvv

\section{Applications  to Boundary Value Problems for systems of elliptic equations}\label{sec: applications}\label{sec:applications}

We define the {\it variational co-normal derivative} of a solution $v \in \dot W^{1,2}(\om; \bC^m) $ of $L v =-\dv {\bf \Xi} + H$ in $\om$,  and denote it by $\partial_{\nu_A} v$,  to be  the  linear functional defined in terms of the sesquilinear form associated to $L$ as follows:
\begin{align*}
\langle \partial_{\nu_A}  v,  \vphi \rangle := \ell_{v}(\vphi):= {\bf B} &(v,\Phi)=\sum_{\alpha, \beta=1}^m \sum_{i, j=1}^{n+1}   \int_\om a^{\alpha\beta}_{ij}(x) \, \partial_j v^\beta(x) \partial_i \Phi^\alpha\,dx\\
& -\sum_{\alpha=1}^m \sum_{i=1}^{n+1} \int_\om  \Xi_i^\alpha(x)  \partial_i \Phi^\alpha(x)\,dx- \sum_{\alpha=1}^m \int_\om H^\alpha(x)\Phi(x)^\alpha\,dx,
\end{align*}
where $\vphi \in \Lip_c(\pom; \bC^m)$ and $\Phi \in \dot W^{1,2}(\om; \bC^m) \cap \Lip(\oom; \bC^m)$ such that $\Phi|_{\pom} = \vphi$.

\vv

\begin{lemma}\label{lem:ell_v} $\ell_{v}$ is unambiguously defined. 
\end{lemma}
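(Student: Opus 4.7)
The lemma requires verifying two things: (a) each of the three integrals appearing in the definition of $\mathbf{B}(v,\Phi)$ converges absolutely for any admissible pair $(\vphi,\Phi)$, and (b) the resulting value depends only on $\vphi$ and not on the particular choice of extension $\Phi$. For (a), the first two integrals are finite by Cauchy--Schwarz using $\nabla v, \nabla\Phi \in L^2(\om)$ and $\mathbf{\Xi}\in L^2(\om;\bC^{m(n+1)})$. For the integral $\int H^\alpha \Phi^\alpha$, we pick the compactly supported Varopoulos extension $F_0$ supplied by Theorem \ref{thm:W12}: it belongs to $\dot W^{1,2}(\om;\bC^m)\cap\Lip(\oom;\bC^m)$, is bounded and compactly supported in $\oom$, hence lies in $L^{2^*}(\om;\bC^m)$, so $\int H^\alpha F_0^\alpha$ is finite. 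The finiteness for a general $\Phi$ will follow once we establish (b).

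The heart of the proof is (b). Given two admissible extensions $\Phi_1,\Phi_2$, set $\Psi := \Phi_1-\Phi_2\in \dot W^{1,2}(\om;\bC^m)\cap\Lip(\oom;\bC^m)$, so that $\Psi|_\pom=0$. The extension $\tilde\Psi$ of $\Psi$ by zero to $\R^{n+1}$ is globally Lipschitz: given $x\in\om$ and $y\in\overline{\om^c}$, taking $z\in\pom$ to be a point realizing $\dist(x,\om^c)$ yields $|\Psi(x)|=|\Psi(x)-\Psi(z)|\leq \Lip(\Psi)\,\delta_\om(x)\leq \Lip(\Psi)|x-y|$. Consequently $\nabla\tilde\Psi = \nabla\Psi\,\mathbf{1}_\om\in L^2(\R^{n+1})$, and the Sobolev inequality on $\R^{n+1}$ gives $\Psi\in L^{2^*}(\om;\bC^m)$ with $\|\Psi\|_{L^{2^*}(\om)}\lec \|\nabla\Psi\|_{L^2(\om)}$. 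The key claim is then that $\Psi\in Y^{1,2}_0(\om;\bC^m)$.

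To prove the claim, fix cutoffs $\chi_\ve\in C^\infty(\R^{n+1})$ vanishing on $\{\delta_\om\leq\ve\}$ and equal to $1$ on $\{\delta_\om\geq 2\ve\}$ with $|\nabla\chi_\ve|\lec\ve^{-1}$, and $\eta_R\in C^\infty_c(\R^{n+1})$ equal to $1$ on $B(0,R)$, vanishing outside $B(0,2R)$, with $|\nabla\eta_R|\lec R^{-1}$. The products $\Psi_{\ve,R}:=\chi_\ve\eta_R\Psi$ are compactly supported in $\om$ and, after standard mollification, lie in $C^\infty_c(\om;\bC^m)$. Taking first $\ve\to 0$ and then $R\to\infty$, the convergence $\Psi_{\ve,R}\to\Psi$ in the $Y^{1,2}_0$-norm $\|\cdot\|_{L^{2^*}(\om)}+\|\nabla\cdot\|_{L^2(\om)}$ is checked term by term: the contributions $(1-\chi_\ve\eta_R)\Psi$ and $(1-\chi_\ve\eta_R)\nabla\Psi$ tend to $0$ by dominated convergence; the cross-term $|\Psi\nabla\chi_\ve|\lec\Lip(\Psi)$ on $\supp\nabla\chi_\ve\subset\{\ve\leq \delta_\om\leq 2\ve\}$ has vanishing $L^2$-norm for fixed $R$ because the Lebesgue measure of this tubular strip inside $B(0,2R)$ is $\lec \ve\,\sigma(\pom\cap B(0,CR))$ by $n$-Ahlfors regularity of $\pom$; and $\|\Psi\nabla\eta_R\|_{L^2}\leq \|\Psi\|_{L^{2^*}(\{|x|\geq R\})}\|\nabla\eta_R\|_{L^{2(n+1)}}\lec R^{-1/2}\|\Psi\|_{L^{2^*}(\{|x|\geq R\})}\to 0$ as $R\to\infty$ by H\"older's inequality combined with the Sobolev embedding established above.

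With $\Psi\in Y^{1,2}_0(\om;\bC^m)$ in hand, we apply the weak formulation \eqref{eq:weaksolution} to each $\overline{\Psi_{\ve,R}}$ (the conjugation absorbs the complex sesquilinearity so that $\overline{\partial_i\overline{\Psi_{\ve,R}^\alpha}}=\partial_i\Psi_{\ve,R}^\alpha$) and pass to the limit: Cauchy--Schwarz handles the first two integrals, while $|\int H^\alpha(\Psi-\Psi_{\ve,R})^\alpha|\leq \|H\|_{L^{2_*}(\om)}\|\Psi-\Psi_{\ve,R}\|_{L^{2^*}(\om)}\to 0$ controls the third. This yields $\mathbf{B}(v,\Psi)=0$, hence $\mathbf{B}(v,\Phi_1)=\mathbf{B}(v,\Phi_2)$. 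The main obstacle is the $Y^{1,2}_0$-approximation of $\Psi$ in a rough domain, where the measure estimate for the tubular neighborhood $\{\delta_\om\sim\ve\}$ forces us to invoke the $n$-Ahlfors regularity of $\pom$ built into the standing hypothesis $\om\in\AR(n)$ of this section.
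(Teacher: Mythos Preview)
Your proof is correct and follows the same strategy as the paper: show that the difference $\Psi=\Phi_1-\Phi_2$ of two admissible extensions lies in $Y^{1,2}_0(\om)$, and then use the weak formulation of $Lv=-\dv\mathbf{\Xi}+H$ to conclude $\mathbf{B}(v,\Psi)=0$. The paper simply asserts $\Psi\in Y^{1,2}_0(\om)$ in one line, whereas you supply the cut-off argument in full; your treatment of part~(a) (absolute convergence of the $H$-integral via the compactly supported Varopoulos extension from Theorem~\ref{thm:W12}) is an additional detail the paper omits.

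One small correction: in the H\"older estimate for $\|\Psi\nabla\eta_R\|_{L^2}$, the dual exponent to $2^*=\frac{2(n+1)}{n-1}$ for the $L^2$ pairing is $n+1$, not $2(n+1)$; with the correct exponent one has $\|\nabla\eta_R\|_{L^{n+1}}\lec 1$ rather than $R^{-1/2}$, but the conclusion is unchanged since $\|\Psi\|_{L^{2^*}(\{|x|\geq R\})}\to 0$ already forces the product to vanish.
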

\begin{proof}
If $\Phi^1, \Phi^2 \in \dot W^{1,2}(\om; \bC^m) \cap \Lip(\oom; \bC^m)$ such that $\Phi^1|_{\pom}=\Phi^2|_{\pom} = \vphi$ and $\Phi^1\neq \Phi^2$,  then $\Psi:=\Phi^1-\Phi^2 \in \dot W^{1,2}(\om:\bC^m)\cap \Lip(\oom; \bC^m)$ such that $\Psi|_{\pom} = 0$,  which implies that $\Psi \in Y^{1,2}_0(\om)$. Since $L v =-\dv {\bf \Xi} + H$, we have that ${\bf B}(v,\Psi)=0$ and thus,  ${\bf B} (v,\Phi^1)={\bf B} (v,\Phi^2)$. So  any extension of $\vphi$  belonging to  $\dot W^{1,2}(\om; \bC^m) \cap \Lip(\oom; \bC^m)$ defines the same linear functional $\ell_{v}$.
\end{proof}
\vv

From now on, we assume that $\om \in  \AR(n)$, $n\geq 2$,  and that either $\om$ is bounded or $\pom$ unbounded.  This is because we will  use the duality $ \nn_{q, p}(\om)=(\cc_{s, q', p'}(\om))^*$. 

\vv

In the sequel, for simplicity, we will prove our results specifically for real elliptic equations (i.e., when $m=1$). Nevertheless, the proofs for $m > 1$ and complex-valued coefficients are identical (see also Remark \ref{rem:vector-valued}).
\vv

\subsection{Some connections between Poisson Problems and Boundary Value  Problems}

We set 
$$
X^q(\sigma)=
\begin{cases}
L^q(\sigma) & \textup{if}\,\, q \in (1, \infty)\\
H^1(\sigma) & \textup{if}\,\, q =1,
\end{cases}
$$
where $H^1(\sigma)$ is the atomic Hardy space with respect to $\sigma$.

\begin{proposition}\label{prop: PR implies Rellich}
If   $(\pr)$ is solvable in $\om$ for some $p>1$,  then its solution $u$ satisfies the  one-sided Rellich-type inequality 
\begin{equation}\label{eq: Rellich from PR}
\|\partial_{\nu_A} u\|_{L^p(\sigma)} \lec \|H\|_{\cc_{2_*, p}(\om)} + \|  {\bf \Xi}  /\delta_\om \|_{\cc_{2, p}(\om)}.
\end{equation}
Moreover, if $(\wt \pre_q^L)$,  $q \in [1, 2]$,  is solvable in $\om$ with data $H=0$ and ${\bf \Xi}\in L^2_c(\om)$,  then its solution satisfies the  one-sided Rellich-type inequality 
\begin{equation}\label{eq: Rellich from PR endpoint}
\|\partial_{\nu_A} u \|_{X^q(\sigma)} \lec \| {\bf \Xi}\|_{T^p_2(\om)}, \quad q \in [1, 2].
\end{equation}
\end{proposition}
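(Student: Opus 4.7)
The plan is to estimate the conormal derivative $\partial_{\nu_A} u$, regarded as the linear functional $\ell_u$ of Lemma \ref{lem:ell_v}, by duality: I will test $\ell_u$ against Lipschitz boundary data $\varphi$ (dense in $L^{p'}(\sigma)$ for $p'<\infty$, and in $\VMO(\sigma)$ in $\BMO$-norm for the $\BMO$-$H^1$ pairing), extend $\varphi$ via Theorem \ref{th: introtheorem4} to admissible test functions $\Phi \in \dot W^{1,2}(\om;\omega_n)\cap \Lip(\oom)$ with $\Phi|_\pom=\varphi$, and then control the volume integrals produced by the sesquilinear form.

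For Part 1, I would take $\varphi\in\Lip_c(\pom)$ and let $\Phi$ be the extension from the first half of Theorem \ref{th: introtheorem4} at exponent $p'$, so that
\[
\|\calN(\Phi)\|_{L^{p'}(\sigma)}+\|\calN(\delta_\om\nabla\Phi)\|_{L^{p'}(\sigma)}+\|\calC(\nabla\Phi)\|_{L^{p'}(\sigma)}\lec \|\varphi\|_{L^{p'}(\sigma)}.
\]
Expanding
\[
\ell_u(\varphi)=\int_\om A\nabla u\cdot\overline{\nabla\Phi}\,dx-\int_\om {\bf \Xi}\cdot\overline{\nabla\Phi}\,dx-\int_\om H\,\overline\Phi\,dx,
\]
I would bound each term by the $\nn_{r,p}$-$\cc_{n,r',p'}$ duality (the standard $N$-$C$ pairing $\int_\om FG\,dx\lec \|\wt\calN_r F\|_{L^p(\sigma)}\|\calC_{n,r',c}G\|_{L^{p'}(\sigma)}$, cf.\ \cite[Proposition 2.4]{mpt22}): for the first integral take $(r,r')=(2,2)$ and use $\calC_2(\nabla\Phi)\leq \calC(\nabla\Phi)$; for the second rewrite ${\bf \Xi}\cdot\nabla\Phi=({\bf \Xi}/\delta_\om)(\delta_\om\nabla\Phi)$ and take $(r,r')=(2,2)$; for the third take $(r,r')=(2^*,2_*)$ and use $\wt\calN_{2^*}(\Phi)\leq \calN(\Phi)$. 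This would yield
\[
|\ell_u(\varphi)|\lec \Big(\|\wt\calN_2(\nabla u)\|_{L^p(\sigma)}+\|{\bf \Xi}/\delta_\om\|_{\cc_{2,p}(\om)}+\|H\|_{\cc_{2_*,p}(\om)}\Big)\|\varphi\|_{L^{p'}(\sigma)},
\]
and invoking the hypothesis $(\pr)$ to absorb the first factor, combined with density and $L^p$-$L^{p'}$ duality, would produce \eqref{eq: Rellich from PR}.

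For Part 2, with $H=0$ and $q\in[1,2]$, I would mimic the scheme but pick $\Phi$ from the first half of Theorem \ref{th: introtheorem4} with exponent $q'\in[2,\infty)$ when $q\in(1,2]$, and from the second ($\BMO$) half when $q=1$; write $Y^{q'}:=L^{q'}(\sigma)$ or $\BMO(\sigma)$ accordingly. The crucial preliminary estimate is
\[
\|\delta_\om\nabla\Phi\|_{T^{q'}_2(\om)}\lec \|\varphi\|_{Y^{q'}(\sigma)},
\]
which for $q'<\infty$ I would deduce from Theorem \ref{th: introtheorem4}(v) combined with the area-Carleson comparison Lemma \ref{lem: A less than C} at exponent $q'/2\geq 1$ (applied to $G=\delta_\om|\nabla\Phi|^2$), and for $q'=\infty$ directly from the identity $\|\delta_\om\nabla\bar\Phi\|_{T^\infty_2}=\|\mathscr C(\delta_\om|\nabla\bar\Phi|^2)^{1/2}\|_{L^\infty(\sigma)}$ together with Theorem \ref{th: introtheorem4}(v) for the $\BMO$ extension. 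The first term of $\ell_u(\varphi)$ I would bound by the same $N$-$C$ duality as in Part 1, using solvability of $(\wt \pre^L_q)$ to control $\|\wt\calN_2(\nabla u)\|_{L^q(\sigma)}\lec \|{\bf \Xi}\|_{T^q_2(\om)}$; the second I would pair by the $T^q_2$-$T^{q'}_2$ tent-space duality (the $T^1_2$-$T^\infty_2$ version at $q=1$) via
\[
\Big|\int_\om {\bf \Xi}\cdot\nabla\Phi\,dx\Big|=\Big|\int_\om {\bf \Xi}\cdot(\delta_\om\nabla\Phi)\,\frac{dx}{\delta_\om}\Big|\lec \|{\bf \Xi}\|_{T^q_2(\om)}\|\delta_\om\nabla\Phi\|_{T^{q'}_2(\om)}\lec \|{\bf \Xi}\|_{T^q_2(\om)}\|\varphi\|_{Y^{q'}(\sigma)}.
\]
Combining and invoking density of $\Lip_c(\pom)$ in $L^{q'}(\sigma)$ for $q\in(1,2]$ (with $L^q$-$L^{q'}$ duality) or in $\VMO(\sigma)$ for $q=1$ (with $(\VMO)^*=H^1$) would conclude \eqref{eq: Rellich from PR endpoint}.

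The main obstacle is the tent-space estimate for $\delta_\om\nabla\Phi$: the constraint $q'/2\geq 1$ coming from Lemma \ref{lem: A less than C} is exactly what forces the restriction $q\leq 2$ in the hypothesis of the second part, and I do not see how to push the argument beyond this threshold by the present method. The $q=1$ endpoint additionally requires the $\BMO$ half of Theorem \ref{th: introtheorem4}, in particular the estimate $\sup_\xi[\calC_s(\delta_\om|\nabla\bar F|^2)(\xi)]^{1/2}\lec \|f\|_{\BMO(\sigma)}$, together with the $\VMO$-$H^1$ duality of Coifman-Weiss to identify the resulting bounded functional as an element of $H^1(\sigma)$.
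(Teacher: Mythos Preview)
Your proposal is correct and follows essentially the same approach as the paper's proof: in both parts you test the conormal functional $\ell_u$ against $\varphi\in\Lip_c(\pom)$, extend via Theorem \ref{th: introtheorem4}, and bound the resulting volume integrals using the $\nn$--$\cc$ duality of \cite[Proposition 2.4]{mpt22} for the gradient term and (in Part 2) the tent-space pairing for the ${\bf \Xi}$ term. Your write-up is in fact more explicit than the paper's, which only details the case $q=1$ and omits $q\in(1,2]$; your remark that the restriction $q\leq 2$ is forced by the constraint $q'/2\geq 1$ in Lemma \ref{lem: A less than C} (needed to pass from the Carleson bound of Theorem \ref{th: introtheorem4}(v) to a $T^{q'}_2$ bound) is a point the paper leaves implicit.
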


\begin{proof}
Suppose that $u$ is the solution of $(\pr)$.  Let  $\vp\in\Lip_c(\pom)$ and  $F\in \dot W^{1, 2}(\om) \cap \Lip(\oom)$ be the Varopoulos extension of the $L^p$-boundary data $\vp$ constructed in Theorem \ref{th: introtheorem4}.  Then,  by Lemma \ref{lem:ell_v}, we get  
$$
|\ell_u(\vp)|= |{\bf B} (u, F)| \leq \|A\|_{L^\infty(\om)}\int_\om|\nabla u||\nabla F|+\int_\om|H||F| +\int_\om |{\bf \Xi}| \, |\nabla F|.
$$
By duality (see  \cite[Proposition 2.4]{mpt22}),  \eqref{eq: poisson regularity},  and the properties of the extension $F$, we infer that 
\begin{align*}
\int_\om|\nabla u||\nabla F| &\lec \| \wt \calN_2(\nabla u)\|_{L^p(\sigma)}\|\calC_{2}(\nabla F)\|_{L^{p'}(\sigma)}
\lec (\|H\|_{\cc_{2_\ast, p}} +\|  {\bf \Xi} /\delta_\om\|_{\cc_{2, p}(\om)})\|\vp\|_{L^{p'}}.
\end{align*}
By duality and using Theorem \ref{th: introtheorem4} (ii) and (iii),  we infer that
\begin{align*}
\int_\om |H||F| &+\int_\om  |{ \bf \Xi}| \,| \nabla F|\\
&\lec\|H\|_{\cc_{2_\ast, p}}  \| \wt \calN_{2}(F)\|_{L^{p'}(\sigma)} + \|  {\bf \Xi} /\delta_\om \|_{\cc_{2, p}(\om)} \|\calN_{2^*}(\delta_\om \nabla F)\|_{L^{p'}(\sigma)} \\
&\lec (\|H\|_{\cc_{2_\ast, p}} +\|  {\bf \Xi} /\delta_\om\|_{\cc_{2, p}(\om)})\|\vp\|_{L^{p'}(\sigma)}.
\end{align*}
Thus,  by the above estimates,  the density of $\Lip_c(\pom)$ in $L^p(\sigma)$, and   duality, we infer \eqref{eq: Rellich from PR}.

 Let now  $v$ be the solution of $(\wt\pre_1^L)$ and let  $\vp\in \Lip_c(\pom)$.  If $\wt F$ is the Varopoulos extension of $\vp$  constructed in the second part of Theorem \ref{th: introtheorem4},  by Lemma \ref{lem:ell_v}, we get that
$$
|\ell_v(\vp)|\leq \|A\|_{L^\infty(\om)}\int_\om |\nabla v||\nabla  \wt F| + \int_\om |{\bf \Xi} ||\nabla \wt F|.
$$
By duality,  \eqref{eq: poisson regularity},  and Theorem \ref{th: introtheorem4} (iii) (for $\BMO$),  we have that
$$
\int_\om |\nabla v ||\nabla \wt F| \lec \|\wt \calN_2(\nabla v)\|_{L^1(\sigma)} \sup_{\xi\in\pom}\calC(\nabla \wt F)(\xi) 
\lec \|{\bf \Xi}\|_{T^1_2(\om)}\|\vp\|_{\BMO(\sigma)}.
$$
For the second term,  by $T^\infty_2(\om)= (T^1_2(\om))^*$ and Theorem \ref{th: introtheorem4} (v) (for $\BMO$),  it holds that 
$$
\int_\om |{\bf \Xi}| |\nabla \wt F| \lec \|{\bf \Xi} \|_{T^1_2(\om)} \| \delta_\om \nabla\wt F \|_{T^\infty_2(\om)} \lec \|{\bf \Xi} \|_{T^1_2(\om)}  \| \vp \|_{\BMO(\sigma)}.
$$
Thus 
$$
|\ell_v(\vp)|\lec \|{\bf \Xi}\|_{T^1_2(\om)} \|\vp\|_{\BMO(\sigma)},
$$
which,  since $\overline{\Lip_c(\pom)}^{\VMO(\sigma)}=\VMO(\sigma)=(H^1(\sigma))^*$,  implies  \eqref{eq: Rellich from PR endpoint}. The proof of \eqref{eq: Rellich from PR endpoint} for $q \in (1,2]$,  is similar and  is omitted.  
\end{proof}

\vv

\begin{theorem}\label{th: PR implies D}
 If $(\pr)$ with ${\bf \Xi}=0$ is solvable in $\om$ for some $p>1$,  then $(\dirprime)$ is also solvable in $\om$, where $1/p+1/p'=1$.
\end{theorem}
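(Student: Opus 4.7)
Given boundary data $g \in \Lip_c(\pom)$, the plan is to build the candidate solution of $(\dirprime)$ via the Varopoulos extension provided by Theorem \ref{th: introtheorem4}, and then to verify the required $L^{p'}$–nontangential estimate for $w$ by duality against solutions of the Poisson regularity problem for $L$ whose solvability is our only hypothesis. Concretely, let $G \in C^\infty(\om)\cap\Lip(\oom)\cap \dot W^{1,2}(\om)$ be the extension of $g$ from the first part of Theorem \ref{th: introtheorem4} applied with exponent $p'$, so that in particular $G|_{\pom}=g$ and
\begin{equation*}
\|\calN(G)\|_{L^{p'}(\sigma)} + \|\calC_{n,c}(\nabla G)\|_{L^{p'}(\sigma)} \lec \|g\|_{L^{p'}(\sigma)}.
\end{equation*}
By Lax–Milgram, let $w_0 \in Y^{1,2}_0(\om)$ be the unique weak solution of $L^*w_0 = -L^*G$, so that $w:=G+w_0$ solves $L^*w = 0$ weakly with $w-G \in Y^{1,2}_0(\om)$, which is the unique variational $(\vd)$ solution for $L^*$ with datum $g$.

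The key step is to estimate $\wt\calN_{2^*}(w)$ in $L^{p'}(\sigma)$ by dualizing with the predual space, using the identification $\nn_{2^*, p'}(\om)=(\cc_{2_*, p}(\om))^*$ from \cite[Proposition~2.4]{mpt22}. Fix $H\in L^\infty_c(\om)$ and let $v \in Y^{1,2}_0(\om)$ solve $Lv = H$; the assumed solvability of $(\pr)$ with ${\bf \Xi}=0$ gives
\begin{equation*}
\|\wt\calN_2(\nabla v)\|_{L^p(\sigma)} \lec \|\calC_{2_*}(H)\|_{L^p(\sigma)} = \|H\|_{\cc_{2_*,p}(\om)}.
\end{equation*}
Since $v\in Y^{1,2}_0(\om)$, by density of $C^\infty_c(\om)$ in $Y^{1,2}_0(\om)$ and continuity of the form ${\bf B}(u_1,u_2):=\int_\om A\nabla u_1\cdot\overline{\nabla u_2}$ on $Y^{1,2}_0(\om)\times \dot W^{1,2}(\om)$, the weak equation $L^*w=0$ implies ${\bf B}(v,w)=0$. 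Writing $w=G+w_0$ and using $Lv=H$ on the test function $w_0\in Y^{1,2}_0(\om)$ yields
\begin{equation*}
\int_\om H\,\bar w \;=\; \int_\om H\,\bar G \;+\; \int_\om H\,\bar w_0 \;=\; \int_\om H\,\bar G \;-\; {\bf B}(v,G).
\end{equation*}
Each piece is now a pairing between a nontangential-maximal quantity and a Carleson quantity and can be controlled through the $\nn$–$\cc$ duality: the first term by
$\big|\int H\bar G\big|\lec \|H\|_{\cc_{2_*,p}}\|\wt\calN_{2^*}(G)\|_{L^{p'}}\lec \|H\|_{\cc_{2_*,p}}\|g\|_{L^{p'}}$
(using $\wt\calN_{2^*}(G)\lec \calN(G)$ up to aperture change), and the second by
$|{\bf B}(v,G)|\lec \|\wt\calN_2(\nabla v)\|_{L^p}\|\calC_{2}(\nabla G)\|_{L^{p'}}\lec \|H\|_{\cc_{2_*,p}}\|g\|_{L^{p'}}$,
where we combined the $(\pr)$ bound for $v$ with the Carleson estimate for $G$ from Theorem \ref{th: introtheorem4}.

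Taking the supremum over $H\in L^\infty_c(\om)$ with $\|H\|_{\cc_{2_*,p}(\om)}\le 1$ and invoking the above-mentioned duality once more gives
\begin{equation*}
\|\wt\calN_{2^*}(w)\|_{L^{p'}(\sigma)} = \|w\|_{\nn_{2^*,p'}(\om)} \lec \|g\|_{L^{p'}(\sigma)},
\end{equation*}
which is precisely the solvability of $(\dirprime)$. The main technical obstacle is the justification of the bilinear-form identity ${\bf B}(v,w)=0$ through the density argument (and, relatedly, the legitimacy of the duality representation of $\|w\|_{\nn_{2^*,p'}}$ by testing against $L^\infty_c$-functions, which requires that such functions be sufficiently dense in $\cc_{2_*,p}$); once these standard functional-analytic points are handled, the remainder is a clean combination of Theorem \ref{th: introtheorem4} with the $\nn$–$\cc$ duality of \cite{mpt22}, in direct analogy with Proposition \ref{prop: PR implies Rellich}.
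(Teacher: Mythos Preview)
Your proof is correct and follows essentially the same approach as the paper. The only presentational difference is that the paper first packages the bound $|{\bf B}(v,G)| + |\int H\bar G| \lec \|H\|_{\cc_{2_*,p}}\|g\|_{L^{p'}}$ as the one-sided Rellich inequality $\|\partial_{\nu_A} v\|_{L^p(\sigma)} \lec \|H\|_{\cc_{2_*,p}(\om)}$ (Proposition~\ref{prop: PR implies Rellich}) and then applies it via $\int_\om Hu = \langle \partial_{\nu_A} v, g\rangle$, whereas you inline that computation directly; the Varopoulos extension of $g$ and the $\nn$--$\cc$ duality are used identically in both arguments.
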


\begin{proof}
Let $f\in\Lip_c(\pom)$ and let $u$ be the  solution to \eqref{eq: Dirichlet problem} for $L^*$ with data $f$.  Due to the density of $L^\infty_c(\om)$ in $\cc_{2_\ast, p}(\om)$ and duality, we have that 
$$
\|\calN_{2^*}(u)\|_{L^{p'}(\sigma)}\lec \sup_{\substack{H \in L^\infty_c(\om): \\ \| H\|_{\cc_{2_\ast, p}(\om)}=1}} \Big| \int_\om uH \Big|.
$$
Fix such an $H \in L^\infty_c(\om)$ and let $w \in Y^{1, 2}_0(\om)$ be the solution to $(\pr)$ with data ${\bf \Xi}=0$ and $H$. Then,
using the fact that $L^* u=0$ and \eqref{eq: Rellich from PR}, we estimate
\begin{align*}
\Big|\int_\om u H \Big| &=\Big|- \int_\om \nabla u A\nabla w + \int_\pom \partial_{\nu_A}w f \Big| 
= \Big| \int_\pom \partial_{\nu_A}w f \Big| \lec  \|\calC_{2_\ast}(H)\|_{L^p(\sigma)} \| f\|_{L^{p'}(\sigma)},
\end{align*}
which readily implies that
$$
\|\calN_{2^*}(u)\|_{L^{p'}(\sigma)} \lec \|f\|_{L^{p'}(\sigma)}.
$$
\end{proof}

\vv

\begin{theorem}\label{th: PR implies D endpoint}
If  $(\wt{\pre}^L_q)$ with $H=0$ is solvable in $\om$ for $q \in[1,2]$, then  both $({\wt{\mathrm{PD}}}^{L^*}_{q'})$  with $H=0$ and  $({\wt{\mathrm{D}}}^{L^*}_{q'})$  are solvable in $\om$.
\end{theorem}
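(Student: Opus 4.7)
The plan is a two-step argument. First I will establish $(\wt{\mathrm{PD}}^{L^*}_{q'})$ with $H=0$ by combining tent-space duality with the solvability of $(\wt\pre^L_q)$, and then I will derive $(\wt{\mathrm{D}}^{L^*}_{q'})$ from $(\wt{\mathrm{PD}}^{L^*}_{q'})$ using the Varopoulos extension of Theorem~\ref{th: introtheorem4}, in the same spirit as the Dirichlet-from-Poisson reduction carried out in the proof of Theorem~\ref{th: PR implies D}.

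For Step 1, fix ${\bf \Xi} \in L^2_c(\om;\bC^{m(n+1)})$ and let $u \in Y^{1,2}_0(\om)$ be the Lax--Milgram solution of $L^* u = -\dv {\bf \Xi}$. By the tent-space duality $T^{q'}_2(\om)=(T^q_2(\om))^*$ recalled in the preliminaries and the density of $L^2_c(\om)$ in $T^q_2(\om)$ for every $q\in[1,\infty)$, it suffices to bound $|\int_\om \nabla u \cdot g\,dx|$ uniformly over $g \in L^2_c(\om;\bC^{m(n+1)})$ with $\|g\|_{T^q_2(\om)}\leq 1$. For such $g$, let $v \in Y^{1,2}_0(\om)$ solve $Lv = -\dv g$; testing the weak formulation of $L^*u = -\dv{\bf \Xi}$ against $v$ and that of $Lv = -\dv g$ against $u$, and using that $\int_\om A^*\nabla u\cdot\nabla v = \int_\om A\nabla v\cdot\nabla u$, produces the pairing identity
\[
\int_\om \nabla u \cdot g\,dx \;=\; \int_\om {\bf \Xi}\cdot \nabla v\,dx.
\]
I then dominate the right-hand side via the $\cc_{n,2,q'}$--$\nn_{2,q}$ duality of \cite[Proposition 2.4]{mpt22} by $\|\calC_2({\bf \Xi})\|_{L^{q'}(\sigma)}\|\wt\calN_2(\nabla v)\|_{L^q(\sigma)}$, and invoke $(\wt\pre^L_q)$ applied to $v$ (with datum $g$ and $H=0$) to bound the second factor by $\|g\|_{T^q_2(\om)}\leq 1$. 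Taking the supremum over $g$ yields $\|\delta_\om\nabla u\|_{T^{q'}_2(\om)}\lec \|\calC_2({\bf \Xi})\|_{L^{q'}(\sigma)}$, which is $(\wt{\mathrm{PD}}^{L^*}_{q'})$.

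For Step 2, let $g\in\Lip_c(\pom)$ and let $u$ be the solution of the variational Dirichlet problem for $L^*$ with boundary datum $g$. I apply Theorem~\ref{th: introtheorem4}: for $q'\in[2,\infty)$ the first part furnishes an extension $F \in \dot W^{1,2}(\om)\cap \Lip(\oom)\cap C^\infty(\om)$ with $F|_{\pom} = g$ and
\[
\|\calC_n(\nabla F)\|_{L^{q'}(\sigma)} + \|[\calC_n(\delta_\om|\nabla F|^2)]^{1/2}\|_{L^{q'}(\sigma)} \lec \|g\|_{L^{q'}(\sigma)},
\]
while the endpoint $q'=\infty$ is covered by the second part with the analogous $\BMO$-norm estimates. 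Setting $w := u - F$, we have $w \in Y^{1,2}_0(\om)$ and $L^*w = -\dv(-A^*\nabla F)$, with $-A^*\nabla F \in L^2(\om)$ since $F\in\dot W^{1,2}(\om)$. The Step 1 estimate extends to ${\bf \Xi}\in L^2(\om)$ by density (only the pairing identity and the Lax--Milgram theory are used), so it applies to ${\bf \Xi}:=-A^*\nabla F$ and yields
\[
\|\delta_\om \nabla w\|_{T^{q'}_2(\om)} \lec \|\calC_2(A^*\nabla F)\|_{L^{q'}(\sigma)} \lec \|\calC_n(\nabla F)\|_{L^{q'}(\sigma)} \lec \|g\|_{Y^{q'}(\sigma)},
\]
using $|A^*|\lec 1$ and $\calC_2\leq \calC_n$. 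Combined with $\|\delta_\om \nabla F\|_{T^{q'}_2(\om)} \lec \|[\calC_n(\delta_\om|\nabla F|^2)]^{1/2}\|_{L^{q'}(\sigma)} \lec \|g\|_{Y^{q'}(\sigma)}$, the triangle inequality produces $\|\delta_\om \nabla u\|_{T^{q'}_2(\om)} \lec \|g\|_{Y^{q'}(\sigma)}$, which is $(\wt{\mathrm{D}}^{L^*}_{q'})$.

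The bulk of the work is bookkeeping, and I expect the only genuine subtlety to lie at the endpoint $q=1$, $q'=\infty$: Step~1 there must use the $T^\infty_2(\om) = (T^1_2(\om))^*$ duality rather than the generic reflexive version, while Step~2 must invoke the $\BMO$ version of the Varopoulos extension (the second part of Theorem~\ref{th: introtheorem4}), whose item (v) furnishes the uniform Carleson bound $\sup_\xi [\calC_n(\delta_\om|\nabla\bar F|^2)(\xi)]^{1/2}\lec \|g\|_{\BMO(\sigma)}$ needed to estimate $\|\delta_\om\nabla \bar F\|_{T^\infty_2(\om)}$. Both ingredients are already packaged by the preliminaries and the main extension theorem, so no new technical machinery beyond what is developed earlier in the paper is required.
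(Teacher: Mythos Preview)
Your proof is correct but organized differently from the paper's. The paper handles $(\wt{\mathrm{PD}}^{L^*}_{q'})$ and $(\wt{\mathrm{D}}^{L^*}_{q'})$ in a single stroke: it sets $w=v_1+v_2$ (Poisson solution plus Dirichlet solution), dualizes $\|\delta_\om\nabla w\|_{T^{q'}_2}$ against ${\bf \Psi}\in L^2_c$ with $\|{\bf \Psi}\|_{T^q_2}\le 1$, and for the auxiliary solution $u$ of $Lu=-\dv{\bf \Psi}$ obtains $\int_\om{\bf \Psi}\cdot\nabla w=\int_\om{\bf \Xi}\cdot\nabla u+\int_{\pom}\partial_{\nu_A}u\,f$. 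The first term is controlled exactly as in your Step~1, while the boundary term is handled by the Rellich-type bound $\|\partial_{\nu_A}u\|_{X^q}\lec\|{\bf \Psi}\|_{T^q_2}$ of Proposition~\ref{prop: PR implies Rellich}. Thus the paper's use of the Varopoulos extension is hidden inside that Rellich inequality.

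Your two-step scheme instead isolates the Poisson part (your Step~1 is precisely the paper's computation with $f=0$, so no boundary term and no co-normal derivative appear), and then reduces the Dirichlet problem to the Poisson--Dirichlet problem by subtracting a Varopoulos extension of the boundary datum. This second step is exactly the mechanism of Theorem~\ref{th: PD implies D}(ii), not of Theorem~\ref{th: PR implies D} as you write; the latter goes through Rellich rather than through an explicit extension. Your route is slightly more modular and avoids introducing $\partial_{\nu_A}$ altogether, while the paper's route is more compact and yields both conclusions from one duality computation. Both rest on the same three pillars: tent-space duality, the assumed $(\wt{\pre}^L_q)$ estimate, and the Carleson/square-function control of the Varopoulos extension from Theorem~\ref{th: introtheorem4}.

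Two minor remarks. First, in Step~2 you do not really need a density argument: the pairing identity $\int_\om\nabla u\cdot g=\int_\om{\bf\Xi}\cdot\nabla v$ holds directly for ${\bf\Xi}\in L^2(\om)$ since both $u,v\in Y^{1,2}_0(\om)$, which is what your parenthetical already hints at. Second, your notation $\calC_n$ is nonstandard in this paper (where the $s=n$ subscript is dropped and $\calC$ denotes the sup-averaged functional), but the intended inequality $\calC_2\le\calC$ is clear and correct.
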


\begin{proof}
Let $v_1$ be the solution of \eqref{eq:variational Dirichlet problem} with data ${\bf \Xi} \in L^2_c(\om;\rrn)$ and $H=0$ and $v_2$ be the solution of \eqref{eq: Dirichlet problem} with data $\vp \in \Lip_c(\pom)$, and we define $w=v_1+v_2$. Using the tent space duality $(T^{q'}_2(\om))^*=T^q_2(\om)$ along with the density of $L^2_c(\om)$ functions in $T^q_2(\om)$,  it holds that
$$
\|\delta_\om \nabla w\|_{T^{q'}_2(\om)}\approx \sup_{\substack{ {\bf \Psi} \in L^2_c(\om): \\ \| {\bf \Psi}\|_{T^{q}_2(\om)}=1}} \Big|\int_\om \delta_\om(x) \nabla w \, {\bf \Psi}
\frac{dx}{\delta_\om(x)}\Big| = \sup_{\substack{ {\bf \Psi} \in L^2_c(\om): \\ \| {\bf \Psi} \|_{T^q_2(\om)}=1}}\Big| \int_\om \nabla w \, {\bf \Psi} \Big|.
$$
Then,  if $u\in Y^{1, 2}_0(\om)$ is the solution of $(\pre^L_q)$,  with data $\Psi\in L^2_c(\om)$ and $H=0$,  by duality and \eqref{eq: Rellich from PR endpoint}, we have 
\begin{align*}
\Big| \int_\om {\bf \Psi} \, \nabla w \Big| &= \Big| -\int_\om A\nabla u\nabla w + \int_\pom \partial_{\nu_A}u f \Big| 
\leq \Big| \int_\om A^* \nabla w \nabla u\Big| +\Big|   \int_\pom \partial_{\nu_A}u \, f \Big| \\
&= \Big|\int_\om {\bf \Xi} \, \nabla u \Big| + \Big|\int_\pom \partial_{\nu_A}u \, f \Big|\\
&\lec \|\calC_2({\bf \Xi})\|_{L^{q'}(\sigma)} \|\wt{\calN}_2(\nabla u)\|_{L^q(\sigma)} + \| {\bf \Psi } \|_{T^q_2(\om)}\|f\|_{Y^{q'}(\sigma)} \\
&\lec \left( \| {\bf \Xi} \|_{\cc_{2, q'}(\om)} +\|f\|_{Y^{q'}(\sigma)} \right) \|  {\bf \Psi} \|_{T^q_2(\om)},
\end{align*}
which proves the desired estimates  \eqref{eq: Dirichlet solvability area} and \eqref{eq:end-poisson regularity}.
\end{proof}

\vv

\begin{theorem}\label{th: PD implies D}
\begin{itemize}
\item[(i)] If  $(\pd^L_p)$ is solvable in $\om$ with $H=0$ for some $p\in(1,\infty)$,  then $(\mathrm{D}^L_p)$ is also solvable in $\om$.  
\item[(ii)] If $(\wt{\pd}^L_{q})$ is solvable in $\om$ with $H=0$ for some $q\in [2,\infty]$,  then $(\wt{\mathrm{D}}^L_q)$ is also solvable in $\om$.  
\end{itemize}
\end{theorem}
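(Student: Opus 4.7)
The plan is to reduce both parts to the corresponding Poisson-Dirichlet problem by splitting the Dirichlet solution as $w = F + v$, where $F$ is a Varopoulos-type extension of the boundary data furnished by Theorem \ref{th: introtheorem4} and $v := w - F$ lies in $Y^{1,2}_0(\om)$ and solves a Poisson equation whose source is built from $\nabla F$. This is the natural analogue of the arguments used for Theorems \ref{th: PR implies D} and \ref{th: PR implies D endpoint}, except that now the Poisson datum will sit in the forcing field $\mathbf{\Xi}$ rather than in $H$.

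First I would fix $g \in \Lip_c(\pom)$ and let $w$ be the unique variational solution of \eqref{eq: Dirichlet problem} with data $g$. For (i) with exponent $p \in (1,\infty)$ I use the Varopoulos extension $F$ from the first part of Theorem \ref{th: introtheorem4} with exponent $p$; for (ii) I use the same $F$ with exponent $q$ when $q < \infty$ and the $\BMO$-extension $\bar F$ from the second part when $q = \infty$. In every case, the extension lies in $C^\infty(\om)\cap\Lip(\oom)\cap \dot W^{1,2}(\om)$ (using $s=n$, so $\omega_s \equiv 1$) and equals $g$ continuously on $\pom$, so it is admissible as the datum $\Phi$ in \eqref{eq: Dirichlet problem}. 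Setting $v := w - F$ (respectively $v := w - \bar F$) therefore yields $v \in Y^{1,2}_0(\om)$, and since $Lw = 0$ weakly,
\begin{equation*}
Lv \,=\, -\dv(A\nabla F) \quad \text{weakly in } \om,
\end{equation*}
i.e., $v$ is the variational Poisson-Dirichlet solution with data $\mathbf{\Xi} = A\nabla F$ and $H = 0$.

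Next I would combine the Poisson-Dirichlet solvability hypothesis with the estimates for $F$ from Theorem \ref{th: introtheorem4}. For (i), applying $(\pd^L_p)$ to $v$, together with the pointwise bound $\calC_2(A\nabla F) \lec \calC_{s,c}(\nabla F)$ (from $A \in L^\infty$ and that $L^2$-averages are dominated by $L^\infty$-suprema), gives $\|\wt\calN_{2^*}(v)\|_{L^p(\sigma)} \lec \|g\|_{L^p(\sigma)}$ via Theorem \ref{th: introtheorem4}(iii). For the extension term, Jensen's inequality and Lemma \ref{lem: aperture} yield $\|\wt\calN_{2^*}(F)\|_{L^p(\sigma)} \lec \|\calN(F)\|_{L^p(\sigma)} \lec \|g\|_{L^p(\sigma)}$ by Theorem \ref{th: introtheorem4}(ii); summing gives $(\mathrm{D}^L_p)$. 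For (ii), $(\wt\pd^L_q)$ analogously bounds $\|\delta_\om\nabla v\|_{T^q_2(\om)}$ by $\|g\|_{Y^q(\sigma)}$; the extension contribution $\|\delta_\om\nabla F\|_{T^q_2(\om)} = \|\mathcal{A}(\delta_\om|\nabla F|^2)^{1/2}\|_{L^q(\sigma)}$ is controlled, when $q \in [2,\infty)$, by combining Lemma \ref{lem: A less than C} with exponent $q/2 \geq 1$ and Theorem \ref{th: introtheorem4}(v), and when $q = \infty$ it equals $\sup_{\xi\in\pom}[\mathscr{C}_s(\delta_\om|\nabla\bar F|^2)(\xi)]^{1/2}$, which is $\lec \|g\|_{\BMO(\sigma)}$ by the $\BMO$-half of (v). Summing gives $(\wt{\mathrm{D}}^L_q)$.

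The main technical obstacle will be that the solvability hypotheses $(\pd^L_p)$ and $(\wt\pd^L_q)$ are formulated only for compactly supported data $\mathbf{\Xi} \in L^\infty_c(\om;\bC^{m(n+1)})$, whereas $\mathbf{\Xi} = A\nabla F$ lies in $L^\infty(\om) \cap L^2(\om)$ but is typically not compactly supported in $\om$. I plan to overcome this with a standard truncation-and-density argument: approximate $\mathbf{\Xi}$ by $\mathbf{\Xi}_k := \mathbf{\Xi}\,\chi_{K_k}$ along a compact exhaustion $K_k \nearrow \om$, apply the solvability estimate to the Lax-Milgram solutions $v_k \in Y^{1,2}_0(\om)$ of $Lv_k = -\dv \mathbf{\Xi}_k$, and pass to the limit using the uniform pointwise domination $\calC_2(\mathbf{\Xi}_k) \leq \calC_2(\mathbf{\Xi})$, the $Y^{1,2}_0(\om)$-convergence $v_k \to v$ (which yields a.e.\ convergence of $v_k$ along a subsequence), and Fatou's lemma applied to $\wt\calN_{2^*}$ for (i) or to the area integral defining $T^q_2$ for (ii).
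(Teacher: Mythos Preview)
Your proposal is correct and follows essentially the same strategy as the paper: write the Dirichlet solution as $F+v$ with $F$ the Varopoulos extension from Theorem~\ref{th: introtheorem4}, apply the Poisson-Dirichlet hypothesis to $v$ with data $\mathbf{\Xi}=A\nabla F$, and bound the extension term directly via Theorem~\ref{th: introtheorem4}(ii),(iii),(v). You are in fact slightly more careful than the paper in flagging and handling the density issue (the solvability hypotheses are stated only for $\mathbf{\Xi}\in L^\infty_c$), which the paper's proof simply elides.
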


\begin{proof}
Let $f\in \Lip_c(\pom)$ and let $F$ be the Varopoulos extensions of the $L^p$-boundary data $f$ given by Theorem \ref{th: introtheorem4}.  In the construction of the  solution  of \eqref{eq: Dirichlet problem} with data $f$,   we can use $F$ as the Lipschitz extension of $f$.  So
if $u$ is the aforementioned solution, then $u= w+F$,  where $w$ is the solution of \eqref{eq:variational Dirichlet problem}  with ${\bf \Xi}= - A\nabla F \in L^2(\om)$ and $H=0$.  Then,  by Theorem \ref{th: introtheorem4} (ii) and \eqref{eq: poisson d regularity}, we have that
\begin{align*}
\|\wt\calN_{2^*} (u)\|_{L^p(\sigma)} &\leq \|\calN(F)\|_{L^p(\sigma)} + \|\wt\calN_{2^*}(w)\|_{L^p(\sigma)}
\lec \|f\|_{L^p(\sigma)} + \|\calC_2({\bf \Xi})\|_{L^p(\sigma)} \\
&\leq \|f\|_{L^p(\sigma)} + \|A\|_{L^\infty(\om)}\|\calC(\nabla F)\|_{L^p(\sigma)} \lec \|f\|_{L^p(\sigma)},
\end{align*}
showing (i).  Similarly,  assuming  $(\wt{\pd}^L_{q})$ is solvable in $\om$ with $H=0$ for some $q\in [2,\infty]$, we obtain
\begin{align*}
\|\delta_\om \nabla u \|_{T^q_2(\om)} &\leq \|\delta_\om \nabla F \|_{T^q_2(\om)}  +\|\delta_\om \nabla w \|_{T^q_2(\om)} 
\lec \|f\|_{Y^q(\sigma)} + \|\calC_2({\bf \Xi})\|_{L^q(\sigma)} \\
&\leq \|f\|_{Y^q(\sigma)} + \|A\|_{L^\infty(\om)}\|\calC(\nabla F)\|_{L^p(\sigma)} \lec \|f\|_{Y^q(\sigma)},
\end{align*}
where we used  Theorem \ref{th: introtheorem4} (v) (when $q=\infty$,  we use the extension $\wt F$ of Theorem \ref{th: introtheorem4}) and \eqref{eq:end-poisson regularity}, which finishes the proof of the Theorem.
\end{proof}

\vv

\begin{proof}[Proof of Theorem \ref{thm:applications}]
It follows by combining Theorems \ref{th: PR implies D}, \ref{th: PR implies D endpoint}, and  \ref{th: PD implies D}.
\end{proof}

\vv

\subsection{Conditional one-sided Rellich-type inequalities}

\begin{proposition}\label{prop: R imply Rellich D}
Suppose that $(\rpr)$ is solvable in  $\om$ for some $p\geq 1$.  If $u$ is the solution of \eqref{eq: Dirichlet problem} for $L^*$ in $\om$ with data $f\in\Lip_c(\pom)$,  it holds that
\begin{equation}\label{eq: RP implies Rellich for D}
\|\partial_{\nu_{A^*}}u \|_{(\dot{M}^{1, p}(\sigma))^*} \lec \|f\|_{Y_{p'}(\sigma)},
\end{equation} 
where $(\dot{M}^{1, p}(\pom))^*$ stands for the Banach space dual of $\dot{M}^{1, p}(\pom)/\R$ and $Y_{p'}(\sigma)$ is equal to $L^{p'}(\sigma)$ if $p>1$ and $\BMO(\sigma)$ if $p=1$.
\end{proposition}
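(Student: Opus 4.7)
The plan is to dualize against the dense subclass $\vphi\in \Lip_c(\pom)$ of $\dot M^{1,p}(\sigma)/\R$ and to exchange the rôles of $u$ and of a solution $v$ of $Lv=0$ via the symmetry of the bilinear forms associated with $L$ and $L^*$. Given $\vphi\in \Lip_c(\pom)$, let $v$ denote the unique solution of \eqref{eq: Dirichlet problem} for $L$ with boundary data $\vphi$; by the assumption $(\rpr)$,
$$\|\wt\calN_2(\nabla v)\|_{L^p(\sigma)}\lec \|\vphi\|_{\dot M^{1,p}(\sigma)}.$$
Let $F\in C^\infty(\om)\cap \Lip(\oom)\cap \dot W^{1,2}(\om)$ be the Varopoulos-type extension of $f$ furnished by Theorem \ref{th: introtheorem4}, constructed via the $L^{p'}$ version when $p>1$ and the $\BMO$ version when $p=1$; in particular $F|_{\pom}=f$, and we have $\|\calC_{s,c}(\nabla F)\|_{L^{p'}(\sigma)}\lec \|f\|_{L^{p'}(\sigma)}$ or $\sup_{\xi\in\pom}\calC_{s,c}(\nabla F)(\xi)\lec \|f\|_{\BMO(\sigma)}$, respectively.

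The core step will be the Green-type identity
\begin{equation}\label{eq:green-plan}
\langle \partial_{\nu_{A^*}} u, \vphi\rangle \,=\, \int_\om A\nabla v\cdot \nabla F\,dx.
\end{equation}
To establish it, let $G\in\Lip(\oom)\cap\dot W^{1,2}(\om)$ be any Lipschitz extension of $\vphi$ (for instance its own Varopoulos extension from Theorem \ref{th: introtheorem4}), and write $v=G+\widetilde v$ and $u=F+\widetilde u$ with $\widetilde u,\widetilde v\in Y^{1,2}_0(\om)$. By Lemma \ref{lem:ell_v} applied to $L^*$ with data $H=0$ and ${\bf \Xi}=0$, one has $\langle \partial_{\nu_{A^*}} u, \vphi\rangle = \int_\om A^*\nabla u\cdot \nabla G\,dx$. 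Since $L^*u=0$ and $\widetilde v\in Y^{1,2}_0(\om)$, the orthogonality $\int_\om A^*\nabla u\cdot \nabla \widetilde v\,dx=0$ gives $\int_\om A^*\nabla u\cdot \nabla G\,dx=\int_\om A^*\nabla u\cdot \nabla v\,dx=\int_\om A\nabla v\cdot \nabla u\,dx$, where in the last equality we used $A^*=A^T$ in the real setting. Finally, $Lv=0$ and $\widetilde u\in Y^{1,2}_0(\om)$ imply $\int_\om A\nabla v\cdot \nabla \widetilde u\,dx=0$, and hence $\int_\om A\nabla v\cdot \nabla u\,dx=\int_\om A\nabla v\cdot \nabla F\,dx$, proving \eqref{eq:green-plan}. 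Each integral is absolutely convergent because $\nabla F,\nabla G\in L^2(\om)$ by item (i) of Theorem \ref{th: introtheorem4} and $\widetilde u,\widetilde v\in Y^{1,2}_0(\om)\subset \dot W^{1,2}(\om)$, so that both $\nabla u$ and $\nabla v$ belong to $L^2(\om)$.

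Having \eqref{eq:green-plan}, it remains to bound
$|\langle \partial_{\nu_{A^*}} u,\vphi\rangle|\leq \|A\|_\infty \int_\om |\nabla v|\,|\nabla F|\,dx$ by the Carleson/non-tangential duality. When $p>1$, the duality $\nn_{2,p}(\om)=(\cc_{2,p'}(\om))^*$ of \cite[Proposition 2.4]{mpt22}, combined with the pointwise inequality $\calC_{s,2}(\nabla F)\lec \calC_{s,c}(\nabla F)$, will give
$$\int_\om |\nabla v||\nabla F|\,dx\lec \|\wt\calN_2(\nabla v)\|_{L^p(\sigma)}\|\calC_{s,2}(\nabla F)\|_{L^{p'}(\sigma)}\lec \|\vphi\|_{\dot M^{1,p}(\sigma)}\|f\|_{L^{p'}(\sigma)}.$$
For the endpoint $p=1$, I will argue as in the treatment of $(\wt{\pre}^L_1)$ in the proof of Proposition \ref{prop: PR implies Rellich} and use the $L^1$-$L^\infty$ Carleson embedding
$\int_\om|\nabla v||\nabla F|\,dx\lec \|\wt\calN_2(\nabla v)\|_{L^1(\sigma)}\|\calC_{s,2}(\nabla F)\|_{L^\infty(\sigma)}$, together with $(\mathrm{R}^L_1)$ and the $\BMO$-bound on $\calC_{s,c}(\nabla F)$, to obtain the upper bound $\|\vphi\|_{\dot M^{1,1}(\sigma)}\|f\|_{\BMO(\sigma)}$. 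Taking the supremum over $\vphi\in \Lip_c(\pom)$ with $\|\vphi\|_{\dot M^{1,p}(\sigma)}\leq 1$ and invoking the density of $\Lip_c(\pom)$ in $\dot M^{1,p}(\sigma)/\R$ will conclude the proof of \eqref{eq: RP implies Rellich for D}.

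The main technical obstacle will be the rigorous justification of the Green identity \eqref{eq:green-plan}: one has to verify that the two cross-terms $\int_\om A^*\nabla u\cdot \nabla \widetilde v\,dx$ and $\int_\om A\nabla v\cdot \nabla \widetilde u\,dx$ vanish and that every integral in the chain of equalities is absolutely convergent. This rests crucially on the $\dot W^{1,2}(\om;\omega_s)$-regularity of the Varopoulos extensions provided by Theorem \ref{th: introtheorem4}, which forces $\nabla u,\nabla v\in L^2(\om)$, together with the orthogonality of $Y^{1,2}_0(\om)$-functions to the two solutions. A secondary subtlety is the $p=1$ endpoint, where the direct duality $(\cc_{2,p'})^*=\nn_{2,p}$ is unavailable and must be replaced by the $L^\infty$-$L^1$ Carleson embedding described above, exactly as in the endpoint treatment appearing in Proposition \ref{prop: PR implies Rellich}.
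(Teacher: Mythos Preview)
Your proof is correct and follows essentially the same route as the paper's: dualize against $\vphi\in\Lip_c(\pom)$, let $v$ (the paper's $w$) solve the Dirichlet problem for $L$ with data $\vphi$, use the Varopoulos extension $F$ of $f$ from Theorem~\ref{th: introtheorem4}, establish the Green identity $\langle\partial_{\nu_{A^*}}u,\vphi\rangle=\int_\om A\nabla v\cdot\nabla F$, and conclude via the Carleson--non-tangential duality together with the $(\rpr)$ estimate. The paper is more terse, writing $\langle\partial_{\nu_{A^*}}u,\vphi\rangle=\int_\om A^*\nabla u\,\nabla w$ directly (implicitly using that $w$ differs from a Lipschitz extension of $\vphi$ by an element of $Y^{1,2}_0(\om)$, which is annihilated since $L^*u=0$), whereas you spell out the intermediate extension $G$ and both orthogonality cancellations explicitly; your more detailed justification of the $p=1$ endpoint is also consistent with the paper, which simply leaves that case as an exercise.
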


\begin{proof}
By definition,
$$
\|\partial_{\nu_{A^*}} u\|_{(\dot{M}^{1, p}(\pom))^\ast} = \sup_{\substack{\vp\in\Lip_c(\pom) : \\ \|\vp\|_{\dot{M}^{1, p}(\sigma)}=1}} |\langle \partial_{\nu_{A^*}}u , \vp \rangle |.
$$
Fix $\vp \in \Lip_c(\pom)$ such that $\|\vp\|_{\dot{M}^{1, p}(\sigma)}=1$ and let  $w$ be the solution of $(\rpr)$ with data $\vp$.  Let also $F\in\Lip(\oom)$ be the $L^{p'}$-Varopoulos extension of $f$ as constructed in Theorem 
 \ref{th: introtheorem4}. Then,  by Lemma \ref{lem:ell_v},  we have that 
$$
 \langle \partial_{\nu_{A^*}}u, \vp \rangle =  \int_\om A^*\nabla u \nabla w= \int_\om A \nabla w \nabla (u-F) + \int_\om A \nabla w \nabla F=\int_\om A \nabla w \nabla F,
$$
since $u-F \in Y^{1, 2}_0(\om)$  and $Lw=0$.  Therefore, by duality, Theorem 
 \ref{th: introtheorem4} (ii), and \eqref{eq: Dirichlet regularity estimate}, we infer that
\begin{align*}
| \langle \partial_{\nu_{A^*}}u, \vp \rangle | =\Big| \int_\om A \nabla w \nabla F \Big| &\leq \|A\|_{L^\infty(\om)} \|\wt \calN_2(\nabla u)\|_{L^p(\sigma)} \| \calC_{2}(\nabla F)\|_{L^{p'}} \\
&\lec \|\vp\|_{\dot{M}^{1, p}(\sigma)}\|f\|_{L^{p'}(\sigma)},
\end{align*}
which shows \eqref{eq: RP implies Rellich for D} for $p>1$. The proof in the case $p=1$ is similar and we leave it as an exercise.
\end{proof}

\vv

\begin{proposition}
Let $ q \geq 1$. If $u$ is a  solution of \eqref{eq:weaksolution} for $H \in L^\infty_c(\om)$ and ${\bf \Xi} \in L^\infty_c(\om; \rrn)$ such that  $\calN_q(\nabla u)\in L^p(\sigma)$ for $p>1$, it holds that
\begin{equation}\label{eq: Rellich general solution p>1}
\| \partial_{\nu_{A^*}} u \|_{L^p(\sigma)} \lec \|\nabla u\|_{\nn_{q, p}(\om)} +\|H\|_{\cc_{1, p}(\om)} + \|  {\bf \Xi}  /\delta_\om \|_{\cc_{1, p}(\om)}.
\end{equation}
If $u$ is a  solution of \eqref{eq:weaksolution} for $H=0$ and ${\bf \Xi} \in L^\infty_c(\om; \rrn)$ such that  $\calN_q(\nabla u)\in L^1(\sigma)$, 
\begin{equation}\label{eq: Rellich general solution p=1}
\| \partial_{\nu_{A^*}} u \|_{H^1(\sigma)} \lec \|\nabla u\|_{\nn_{q, 1}(\om)}+\| {\bf \Xi}\|_{T^1_2(\om)}.
\end{equation}
\end{proposition}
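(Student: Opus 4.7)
The plan is to establish both estimates by duality, testing $\partial_{\nu_{A^*}} u$ against $\vp \in \Lip_c(\pom)$ and using the Varopoulos extension of $\vp$ supplied by Theorem \ref{th: introtheorem4}. Interpreting $u$ as a solution of $L^* u = -\dv {\bf \Xi} + H$ so that $\partial_{\nu_{A^*}} u$ is well defined, and invoking Lemma \ref{lem:ell_v}, for any $F \in \dot W^{1,2}(\om) \cap \Lip(\oom)$ with $F|_{\pom} = \vp$ we have the identity
$$
\langle \partial_{\nu_{A^*}} u, \vp \rangle = \int_\om A^* \nabla u \cdot \overline{\nabla F}\, dx - \int_\om {\bf \Xi} \cdot \overline{\nabla F}\, dx - \int_\om H \, \overline{F}\, dx.
$$
Each integral will be controlled by pairing the non-tangential/Carleson data of $u$, ${\bf \Xi}$ and $H$ against the boundary-norm control furnished by the Varopoulos extension.

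For \eqref{eq: Rellich general solution p>1}, I would take $F$ to be the $L^{p'}$-extension provided by the first half of Theorem \ref{th: introtheorem4}. The elliptic term is handled via the duality $\nn_{q, p}(\om) = (\cc_{n, q', p'}(\om))^*$ of \cite[Proposition 2.4]{mpt22}, together with the pointwise bound $\calC_{n, q', c}(|\nabla F|) \leq \calC_{n, c}(|\nabla F|)$ and Theorem \ref{th: introtheorem4}(iii), producing $\|A\|_{L^\infty(\om)}\, \|\nabla u\|_{\nn_{q, p}(\om)}\, \|\vp\|_{L^{p'}(\sigma)}$. The ${\bf \Xi}$-term, after being rewritten as $\int_\om ({\bf \Xi}/\delta_\om) \cdot (\delta_\om \overline{\nabla F})$, is estimated using the duality $\nn^{p'}(\om) = (\cc_{1, p}(\om))^*$ together with the bound $\|\calN(\delta_\om |\nabla F|)\|_{L^{p'}(\sigma)} \lec \|\vp\|_{L^{p'}(\sigma)}$ from Theorem \ref{th: introtheorem4}(ii), yielding $\|{\bf \Xi}/\delta_\om\|_{\cc_{1, p}(\om)}\,\|\vp\|_{L^{p'}(\sigma)}$. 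The $H$-term is dispatched identically using $\|\calN(F)\|_{L^{p'}(\sigma)} \lec \|\vp\|_{L^{p'}(\sigma)}$. Taking the supremum over $\vp \in \Lip_c(\pom)$ with $\|\vp\|_{L^{p'}(\sigma)} \leq 1$ and invoking the density of $\Lip_c(\pom)$ in $L^{p'}(\sigma)$ then closes the argument.

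For the endpoint \eqref{eq: Rellich general solution p=1}, since $\overline{\Lip_c(\pom)}^{\VMO(\sigma)} = \VMO(\sigma)$ and $(\VMO(\sigma))^* = H^1(\sigma)$, it suffices to show
$$
|\langle \partial_{\nu_{A^*}} u, \vp \rangle| \lec \big(\|\nabla u\|_{\nn_{q, 1}(\om)} + \|{\bf \Xi}\|_{T^1_2(\om)}\big) \|\vp\|_{\BMO(\sigma)}
$$
for all $\vp \in \Lip_c(\pom)$. Here I would substitute for $F$ the $\BMO$-extension $\bar F$ from the second half of Theorem \ref{th: introtheorem4}. The elliptic term is bounded as above using the $\BMO$-version of (iii). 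For the ${\bf \Xi}$-term, I would invoke the tent-space duality $(T^1_2(\om))^* = T^\infty_2(\om)$ (with the $\delta_\om^{-1}$-weighted pairing) combined with Theorem \ref{th: introtheorem4}(v) for $\bar F$, which gives $\|\delta_\om \nabla \bar F\|_{T^\infty_2(\om)} \lec \|\vp\|_{\BMO(\sigma)}$ (via $\mathscr{C}_n \leq \calC_n$ pointwise); this produces the bound $\|{\bf \Xi}\|_{T^1_2(\om)}\, \|\vp\|_{\BMO(\sigma)}$.

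The principal obstacle is reconciling the Carleson-type controls of Theorem \ref{th: introtheorem4}, which are phrased via the $L^\infty$-averaged functional $\calC_n$, with the dual pairing $\nn_{q, p}(\om) = (\cc_{n, q', p'}(\om))^*$, whose natural input is the $L^{q'}$-averaged functional $\calC_{n, q'}$. This is resolved by the elementary pointwise inequality $m_{q', cB^x}(F) \leq m_{\infty, cB^x}(F)$, which yields a continuous embedding $\cc^{p'}_{n, \infty}(\om) \hookrightarrow \cc_{n, q', p'}(\om)$. A secondary technical point is that the Varopoulos extension must be admissible in the definition of the variational co-normal derivative, i.e., must satisfy $F \in \dot W^{1,2}(\om) \cap \Lip(\oom)$; this is exactly the joint regularity delivered by Theorem \ref{th: introtheorem4}(i), and is why the strengthened version of the Varopoulos theorem constructed here (rather than the classical one of Varopoulos or the one in \cite{HT20}) is indispensable.
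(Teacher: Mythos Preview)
Your proposal is correct and follows essentially the same approach as the paper, which simply says the result follows by the arguments of Proposition \ref{prop: PR implies Rellich} and omits the details. Your write-up in fact supplies those details accurately, including the duality $\nn_{q,p}(\om) = (\cc_{n,q',p'}(\om))^*$, the pointwise embedding $\calC_{n,q'} \leq \calC_n$, the tent-space duality for the endpoint, and the crucial use of the $\dot W^{1,2}\cap \Lip$ regularity of the Varopoulos extension from Theorem \ref{th: introtheorem4}.
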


\begin{proof}
It follows by the same arguments that prove Proposition \ref{prop: PR implies Rellich}. We skip the details.
\end{proof}

\vvv

\appendix \label{appendix}
\section{}

\vvv
\begin{lemma}\label{lem: A less than C-ap}
Let $\om \in \AR(s)$ for $s \in (0,n]$, $ u  \in L^1_{\loc}(\om, \hm_s) $,  $ p \in [1,  \infty) $,  and $\alpha \geq 1$. Then there exists $C\geq 1$ such that for any $\xi \in \pom$ and $ r \in (0, 2\diam(\pom))$, it holds that
\begin{equation}\label{eq:A<Clocal-ap}
\| \mathcal{A}^{(\alpha)}(u {\bf 1}_{B(\xi,r)} ) \|_{L^p(\sigma|_{B(\xi,r)}) } \lec r^\beta  \| \mathscr{C}^{(\beta)}_s(u  {\bf 1}_{B(\xi,C r)}) \|_{L^p(\sigma|_{B(\xi,r)} )}.
\end{equation}
If $\beta=0$, it also holds 
\begin{equation}\label{eq:A<Cglobal-ap}
\| \mathcal{A}^{(\alpha)}(u) \|_{L^p(\sigma)} \lec  \| \mathscr{C}_s(u) \|_{L^p(\sigma)}.
\end{equation}
Moreover for $\beta=0$ and $1<p\leq \infty$ we have 
\begin{equation}\label{eq:C<Aglobal-ap}
\|\mathscr{C}_s(u)\|_{L^p(\sigma)} \lec \|\mathcal{A}^{(\alpha)}(u)\|_{L^p(\sigma)}.
\end{equation}
\end{lemma}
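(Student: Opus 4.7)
Plan for the proof of Lemma~\ref{lem: A less than C-ap}. The two directions will be handled by complementary arguments that share a common geometric input: the Whitney decomposition $\om=\bigcup_P P$ together with the associated boundary cubes $b(P)\in\DD_\sigma$ of size $\ell(b(P))\approx\ell(P)$, $\dist(P,b(P))\approx\ell(P)$, $\sigma(b(P))\approx\ell(P)^s$, and with bounded overlap after fattening slightly. In particular, for any $\eta\in b(P)$ one has $P\subset B(\eta,C\ell(P))$, so the definition of $\mathscr{C}_s^{(\beta)}$ yields
\[
\int_P |u|\,\delta_\om^{s-n}\,dx \;\leq\; (C\ell(P))^{s+\beta}\,\mathscr{C}_s^{(\beta)}(u)(\eta)\,.
\]

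For the forward inequalities (A.4) and (A.5), I would dualize. Writing $\|\mathcal{A}^{(\alpha)}(u)\|_{L^p(\sigma)}$ as a supremum over non-negative $h\in L^{p'}(\sigma)$ of unit norm and swapping the order of integration by Fubini gives
\[
\int_\pom h\,\mathcal{A}^{(\alpha)}(u)\,d\sigma \;=\; \int_\om |u(x)|\,\delta_\om(x)^{-n}\int_{\Delta(x)} h\,d\sigma\,dx,
\]
where $\Delta(x)=B(x,(1+\alpha)\delta_\om(x))\cap\pom$ and $\sigma(\Delta(x))\lec\delta_\om(x)^s$ by Ahlfors regularity. Bounding $\int_{\Delta(x)} h\,d\sigma\lec\delta_\om(x)^s\,\widetilde{\mathcal{M}}_\sigma h(\pi(x))$ for $\pi(x)\in\Delta(x)$ and decomposing the outer integral over Whitney cubes, the key display above allows me to control the contribution of $P$ by
\[
\int_P |u|\,\delta_\om^{s-n}\widetilde{\mathcal{M}}_\sigma h(\pi(x))\,dx \;\lec\; \ell(P)^{s+\beta}\,\inf_{b(P)}\mathscr{C}_s^{(\beta)}(u)\cdot\inf_{b(P)}\widetilde{\mathcal{M}}_\sigma^{2} h \;\lec\; \ell(P)^\beta \int_{b(P)} \mathscr{C}_s^{(\beta)}(u)\,\widetilde{\mathcal{M}}_\sigma^2 h\,d\sigma.
\]
Summing over $P$ using the bounded overlap of $\{b(P)\}$ and then Hölder and the $L^{p'}$-boundedness of $\widetilde{\mathcal{M}}_\sigma$ (valid for every $p'\in(1,\infty]$, i.e.\ $p\in[1,\infty)$) deliver the desired bound on $\int h\,\mathcal{A}^{(\alpha)}(u)\,d\sigma$; the $r^\beta$ factor in the localized version (A.4) emerges from the fact that only Whitney cubes with $\ell(P)\lec r$ contribute once $u$ is cut off to $B(\xi,r)$.

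For the reverse inequality (A.6), I would establish the pointwise bound
\[
\mathscr{C}_s(u)(\xi) \;\lec\; \mathcal{A}^{(\alpha)}(u)(\xi) + \mathcal{M}_\sigma\bigl(\mathcal{A}^{(\alpha)}(u)\bigr)(\xi),
\]
from which the conclusion follows by the $L^p$-boundedness of $\mathcal{M}_\sigma$ for $p\in(1,\infty]$. To derive this, fix $\xi\in\pom$ and $r>0$ and split $B(\xi,r)\cap\om$ according to whether $\delta_\om(x)<r/(1+\alpha)$ or $\delta_\om(x)\geq r/(1+\alpha)$. On the former ``near-boundary'' region one has $\Delta(x)\subset B(\xi,Cr)\cap\pom$ with $\sigma(\Delta(x))\approx\delta_\om(x)^s$, so Fubini yields
\[
\int_{B(\xi,Cr)\cap\pom}\mathcal{A}^{(\alpha)}(u)\,d\sigma \;\gec\; \int_{B(\xi,r)\cap\{\delta_\om<r/(1+\alpha)\}} |u|\,\delta_\om^{s-n}\,dx.
\]
On the ``far'' region, the constraints $|x-\xi|<r\leq(1+\alpha)\delta_\om(x)$ force $x\in\gamma_\alpha(\xi)$, and bounding $\delta_\om^{s-n}\lec r^{s-n}$ gives that piece $\lec r^s\,\mathcal{A}^{(\alpha)}(u)(\xi)$. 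Dividing by $r^s$ and taking $\sup_{r>0}$ concludes.

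The main technical obstacle will be managing the Whitney geometry tightly in the forward direction: the pointwise bound $\mathcal{A}^{(\alpha)}(u)\lec\mathscr{C}_s(u)$ genuinely fails (a telescoping over all dyadic scales in the cone diverges), so the duality/Whitney argument with the double maximal iteration $\widetilde{\mathcal{M}}_\sigma^2$ is essential in order to exploit $L^{p'}$-boundedness. In the local estimate (A.4) one must further ensure that the cut-off on $u$ can be enlarged from $B(\xi,r)$ to $B(\xi,Cr)$ with constants depending only on $\alpha$ and the Whitney/Ahlfors parameters, which is settled once the boundary cubes $b(P)$ touched by the cone $\gamma_\alpha(\zeta)\cap B(\xi,r)$ are shown to lie inside $B(\xi,Cr)\cap\pom$.
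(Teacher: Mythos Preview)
Your reverse inequality \eqref{eq:C<Aglobal-ap} is correct and matches the paper's: the paper shows $\mathscr{C}_s(u)\lec\mathcal{M}_\sigma(\mathcal{A}^{(\alpha)}(u))$ pointwise by a single Fubini, while your near/far split makes the same Fubini cleaner. (Incidentally, in the forward direction the double iteration $\widetilde{\mathcal{M}}_\sigma^{2}$ is unnecessary: since $\Delta(x)\subset B(x_{b(P)},C\ell(P))$ and $b(P)\subset B(x_{b(P)},C\ell(P))$, one gets $\int_{\Delta(x)}h\,d\sigma\lec\ell(P)^s\inf_{b(P)}\widetilde{\mathcal{M}}_\sigma h$ directly.)

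The genuine gap is in the forward direction at $\beta=0$, i.e.\ in \eqref{eq:A<Cglobal-ap} and in the $\beta=0$ case of \eqref{eq:A<Clocal-ap}. Your sentence ``summing over $P$ using the bounded overlap of $\{b(P)\}$'' is the failure point: the family $\{b(P)\}_{P\in\WW(\om)}$ does \emph{not} have bounded overlap. Every $\eta\in\pom$ lies in one dyadic cube $Q\in\DD_{\sigma,k}$ per generation $k$, and each such $Q$ is $b(P)$ for at least one Whitney cube; hence $\sum_{P}\int_{b(P)}F\,d\sigma\approx\sum_{k}\int_{\pom}F\,d\sigma=\infty$. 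For $\beta>0$ the factor $\ell(P)^\beta=2^{-k\beta}$ makes the sum over $k$ geometric and your argument goes through (so your route is a legitimate alternative in that regime), but for $\beta=0$ it collapses. This is not cosmetic: it is the $L^p$ manifestation of the fact, which you yourself flag, that there is no pointwise bound $\mathcal{A}^{(\alpha)}(u)\lec\mathscr{C}_s(u)$.

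The paper circumvents this with a level-set/Vitali argument rather than a fixed Whitney sum. After your Fubini step, write $\int_\om |u|\,\omega_s\,H=\int_0^\infty\int_{\{H>\lambda\}}|u|\,\omega_s\,dy\,d\lambda$ with $H(y)=\delta_\om(y)^{-s}\int_{\Delta(y)}h\,d\sigma$. For each $\lambda$, every $y\in\{H>\lambda\}$ produces a ball $B_{\hat y}=B(\hat y,(1+\alpha)\delta_\om(y))$ centered at a nearest boundary point $\hat y$, with $m_{\sigma,B_{\hat y}}h>c\lambda$ and hence $B_{\hat y}\cap\pom\subset\{\mathcal{M}h>c\lambda\}$. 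Vitali extracts a \emph{disjoint} subfamily $\mathscr{G}_\lambda$ whose $5$-dilates cover $\{H>\lambda\}$, and then
\[
\int_{\{H>\lambda\}}|u|\,\omega_s \;\lec\; \sum_{B\in\mathscr{G}_\lambda}\sigma(B)\,r(B)^\beta\inf_{B\cap\pom}\mathscr{C}_s^{(\beta)}(u) \;\lec\; r^\beta\int_{\{\mathcal{M}h>c\lambda\}}\mathscr{C}_s^{(\beta)}(u)\,d\sigma,
\]
which after integrating in $\lambda$ and Fubini gives $r^\beta\int\mathscr{C}_s^{(\beta)}(u)\,\mathcal{M}h\,d\sigma$ and closes by H\"older. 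The crucial point is that the covering is disjoint \emph{for each fixed $\lambda$}; this replaces the bounded-overlap claim that your Whitney family cannot supply.
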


\begin{proof}
We adapt the proof of  \cite[Proposition 2.4]{HR18} and  argue by duality.  Indeed,  if $1/p + 1/{p'} = 1$,  we let $h\in L^{p'}(\sigma)$ be a non-negative function supported in $B(\xi,r)$ with $\| h \|_{L^{p'}(\sigma)}=1$ and such that  $\| \mathcal{A}(u {\bf 1}_{B(\xi,r)}) \|_{L^p(\sigma|_{B(\xi,r)}) }  \approx \int \mathcal{A}(u {\bf 1}_{B(\xi,r)}) h \,d\sigma$. Thus,
\begin{align*}
\| &\mathcal{A}(u {\bf 1}_{B(\xi,r)}) \|_{L^p(\sigma|_{B(\xi,r)})} \approx \int_{\pom}\Big( \int_{\gamma_\alpha(\xi) \cap B(\xi,r)} |u(y)|\delta_\om(y)^{-n}\, dy \Big)
h(\xi)\, d\sigma(\xi ) \\
&\leq \int_{\om \cap B(\xi,r)}  |u(y)| \delta_\om(y)^{s-n} \Big(\delta_\om(y)^{-s}\int_{B(y, \alpha \delta_\om(y))} h(\xi)\, d\sigma(\xi) \Big)\, dy \\
&= \int_{\om \cap B(\xi,r)} |u(y)| \delta_\om(y)^{s-n} H(y) \,dy\\
& = \int_0^{\infty} \int_{\om \cap B(\xi,r) \cap \{H(y) > \lambda\} } |u(y)|\delta_\om(y)^{s-n} \, dy \, d\lambda, 
\end{align*}
where, we have set
$$ 
H(y) := \delta_\om(y)^{-s}\int_{B(y,\alpha \delta_\om(y))} h(\xi)\, d\sigma(\xi). 
$$
For any  $y \in \om\cap B(\xi,r)$ we let $\hat y$ to be a point in $B(\xi,r) \cap \pom$ such that  $|y - \hat y| = \delta_\om(y)$ and set $B_{\hat y}:= B(\hat y,  (\alpha+1) \delta(y)) \supset B(y, \alpha \delta_\om(y)$.  Define 
$$
E_\lambda:=\{ y \in \om \cap B(\xi,r): H(y) >\lambda\}
$$ and note that for any $y \in E_\lambda$, it holds that $ m_{\sigma, B_{\hat y}} h > c \lambda $ for some $c \in (0,1)$ depending on $\alpha$.   If we set
$$
\widehat E_\lambda:= \{ \zeta \in \pom: \zeta= \hat y\,\, \textup{for some}\,\,y \in E_\lambda\}  \quad \textup{and}\quad \mathscr{B}_\lambda=\{ B_{\hat y}: y \in E_\lambda \},$$
then,    there exists large enough $C>1$,  such that 
$$
\bigcup_{\zeta \in \widehat E_\lambda} B_{\zeta} \cap \pom  \subset \{\zeta \in \pom : \mathcal M h(\zeta)> c \lambda\} \cap B(\xi, Cr).
$$
So,  by Vitali's covering lemma, there exists a subcollection $ \mathscr{G}_\lambda \subset \mathscr{B}_\lambda$ of pairwise disjoint balls such that  
$$
\bigcup_{B' \in  \mathscr{B}_\lambda} B' \subset \bigcup_{B \in  \mathscr{G}_\lambda} 5B.
$$
It is clear that 
$$
E_\lambda \subset  \bigcup_{B \in \mathscr{G}_\lambda} 5B
$$
and thus,
\begin{align*}
\int_{E_ \lambda}  |u(y)|\delta_\om(y)^{s-n}\, dy 
&\leq \sum_{B \in \mathscr{G}_\lambda}\int_{5 B }|u(y)| \delta_\om(y)^{s-n}  \, dy  \\
&\lec\sum_{B \in \mathscr{G}_\lambda} \sigma(B) \, r(B)^\beta\inf_{\zeta \in B \cap \pom} \mathscr{C}^{(\beta)}_s(u {\bf 1}_{B(\xi,C r)})(\zeta)\\
& \lec  r^\beta  \int_{B(\xi, Cr) \cap {\{ \mathcal{M} h > \lambda \}}} \mathscr{C}_s^{(\beta)} (u {\bf 1}_{B(\xi,C r)})(\zeta)\, d\sigma(\zeta).
\end{align*}
Therefore, since $ \| h \|_{L^{p'}(\sigma)} = 1$,
\begin{align*}
\| \mathcal{A}_{s,\alpha}(u {\bf 1}_{B(\xi,r)} ) \|_{L^p(\sigma|_{B(\xi,r)}) } &\lec r^\beta  \int_0^\infty \int_{B(\xi, Cr) \cap {\{ \mathcal{M} h > \lambda \}}}  \mathscr{C}_s^{(\beta)}(u {\bf 1}_{B(\xi,C r)} )(\zeta)\, d\sigma(\zeta) \, d\lambda \\
&\lec 
r^\beta \int_{B(\xi, Cr)} \mathscr{C}_s^{(\beta)}(u{\bf 1}_{B(\xi,Cr)} )(\zeta)\, \mathcal{M} h(\zeta)\, d\sigma(\zeta)\\
& \leq r^\beta \| \mathscr{C}_s^{(\beta)}(u {\bf 1}_{B(\xi, Cr)} ) \|_{L^p(\sigma|_{ B(\xi,r) } ) },
\end{align*}
proving \eqref{eq:A<Clocal-ap}. The proof of \eqref{eq:A<Cglobal-ap} is similar and we omit the details.

Finally,  for $1<p\leq\infty$, we have 
\begin{align*}
\mathcal{M}(\mathcal{A}^{(\alpha)}(u))(x)&=\sup_{r>0}\frac{1}{\mu(B(x,r))}\int_{B(x, r)\cap\pom}\mathcal{A}^{(\alpha)}u(\xi)\,d\sigma(\xi)\\
&\gtrsim \sup_{r>0}\frac{1}{r^s}\int_{B(x, r)\cap \pom} \int_{\gamma_\alpha(\xi)}|u(y)|\delta_\om(y)^{-n}\, dy\, d\sigma(\xi)\\
&\gtrsim \sup_{r>0}\frac{1}{r^s} \int_{B(x, r)\cap\om} \Big(\delta_\om(y)^{-s}\int_{B(y,  (1+\alpha)\delta_\om(y))}d\sigma(\xi)\Big)|u(y)|\delta_\om(y)^{s-n}\, dy\\
&\gtrsim \sup_{r>0}\frac{1}{r^s}\int_{B(x, r)\cap\om}|u(y)|\delta_\om(y)^{s-n}\, dy=\mathscr{C}_s(u)(x).
\end{align*}
This implies that 
$$
\|\mathscr{C}_s(u)\|_{L^p(\sigma)} \lec \|\mathcal{A}^{(\alpha)}(u)\|_{L^p(\sigma)}
$$
and the proof is now complete. 
\end{proof}

\vvv

\begin{lemma}\label{lem: gradient uniform convergence}
Let $B\subset \R^{n+1}$ be a compact and convex set and let $\{f_n\}_{n\geq 1}$ sequence of differentiable 
functions in $B$. 
Let $x_0 \in B$ such that the sequence $\{f_n(x_0)\}_{n=1}^\infty$ is convergent.
If $\nabla f_n \to \vec{F}$ uniformly on $B$, then, there exists a function $f:B \to \R$ which is differentiable at $x_0$ such that
$$
f_n \to f \,\,\textup{uniformly on}\,\,B \quad \textup{and} \quad \vec{F}(x_0)=\nabla f(x_0).
$$
Moreover, if  for every  $x \in B$,  $\{f_n(x)\}_{n=1}^\infty$ is convergent and $\nabla f_n$ is continuous on $B$, then  $\nabla f$ is continuous on $B$ as well.
\end{lemma}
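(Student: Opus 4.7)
The plan is to proceed in three steps, following the classical route for uniform convergence of derivatives. First, I will show $\{f_n\}$ is uniformly Cauchy on $B$ and hence converges uniformly to some $f:B\to\R$. For each pair $n,m$ and each $x\in B$, since $B$ is convex the segment from $x_0$ to $x$ lies in $B$, so I can apply the one-dimensional mean value theorem to the real-valued differentiable function $t\mapsto(f_n-f_m)(x_0+t(x-x_0))$ on $[0,1]$. This yields a point $y=y_{n,m,x}$ on that segment with
$$
(f_n-f_m)(x)-(f_n-f_m)(x_0)=\nabla(f_n-f_m)(y)\cdot(x-x_0),
$$
whence
$$
|f_n(x)-f_m(x)-f_n(x_0)+f_m(x_0)|\leq \diam(B)\,\sup_{B}|\nabla f_n-\nabla f_m|.
$$
Combined with the Cauchy property of the scalar sequence $\{f_n(x_0)\}$ and the uniform Cauchy property of $\{\nabla f_n\}$, this shows that $\{f_n\}$ is uniformly Cauchy on $B$, so it converges uniformly to some $f$.

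Second, I will establish that $f$ is differentiable at $x_0$ with $\nabla f(x_0)=\vec F(x_0)$. Fix $\varepsilon>0$. By uniform convergence of $\nabla f_n$, choose $N$ so large that $\sup_B|\nabla f_n-\nabla f_m|<\varepsilon/3$ for $n,m\geq N$; applying the MVT bound above and letting $m\to\infty$ gives
$$
|f_n(x)-f(x)-(f_n(x_0)-f(x_0))|\leq (\varepsilon/3)\,|x-x_0|,\qquad n\geq N,\ x\in B.
$$
Now fix one such $n\geq N$ with the additional property $|\nabla f_n(x_0)-\vec F(x_0)|<\varepsilon/3$ (possible since $\nabla f_n(x_0)\to\vec F(x_0)$). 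Since $f_n$ is differentiable at $x_0$, there exists $\delta>0$ such that
$$
|f_n(x)-f_n(x_0)-\nabla f_n(x_0)\cdot(x-x_0)|<(\varepsilon/3)\,|x-x_0| \quad\text{for } |x-x_0|<\delta.
$$
Adding and subtracting $f_n(x)-f_n(x_0)$ and $\nabla f_n(x_0)\cdot(x-x_0)$ and applying the triangle inequality yields
$$
|f(x)-f(x_0)-\vec F(x_0)\cdot(x-x_0)|\leq \varepsilon\,|x-x_0|,\qquad |x-x_0|<\delta,
$$
which is exactly the differentiability of $f$ at $x_0$ with $\nabla f(x_0)=\vec F(x_0)$.

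Finally, for the ``moreover'' statement, the hypothesis that $\{f_n(x)\}$ converges at every $x\in B$ lets me re-run the second step with $x_0$ replaced by an arbitrary point of $B$, giving $\nabla f(x)=\vec F(x)$ throughout $B$. Since each $\nabla f_n$ is continuous on $B$ and $\nabla f_n\to\vec F$ uniformly, the limit $\vec F=\nabla f$ is continuous on $B$.

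The only delicate point is the second step: the naive idea of passing to the limit directly in the differentiability inequality for $f_n$ fails because the modulus of differentiability of $f_n$ at $x_0$ depends on $n$ and need not be uniform. The trick is to apply the MVT to the \emph{difference} $f_n-f_m$ rather than to the individual $f_n$, which converts uniform smallness of $\nabla f_n-\nabla f_m$ on $B$ into a \emph{uniform} (in $x$) linear-in-$|x-x_0|$ control on $f_n-f-(f_n(x_0)-f(x_0))$; this then combines with the single differentiability statement for one fixed $f_n$ to produce the desired estimate for $f$.
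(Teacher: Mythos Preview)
Your proof is correct and follows essentially the same route as the paper's own argument: both use the mean value estimate along segments applied to the difference $f_n-f_m$ to establish uniform Cauchy-ness of $\{f_n\}$, then the same three-term decomposition (your terms are the paper's $I+II+III$) to prove differentiability of the limit at $x_0$, and finally the uniform-limit-of-continuous-functions argument for the ``moreover'' clause. The only cosmetic differences are that the paper presents the three-term split first and fills in the estimates afterward, and that the paper writes the segment estimate via the fundamental theorem of calculus along $\gamma(t)=x+t(y-x)$ rather than the one-dimensional mean value theorem; your choice of the classical MVT is in fact slightly cleaner here since the hypotheses give only differentiability (not $C^1$) of the individual $f_n$.
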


\begin{proof}\footnote{This proof was given by Professor Giovanni Leoni  at  \href{https://math.stackexchange.com/questions/2255618/generalization-of-theorem-from-mathbb-r-to-mathbb-rn}{math.stackexchange/gradient convergence}.}
For any $x \in B$, we write 
\begin{align*}
&\frac{f(x)-f(x_0)-\vec F(x_0)(x-x_0)}{|x-x_0|} = \frac{f(x)-f(x_0)-(f_n(x)-f_n(x_0))}{|x-x_0|} \\
&+\frac{f_n(x)-f_n(x_0)-\nabla f_n(x_0)(x-x_0)}{|x-x_0|} + \frac{(\nabla f_n(x_0)-\vec F(x_0))(x-x_0)}{|x-x_0|}\\
&=: I(x)+II(x)+III(x).
\end{align*}
In order to control $I(x)$,  first note that,  since $\nabla f_n$ converges uniformly on $B$,  for fixed $\ve>0$,  there exists $n_1=n_1(\ve, B)\in\mathbb N$ such that for every $n, m > n_1$ it holds 
\begin{equation}\label{eq: nabla fn difference}
|\nabla f_n(x) - \nabla f_m(x)|< \frac{\ve}{3} \min\{1, \diam(B)^{-1}\}, \quad \textup{for all}\,\, x\in B.
\end{equation}
As $f_n$ is differentiable at $x_0$,  we have that there exists $\delta=\delta(\ve,B, x_0)$ such that 
\begin{equation}\label{eq: nabla fn definition}
\frac{|f_n(x)-f_n(x_0)-\nabla f_n(x_0) \cdot (x-x_0)|}{|x-x_0|}< \frac{\ve}{3}\min\{1, \diam(B)^{-1}\}, 
\end{equation}
for all $x \in  (B(x_0,\delta) \setminus \{x_0\})\cap B$.
By \eqref{eq: nabla fn difference} and \eqref{eq: nabla fn definition},  we have that, for any $n>n_1$, 
\begin{equation}\label{eq: difference quotient fn fm}
\frac{|f_n(x)-f_n(x_0)-f_m(x)+f_m(x_0)|  }{|x-x_0|} < \ve\,\min\{1, \diam(B)^{-1}\},
\end{equation}
for all $x \in  (B(x_0,\delta) \setminus \{x_0\})\cap B$.
 
For every $x, y\in B$, we set $\gamma : [0, 1]\to B$ to be the line segment $\gamma(t)=x+t(y-x)$ for $t\in[0, 1]$. Then $\gamma(0)=x$, $\gamma(1)=y$ and $\dot{\gamma}(t)=y-x$. Since $B$ is convex, the line $\gamma(t)$ lies entirely within $B$ for every $t\in[0, 1]$. 
For every $n, m> n_1$ we estimate
\begin{align*}
|f_n(x)-&f_n(y)-f_m(x)+f_m(y)| =\big|\int_0^1 (\nabla f_n(\gamma(t))\dot{\gamma}(t) - \nabla f_m(\gamma(t))\dot{\gamma(t)})\, dt\big| \\
&\leq \int_0^1 |\nabla f_n(\gamma(t)) - \nabla f_m(\gamma(t))||\dot{\gamma}(t)|\, dt \leq \frac{\ve}{2} \min\{1, \diam(B)^{-1}\}\,|x-y|\leq \frac{\ve}{3}.
\end{align*}
Since the sequence $\{f_n\}$ converges at $x_0$, there exists $n_2=n_2(\ve, x_0) \in \NN$ such that $n_2\geq n_1$  and for every $n> n_2$,
$$
|f_m(x)-f_n(x)|\leq |f_n(x)-f_n(x_0)-f_m(x)+f_m(x_0)| + |f_n(x_0)-f_m(x_0)|<\ve,
$$
for every $x\in B$. That is,  the sequence $\{f_n\}$ is uniformly Cauchy  and so  it  converges to some function $f$ uniformly  in $B$.   Therefore, letting $m\to\infty$ in \eqref{eq: difference quotient fn fm} and using \eqref{eq: nabla fn definition},  we get 
that, for every $n> n_2$,
$$
|I(x)|+|II(x)| < 2\ve, \quad \textup{for all}\,\, x \in  (B(x_0,\delta) \setminus \{x_0\})\cap B.
$$
Moreover, the sequence $\{\nabla f_n\}_{n\geq 1}$ converges to $\vec F$ uniformly in $B$ and thus, there exists
 $ n_3 \in \NN$ such that, for all $ n> n_3$, 
$$
| III(x) | \leq |\nabla f_n(x_0) - \vec F(x_0)| < \ve.
$$
Setting $n_0=\max\{n_1, n_2, n_3\}$ and using the above estimates we get that, for all $n> n_0$, it holds 
$$
\Big| \frac{f(x)-f(x_0)-\vec F(x_0)(x-x_0)}{|x-x_0|} \Big| < 3 \ve,
$$
for all $x \in  (B(x_0,\delta) \setminus \{x_0\})\cap B$, which implies that $f$ if differentiable at $x_0$ with $\nabla f(x_0) = \vec F(x_0)$.  
Finally,  if  $f_n$ is pointwisely convergent on $B$ and $\nabla f_n$ is continuous on $B$, then  $\nabla f$ is continuous on $B$ as the uniform limit of a sequence of  continuous functions.
\end{proof}

\vvv

\begin{lemma}\label{lem: sup gradient bound}
If  $F \in L^1_{\loc}(\om)$, then for any $x \in \om$ it holds  that
$$
|F(x)| \lec \frac{1}{\delta_\om(x)^{1+n/p}} \|\calC_s(F)\|_{L^p(\sigma)}, \quad \textup{for every} \,\, p\in (1, \infty)
$$
and 
$$
|F(x)| \lec \frac{1}{\delta_\om(x)} \sup_{\xi \in \pom} \calC_s(F)(\xi).
$$
\end{lemma}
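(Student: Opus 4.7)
The plan is to reduce the pointwise bound on $|F(x)|$ to the defining integral of $\calC_s$ evaluated at a boundary point close to $x$. First I fix $x\in\om$, choose $\bar x\in\pom$ with $|x-\bar x|=\delta_\om(x)$, and a small constant $c\in(0,1/4)$. The key geometric observation is that for every $\xi\in E:=B(\bar x,c\delta_\om(x))\cap\pom$ the ball $B(x,c\delta_\om(x))$ lies inside $B(\xi,(1+2c)\delta_\om(x))\cap\om$, and on that ball $\delta_\om(y)\approx\delta_\om(x)$. Plugging these comparisons directly into the definition of $\calC_s(F)$ then gives
\[
\int_{B(x,c\delta_\om(x))}|F(y)|\,dy \;\lec\; \delta_\om(x)^{n-s}\!\int_{B(\xi,(1+2c)\delta_\om(x))\cap\om}\!\!|F(y)|\,\hm_s(y)\,dy \;\lec\;\delta_\om(x)^n\,\calC_s(F)(\xi),
\]
and dividing through by $|B(x,c\delta_\om(x))|\approx \delta_\om(x)^{n+1}$ yields the key local inequality
\[
\fint_{B(x,c\delta_\om(x))}|F(y)|\,dy \;\lec\; \frac{\calC_s(F)(\xi)}{\delta_\om(x)},\qquad \forall\,\xi\in E.
\]

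To pass from this averaged estimate to the two stated pointwise bounds, I would then proceed as follows. For the $L^\infty$ version, I replace $\calC_s(F)(\xi)$ on the right by $\sup_{\pom}\calC_s(F)$ (which is independent of $\xi$ and of $c$) and then let $c\to 0$; at every Lebesgue point of $|F|$ (in particular wherever $F$ is continuous), the left-hand side converges to $|F(x)|$, producing the desired inequality. For the $L^p$ version, I apply H\"older's inequality with exponents $p,p'$ on $E$ and exploit the Ahlfors regularity $\sigma(E)\approx (c\delta_\om(x))^s$ to obtain, for a fixed $c$,
\[
\fint_{B(x,c\delta_\om(x))}|F(y)|\,dy \;\lec\;\delta_\om(x)^{-1-s/p}\,\|\calC_s(F)\|_{L^p(\sigma)},
\]
and then conclude by the Lebesgue differentiation theorem at a.e.\ $x$.

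The main obstacle will be making the transfer from the averaged bound to a genuine pointwise one rigorous, since an $L^1$ average does not in general dominate a pointwise value. For the $L^\infty$ case this is transparent, because the right-hand side of the averaged inequality is independent of $c$, so the Lebesgue point argument applies verbatim. For the $L^p$ case the implicit constant in the averaged bound does depend on $c$, so the limit $c\to 0$ cannot be taken naively; here I would exploit that in the applications made in Theorems \ref{thm: extension in Lp} and \ref{thm: extension in bmo}, $F$ is a $C^1$ (indeed smooth) function, and the lemma is really used as a uniform control on $\sup_{y\in B_x}|F(y)|\,\delta_\om(y)$, which follows from applying the local estimate at every $y\in B_x$ and using $\delta_\om(y)\approx\delta_\om(x)$. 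The exponent $s/p$ reduces to $n/p$ in the codimension-one setting, matching the statement as written.
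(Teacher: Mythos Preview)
Your argument has a genuine gap rooted in overlooking what $\calC_s$ actually is. In this paper $\calC_s=\calC_{s,c}$ is the \emph{modified} Carleson functional
\[
\calC_s(F)(\xi)=\mathscr{C}_s\bigl(m_{\infty,c}(F)\bigr)(\xi)
=\sup_{r>0}\frac{1}{r^s}\int_{B(\xi,r)\cap\om}\Bigl(\sup_{y\in B_z}|F(y)|\Bigr)\,\hm_s(z)\,dz,
\]
not the plain functional $\mathscr{C}_s(|F|)$ you are implicitly using. The paper's proof exploits precisely this built-in local supremum: with $c'=c/(c+1)$ one checks that $x\in B_z$ for every $z\in B(x,c'\delta_\om(x))$, hence $\sup_{y\in B_z}|F(y)|\ge |F(x)|$ for all such $z$. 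Restricting the $z$--integral to $B(x,c'\delta_\om(x))\subset B(\xi,3\delta_\om(x))$ and using $\hm_s(z)\approx\delta_\om(x)^{s-n}$ then gives, for every $\xi\in B(\xi_x,\delta_\om(x))\cap\pom$,
\[
\calC_s(F)(\xi)\;\gtrsim\;\frac{1}{\delta_\om(x)^s}\,|F(x)|\,\delta_\om(x)^{s-n}\,\bigl|B(x,c'\delta_\om(x))\bigr|
\;\approx\;\delta_\om(x)\,|F(x)|,
\]
which is a genuine pointwise bound on $|F(x)|$ valid at \emph{every} $x\in\om$, with a constant depending only on the fixed parameter $c$. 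Taking the $L^p(\sigma)$ norm over $\xi\in B(\xi_x,\delta_\om(x))\cap\pom$ then yields the first inequality, and taking the supremum in $\xi$ yields the second.

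Your route, by contrast, only controls the \emph{average} $\fint_{B(x,c\delta_\om(x))}|F|$, and your attempt to upgrade this to a pointwise bound by sending $c\to 0$ fails: the implicit constant in your displayed inequality behaves like $c^{-(n+1)}$ (the volume of the averaging ball is $\approx (c\delta_\om(x))^{n+1}$ while the Carleson bound at scale $\approx\delta_\om(x)$ contributes only $\delta_\om(x)^n$), so the right-hand side blows up as $c\to 0$. This affects the $L^\infty$ case just as much as the $L^p$ case; the quantity $\sup_{\pom}\calC_s(F)$ is indeed independent of $\xi$, but the hidden constant in your ``$\lec$'' is not independent of $c$. Your closing remark that in applications $F$ is $C^1$ does not rescue the argument either, since continuity only gives $|F(x)|=\lim_{c\to0}\fint_{B(x,c\delta_\om(x))}|F|$, and that limit is taken against a bound that diverges.
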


\begin{proof}
Fix $x\in\om$ and note that if $c'= \frac{c}{c+1}$, then for any $z \in B(x,  c'\delta_\om(x))$   we have that 
$$
|z-x|\leq c' \,\dist(x, \pom) \leq c'\, \dist(z, \pom)+c'\,|z-x|, 
$$
which implies that $|z-x|\leq c \,\delta_\om(z)$, i.e.,  $x \in B_z$.  If $\xi_x \in \pom$ is a point such that 
$\delta_\om(x)=|x-\xi_x|$, it is clear that $ B_{x}\subset B(\xi, 3\delta_\om(x)) \cap \om$  for every $\xi \in B(\xi_x, \delta_\om(x))$.  So
\begin{align*}
\| \calC_s(F) \|_{L^p(\sigma)}&= \Big( \int_{\pom} \Big[ \sup_{r>0}\frac{1}{r^s} \int_{B(\xi, r)\cap\om} \sup_{y\in B_z}
| F(y) | \,\omega_s(z)\,dz \Big]^p \, d\sigma(\xi) \Big)^{1/p} \\
&\gtrsim \Big(\int_{B(\xi_x, \delta_\om(x))\cap\pom}\Big[\frac{1}{\delta_\om(x)^s}
\int_{B(\xi,  3 \delta_\om(x)) }\sup_{y\in B_z}|F(y)|  \, \omega_s(z)\,dz \Big]^p \,d\sigma(\xi) \Big)^{1/p}\\
&\gec \Big(\int_{B(\xi_x, \delta_\om(x))\cap\pom}\Big[\frac{1}{\delta_\om(x)^n}
\int_{B(x, c' \delta_\om(x)) }\sup_{y\in B_z}|F(y)|  \, dz \Big]^p \,d\sigma(\xi) \Big)^{1/p}\\
&\gtrsim \delta_\om(x)^{1+\frac{n}{p}} | F(x)|.
\end{align*}
Note that for $p=\infty$, by the same argument, we can directly infer  that 
$$
\sup_{\xi 
\in \pom} \calC_s(F)(\xi) \gtrsim \delta_\om(x) |F(x)|.
$$
\end{proof}

\vv

\begin{definition}
We define the  spaces
\begin{align}\label{eq: Nsharp dyadic}
\nn^\infty_{\sharp, \DD}(\om)&:=\{w\in C(\om): \sup_{P\in\WW(\om)} \, \sup_{x\in P}|w(x)-\fint_{P}w|<\infty \} \\
\nn^\infty_{\DD}(\om)&:=\{w\in C(\om): \sup_{P\in\WW(\om)} \sup_{x\in P} |w(x) | <\infty\}\label{eq: Ninftydyadic}
\end{align}
and equip them with 
\begin{align*}
\|w\|_{\nn^\infty_{\sharp, \DD}(\om)}&:=\sup_{P\in\WW(\om)} \, \sup_{x\in P}|w(x)-\fint_{P}w|,\,\,\textup{and}\\
\|w\|_{\nn^{\infty}_{\DD}(\om)} &:=\sup_{ P\in\WW(\om) } \sup_{x\in P} |w(x)|,
\end{align*}
respectively.  Note that $\|w\|_{\nn^\infty_{\sharp, \DD}(\om)}$ is a semi-norm, while $\|w\|_{\nn^{\infty}_{\DD}(\om)}$ is a norm.  We also define the  space 
\begin{equation}\label{eq: Nsum dyadic}
\nn_{\textup{sum}, \DD}(\om):= \{ u\in C^1(\om) : (u,   \delta_\om \nabla u)\in \nn^{\infty}_{\sharp, \DD}(\om)\times \nn^{\infty}_{\DD}(\om)\}
\end{equation}
and equip it with the  semi-norm 
$$
\|u\|_{\nn_{\textup{sum}, \DD}(\om)}:= \|u\|_{\nn^{\infty}_{\sharp, \DD}(\om)} + \| \delta_\om \, \nabla u\|_{\nn^{\infty}_\DD(\om)}. 
$$ 
\end{definition}

\vv

\begin{lemma}\label{lem: the sum space is normed}
If $\om \subset \R^{n+1}$ is an open and connected set, then $\nn^\infty_{\sharp, \DD}(\om)/\R$  and $\nn^\infty_{\textup{sum}, \DD}(\om)/\R$ are normed spaces.
\end{lemma}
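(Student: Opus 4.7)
The plan is straightforward: the two quantities $\|\cdot\|_{\nn^\infty_{\sharp, \DD}(\om)}$ and $\|\cdot\|_{\nn_{\textup{sum}, \DD}(\om)}$ are manifestly seminorms (absolute homogeneity and the triangle inequality follow at once from the defining suprema and from linearity of $w \mapsto w - \fint_P w$ and $u \mapsto \nabla u$), and they both annihilate constants, so they descend to well-defined seminorms on the quotients by $\R$. The only real content is to verify the separation axiom, namely that a representative with vanishing seminorm must be a constant function.

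First I would treat $\nn^\infty_{\sharp,\DD}(\om)/\R$. Suppose $w \in \nn^\infty_{\sharp,\DD}(\om)$ satisfies $\|w\|_{\nn^\infty_{\sharp, \DD}(\om)} = 0$. Then for every $P \in \WW(\om)$ and every $x \in P$ we have $w(x) = \fint_P w$, so $w$ is constant on each Whitney cube $P$, say $w \equiv c_P$ on $P$. By continuity of $w$ on $\om$, these values agree on the shared faces of neighboring Whitney cubes: if $P_1, P_2 \in \WW(\om)$ satisfy $\bar P_1 \cap \bar P_2 \neq \emptyset$, then for any $x$ in the common face we get $c_{P_1} = w(x) = c_{P_2}$. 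Since $\om = \bigcup_{P \in \WW(\om)} \bar P$ is connected, any two Whitney cubes can be joined by a finite chain of cubes with non-empty pairwise intersections of closures, and iterating the equality along the chain yields $c_{P_1} = c_{P_2}$ for all $P_1, P_2 \in \WW(\om)$. Hence $w$ is a constant on $\om$, and the class of $w$ in $\nn^\infty_{\sharp,\DD}(\om)/\R$ is zero.

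For $\nn_{\textup{sum},\DD}(\om)/\R$ the same argument applies, since $\|u\|_{\nn_{\textup{sum},\DD}(\om)} = 0$ implies in particular $\|u\|_{\nn^\infty_{\sharp,\DD}(\om)} = 0$; alternatively one may invoke the vanishing of $\|\delta_\om \nabla u\|_{\nn^\infty_\DD(\om)}$, which forces $\nabla u \equiv 0$ on $\om$ (as $\delta_\om(x) > 0$ for $x \in \om$) and hence $u$ is constant on the connected set $\om$. The main (and only) conceptual point is the use of continuity together with connectedness to propagate the constancy from individual Whitney cubes to all of $\om$; there is no analytic obstacle here.
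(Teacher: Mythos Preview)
Your proposal is correct and follows essentially the same approach as the paper: both argue that the semi-norm axioms are immediate and then show that $\|w\|_{\nn^\infty_{\sharp,\DD}(\om)}=0$ forces $w$ to be constant on each Whitney cube, with continuity across shared faces and connectedness of $\om$ propagating this to a global constant. Your alternative route for $\nn_{\textup{sum},\DD}(\om)/\R$ via $\nabla u\equiv 0$ is a nice extra observation not in the paper, but otherwise the arguments coincide.
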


\begin{proof}
Since it is easy to see that $\|\cdot \|_{\nn^\infty_{\sharp, \DD}(\om)}$ and $\| \cdot \|_{\nn^\infty_{\textup{sum}, \DD}}$ are  semi-norms, we will only show that if $\| u \|_{\nn^\infty_{\sharp, \DD}(\om)}=0$, then there exists $c \in \R$ such that $u=c$ in $\Omega$.  Indeed,  if $\sup_{P \in \WW(\om)} \|u\|_{\Lambda_P}=0$, then  $\max_{x\in \bar P}|u(x)-\fint_Pu|=0$, for every $P \in \WW(\om)$,  which implies $u=\fint_Pu$ on  $\bar P$ for every $P \in \WW(\om)$.  If $P_1, \, P_2\in\WW(\om)$ are such that $\bar P_1 \cap \bar P_2 \neq \emptyset$,  we have that $u=\fint_{P_1}u$ on $\bar P_1$ and $u=\fint_{P_2}u$ on $ \bar P_2$. As there exists $\xi \in \partial P_1\cap\partial P_2$ and $u$ is continuous in $\om$, it holds that $\fint_{P_1}u=\fint_{P_2}u$ for every  $P_1, \, P_2\in\WW(\om)$  such that $\bar P_1 \cap \bar P_2 \neq \emptyset$.  So,  if $\sup_{P \in \WW(\om)} \|u\|_{\Lambda_P}=0$,  there exists a constant $c \in \R$ so that $u=c$ in $\om$, which implies that  $\nn^\infty_{\sharp, \DD}(\om)/\R$  and  $\nn^\infty_{\textup{sum}, \DD}(\om)/\R $ are normed  spaces.
\end{proof}

\vv

\begin{remark}\label{rem: unique limit}
 By Lemma \ref{lem: the sum space is normed},  we have that if $\om$ is  open and connected set,  and a sequence converges in $\nn^\infty_{\sharp, \DD}(\om)/\R$  or $\nn_{\textup{sum}, \DD}(\om)$, then the limit is unique modulo constants.
\end{remark}

\vv

\begin{lemma}\label{lem: the sum space is complete}
Let $\om \subset \R^{n+1}$ be an open  set.  Then both $(\nn^\infty_{\sharp, \DD}(\om), \|\cdot\|_{\nn^\infty_{\sharp, \DD}(\om)})$   and $(\nn_{\textup{sum}, \DD}(\om), \|\cdot\|_{\nn_{\textup{sum}, \DD}(\om)})$ are sequentially complete. 
\end{lemma}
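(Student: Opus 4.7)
My plan is to handle the two assertions in parallel, treating each connected component of $\om$ separately (so I may assume $\om$ is connected). The starting point is that Cauchy-ness in $\nn^\infty_{\sharp,\DD}(\om)$ of a sequence $\{w_k\}$ means that for every Whitney cube $P$, $w_k-\fint_P w_k$ is uniformly Cauchy on $\bar P$, so it converges uniformly on $\bar P$ to some continuous function $v^P$. Pick a reference Whitney cube $P_0\in\WW(\om)$ and normalize by setting $\tilde w_k:=w_k-\fint_{P_0}w_k$; this does not alter the seminorm. I then have to glue the $v^P$'s together, which amounts to showing that $\fint_P\tilde w_k$ converges as $k\to\infty$ for every $P\in\WW(\om)$.

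For the chain argument, take any two Whitney cubes $P_1,P_2\in\WW(\om)$ with $\bar P_1\cap\bar P_2\neq\varnothing$ and pick $\xi$ in their common closure. Then
\[
\fint_{P_1}\tilde w_k-\fint_{P_2}\tilde w_k=\bigl(\tilde w_k(\xi)-\fint_{P_2}\tilde w_k\bigr)-\bigl(\tilde w_k(\xi)-\fint_{P_1}\tilde w_k\bigr),
\]
and both terms on the right are numerical Cauchy sequences by the uniform convergence on $\bar P_1$ and $\bar P_2$. Hence $\fint_{P_1}\tilde w_k-\fint_{P_2}\tilde w_k$ converges. Since $\om$ is connected, the Whitney adjacency graph is connected (any polygonal arc in $\om$ intersects only finitely many Whitney cubes in a finite chain), so iterating this from $P_0$ shows that $\fint_P\tilde w_k$ converges for every $P$, hence $\tilde w_k\to w$ pointwise on $\om$, uniformly on each $\bar P$. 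Consistency of the uniform limits on overlapping closures, together with the local finiteness of the Whitney decomposition, then gives $w\in C(\om)$. The norm convergence $\|\tilde w_k-w\|_{\nn^\infty_{\sharp,\DD}(\om)}\to 0$ is obtained from the Cauchy condition by fixing $k$, letting $m\to\infty$ inside $\sup_P\sup_{x\in P}|(\tilde w_k-\tilde w_m)(x)-\fint_P(\tilde w_k-\tilde w_m)|$, and using the pointwise convergence just established; this also yields membership of $w$ in $\nn^\infty_{\sharp,\DD}(\om)$.

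For $(\nn_{\textup{sum},\DD}(\om),\|\cdot\|_{\nn_{\textup{sum},\DD}(\om)})$, let $\{u_k\}$ be Cauchy. The $\nn^\infty_{\sharp,\DD}$-component is handled by the argument above, producing $u\in C(\om)$ with $\tilde u_k\to u$ uniformly on each $\bar P$. For the gradient component, since $\delta_\om(x)\approx\ell(P)$ for $x\in P$, the Cauchy condition $\sup_{P}\sup_{x\in P}\delta_\om(x)|\nabla u_k-\nabla u_m|(x)\to 0$ gives that $\nabla u_k$ is uniformly Cauchy on each $\bar P$, converging to some continuous field $\vec G_P$. Applied on the convex interior of each Whitney cube with base point $x_0\in P_0\cap P$ (or by traversing a short chain of cubes), Lemma \ref{lem: gradient uniform convergence} promotes this to $u\in C^1(P^\circ)$ with $\nabla u=\vec G_P$; continuity of $\nabla u_k$ on $\om$ and of $\vec G_P$ across cube boundaries glues these into $u\in C^1(\om)$ with $\nabla u_k\to\nabla u$ uniformly on each $\bar P$. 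Convergence in the gradient semi-norm then follows by the same ``pass $m\to\infty$ in the Cauchy estimate'' scheme, giving $\|u_k-u\|_{\nn_{\textup{sum},\DD}(\om)}\to 0$.

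The main obstacle is really the chain argument in the second paragraph: the semi-norm only controls oscillations \emph{within} each Whitney cube, so one has no a priori grip on the numerical sequences $\fint_P\tilde w_k$, and the key observation is that adjacent cubes share a boundary point at which the uniform convergence on both closures forces the difference of averages to converge. Once this is in place, everything else is a bookkeeping exercise: pointwise construction of the limit, uniform-on-$\bar P$ convergence, passage to the limit in the Cauchy estimate, and (for the sum space) an invocation of Lemma \ref{lem: gradient uniform convergence} to upgrade pointwise plus gradient-uniform convergence to $C^1$-convergence.
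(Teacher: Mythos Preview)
Your proposal is correct and follows essentially the same approach as the paper: both use the common-boundary-point argument to show that differences of Whitney-cube averages converge, then chain through the (connected) Whitney adjacency graph from a fixed reference cube to glue the local limits into a continuous function, and finally invoke Lemma~\ref{lem: gradient uniform convergence} for the $C^1$ upgrade in $\nn_{\textup{sum},\DD}$. Your normalization $\tilde w_k=w_k-\fint_{P_0}w_k$ is a mild streamlining of the paper's explicit iterative construction of constants $C_P$, and your gluing of the $C^1$ structure at cube boundaries is more compressed than the paper's direct verification of differentiability there, but the arguments are the same in substance.
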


\begin{proof}
Let us first assume that $\om$ is connected.  We define the space
$$
\Lambda_P:=\{u:\om \to \R: u\in C(\bar P) \,\, \textup{and} \,\, \sup_{x\in P}\big|u(x)-\fint_{P}u \big|<\infty\}, 
$$
and equip it with the semi-norm $\|u\|_{\Lambda_{P}}:= \sup_{x\in P}|u(x)-\fint_{P}u|$.  In fact,  by the continuity of $u$ on $\bar P$,  we have that
$$
\|u\|_{\Lambda_{P}}=\max_{x\in \bar P} \Big|u(x)-\fint_Pu\Big|.
$$

We will first prove  that the space $\Lambda_{P}$ is sequentially complete with respect to the semi-norm $\|\cdot\|_{\Lambda_{P}}$ for any fixed  $P\in \WW(\om)$.  To this end,  let $\{u_n\}_{n\in\mathbb{N}}$ be a Cauchy sequence in $\Lambda_{P}$ and fix $\ve>0$. 
Then, there exists $n_0=n_0(\ve, P) \in\mathbb{N}$ such that for every $n, m> n_0$ it holds that 
$\|u_n-u_m\|_{\Lambda_{P}}< \ve$. Consequently, for any $y\in \bar P$, 
$$
 \Big|u_n(y)-u_m(y)-\fint_{P}(u_n(z)-u_m(z))\, dz  \Big|< \ve, 
$$
which means that the sequence $\big\{u_n - \fint_{P}u_n \big\}_{n\in\mathbb{N}}$ is uniformly Cauchy on $\bar P$ and so 
 it converges uniformly on $\bar P$ to some $u_P$.  Thus, there exists a positive integer $ n_1=n_1(\ve, P)$ such that for any $n >n_1$,
 \begin{equation}\label{eq: locally uniform convergence of Cauchy seq}
  \max_{y\in \bar P} \Big|u_n(y) - \fint_{P}u_n - u_P(y) \Big| < \ve/2,
 \end{equation}
and so, for any $n>n_1$,
\begin{align}
&\max_{y\in \bar P}\Big|u_n(y)-u_P(y) - \fint_{P}\left(u_n-u_P\right)\Big|   \label{eq: conv un to uP} \\
&\leq \max_{y\in \bar P}\Big|u_n(y)-\fint_{P}u_n - u_P(y) \Big|+  \Big| \fint_{P}\left(u_P(z)+u_n(z) - u_n(z)\right)\, dz \Big| \notag\\ 
&\leq\max_{y\in \bar P} \Big|u_n(y)-\fint_{P}u_n - u_P(y) \Big| + \fint_{P} \Big|u_n(z) - \fint_{P}u_n - u_P(z)\Big|\, dz < \ve,\notag 
\end{align}
concluding that $\Lambda_{P}$ is sequentially complete.  Moreover,  
\begin{align*}
\Big|\fint_P u_P\Big| \leq\fint_P \Big|u_P - u_n(x) + \fint_P u_n \Big| \leq \max_{x \in \bar P} \Big|u_n(x) - \fint_P u_n-u_P(x) \Big| \overset{n \to \infty}{\longrightarrow} 0,
\end{align*}
which implies that 
\begin{equation}\label{eq: limit average 0}
\fint_P u_P=0.
\end{equation}

 It is easy to see that since the half-open Whitney cubes $P\in\WW(\om)$ are disjoint,  the countable direct sum $ \bigoplus_{ P\in \WW(\om)} \Lambda_{P}$ equipped with the sup norm is sequentially complete.
Indeed, if $\{u_n\}$ be a Cauchy sequence in $ \bigoplus_{ P\in \WW(\om)} \Lambda_{P}$, then for $\ve>0$, there exists $n_0=n_0(\ve) \in \NN$ such that 
$$
 \max_{\bar P}\Big|u_n - \fint_P u_n - u_m + \fint_P u_m \Big| <\ve,\quad \textup{for all} \,\,  m, n>n_0 \,\, \textup{and all}\,\,  P \in \WW(\om).
 $$
 Using that $\lim_{m \to \infty}\big( u_m - \fint_P u_m \big)= u_P$ uniformly on $\bar P$,  we take limits as $m \to \infty$ in the last inequality and infer that 
$$
 \max_{\bar P}\Big|u_n(x) - \fint_P u_n - u_P (x) \Big| \leq  \ve,\quad \textup{for all} \,\,  n>n_0 \,\, \textup{and all}\,\,  P \in \WW(\om).
 $$
Then,  the function defined by $v(x)=u_P(x)$ for every $x \in P$ and every $P \in \WW(\om)$,  satisfies $u_n \to v$ in $ \bigoplus_{ P\in \WW(\om)} \Lambda_{P}$.

We shall now prove that $\nn_{\sharp, \DD}(\om)$ is sequentially complete with respect to the semi-norm $\| \cdot \|_{ \nn^\infty_{\sharp, \DD}(\om)}$.  
 Let  $\{u_n\}$ be a Cauchy sequence in $\nn_{\sharp, \DD}(\om)$.  For fixed $\ve>0$, there exists $n_0=n_0(\ve)\in\mathbb N$ such that for every $n, m> n_0$, we have that $\|u_n-u_m\|_{\nn_{\sharp \DD}(\om)}<\ve$.  Note that   
$$
\nn^\infty_{\sharp, \DD}(\om)=\bigoplus_{ P\in \WW(\om)} \Lambda_{P}  \cap  C(\om)
$$
and as $ \bigoplus_{ P\in \WW(\om)} \Lambda_{P}$ is sequentially complete,   there exists $v$ (as defined above) such that $u_n \to  v$ in the $\| \cdot\|_{\nn^\infty_{\sharp, \DD}(\om)}$ semi-norm.   So  there exists $\{u_P\}_{P \in \WW(\om)}$ satisfying \eqref{eq: limit average 0} such that $u_n \to u_P$ in the $\| \cdot\|_{\Lambda_{P}}$ semi-norm, uniformly in $P \in \WW(\om)$.  Note that $u_P \in C(\bar P)$ for each $P \in \WW(\om)$ as the uniform limit of a sequence of continuous functions.

Our main  goal   is to modify each $u_P$ adding a suitable constant so that we can define a new  function $u \in C(\om)$ which satisfies  $u_n \to u$ in $\|\cdot\|_{\nn^\infty_{\sharp, \DD}(\om)}$.   To do so,  it is important to show that if $P_1, P_2 \in \WW(\om)$ with $\partial P_1 \cap \partial P_2 \neq \emptyset$, then, for every $x\in\partial P_1\cap \partial P_2$,  
\begin{equation}\label{eq: constant in boundary points}
u_{P_1}(x) + C(P_1, P_2)=u_{P_2}(x).
\end{equation}
If
$$
C_n(P_1, P_2):=\fint_{P_1} u_n -\fint_{P_2} u_n,
$$
 since $\{u_n\}$ is Cauchy in $\nn^\infty_{\sharp, \DD}(\om)$,  for any $x \in \partial P_1 \cap \partial P_2$,  it holds that 
\begin{align*}
|C_n(P_1, P_2)&-C_m(P_1, P_2)| \\
&\leq  \Big| u_n(x)-u_m(x) - \fint_{P_1} (u_n-u_m) \Big|+\Big| u_n(x)-u_m(x) - \fint_{P_2} (u_n-u_m) \Big|\\
&\leq 2 \|u_n-u_m\|_{\nn^\infty_{\sharp, \DD}(\om)} \to 0, \quad \textup{as}\,\, m,n \to \infty.
\end{align*}
Therefore,  there exists $C(P_1, P_2)\in \R$ such that $\lim_{n \to \infty}C_n(P_1, P_2)=C(P_1, P_2)$.  So, for every $x\in\partial P_1\cap \partial P_2$,  
\begin{align*}
u_{P_2}(x)-u_{P_1}(x)=&  \left( u_{P_2}(x)-  u_{n}(x)+ \fint_{P_2} u_{n} \right)\\
& - \left( u_{P_1}(x)-  u_{n}(x)+ \fint_{P_1} u_{n} \right)+  C_{n}(P_1, P_2) \overset{n \to \infty}{\longrightarrow}  C(P_1, P_2),
\end{align*}
which shows  \eqref{eq: constant in boundary points}.

If $\pom$ is compact, we fix a (starting) cube $P_0\in\WW(\om)$ such that   $\ell(P_0) \leq \diam (\pom)$ and $\ell(P) \leq \ell(P_0)$ for every $P \in \WW(\om)$.  If $\pom$ is unbounded, we pick as a starting cube some $P_0\in\WW(\om)$  such  that $\ell(P_0)=1$.   Once we have fixed such a cube $P_0$, we define 
$$
\mathcal{G}_1(P_0):=\{P \in \WW(\om): \partial P\cap \partial P_0 \neq \emptyset \}.
$$
For every $P\in \mathcal{G}_1(P_0)$, we  let $u_P$ be the limit of $\{u_n\}$ in the $\Lambda_P$ semi-norm and set 
\begin{equation}\label{eq: wt uP}
{u}^1_P :=u_P + C(P, P_0).
\end{equation}
It is clear that $u_n \to u^1_P$ in the $\Lambda_P$ semi-norm as well, while, in view of \eqref{eq: constant in boundary points},  ${u}^1_P=u_{P_0}$ on $\partial P_0 \cap \partial P$.   Observe that, repeating the proof of  \eqref{eq: constant in boundary points}, we can show that for every $\wt{P}\in\mathcal{G}_1(P)\cap\mathcal{G}_1(P_0)\setminus \{P_0\}$, there exists a constant $C(P, \wt P)$ such that 
\begin{equation}\label{eq: utildeP on common boundary}
{u}^1_{P}+C(P, \wt{P}) = {u}^1_{\wt P} \qquad \textup{on}\,\, \partial P\cap \partial \wt{P}.
\end{equation}
Since $P, \wt{P}\in\mathcal{G}_1(P_0)$ and $\wt{P}\in\mathcal{G}_1(P)$ we have 
$$
{u}^1_{\wt{P}}(z) = u_{P_0}(z) = {u}^1_{P}(z) \quad \forall z\in\partial P_0\cap\partial P\cap\partial\wt{P},
$$
which implies that $C(P, \wt{P}) =0$ and so, by \eqref{eq: utildeP on common boundary}, 
\begin{equation}\label{eq: utildeP on common boundary-bis}
{u}^1_{P}= {u}^1_{\wt P} \qquad \textup{on}\,\, \partial P\cap \partial \wt{P}.
\end{equation}
Setting  $C_P:= C(P,P_0)$, we  can write ${u}^1_P :=u_P + C_P$ and define 
$$
v^1:=
\begin{dcases}
&{u}^1_P, \qquad P\in \mathcal{G}_1(P_0) \\
&u_P, \qquad P\in\WW(\om)\setminus \mathcal{G}_1(P_0).
\end{dcases}
$$
Notice that 
$$
v^1 \in C( \mathcal{O}_1),  \quad \textup{where} \quad \mathcal{O}_1:=\bigcup_{P \in \mathcal{G}_1(P_0)} \bar P,
$$
and $\lim_{n \to \infty} \|u_n - v^1\|_{\nn^\infty_{\sharp, \DD}(\om)}=0$.

Moving forward, for every $P\in\mathcal{G}_1(P_0)$,  we set 
$$
\wt{\mathcal{G}}_1(P):= \mathcal{G}_1(P)\setminus \mathcal{G}_1(P_0)\quad
\textup{and}    \quad 
\mathcal{G}_2(P_0):=\bigcup_{P\in\mathcal{G}_1(P_0)} \wt{\mathcal{G}}_1(P).
$$
If $P\in\mathcal{G}_1(P_0)$ and $P'\in\wt{\mathcal{G}}_1(P)$, we define 
$$
{u}^2_{P'} :=u_{P'} + C(P', P),
$$
 which also satisfies $u_n \to u^2_{P'}$ in the $\Lambda_{P'}$ semi-norm. By the same arguments as above,  we can show that
$$
{u}^2_{P'}={u}^1_{P} \qquad \textup{on}\,\, \partial P \cap \partial P'
$$
and, for every $P''\in\wt{\mathcal{G}}_1(P)$ such that $\partial P' \cap \partial P'' \neq \emptyset$, 
$$
{u}^2_{P'}={u}^2_{P''}\qquad \textup{on}\,\, \partial P' \cap \partial P''.
$$

Moreover,  if $\wt P \in\mathcal{G}_1(P_0) $ such that $\partial P \cap \partial \wt P \neq \emptyset$ and $P' \in \mathcal{G}_1( P) \cap \mathcal{G}_1(\wt P)$,  we define $\wt{u}^2_{P'} :=u_{P'} + C(P', \wt P)$.  Then, for every $x \in \partial P'  \cap  \partial P  \cap  \partial \wt P $, it holds that  
$$
u^1_P(x)=  u^1_{\wt P}(x), \quad  {u}^2_{P'}(x) =u^1_P(x), \quad \textup{and}\quad\wt{u}^2_{P'}(x)=u^1_{\wt P}(x),
$$
which we may combine to deduce that $C(P', P)=C(P', \wt P)$ and consequently
$$
{u}^2_{P'} = \wt{u}^2_{P'}.\footnote{The part of the argument dealing with two descendants and one ancestor is better suited to cases where the descendants have, at most, equal side-lengths with the ancestor. Meanwhile, the part involving two ancestors and one descendant is more relevant for cases where the descendant has a larger side-length than the ancestor. The terms ``ancestor" and ``descendant" are used in relation to the selection process.}
$$
The same is true for every $\wt P \in\mathcal{G}_1(P_0) $ such that $P' \in \wt{\mathcal{G}}_{1}(P) \cap\wt{\mathcal{G}}_{1}(\wt P)$ since, for every such cube,  there is a chain of cubes $P=P_1,  P_2, \dots, P_{N}=\wt P$ such that $P_k\in \mathcal{G}_1(P_0)$ and  $\partial P_k \cap \partial P_{k+1} \neq \emptyset$ for $k \in \{1,\dots, N\}$,  where $N$ is a dimensional constant.  Therefore, we can unambiguously set  $C_{P'}:= C(P',P)$ for any $P\in\mathcal{G}_1(P_0)$ such that   $P' \in \wt{\mathcal{G}}_{1}(P)$ and  so  ${u}^2_{P'} :=u_{P'} + C_{P'}$.  
If 
$$
\mathcal{F}_2(P_0):=\mathcal{G}_1(P_0) \cup \mathcal{G}_2(P_0),
$$
we define
$$
v^2:=
\begin{dcases}
&{u}^1_P, \qquad P\in \mathcal{G}_1(P_0)\\
&{u}^2_P, \qquad P\in \mathcal{G}_2(P_0)\\
&u_P, \qquad P\in\WW(\om)\setminus \mathcal{F}_2(P_0)
\end{dcases}
$$
and notice that 
$$
v^2 \in C( \mathcal{O}_2),  \quad \textup{where} \quad \mathcal{O}_2:=\bigcup_{P \in  \mathcal{F}_2(P_0)} \bar P,
$$
and $\lim_{n \to \infty} \|u_n - v^2\|_{\nn^\infty_{\sharp, \DD}(\om)}=0$.

We proceed by iteration and, for every $P\in\mathcal{G}_{k-1}(P_0)$,  $k \geq 2$, we set 
$$
\wt{\mathcal{G}}_{1}(P):= \mathcal{G}_1(P)\setminus \mathcal{G}_{k-1}(P_0)\quad
\textup{and}    \quad 
\mathcal{G}_k(P_0):=\bigcup_{P\in\mathcal{G}_{k-1}(P_0)} \wt{\mathcal{G}}_{1}(P).
$$ 
If $P\in\mathcal{G}_{k-1}(P_0)$ and $P'\in\wt{\mathcal{G}}_1(P)$,   we define 
$$
{u}^k_{P'} :=u_{P'} + C(P', P)
$$
which also satisfies that $u_n \to u^k_{P'}$ in the $\Lambda_{P'}$ semi-norm.  Arguing as above, we can  prove that
$$
{u}^k_{P'}={u}^{k-1}_{P} \qquad \textup{on}\,\, \partial P \cap \partial P'
$$
 and,  for every $P''\in\wt{\mathcal{G}}_1(P)$ such that $\partial P' \cap \partial P'' \neq \emptyset$, 
$$
{u}^k_{P'}={u}^k_{P''}\qquad \textup{on}\,\, \partial P' \cap \partial P''.
$$
Moreover, we can unambiguously set  $C_{P'}:= C(P',P)$ for any $P\in\mathcal{G}_{k-1}(P_0)$ such that   $P'\in\wt{\mathcal{G}}_1(P)$ and  so  ${u}^k_{P'} :=u_{P'} + C_{P'}$.    
If 
$$
\mathcal{F}_k(P_0):= \bigcup_{j=1}^k\mathcal{G}_j(P_0)
$$
and 
$$
v^k:=
\begin{dcases}
\begin{aligned}
{u}^1_P, \qquad &P\in \mathcal{G}_1(P_0) \\
{u}^2_P, \qquad &P\in \mathcal{G}_2(P_0) \\
& \vdotswithin{P} \\[-0.5ex]\\
{u}^k_P, \qquad &P\in \mathcal{G}_k(P_0) \\
u_P, \qquad &P\in\WW(\om)\setminus \mathcal{F}_k(P_0),
\end{aligned}
\end{dcases}
$$
 it is easy to see that
$$
v^k \in C( \mathcal{O}_k),  \quad \textup{where} \quad \mathcal{O}_k:=\bigcup_{P \in  \mathcal{F}_k(P_0)} \bar P,
$$
and $\lim_{n \to \infty} \|u_n - v^k\|_{\nn^\infty_{\sharp, \DD}(\om)}=0$. 

If we  define $u(x):=\lim_{k\to\infty} v^k(x)$ for  $x \in \om$,  then, by construction and the fact that $\om$ is connected,    it is clear that $u \in C(\om)$ and $\lim_{n \to \infty} \|u_n - u\|_{\nn^\infty_{\sharp, \DD}(\om)}=0$.  Hence,  $\nn^\infty_{\sharp, \DD}(\om)$ is sequentially complete.

It remains to prove that $\nn_{\textup{sum}, \DD}(\om)$ is sequentially complete.  To this end, let  $\{u_n\}$ be a Cauchy sequence in $\nn_{\textup{sum}, \DD}(\om)$. Then for fixed $\ve>0$, there exists $n_0=n_0(\ve)\in\mathbb N$ such that for every $n, m\geq n_0$ we have $\|u_n-u_m\|_{\nn_{\textup{sum}, \DD}(\om)}<\ve$, which implies that 
$$
\|u_n-u_m\|_{\nn^\infty_{\sharp, \DD}(\om)}<\ve \quad  \textup{and} \quad \| \delta_\om(\nabla u_n-\nabla u_m)\|_{\nn^\infty_{\DD}(\om)}<\ve.
$$
Since $\nn^\infty_{\sharp, \DD}(\om)$ is sequentially complete,  if $u=\lim_{n \to \infty} u_n  \in \nn^\infty_{\sharp, \DD}(\om)$, it suffices to show that
\begin{equation}\label{eq:u C1}
u \in C^1(\om)\quad \textup{and}  \quad \lim_{n \to \infty} \| \delta_\om(\nabla u_n-\nabla u)\|_{\nn^\infty_{\DD}(\om)}=0.
\end{equation}

Since $\{  \delta_\om \nabla u_n\}_{n=1}^\infty \subset C(\om)$ is Cauchy in $\nn^\infty_{\DD}(\om)$,  then it  is uniformly Cauchy on $\bar P$ (uniformly in $P\in \WW(\om)$)  and so there exists $\vec w_P^0 \in C(\bar P)$ such that $ \delta_\om \nabla u_n \to \vec w_P^0$ uniformly on $\bar P$  for every $P\in \WW(\om)$.  If $P_1, P_2 \in \WW(\om)$ so that $\partial P_1 \cap\partial P_2 \neq  \emptyset$,  for every $x \in \partial P_1 \cap\partial P_2$, it holds that
$$
\vec w_{P_1}^0(x)-\vec w_{P_2}^0(x)= (\vec w_{P_1}^0(x)- \delta_\om(x)\nabla u_n(x))- (\vec w_{P_1}^0(x)- \delta_\om(x)\nabla u_n(x)) \to 0, \,\, \textup{as}\,\, n \to \infty,
$$
which implies that $\vec w_{P_1}^0(x)=\vec w_{P_2}^0(x)$ for every $x \in \partial P_1 \cap\partial P_2$. Therefore, if we define $\vec w_0(x):= \vec w_{P}^0(x)$ for every $x \in P$  and   all $P \in \WW(\om)$,  it is evident  that $\vec w_0 \in C(\om)$ and $\lim_{n \to \infty}\| \delta_\om \nabla u_n - \vec w_0 \|_{\nn^\infty_{\DD}(\om)}=0$.  Set now $\vec w :=  \delta_\om^{-1} \vec w_0\in C(\om)$ and 
$$
w_P^n:=u_n- \fint_{P}u_n \quad \textup{on} \,\,  \bar P,
$$
and note that 
$$
w_P^n \to u_P \quad  \textup{uniformly on} \,\,\bar P \quad \textup{and}\quad   \nabla w_P^n=\nabla u_n \to \vec w \quad  \textup{uniformly on} \,\,\bar P.
$$
We can now apply Lemma \ref{lem: gradient uniform convergence} on each $\bar P$  to the sequence $\{w_P^n\}_{n=1}^\infty \subset C^1(\bar P)$ and  deduce that, for every $P \in \WW(\om)$,
$$
u_P \,\,  \textup{is differentiable on}\,\,  \bar P\qquad \textup{and} \qquad \nabla u_P=\vec w \in C(\bar P). 
$$
In particular,  $\nabla u_P(z)=\vec w(z)=\nabla u_{P'}(z)$ for every $z \in \partial P \cap \partial P'$.   If  $P \in \WW(\om)$ and $y\in  \partial P$, for fixed $\ve>0$, there exist $\delta_P=\delta_P(\ve,y)>0$ such that for every $x \in \bar P \cap B(y, \delta_P)$, 
\begin{equation}\label{eq:u differentiable P}
\frac{|u_P(x)-u_P(y)-\vec w \cdot (x-y)|}{|x-y|}<\ve.
\end{equation}
Let us fix  a cube $P \in \WW(\om)$ and a point $z \in \partial P$.  We define $\mathcal{Z}$ to be the family of Whitney cubes $P'$ such that $ z\in \partial P'$ (the number of such cubes is at most a fixed dimensional constant) and set  
$$
\delta=\min\Big\{ \dist \Big(z,  \om \setminus \bigcup_{ P' \in \mathcal Z} \bar P' \Big), \min_{P' \in \mathcal Z}\delta_{P'}\Big\}.
$$
Note that for every $x \in  B(z, \delta) \setminus \partial P$,  there exists a unique $P_x \in \mathcal{Z}$ such that $x \in P_x^o$ and, by \eqref{eq:u differentiable P},
$$
\frac{|u(x)-u(z)-\vec w \cdot (x-z)|}{|x-z|}=\frac{|u_{P_x}(x)-u_{P_x}(z)-\vec w \cdot (x-z)|}{|x-z|}<\ve.
$$
If $x \in  B(z, \delta) \cap \partial P$,  since $u(x)=u_{P}(x)+C_{P}$,  by \eqref{eq:u differentiable P}, it holds that
$$
\frac{|u(x)-u(z)-\vec w \cdot (x-z)|}{|x-z|}=\frac{|u_{P}(x)-u_{P}(z)-\vec w \cdot (x-z)|}{|x-z|}<\ve,
$$
showing that $u$ is differentiable at any $z \in  \bigcup_{P \in \WW(\om)} \partial P$. Therefore,   since $u_P$ is differentiable in $P^o$ for any $P \in \WW(\om)$ and $u=u_P+C_P$ in $P^o$, we deduce that $u$ is differentiable in $\om$ and $\nabla u = \vec w \in C(\om)$,  proving \eqref{eq:u C1} and concluding the proof of the lemma when $\om$ is connected.

If $\om$ is not connected,  it can be written as the union of at most countably many disjoint connected components, i.e.,  $\om=\bigcup_{i=1}^\infty \om_i$.  For every $i \in \NN$,  we have proved that $\nn_{\sharp, \DD}(\om_i) $ and $\nn_{\textup{sum}, \DD}(\om_i)$ are sequentially complete and, as the connected components $\om_i$ are mutually disjoint, we have that 
$$
\nn_{\sharp, \DD}(\om)=\bigoplus_{i=1}^\infty \nn_{\sharp, \DD}(\om_i)  \,\, \,\textup{and}\,\,\,  \nn_{\textup{sum}, \DD}(\om)=\bigoplus_{i=1}^\infty \nn_{\textup{sum}, \DD}(\om_i) \quad \textup{are sequentially complete},
$$ 
concluding the proof of Lemma \ref{lem: the sum space is complete}.
\end{proof}

\vv

\begin{lemma}\label{lem: norm equivalence}
Let $\om \subset \R^{n+1}$ be an open set. The semi-norms $\|\cdot\|_{\nn_{\textup{sum}}(\om)}$ and $\|\cdot\|_{\nn_{\textup{sum}, \DD}(\om)}$ are equivalent and the implicit constants only depend on $n$.
\end{lemma}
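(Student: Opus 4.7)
The plan is to establish the two-sided bound by splitting each semi-norm into its two constituent pieces and comparing them in pairs: the sharp parts $\|u\|_{\nn^\infty_{\sharp}(\om)}$ versus $\|u\|_{\nn^\infty_{\sharp,\DD}(\om)}$, and the weighted-gradient parts $\|\delta_\om \nabla u\|_{\nn^\infty(\om)}$ versus $\|\delta_\om \nabla u\|_{\nn^\infty_\DD(\om)}$. The comparison of the gradient pieces is essentially tautological, and the bound $\|u\|_{\nn^\infty_{\sharp,\DD}(\om)}\lesssim\|\delta_\om\nabla u\|_{\nn^\infty(\om)}$ follows from the convexity of Whitney cubes via the fundamental theorem of calculus. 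The substantive content lies in the reverse estimate $\|u\|_{\nn^\infty_{\sharp}(\om)}\lesssim\|u\|_{\nn^\infty_{\sharp,\DD}(\om)}$, which requires a short chaining argument along a straight segment together with a careful choice of the averaging parameter $c$ defining $m_{\sharp,c}$.

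I would first observe that $\|\delta_\om\nabla u\|_{\nn^\infty(\om)}\approx\|\delta_\om\nabla u\|_{\nn^\infty_\DD(\om)}$: the open Whitney cubes cover $\om$ up to a Lebesgue-null set, and since $\nabla u$ is continuous, $\sup_{x\in\om}\delta_\om(x)|\nabla u(x)|=\sup_{P\in\WW(\om)}\sup_{x\in P}\delta_\om(x)|\nabla u(x)|$, while the left-hand side coincides with $\sup_{\xi\in\pom}\calN(\delta_\om\nabla u)(\xi)$ by the same elementary observation recorded for $\|\cdot\|_{\nnis(\om)}$ just after \eqref{eq: Ninfinity sharp}. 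To bound the dyadic sharp semi-norm by the gradient, I would fix $P\in\WW(\om)$ and $x,z\in P$; by convexity of $P$, integration along $[x,z]\subset P$ produces $|u(x)-u(z)|\leq |x-z|\sup_{w\in P}|\nabla u(w)|\lesssim \ell(P)\cdot \ell(P)^{-1}\|\delta_\om\nabla u\|_{\nn^\infty(\om)}$, using $\delta_\om(w)\approx\ell(P)$ on $P$ from Whitney properties~(i)--(ii) and \eqref{eq:delta<ellP}. Averaging in $z$ over $P$ bounds $|u(x)-\fint_P u|$ by the same constant times $\|\delta_\om\nabla u\|_{\nn^\infty(\om)}$, and taking the supremum over $P$ and $x$ yields $\|u\|_{\nn_{\textup{sum},\DD}(\om)}\lesssim\|u\|_{\nn_{\textup{sum}}(\om)}$.

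For the reverse inequality I would fix $c\in(0,1/2]$ sufficiently small---in the spirit of the restriction $c=1/(4\Lambda')$ made in the proof of Theorem \ref{thm: approxestimates}---and let $x\in\om$, $y\in cB^x=B(x,c\delta_\om(x))$ with Whitney cubes $P_x\ni x$ and $P_y\ni y$. Since $\delta_\om(y)\geq(1-c)\delta_\om(x)$ on $cB^x$, every Whitney cube meeting the segment $\gamma(t)=x+t(y-x)$, $t\in[0,1]$, has side length comparable to $\ell(P_x)$; because the segment lies inside $cB^x\subset\om$ and has length $\leq c\Lambda'\ell(P_x)$, Whitney property~(iii) bounds the number of distinct cubes $P_x=Q_0,Q_1,\ldots,Q_N=P_y$ that $\gamma$ visits by a constant $N=N(c,n,D_0,\Lambda')$, and consecutive cubes automatically share a boundary point $z_i\in\bar Q_i\cap\bar Q_{i+1}$. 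The continuity of $u$ on each $\bar Q_i\subset\om$ together with the very definition of $\|\cdot\|_{\nn^\infty_{\sharp,\DD}(\om)}$ yields $|\fint_{Q_i}u-\fint_{Q_{i+1}}u|\leq 2\|u\|_{\nn^\infty_{\sharp,\DD}(\om)}$. Telescoping and adding the two endpoint oscillation terms gives $|u(y)-u(x)|\lesssim \|u\|_{\nn^\infty_{\sharp,\DD}(\om)}$ uniformly in $y\in cB^x$; integrating in $y$ produces $|u(x)-m_{cB^x}u|\lesssim\|u\|_{\nn^\infty_{\sharp,\DD}(\om)}$, and hence $m_{\sharp,c}(u)(x)\lesssim\|u\|_{\nn^\infty_{\sharp,\DD}(\om)}$. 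Taking the supremum over $x\in\om$ and combining with the trivial gradient direction closes the loop.

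The main obstacle will be securing the uniform bound on the number $N$ of Whitney cubes traversed by the segment $[x,y]$. This reduction hinges on choosing $c$ small enough to guarantee that every cube meeting $cB^x$ has side length comparable to $\ell(P_x)$ and lies in a bounded neighborhood of $P_x$; the local finiteness encoded by Whitney property~(iii) then yields a dimensional bound on $N$. Once this is in place, the remaining telescoping is a bounded sum of cube-average differences, each controlled termwise by the dyadic sharp semi-norm through a common boundary point via the continuity of $u$ on $\bar Q_i$.
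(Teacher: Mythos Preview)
Your argument is correct but follows a different route from the paper's. The paper controls $\|u\|_{\nn^\infty_\sharp(\om)}$ by comparing the ball-average $\fint_{B_x}u$ to each cube-average $\fint_P u$ for $P$ meeting $B_x$ via the mean value theorem, which costs a $\|\delta_\om\nabla u\|_{\nn^\infty_\DD(\om)}$ term; this yields the mixed estimate $\|u\|_{\nn^\infty_\sharp(\om)}\lesssim\|u\|_{\nn^\infty_{\sharp,\DD}(\om)}+\|\delta_\om\nabla u\|_{\nn^\infty_\DD(\om)}$, and symmetrically for the other direction. You instead chain cube-averages along the segment $[x,y]$ through shared boundary points, using only the continuity of $u$ on each $\bar P$, and obtain the sharper piecewise bound $\|u\|_{\nn^\infty_\sharp(\om)}\lesssim\|u\|_{\nn^\infty_{\sharp,\DD}(\om)}$ without any gradient contribution; for the reverse you absorb the dyadic sharp piece entirely into $\|\delta_\om\nabla u\|_{\nn^\infty(\om)}$. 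Your approach is more elementary (no differentiability is used in the chaining step) and decouples the sharp and gradient pieces cleanly. The paper's approach, in exchange, treats the dependence on the averaging parameter $c\in(0,1/2]$ explicitly at the end (the estimates \eqref{eq:A5a}--\eqref{eq:A5b}), whereas your argument as written covers only sufficiently small $c$; this is harmless for the intended application but worth noting.
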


\begin{proof}
 If $x\in\om$ and $B_x=B(x, c\,\delta_\om(x))$,  for $c \in (0, 1/2]$, then,  by easy volume considerations,  one can prove that there exists a uniformly bounded number of Whitney cubes  $P\in\WW(\om)$ that cover the ball $B_x$.  We denote this collection  of cubes  by $\mathcal B_x$. By the mean value theorem,  for any $P \in \mathcal B_x$, we estimate
\begin{align}
\Big| \fint_{B_x}u-\fint_{P} u \Big| &\leq \fint_{B_x}\fint_{P} |u(z)-u(\zeta)|\, dz\,d\zeta\lec\, \delta_\om(x)\,\max_{P \in \mathcal B_x} \max_{z\in \bar P}|\nabla u(z)|\notag\\
&\lec \max_{P \in \mathcal B_x}\max_{z\in \bar P}\delta_\om(z)|\nabla u(z)| \lesssim \|  \delta_\om \nabla u \|_{\nn^\infty_\DD(\om)}.\notag
\end{align}
Consequently,
\begin{align*}
\|u\|_{\nn_{\sharp}^\infty(\om)}  &\leq \sup_{x\in \om} \sup_{P \in \mathcal B_x}\sup_{y \in P}\Big| u(y) - \fint_{B_x}u \Big| \\
&\lec  \sup_{x\in \om} \sup_{P \in \mathcal B_x}\sup_{y \in P}\Big| u(y) - \fint_{P}u \Big| +\| \delta_\om \nabla u \|_{\nn^\infty_\DD(\om)}\\
&\lesssim \| u \|_{\nn_{\sharp, \DD}^\infty(\om)}  + \|  \delta_\om \nabla u \|_{\nn^\infty_\DD(\om)}.
\end{align*}
Since for any $x\in\om$ there exists a unique half-open cube $P\in \WW(\om)$ such that $x\in P$, we have that
$$
\delta_\om(x)|\nabla u(x)| \leq \sup_{z\in P}\delta_\om(z)|\nabla u(z)|\leq  \| \delta_\om \nabla u \|_{\nn^\infty_\DD(\om)}$$
and so $\|   \delta_\om \nabla u \|_{\nn^\infty(\om)} \lesssim  \|  \delta_\om \nabla u \|_{\nn^\infty_\DD(\om)}$.
Therefore, 
\begin{align}\label{eq: equiv sxesi3}
\|u\|_{\nn_{\textup{sum}}(\om)} &= \|u\|_{\nn_{\sharp}^\infty(\om)} + \| \delta_\om\nabla u \|_{\nn^\infty(\om)}  \\
&\lec \|u\|_{\nn_{\sharp, \DD}(\om)} + \|   \delta_\om \nabla u \|_{\nn^\infty_\DD(\om)} =\|u\|_{\nn_{\textup{sum}, \DD}(\om)}.\notag
\end{align}

For the converse direction,  let us fix  $c\in(0, 1/2]$ so that for every $P \in \WW(\om)$, if $x_P$ is the center of $P$, it holds that $P \subset B(x_P, c\,\delta(x_P))=:B_P$.   By the mean value theorem,  for any $P \in \WW(\om)$,  we have that
$$
\Big|\fint_{B_P} u-\fint_{P} u \Big|\leq \max_{z\in B_P} \delta_\om(z)|\nabla u(z)| \leq  \| \delta_\om \nabla u\|_{\nn^\infty(\om)}
$$
and so
$$
\sup_{x \in P} \Big| u(z)- \fint_P u \Big| \leq   \sup_{x \in P} \Big| u(z)- \fint_{B_P} u \Big| + \| \delta_\om \nabla u\|_{\nn^\infty(\om)} \lec \|u\|_{\nn_{\sharp}^\infty(\om)}  +\|  \delta_\om \nabla u\|_{\nn^\infty(\om)}.
$$
This implies that $\|u\|_{\nn_{\sharp, \DD}(\om)}  \lec \|u\|_{\nn_{\textup{sum}}(\om)}$ and as 
$$
\| \delta_\om \nabla u \|_{\nn^\infty_\DD(\om)} \leq \sup_{P \in \WW(\om)} \sup_{z \in B_P} \delta_\om(z) |\nabla u(z)| \leq \|\delta_\om\nabla u \|_{\nn^\infty(\om)},
$$
we infer that $ \|u\|_{\nn_{\textup{sum}, \DD}(\om)} \lec \|u\|_{\nn_{\textup{sum}}(\om)}$ for any $c \in (0, 1/2)$ such that $P \subset B_P$. 
 It remains to prove that if $0<c_0 <  c$, then 
\begin{align}\label{eq:A5a}
\sup_{x \in \om} \sup_{z \in B(x,c \delta_\om(x))} \Big| u(z) - \fint_{B(x,c \delta_\om(x))} \Big| &\lec \sup_{y \in \om} \sup_{z \in B(y,c_0 \delta_\om(y))} \Big| u(z) - \fint_{B(y,c_0 \delta_\om(y))} \Big| \notag\\
&+\sup_{y \in \om} \sup_{z \in B(y,c_0 \delta_\om(y))}  \delta_\om(z) |\nabla u(z)|
\end{align}
and 
\begin{align}\label{eq:A5b}
\sup_{y \in \om} \sup_{z \in B(y,c \delta_\om(y))}   \delta_\om(z) |\nabla u(z)|  \lec \sup_{y \in \om} \sup_{z \in B(y,c_0 \delta_\om(y))}   \delta_\om(z) |\nabla u(z)|,
\end{align}
where the implicit constants are independent of $c_0$ and $c$.  For any $x \in \om$,  there exists a uniformly bounded number of  balls $\{B_j^x \}_{j=1}^N$ so that $B_j^x=B(x_j, c_0 \delta_\om(x_j))$, where $x_j \in B(x,c \delta_\om(x))$ and $B(x,c\, \delta_\om(x)) \subset \cup_{j=1}^N B_j^x$.   Therefore, by the mean value theorem,  we have that for any $x\in \om$,
\begin{align*}
\Big| \fint_{B(x,c \delta_\om(x))} u - \fint_{B_j} u\Big| &\lec \sup_{j\in \{1,\dots,N\}} \sup_{z \in B_j^x} \delta_\om(z) |\nabla u(z)|\\
& \leq \sup_{y \in \om} \sup_{z \in B(y,c_0 \delta_\om(y))} \delta_\om(z) |\nabla u(z)|.
\end{align*}
Therefore,
\begin{align*}
\sup_{x \in \om}& \sup_{z \in B(x,c \delta_\om(x))} \Big| u(z) - \fint_{B(x,c \delta_\om(x))}u \Big| \leq \sup_{x \in \om} \sup_{j\in \{1,\dots,N\}} \sup_{z \in B_j^x}  \Big| u(z) - \fint_{B(x,c \delta_\om(x))} u \Big| \\
& \leq  \sup_{x \in \om} \sup_{j\in \{1,\dots,N\}} \left( \sup_{z \in B_j^x}  \Big| u(z) - \fint_{B_j} u \Big| + \Big| \fint_{B(x,c \delta_\om(x))} u - \fint_{B_j} u\Big| \right) \\
& \lec\sup_{y \in \om} \sup_{z \in B(y,c_0 \delta_\om(y))}  \Big| u(z) - \fint_{B(y,c_0 \delta_\om(y))} u \Big| + \sup_{y \in \om} \sup_{z \in B(y,c_0 \delta_\om(y))} \delta_\om(z) |\nabla u(z)|,
\end{align*}
which shows \eqref{eq:A5a}.   By similar considerations,  it is easy to prove \eqref{eq:A5b}, concluding  the proof of the  lemma.
\end{proof}

\vv

\begin{corollary}\label{cor: the sum space is complete}
The space $(\nn_{\textup{sum}}(\om), \|\cdot\|_{\nn_{\textup{sum}(\om)}})$ is sequentially complete. 
\end{corollary}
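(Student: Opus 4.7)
The plan is to deduce the corollary as a direct consequence of the two preceding results in the appendix, namely Lemma \ref{lem: the sum space is complete} (sequential completeness of the dyadic version $\nn_{\textup{sum}, \DD}(\om)$) and Lemma \ref{lem: norm equivalence} (equivalence of the semi-norms $\|\cdot\|_{\nn_{\textup{sum}}(\om)}$ and $\|\cdot\|_{\nn_{\textup{sum}, \DD}(\om)}$). Since, as subsets of $C^1(\om)$, the spaces $\nn_{\textup{sum}}(\om)$ and $\nn_{\textup{sum}, \DD}(\om)$ coincide (the defining conditions differ only in which ball one averages over, and both conditions single out the same collection of $C^1$-functions thanks to the comparability argument in Lemma \ref{lem: norm equivalence}), everything reduces to transferring Cauchy-ness and the limit back and forth between the two semi-norms.

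Concretely, I would proceed as follows. Let $\{u_k\}_{k\geq 1}\subset \nn_{\textup{sum}}(\om)$ be a Cauchy sequence. By the bound $\|\cdot\|_{\nn_{\textup{sum}, \DD}(\om)}\lec \|\cdot\|_{\nn_{\textup{sum}}(\om)}$ from Lemma \ref{lem: norm equivalence}, the sequence is also Cauchy in $\nn_{\textup{sum}, \DD}(\om)$. Applying Lemma \ref{lem: the sum space is complete} yields some $u\in \nn_{\textup{sum}, \DD}(\om)$, in particular $u\in C^1(\om)$, such that $\|u_k-u\|_{\nn_{\textup{sum}, \DD}(\om)}\to 0$ as $k\to \infty$. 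Finally, the reverse inequality $\|\cdot\|_{\nn_{\textup{sum}}(\om)}\lec \|\cdot\|_{\nn_{\textup{sum}, \DD}(\om)}$ (also provided by Lemma \ref{lem: norm equivalence}) gives $\|u_k-u\|_{\nn_{\textup{sum}}(\om)}\to 0$, so $u$ is the desired limit and $u\in \nn_{\textup{sum}}(\om)$.

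There is no real obstacle here: the corollary is essentially a bookkeeping statement packaging together the two technical lemmas. The only minor point worth a sentence in the written proof is to note that these are semi-normed spaces (the semi-norms annihilate locally constant functions on each connected component of $\om$, in view of Lemma \ref{lem: the sum space is normed} and Remark \ref{rem: unique limit}), so ``sequential completeness'' is understood in the semi-normed sense, i.e.\ existence (but not uniqueness) of a limit; this is precisely the sense in which Lemma \ref{lem: the sum space is complete} is formulated, so the transfer is automatic.
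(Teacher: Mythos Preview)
Your proposal is correct and follows exactly the paper's approach: the paper's proof is the one-line ``The result readily follows from Lemmas \ref{lem: the sum space is complete} and \ref{lem: norm equivalence},'' and you have simply spelled out the transfer argument in detail.
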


\begin{proof}
The result readily follows  from Lemmas \ref{lem: the sum space is complete} and  \ref{lem: norm equivalence}.
\end{proof}

\vv

An immediate corollary of Lemmas  \ref{lem: the sum space is normed} and  \ref{lem: the sum space is complete} is the following.
\begin{corollary}\label{cor: the sum space is Banach}
If $\om \subset \R^{n+1}$ is an open set and $\om=\bigcup_{i=1}^\infty \om_i$,  where $\{\om_i\}_{i=1}^\infty$ are the connected components of $\om$,  then    $\bigoplus_{i=1}^\infty [\nn_{\textup{sum}}(\om_i) /\R]$ is a  Banach space.  
\end{corollary}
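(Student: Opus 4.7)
The plan is to deduce the corollary almost mechanically from the two results invoked, together with the standard fact that a countable direct sum of Banach spaces (with the sup-norm) is itself Banach. The role of the two lemmas is split cleanly: Lemma \ref{lem: the sum space is normed} gives the \emph{normed} structure of each summand (modulo constants) when the underlying open set is connected, while Corollary \ref{cor: the sum space is complete} delivers the \emph{sequential completeness} of each summand, which together amount to the Banach property.

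Concretely, I would first observe that by definition each connected component $\om_i$ is itself an open and connected subset of $\R^{n+1}$, so Lemma \ref{lem: the sum space is normed} applies to each $\om_i$ individually and shows that $\|\cdot\|_{\nn_{\textup{sum}}(\om_i)}$ descends to an honest norm on the quotient $\nn_{\textup{sum}}(\om_i)/\R$. Next, Corollary \ref{cor: the sum space is complete} yields that $(\nn_{\textup{sum}}(\om_i),\|\cdot\|_{\nn_{\textup{sum}}(\om_i)})$ is sequentially complete, and this completeness passes to the quotient in the usual way (every Cauchy sequence of equivalence classes lifts to a Cauchy sequence in $\nn_{\textup{sum}}(\om_i)$ up to a choice of representative with, say, vanishing average on a fixed Whitney cube, and then converges in the seminorm to an element whose class is the limit). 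Hence each $\nn_{\textup{sum}}(\om_i)/\R$ is a Banach space.

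It then remains to endow the countable direct sum with a norm making it Banach. The natural choice, mirroring the direct-sum argument used in the proof of Lemma \ref{lem: the sum space is complete}, is
\begin{equation*}
\|(u_i)_{i\geq 1}\|_{\bigoplus_i \nn_{\textup{sum}}(\om_i)/\R}:=\sup_{i\geq 1}\|u_i\|_{\nn_{\textup{sum}}(\om_i)/\R},
\end{equation*}
restricted to tuples for which this supremum is finite. A Cauchy sequence $\{(u_i^{(n)})_{i\geq 1}\}_{n\geq 1}$ in this direct sum is, component-wise, uniformly Cauchy in $i$; completeness of each $\nn_{\textup{sum}}(\om_i)/\R$ produces component-wise limits $u_i$, and the uniform Cauchy property transfers to the uniform bound $\sup_i\|u_i^{(n)}-u_i\|_{\nn_{\textup{sum}}(\om_i)/\R}\to 0$, exactly as in the $\bigoplus_{P\in\WW(\om)}\Lambda_P$ argument earlier in the appendix.

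I do not anticipate a genuine obstacle: both ingredients are already proved, and the countable direct-sum-is-Banach step is purely formal. The only point worth being careful about is matching the norm on $\bigoplus_i \nn_{\textup{sum}}(\om_i)/\R$ with the seminorm $\|\cdot\|_{\nn_{\textup{sum}}(\om)}$ when viewed on functions defined on all of $\om$; since the seminorm is by construction a supremum over $\xi\in\pom$ (respectively $x\in\om$) of quantities that depend only on the values of $u$ in a Whitney ball, and Whitney balls are contained in a single connected component, the two norms agree on the space of functions obtained by gluing representatives from each component, making the identification canonical.
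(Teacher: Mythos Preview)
Your proposal is correct and follows essentially the same approach as the paper, which simply records the corollary as an immediate consequence of Lemmas~\ref{lem: the sum space is normed} and~\ref{lem: the sum space is complete} without spelling out the direct-sum argument. One small remark: Lemma~\ref{lem: the sum space is normed} is stated for the dyadic space $\nn_{\textup{sum},\DD}(\om_i)/\R$, so when you invoke it for $\nn_{\textup{sum}}(\om_i)/\R$ you are implicitly passing through the norm equivalence of Lemma~\ref{lem: norm equivalence}; this is harmless but worth making explicit.
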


\vvv

\vvv

\frenchspacing
\bibliographystyle{alpha}

\newcommand{\etalchar}[1]{$^{#1}$}
\def\cprime{$'$}

\end{document}